\newcommand{\QQ}{\mathbb{Q}}
\DeclareMathOperator{\Hom}{Hom}
\DeclareMathOperator{\Pic}{Pic}
\DeclareMathOperator{\Proj}{Proj}
\DeclareMathOperator{\im}{im}
\DeclareMathOperator{\rank}{rank}
\DeclareMathOperator{\Spec}{Spec}
\DeclareMathOperator{\Id}{Id}
\DeclareMathOperator{\val}{val}
\DeclareMathOperator{\sign}{sgn}
\DeclareMathOperator{\SL}{SL}
\DeclareMathOperator{\PGL}{PGL}
\DeclareMathOperator{\trop}{trop}
\DeclareMathOperator{\prin}{prin}
\DeclareMathOperator{\In}{in}
\DeclareMathOperator{\Ends}{Ends}
\DeclareMathOperator{\Joints}{Joints}
\DeclareMathOperator{\Newt}{Newt}
\DeclareMathOperator{\Supp}{Supp}
\DeclareMathOperator{\midd}{mid}
\DeclareMathOperator{\can}{can}
\DeclareMathOperator{\Scat}{Scat}
\DeclareMathOperator{\Cox}{Cox}
\DeclareMathOperator{\gp}{gp}
\DeclareMathOperator{\diag}{diag}
\let\llb=\llbracket
\let\rrb=\rrbracket
\let\bb=\mathbb
\let\rar=\rightarrow
\let\f=\mathfrak
\let\s=\mathcal
\let\wh=\widehat
\let\wt=\widetilde
\def\risom{\buildrel\sim\over{\smashedlongrightarrow}}
 \def\smashedlongrightarrow{\setbox0=\hbox{$\longrightarrow$}\ht0=1.25pt\box0}
\newcommand {\kk} {\Bbbk}
\newcommand {\sQ} {\mathcal{C}}
\theoremstyle{plain}
 \newtheorem{thm}{Theorem}[section]
 \newtheorem{rem}[thm]{Remark}
 \newtheorem{lem}[thm]{Lemma}
  \newtheorem{lemdfn}[thm]{Lemma/Definition}
  \newtheorem{prop}[thm]{Proposition}
   \newtheorem{cor}[thm]{Corollary}
 \newtheorem{claim}[thm]{Claim}
 \newtheorem{warn}[thm]{Warning}
\newcommand{\C}{{\mathbb C}}
\newcommand{\Q}{{\mathbb Q}}
\newcommand{\R}{{\mathbb R}}
\newcommand{\Z}{{\mathbb Z}}
\newcommand{\N}{{\mathbb N}}
\newcommand{\eq}[2]{\begin{equation}\label{#2} \begin{split} #1  \end{split} \end{equation}}
\newcommand{\eqn}[1]{\begin{equation*} \begin{split} #1 \end{split} \end{equation*}}
\theoremstyle{definition}
 \newtheorem{eg}[thm]{Example}
\theoremstyle{remark}
  \newtheorem{ass}[thm]{Assumption}
\newcommand{\lrp}[1]{\left(#1\right)}
\newcommand{\lrc}[1]{\left\{#1\right\}}
\tikzstyle{mutable}=[inner sep=0.5mm,circle,draw,minimum size=2mm]
\tikzstyle{frozen}=[inner sep=0.5mm,draw,minimum size=2mm]
\tikzstyle{dot} = [inner sep=0.5mm,circle,draw,minimum size=1mm]
\tikzstyle{coeff}=[inner sep=0.5mm,circle,fill=white]
\tikzstyle{wall}=[draw=blue!50,thick]
\tikzstyle{dot} = [fill=black!25,inner sep=0.5mm,circle,draw,minimum size=1mm]
\newenvironment{exam}{\refstepcounter{thm}\begin{proof}[Example \emph{\thethm}]}{\end{proof}}
\title{Valuative independence and cluster theta reciprocity}
\author{Man-Wai Cheung}
\email{mandywai24{\char'100}gmail.com}
\author{Timothy Magee}
\address{Mathematics, Statistics, and Computer Science Dept \\ Hollins University \\ Roanoke, VA 24020 \\ USA}
\email{mageetd{\char'100}hollins.edu}
\author{Travis Mandel}
\address{Department of Mathematics \\ University of Oklahoma \\ Norman, OK 73019 \\ USA}
\email{tmandel{\char'100}ou.edu}
\author{Greg Muller}
\address{Department of Mathematics \\ University of Oklahoma \\ Norman, OK 73019 \\ USA}
\email{gmuller{\char'100}ou.edu}
\begin{document}

\begin{abstract}
We prove that theta functions constructed from positive scattering diagrams satisfy valuative independence.  That is, for certain valuations $\val_{v}$, we have $\val_v(\sum_u c_u \vartheta_u)=\min_{c_u\neq 0} \val_v(\vartheta_u)$.  As applications, we prove linear independence of theta functions with specialized coefficients and characterize when theta functions for cluster varieties are unchanged by the unfreezing of an index.  This yields a general gluing result for theta functions from moduli of local systems on marked surfaces.  We then prove that theta functions for cluster varieties satisfy a symmetry property called theta reciprocity: briefly, $\val_v(\vartheta_u)=\val_u(\vartheta_v)$.  For this we utilize a new framework called a ``seed datum'' for understanding cluster-type varieties.  One may apply valuative independence and theta reciprocity together to identify theta function bases for global sections of line bundles on partial compactifications of cluster varieties.
\end{abstract}

\maketitle

\setcounter{tocdepth}{1}
\tableofcontents  

\section{Introduction}

\subsection{The Valuative Independence Theorem and Theta Reciprocity}\label{sec1.1} A variety $U$ is \emph{log Calabi-Yau (log CY)} if it is isomorphic to the complement of an anticanonical divisor $D$ with normal crossings in a smooth projective variety $Y$.  The pair $(Y,D)$ is called a minimal model for $U$.  One says $U$ has maximal boundary if $D$ contains a $0$-stratum.  A fundamental expectation of the Gross-Siebert program is that the coordinate ring of a sufficiently nice affine log CY variety $U$ with maximal boundary should have a distinguished basis of \emph{theta functions} \cite{CPS,GHK1,GHKK,GSTheta,GHS,GSInt2,KY,KY2}. If $U^\vee$ is a mirror dual affine log CY variety, then the theta functions on $U$ should be naturally parametrized by the \emph{integral tropical points} of $U^\vee$, denoted $U^{\vee}(\Z^{\min})$.  Here, one may define $U^{\vee}(\Z^{\min})$ as the \emph{boundary divisorial valuations} on the field of rational functions of $U^{\vee}$---i.e., $\bb{Z}_{\geq 0}$-multiples of valuations along boundary components of minimal models for $U^{\vee}$.

Similarly, an integral tropical point $v$ of $U$ determines two objects:
\begin{itemize}
    \item a boundary divisorial valuation $\val_v$ on the coordinate ring of $U$;
    \item a theta function $\vartheta_v$ in the coordinate ring of $U^\vee$.
\end{itemize}
Given an integral tropical point $u$ of the mirror dual $U^\vee$, the \textbf{theta pairing} of $v$ and $u$ is the integer obtained by plugging the theta function of $u$ into the boundary valuation of $v$:
\[
\val_v(\vartheta_u).
\]

In this paper, we consider this pairing when $U$ is a \textbf{cluster variety}---i.e., a log CY variety with maximal boundary constructed (up to codimension $2$) by gluing together algebraic tori via certain birational maps called mutations.   We do not assume $U$ or $U^{\vee}$ are affine, so the theta functions may be formal Laurent series (in which case valuations may take the value $-\infty$).

\begin{thm}\label{thm:main}
Let $U$ be a cluster variety, and let $U^\vee$ be a mirror dual\footnote{Theorem \ref{thm:taut-pairing} proves theta reciprocity for ``chiral dual'' pairs.  Proposition \ref{lem:D-move} and Corollary \ref{cor:Langlands} extend this to ``chiral Langlands dual'' and ``Langlands dual'' pairs under an integrality assumption on the exchange matrix.  In \cite{GHKK}, ``mirror'' refers to the Langlands dual.  See \S \ref{subsub:mirror} for more details.\label{foot:mirror}} cluster variety.
\begin{enumerate}
    \item (Valuative Independence) For all (formal) linear combinations $\sum_{u\in U^{\vee}(\Z^{\min})} c_{u} \vartheta_{u}$ and all $v\in U(\Z^{\min})$, we have
    \[
    \val_{v}\left(\sum_{u} c_u\vartheta_u\right)=\inf_{u:\,c_u\neq 0} \val_{v}(\vartheta_u)
    \]
    assuming the value on the right-hand side is finite.
    \item (Theta Reciprocity) For all $u\in U(\mathbb{Z}^{\min})$ and $v\in U^\vee(\mathbb{Z}^{\min})$, 
    \[
    \val_v(\vartheta_u) = \val_u(\vartheta_v)
    \]
    whenever both sides are finite.\footnote{We expect theta reciprocity to hold without the finiteness assumption.  This is discussed in \S \ref{subsub: infinite}.}
\end{enumerate}
\end{thm}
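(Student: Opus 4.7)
The plan is to work chart-by-chart in a seed $\seed$ of $U$, where positivity of the scattering diagram yields broken-line expansions $\vartheta_u = \sum_m \alpha_{u,m}z^m$ with all $\alpha_{u,m}\geq 0$, and where $\val_v$ (for $v$ in the relevant chamber) restricts to a monomial valuation $\val_v(z^m)=\langle m,v\rangle$. Both statements of the theorem then reduce to combinatorial assertions about the supports and leading terms of these expansions, which is the uniform engine underlying both parts.

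For Valuative Independence, I would first observe that the $\geq$ inequality is a formal property of valuations, so the substance is to rule out cancellation among the $\val_v$-leading contributions of the various $\vartheta_u$. The key ingredients are (i) positivity of the broken-line coefficients $\alpha_{u,m}$ and (ii) the injectivity of the $g$-vector map $u \mapsto g_\seed(u)$, which provides a distinguished monomial $z^{g_\seed(u)}$ in each $\vartheta_u$ that persists throughout any mutation path. Together these should imply that the minimal $\val_v$-graded pieces of different $\vartheta_u$'s are linearly independent and cannot cancel, regardless of the signs of the $c_u$. For tropical points $v$ not lying in the cone of $\seed$, one mutates to a seed whose chamber contains $v$; the formal-sum case is handled by truncating below a weight threshold, reducing to finite subsums.

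For Theta Reciprocity, the strategy is to identify a symmetric combinatorial description of $\val_v(\vartheta_u)$ as a function of $(u,v)$. The seed datum framework developed earlier in the paper encodes both $U$ and $U^\vee$ on equal footing, and allows one to express $\val_v(\vartheta_u)$ as a tropical pairing minimum --- for instance, the minimum of $\langle m,v\rangle$ over exponents $m$ of broken lines from $u$ in some $\seed$-chart, reinterpreted via mirror duality. The construction of the dual scattering diagram then makes $\val_u(\vartheta_v)$ visibly equal to the same quantity. One would verify compatibility with mutation so that reciprocity reduces to an initial-seed computation, where it follows from the (anti)symmetry of the exchange data. The chiral case is the cleanest; the Langlands-dual extension requires the integrality assumption on the exchange matrix foreshadowed in the footnote.

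The main obstacle is Theta Reciprocity, not Valuative Independence. The latter, while demanding care with formal sums and non-chamber tropical points, is ultimately a non-cancellation argument built on positivity and $g$-vector uniqueness. Theta Reciprocity, by contrast, equates two numbers defined on distinct varieties via a mirror duality, with no prior reason to agree. The creative step is the design of the seed datum framework that makes the symmetry manifest; the technical step is checking its mutation-equivariance and its behavior under the various duality operations required for the statement.
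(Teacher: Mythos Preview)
Your Valuative Independence sketch has a real gap. The $g$-vector monomial $z^{g_\seed(u)}$ is a distinguished term of $\vartheta_u$, but it is \emph{not} in general the $\val_v$-minimizing term for an arbitrary $v$; so $g$-vector injectivity tells you nothing about whether the $\val_v$-minimizing monomials of $\vartheta_{u_1}$ and $\vartheta_{u_2}$ might coincide and cancel when the $c_u$ have mixed signs. Positivity alone only prevents cancellation \emph{within} a single $\vartheta_u$, not across different ones. The paper's actual mechanism is different: it introduces rational broken lines and the notion of a $v$-taut broken line, proves that any $\val_{v,p}$-minimizing broken line must be $v$-taut (Theorem~\ref{thm:min-taut}), and then observes that for generic $v$ a $v$-taut broken line is uniquely determined by its endpoint $p$ and final exponent $u_\Gamma$ by tracing backwards---hence the initial exponent $u$ is recovered from the minimizing monomial (Theorem~\ref{thm:trop-determines-u}). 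This is what forces distinct theta functions to have distinct valuations at generic $v$; continuity then handles non-generic $v$. Your suggestion to ``mutate to a seed whose chamber contains $v$'' also fails in general, since the cluster complex need not cover tropical space.

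For Theta Reciprocity your outline is too schematic to be an argument, and the phrase ``reduces to an initial-seed computation'' is misleading: the paper's proof is not a mutation-equivariance check. It introduces a $\Lambda$-structure (a linear morphism $\Lambda:(P,Q)\to(Q^\bullet,-P^\bullet)$) to rewrite reciprocity as a relation between two valuations on the \emph{same} scattering diagram (Claim~\ref{conj: theta2}), then develops the notions of $\Lambda$-taut and $\Lambda^\top$-taut broken lines, proves that tropical theta functions are $\Lambda$-constant along $\Lambda$-taut broken lines (Lemma~\ref{lem:L-const}), and uses this constancy to slide basepoints between the positive and negative chambers (Lemma~\ref{lem:Lambda-p-val}). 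The final argument (Theorem~\ref{thm:taut-pairing}) chains these together: a $\val_{\Lambda m_1}$-minimizing broken line for $\vartheta_{m_2}$ is $\Lambda^\top$-taut, and $\Lambda^\top$-constancy along it transports the value to the other side of the claimed equality. None of this structure is visible in your proposal.
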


In particular, valuative independence implies \cite[Conj. 9.8]{GHKK}, while theta reciprocity is conjectured in \cite[Rmk. 9.11]{GHKK}.   In fact, Theorem \ref{thm:main} applies to a somewhat generalized version of cluster varieties constructed from ``seed data'' as defined in \S \ref{sec:cluster}.  It follows from \cite[\S 5]{GHK3} that this generality includes all log Calabi-Yau surfaces with maximal boundary (called Looijenga pairs in \cite{GHK1}).  Additionally, many of our results, including valuative independence, apply more generally to any theta functions constructed from a positive scattering diagram, \textbf{not just the cluster setting}.  In particular, the scattering diagrams associated to smooth affine log CY's in \cite{KY,KY2} are positive, so our arguments apply to this setting as well.  See \S \ref{subsub:general-VIT} for further discussion.

A simple but useful example to keep in mind is when $U$ is an algebraic torus (which may be viewed as a cluster variety with no mutable variables).  Let $M$ denote the character lattice and $N=\Hom(M,\Z)$ the cocharacter lattice, so $U=\bb{G}_m(N)$ has coordinate ring $\Z[x^M]$.  The mirror is the dual algebraic torus $U^{\vee}=\bb{G}_m(M)$ with coordinate ring $\Z[x^N]$.  The integral tropical points are $U(\Z^{\min})=N$ and $U^{\vee}(\Z^{\min})=M$, and the theta functions are just the monomials $\vartheta_m=x^m\in \Z[x^M]$ and $\vartheta_n=x^n\in \Z[x^N]$.  The theta pairing is simply the dual pairing between the lattices:
\[
\val_n(x^m)=m\cdot n=\val_m(x^n).
\]
More generally, the boundary divisorial valuation $\val_n$ on $\Z[x^M]$ associated to $n\in N$ is given by 
\[
\val_{n}\left(\sum_m c_m x^m\right)=\min_{m:\,c_m\neq 0} (m\cdot n).
\]

Since every cluster variety $U$ contains an open algebraic torus $T$, several parts of this story generalize. Let $M$ and $N$ denote the character and cocharacter lattices of $T$.
The coordinate ring of $U$ includes into the Laurent polynomial ring $\Z[x^M]$, and the theta functions are sent to positive integral Laurent polynomials (or formal Laurent series).  The integral tropical points of $U$ may still be identified with $N$, and the boundary valuation $\val_n$ associated to $n\in N$ may again be characterized by its values on monomials:
    \begin{align}\label{eq:valn}
    \val_{n}\left(\sum_m c_m x^m\right):=\inf_{m:\,c_m\neq 0} (m\cdot n).
    \end{align}
Therefore, the theta pairing can be identified with 
\begin{align}\label{eq:valnm}
    \val_{n}(\vartheta_m) = \inf (m'\cdot n)
\end{align}
where the infimum runs over all $m'$ in the support of the Laurent expansion of $\vartheta_m$.

Note that \eqref{eq:valn} makes sense for all $n\in N_{\R}\coloneqq N\otimes \R$.  Define $\vartheta_n^{\trop}:M_{\R}\rar \R\cup \{\pm \infty\}$, $m\mapsto \val_m(\vartheta_n)$.  Similarly, $\val_n:M\rar \Z\cup \{\pm \infty\}$, $m\mapsto \val_n(\vartheta_m)$.  By theta reciprocity, we have $\val_n=\vartheta_n^{\trop}|_M$.  Thus, all properties of tropical theta functions are automatically found to hold for the corresponding valuation functions.  In particular, \eqref{eq:valnm} implies that $\vartheta_n^{\trop}$, hence also $\val_n$ (extended to $M_{\R}$---we shall henceforth understand $\val_n$ to mean this extension), are convex and integral piecewise-linear.

We shall now discuss several geometric and algebraic consequences of the Valuative Independence Theorem (VIT) and theta reciprocity.

\subsection{Theta bases for line bundles}\label{sub:line}

\subsubsection{Line bundles supported on the boundary}
Consider a variety $Y$ with Zariski open subset $U$, and let $D=\sum_i D_i = Y\setminus U$, each $D_i$ denoting an irreducible component.  Suppose $\s{B}=\{f_j\}$ is an additive basis for $\Gamma(U,\s{O}_U)$.  E.g., $U$ may be a cluster variety with each $D_i$ being a toric boundary divisor for the cluster tori and with theta function basis $\s{B}$.  We are interested in identifying when some subset of $\s{B}$ forms a basis for $\Gamma(Y,\s{O}_Y)$, or more generally, for global sections of line bundles $\s{O}_Y(\sum_i a_i D_i)$.

Consider the setting where $Y$ is a toric variety, $U=\bb{G}_m(N)$ is the big torus orbit with coordinate ring $\Z[x^M]$, and $\s{B}=\{x^m: m\in M\}$ is the theta function basis.  By standard toric geometry, each boundary component $D_i$ corresponds to a primitive element $n_i\in N$ such that $\val_{D_i}=\val_{n_i}$.  Here $\val_{D_i}$ is the discrete valuation mapping a function $f$ to the order of vanishing/pole of $f$ along $D_i$.  It follows that $\Gamma(Y,\s{O}_Y(\sum_i a_i D_i))$ admits a basis of monomials consisting of those $x^m$ with $m\in M$ contained in the polytope $\bigcap_i \{m\cdot n_i \geq -a_i\}$.

In the completely general setup, a basis $\s{B}$ will typically \textit{not} yield bases for these line bundles on $Y$.  The issue is as follows.  In general, given a linear combination of basis elements $\sum_j b_j f_j$ ($b_j\neq 0$), one has
\begin{align*}
    \val_{D_i}(\sum_j b_j f_j) \geq \min_j \val_{D_i} (f_j).
\end{align*}
The possibility of strict inequality here (which can happen if the highest-order poles or lowest-order zeroes cancel with each other) means that functions $f_j$ which are \textit{not} sections of a line bundle $\s{L}$ might combine to yield a function which \textit{is} a section of $\s{L}$.  VIT says that this issue does not happen for theta functions on cluster varieties.

\begin{cor}\label{intro-cor-2}
    Let $U$ be a cluster variety with partial compactification $Y$ and $D=Y\setminus U$ an anticanonical divisor such that the distinguished volume form of $U$ has a simple pole along each component of $D$.\footnote{The distinguished volume form is given (up to sign) on each cluster $\Spec \kk[M]$ by $d\log x^{e_1} \wedge \ldots \wedge d\log x^{e_d}$ for any basis $e_1,\ldots,e_d$ of $M$.  The components of $D$ correspond to elements of $U^{\trop}(\Z)$ as in \cite[Def. 1.7]{GHK3}.  Equivalently, valuations along components of $D$ correspond to valuations along toric boundary divisors of the clusters.}  Whenever the theta functions form an additive basis\footnote{The theta functions at least yield an additive basis for $\Gamma(U,\s{O}_U)$ whenever the full Fock-Goncharov conjecture holds.  Some conditions ensuring this are given in \cite[Prop. 0.14]{GHKK}.} for $\Gamma(U,\s{O}_U)$, some subset will form an additive basis for $\Gamma(Y,\s{O}_Y)$.  Moreover, for any line bundle $\s{L}$ supported on $D$, there exists a convex polyhedron $P\subset M_{\R}$ such that $\{\vartheta_m\,|\,m\in P\cap M\}$ forms an additive basis for $\Gamma(Y,\s{L})$.
\end{cor}

For the final claim, consider $\s{L}=\s{O}_Y(\sum_i a_i D_i)$ with $D_i$ corresponding to primitive $n_i\in U(\Z^{\min})$.  The polytope $P$ is given by 
\begin{align*}
    \bigcap_i \left\{m\in M \,\left| \,\val_{n_i}(\vartheta_m)\geq -a_i \right.\right\}.
\end{align*}
As previously noted, theta reciprocity says that $\val_{n_i}=\vartheta_{n_i}^{\trop}$, and this is convex integral piecewise-linear.  So $P$ is indeed a convex polyhedron.

\begin{rem}[Quantum analogs]\label{rem:q}
    By the positivity of broken lines for quantum theta functions \cite{DM}, the exponents contributing to quantum theta functions are the same as those for their classical counterparts.  Since our definitions and arguments are based on these exponents, our results immediately carry over to the quantum setting.  In particular, if a quantum upper cluster algebra $\s{U}$ admits a basis of quantum theta functions, and if $\?{\s{U}}$ is a partially compactified version of $\s{U}$ as in \cite[Def. 2.5]{QY}, then there is a convex polyhedron $P\subset M_{\R}$ such that the quantum theta functions $\{\vartheta_m\,|\,m\in P\cap M\}$ form an additive basis for $\?{\s{U}}$.
\end{rem}

\subsubsection{Theta bases for Cox rings}\label{subsub:Cox}

Corollary \ref{intro-cor-2} applies to line bundles supported on $D$.  Here we discuss how one may find theta bases for arbitrary line bundles on $Y$ (up to isomorphism).

Let $Y$ be a partially compactified cluster variety as in Corollary \ref{intro-cor-2}.  Consider the Cox ring of $Y$:\footnote{Cox rings were defined in \cite{HK}.  When $\Pic(Y)$ is not torsion-free, defining $\Cox(Y)$ so that it admits a ring structure is a bit more complicated; the construction is due to \cite[\S 3]{BH}.  See \cite[Construction 4.3]{GHK3} or \cite[\S 3]{ManCox} for details in the present context.}
\begin{align*}
    \Cox(Y)\coloneqq\bigoplus_{\s{L}\in \Pic(Y)} \Gamma(Y,\s{L}).
\end{align*}
This ring $\Cox(Y)$ is itself a ring of functions on a cluster variety $Y'$ partially compactifying an open cluster variety $U'$---see \cite[\S 4]{GHK3} for the case $Y=U$ (in which case $Y'=U'$) and \cite[Thm. 3.6, Cor. 3.7]{ManCox} for general $Y$.  The space $Y'$ is called the universal torsor over $Y$.  The theta functions on $Y'$ are homogeneous with respect to $\Pic(Y)$-grading, so if the theta functions form a basis for $\Gamma(Y',\s{O}_{Y'})$, then they yield bases for the global sections of every line bundle on $Y$ (up to isomorphism), cf. \cite[Thm. 4.2]{ManCox}.   VIT lets us replace the condition of $Y'$ having a theta basis with the better-understood condition of $U'$ having a theta basis.

More precisely, $Y'$ can be realized as a cluster $\s{A}$-variety with special coefficients, together with the boundary divisors associated to the frozen vectors $\{e_i:i\in F\}$.  The frozen vectors here are chosen to map to primitive generators for the rays of the fan for $Y$.  We want to identify those theta functions on $U'$ which extend to regular functions on $Y'$.  I.e., we want those $\vartheta_m$ such that 
    \begin{align}\label{eq:regular}
        \text{$\val_{e_i}(\vartheta_m)\geq 0$ for each $i\in F$.}
    \end{align}
    As we first learned from Sean Keel, theta reciprocity gives us a nice way to do this.  Namely, let $W=\sum_{i\in F}\vartheta_{e_i}$ be the mirror Landau-Ginzburg potential associated to $Y$, and consider the rational polyhedral cone $$\Xi=\{m\in M_{\R}: W^{\trop}(m)\geq 0\}.$$  Since $W^{\trop}=\min_{i\in F} \vartheta_{e_i}^{\trop}$, theta reciprocity implies that the set of values $m$ satisfying \eqref{eq:regular} is precisely $\Xi\cap M$.

\subsection{Independence after specializing coefficients}

When proving VIT, the valuations are interpreted as infima of valuations of the monomials attached to broken lines.  As a key step in our proof, we show (Theorem \ref{thm:min-taut}) that the only broken lines which could possibly yield a minimum valuation are those which are ``taut'' as defined in \S \ref{sub:taut}. Our proof of VIT has several further consequences which we shall summarize here and in the subsections that follow. 

First, recall that theta functions are constructed over a ring of coefficients like $\Z[y^{\sQ}]=\sum_i a_i y^q$ ($a_i \in \Z$, $q\in \sQ$) for some monoid $\sQ$, or some completion $\Z\llb y^{\sQ}\rrb$.  The theta functions in general are formal Laurent series, but when they are actually finite Laurent polynomials, one can specialize the coefficients $y^{\sQ}$.  It is not obvious that the theta functions remain linearly independent under this coefficient-specialization.  E.g., in \cite[Thm. 0.3]{GHKK}, this linear independence corresponds to the injectivity of their map $\nu$, a property previously only known under additional assumptions (cf. Footnote \ref{foot:inj-nu}).  We obtain the following as a consequence of our proof of VIT.

\begin{thm}[Theorem \ref{thm:nu}]\label{thm:nu-intro}
    Theta functions which are finite Laurent polynomials maintain their valuative independence, hence also their linear independence, after specialization of coefficients.
\end{thm}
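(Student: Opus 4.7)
The plan is to upgrade the proof of VIT so that its conclusion survives coefficient specialization. Write each theta function as $\vartheta_u = \sum_m p_{u,m}(y) x^m$ with $p_{u,m}(y) \in \Z_{\ge 0}[y^{\sQ}]$ by positivity of the scattering diagram, and let $\phi : \Z[y^{\sQ}] \to R$ denote the specialization, so $\vartheta_u^{\phi} = \sum_m \phi(p_{u,m}(y)) x^m$. The goal is to establish
\[
\val_v\bigl(\sum_u c_u \vartheta_u^{\phi}\bigr) = \min_u \val_v(\vartheta_u^{\phi})
\]
for arbitrary $R$-linear combinations whenever the right side is finite; linear independence will then follow by the standard argument.

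The key input is Theorem \ref{thm:min-taut}: for any $m$ on the $\val_v$-minimum face of $\vartheta_u$, the coefficient $p_{u,m}(y)$ is built entirely from \emph{taut} broken lines, each contributing a single $y$-monomial with positive integer coefficient. Thus $p_{u,m}(y)$ is a positive integer combination of distinct $y$-monomials --- a structure rigid enough that $\phi(p_{u,m}(y))$ cannot vanish through internal cancellation (the summands are non-negative and involve disjoint $y$-exponents prior to $\phi$). In particular, $\val_v(\vartheta_u^{\phi}) = \val_v(\vartheta_u)$, and the leading coefficient of $\vartheta_u^{\phi}$ at any $x^{m^*}$ on its minimum face is a non-vanishing explicit expression in the specialized $y$-monomials.

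For the sum $\sum_u c_u \vartheta_u^{\phi}$, I would choose $v$ minimizing $\val_v(\vartheta_u^{\phi})$ among $u$ with $c_u \neq 0$ and let $x^{m^*}$ be a corresponding leading monomial; its coefficient is $\sum_u c_u \phi(p_{u,m^*}(y))$. Applying VIT in the unspecialized setting (with arbitrary lifts of the $c_u$ to $\Z[y^{\sQ}]$) identifies an $m^*$ at which $\sum_u c_u p_{u,m^*}(y) \neq 0$, expressed as a combination of $y$-monomials indexed by taut broken lines from the various contributing $u$. The main obstacle is ruling out spurious cross-$u$ cancellations created by $\phi$: after specialization, $y$-monomials attached to taut broken lines for different $u$'s may collide, and the $c_u$ could conspire to cancel. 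I would handle this by exploiting the combinatorial rigidity of taut broken lines and the freedom to vary $v$ within the relevant cone: distinct taut broken lines carry distinct $y$-exponent data, and by varying $m^*$ across the leading face of the sum (or equivalently perturbing $v$) one obtains enough independent leading-coefficient conditions to detect each $c_u$ individually, even after $\phi$. The heart of the argument is therefore a combinatorial separation statement about taut broken lines, which should be extractable from the refined analysis underlying Theorem \ref{thm:min-taut} together with the non-degeneracy of the tropical pairing $\val_v(\vartheta_u)$ guaranteed by theta reciprocity.
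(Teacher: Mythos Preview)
There is a genuine gap in your non-vanishing argument. You claim that $\phi(p_{u,m^*}(y))$ cannot vanish because $p_{u,m^*}(y)$ is a positive integer combination of distinct $y$-monomials, but distinctness of $y$-exponents \emph{before} $\phi$ says nothing about cancellation \emph{after} $\phi$: in an arbitrary ring $R$ there is no positivity, and a specialization can send $y^{q_1}+y^{q_2}$ to zero. The paper closes this gap differently. It imposes two hypotheses you have omitted: a structural condition on the scattering functions (Assumption~\ref{assum:A-parallel}, automatic in the cluster case) guaranteeing that the $M$-extremal term of each scattering function is unique, and the condition that the specialization $\nu$ sends $y$-monomials to nonzero non-zero-divisors. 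With these in hand, for generic $n\in N_\R$ the $n$-taut broken line with given final $x$-exponent $m_\Gamma$ is \emph{unique} (each taut bend is $M$-extremal by genericity, and the assumption forces a unique term there), so $p_{u,m_\Gamma}(y)$ is a \emph{single} $y$-monomial $c_\Gamma y^{q_\Gamma}$, not a sum. Its image under $\nu$ is then nonzero and not a zero-divisor by hypothesis.

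Your treatment of cross-$u$ cancellation is also incomplete. The proposed ``combinatorial separation statement'' is not proved, and the appeal to theta reciprocity is both circular (reciprocity is established later, using the present result via Proposition~\ref{lem:D-move}) and inapplicable outside the cluster setting. The paper's route is much simpler: the same uniqueness shows that, for generic $n$, the final exponent $m_\Gamma$ determines the initial exponent $u$ (trace the $n$-taut broken line backwards from $p$). Hence the minimizing $x$-monomial in $\sum_u c_u\vartheta_u^\nu$ receives a contribution from exactly one $u$, namely $c_u\,\nu(c_\Gamma y^{q_\Gamma})$, which is nonzero because $\nu(c_\Gamma y^{q_\Gamma})$ is not a zero-divisor. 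Continuity then extends from generic to all $n$. No variation-of-$v$ argument or reciprocity is needed.
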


The corresponding injectivity of the map $\nu$ from \cite{GHKK} is given in Corollary \ref{cor:nu}.

In \S \ref{sec:sing}, we show as a corollary of Theorem \ref{thm:nu} that the condition of having enough global functions (EGF) is preserved under specialization of coefficients.  We then present an argument of Sean Keel showing that log Calabi–Yau varieties with maximal boundary and EGF have canonical singularities (Corollary \ref{cor:sing}).

\subsection{The Theta Function Extension Theorem and its applications}

Suppose we extend our scattering diagram $\f{D}_1$ (the combinatorial object from which our theta functions are constructed) to a new scattering diagram $\f{D}_2$, e.g., by adding new initial walls.  In general, this will change the associated theta functions---their Laurent expansions $\vartheta^{\f{D}_i}_{u,p}$ will pick up new terms coming from the new walls.  Here, $\f{D}_i$ indicates the scattering diagram, $u$ indexes the theta function, and $p$ corresponds to a point identifying a chamber of the scattering diagram, hence the local coordinate system used for taking the Laurent expansion.  The following condition ensures that in fact there are no new terms:

\begin{thm}[Theorems \ref{thm:theta-extension-general} and \ref{thm:theta-ext-seed}]\label{thm:ext-intro}
    If $\vartheta^{\f{D}_1}_{u,p}$ is a finite linear combination of theta functions $\vartheta^{\f{D}_2}_{u_j,p}$, then in fact
$\vartheta^{\f{D}_1}_{u,p}=\vartheta^{\f{D}_2}_{u,p}$.
\end{thm}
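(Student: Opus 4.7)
The plan is to combine the Valuative Independence Theorem (Theorem \ref{thm:main}(1)) with the positivity of broken lines to force the hypothesized linear combination to collapse. Since $\f{D}_1 \subseteq \f{D}_2$, every broken line in $\f{D}_1$ is also a broken line in $\f{D}_2$ obtained by straight-crossing through the new walls; positivity then gives that $g \coloneqq \vartheta^{\f{D}_2}_{u,p} - \vartheta^{\f{D}_1}_{u,p}$ is a Laurent polynomial with non-negative coefficients, so $\val_v(\vartheta^{\f{D}_1}_{u,p}) \ge \val_v(\vartheta^{\f{D}_2}_{u,p})$ for every $v$. Applying VIT to the right-hand side of the hypothesis gives $\val_v(\vartheta^{\f{D}_1}_{u,p}) = \min_{c_j \ne 0} \val_v(\vartheta^{\f{D}_2}_{u_j,p})$, so for each $j$ with $c_j \ne 0$, $\val_v(\vartheta^{\f{D}_2}_{u_j,p}) \ge \val_v(\vartheta^{\f{D}_2}_{u,p})$ for every $v$.

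Next, in a positive scattering diagram the support of $\vartheta^{\f{D}_2}_{u',p}$ lies in $u' + C$ for the pointed cone $C$ generated by the wall normals, and for $v$ in the interior of the dual cone $C^\vee$ the point $u'$ is the unique minimizer; therefore $\val_v(\vartheta^{\f{D}_2}_{u_j,p}) = u_j \cdot v$ and likewise $\val_v(\vartheta^{\f{D}_2}_{u,p}) = u \cdot v$ on this open cone. The inequality $u_j \cdot v \ge u \cdot v$ there, combined with the piecewise-linear identity from VIT (the min must be linear and equal $u \cdot v$), forces $u_{j_0} = u$ for some index $j_0$. Comparing the coefficient of $z^u$ on both sides isolates $c_{j_0} = 1$: the left contributes $1$ via the trivial broken line, while on the right the pointedness of $C$ prevents $z^u \in \supp(\vartheta^{\f{D}_2}_{u_j,p})$ for $u_j \ne u$, leaving only the $c_{j_0}$ contribution.

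The main obstacle is ruling out the residual terms. After subtracting, the equation becomes $g = \sum_{j \ne j_0} (-c_j) \vartheta^{\f{D}_2}_{u_j,p}$, a finite Laurent polynomial with non-negative coefficients supported away from $u$. The plan is to iterate the extreme-vertex argument on the convex hull of $\{u_j : c_j \ne 0,\, j \ne j_0\}$: at an extreme vertex $u_{j_1}$ an analogous coefficient comparison pins down $c_{j_1} = -[z^{u_{j_1}}] g \le 0$, while the positivity of $\vartheta^{\f{D}_1}_{u,p}$ yields a compatible bound on $c_{j_1}$ in the other direction. Completing the induction by matching the supports of the $\vartheta^{\f{D}_2}_{u_j,p}$'s with the Newton polytope of $g$ and peeling off extreme vertices one at a time should terminate with $g = 0$. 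In the seed setting (Theorem \ref{thm:theta-ext-seed}), theta reciprocity provides a cleaner symmetric formulation of the key inequalities via $\val_v(\vartheta_u) = \vartheta^{\trop}_v(u)$, which I expect to streamline the iteration considerably.
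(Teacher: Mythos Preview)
Your first four steps are essentially correct and track the paper's opening: establishing $c_u=1$ in the expansion is exactly what the paper does (via $\vartheta_{u,p}\in z^u(1+\wh{\s{I}})$), and your pointed-cone argument is a valid route to the same conclusion. But step~5 --- the peeling iteration --- has a genuine gap, and this is where your argument fails without invoking the monoid-extension hypothesis of Theorem~\ref{thm:theta-extension-general} (that every non-constant monomial of every scattering function in $\f{D}_2\setminus\f{D}_1$ has $y$-exponent outside $\sQ_{1,\R}$). You never use this hypothesis, so your argument cannot distinguish monomials picked up from the new walls from those already present in $\f{D}_1$.

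Concretely: from $g=\vartheta^{\f{D}_2}_{u,p}-\vartheta^{\f{D}_1}_{u,p}\ge 0$ and minimality of $u_{j_1}$ in the pointed-cone order you correctly get $[z^{u_{j_1}}]g=-c_{j_1}\ge 0$, hence $c_{j_1}\le 0$. But your claimed ``compatible bound in the other direction'' from positivity of $\vartheta^{\f{D}_1}_{u,p}$ only yields
\[
c_{j_1}\;\ge\;-\bigl[z^{u_{j_1}}\bigr]\vartheta^{\f{D}_2}_{u,p},
\]
and the right-hand side can be strictly negative: it is precisely (minus) the contribution of broken lines in $\f{D}_2$ that bend on the new walls and happen to end with exponent $u_{j_1}$. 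The two bounds do not pinch to zero, and the iteration stalls at the very first step.

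The paper's proof avoids any iteration by exploiting the extension hypothesis in a single stroke: choose $w\in\check{\sQ}^{\gp}_{2,\R}$ vanishing on $\sQ_1$ and strictly negative on $\sQ_2\setminus\sQ_{1,\R}$. Then $\val_w(\vartheta^{\f{D}_1}_{u,p})=0$, while VIT together with $c_u=1$ gives $\val_w(\vartheta^{\f{D}_1}_{u,p})\le\val_w(\vartheta^{\f{D}_2}_{u,p})$. If the two theta functions differed, some broken line for $\vartheta^{\f{D}_2}_{u,p}$ would bend on a wall of $\f{D}_2\setminus\f{D}_1$, forcing $\val_w(\vartheta^{\f{D}_2}_{u,p})<0$, a contradiction. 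The seed-datum version (Theorem~\ref{thm:theta-ext-seed}) does not use theta reciprocity at all; it simply verifies the monoid hypothesis by reducing $\f{D}^{(P_2,Q_2)}$ modulo $\langle y^q:q\in\sQ_2\setminus\sQ_1\rangle$ and invoking uniqueness of consistent completions.
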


\begin{rem}
    As indicated in Remark \ref{rem:q}, Theorem \ref{thm:ext-intro} also applies to quantum theta functions for skew-symmetric quantum cluster algebras.  Indeed, the classical case above is equivalent to saying that the new walls of $\f{D}_2$ do not result in any new broken lines with ends $(u,p)$.  Then the positivity of the monomials attached to quantum broken lines \cite{DM} implies that there cannot be any new broken lines in the quantum setting either.
\end{rem}

The simplest examples of Theorem \ref{thm:ext-intro} are the global monomials: if one removes all scattering walls, then the theta functions are precisely the monomials.  Theorem \ref{thm:ext-intro} applies here to recover the result from \cite{GHKK} that global monomials are always theta functions.  

We next discuss two applications of the Theta Function Extension Theorem.  In \S \ref{subsub:moduli-local} we briefly consider cluster algebras associated to moduli of local systems on a marked surface $S$.  Theorem \ref{thm:ext-intro} applies to show that the theta functions are well-behaved under the operation of gluing together boundary arcs of $S$.  Then in \S \ref{sub:loop}, we apply Theorem \ref{thm:ext-intro} to describe a simple class of theta functions called ``loop elements.''  Briefly, if a quiver $Q$ contains the Kronecker quiver $K$ as a full subquiver, then we show, under certain assumptions, that the interesting theta functions for $K$ extend to also give theta functions for $Q$.

\subsubsection{Modularity of theta functions for moduli of local systems on marked surfaces}\label{subsub:moduli-local}

In \cite{FG0}, given a marked compact triangulable surface $S$ (possibly with boundary) and a semisimple group $G$ with Langlands dual group $G^{\vee}$, Fock and Goncharov define the moduli space $\s{A}_{G,S}$ of decorated twisted $G$-local systems on $S$, and also the moduli space $\s{X}_{G^{\vee},S}$ of framed $G^{\vee}$-local systems on $S$.  In the case where $G=\SL_n$ and $G^{\vee}=\PGL_n$, they define cluster algebra structures on the moduli spaces.  These cluster structures were extended to include other simply connected classical groups $G$ in \cite{Le} and then to more general semi-simple groups $G$ in \cite{GS:QGM}.\footnote{\cite{GS:QGM} uses ``pinnings'' to incorporate frozen variables in their $\s{X}$-spaces, calling the resulting moduli spaces $\mathscr{P}_{G^{\vee},S}$.  We continue referring to these spaces as $\s{X}_{G^{\vee},S}$ for simplicity.}  

Here and below, we implicitly assume that $S$ satisfies whatever admissibility conditions are needed to ensure nice moduli and cluster structures.  E.g., every component of $S$ and of $\partial S$ should contain at least one marked point, and more marked points in $S$ may be required for low-genus cases.  In particular, \cite[Condition 1 in \S 2.1.10]{GS:QGM} is satisfied.

The most well-understood cases are those with $G=\SL_2$, $G^{\vee}=\PGL_2$.  In this setting, Fock and Goncharov defined canonical functions \cite[Def. 12.4]{FG0}, reinterpreted combinatorially as the ``bracelets basis'' in \cite{MSW2}.  Recently, \cite{ManQin} proved that the bracelets basis coincides with the theta basis of \cite{GHKK}.  A key step in proving this result was the Gluing Lemma \cite[Lem. 8.2]{ManQin} which says (for unpunctured surfaces) that if a bracelet basis element $\beta$ on $\s{A}_{G,S}$ is a theta function, and we glue two boundary edges of $S$ together to produce a new surface $S'$, then the induced bracelet element $\beta'$ on $\s{A}_{G,S'}$ is automatically a theta function as well.

The proof of the Gluing Lemma in loc. cit. was specific to the setting there, relying on previously known properties of the bracelets basis.  However, the Theta Function Extension Theorem readily yields the following much more general result:

\begin{thm}
    Let $f$ be a function on one of the moduli spaces $\s{A}_{G,S}$ or $\s{X}_{G^{\vee},S}$ discussed above, and let $f'$ be the induced function on $\s{A}_{G,S'}$ or $\s{X}_{G^{\vee},S'}$, respectively, where $S'$ is obtained from $S$ by gluing two boundary arcs together.  If $f$ is a theta function, then $f'$ is as well.
\end{thm}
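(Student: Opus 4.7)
The plan is to interpret boundary-arc gluing $S \rightsquigarrow S'$ as an extension of the underlying cluster scattering diagram and then apply the Theta Function Extension Theorem (Theorem \ref{thm:ext-intro}). In a surface cluster algebra, each arc of a triangulation is a quiver vertex, with boundary arcs giving the frozen vertices. Gluing two boundary arcs $a_1, a_2$ of $S$ produces a single internal arc of $S'$, so at the seed level the two frozen indices $i_1, i_2$ are identified into a single index which is then unfrozen. Using the seed assignments of \cite{FG0, Le, GS:QGM}, one verifies that the exchange matrix and initial data of the seed for $S'$ are exactly what this identify-then-unfreeze operation produces from the seed for $S$. Choosing compatible initial seeds, one obtains an inclusion of scattering diagrams $\f{D}_1 \subset \f{D}_2$ sharing a common initial chamber $p$, where $\f{D}_1$ corresponds to $S$ and $\f{D}_2$ to $S'$; the only new walls in $\f{D}_2$ are mutation walls enabled by unfreezing the new index.

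Next, I would reduce to the case that $f$ is a theta function whose Laurent expansion at $p$ is a finite Laurent polynomial (the infinite-series case should follow by a broken-line truncation argument, since broken lines contributing to $\vartheta^{\f{D}_1}_{u,p}$ still appear unchanged on the $\f{D}_2$ side). The induced function $f'$ on the cluster variety for $S'$ is, by definition of the gluing, the image of $f$ under the natural algebra map on upper cluster algebras induced by the identification of the two frozen monomials, so $f'$ is again a finite Laurent polynomial in the initial cluster for $S'$. By the linear independence of theta functions on $S'$ (Theorem \ref{thm:nu-intro}) together with the fact that $f'$ manifestly lies in the upper cluster algebra for $S'$ where theta functions span, one can write $f' = \sum_j c_j \vartheta^{\f{D}_2}_{u_j,p}$ as a finite linear combination.

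Now apply Theorem \ref{thm:ext-intro}: since $\vartheta^{\f{D}_1}_{u,p} = f$ and $f'$ coincides, as a Laurent polynomial expanded at $p$, with a finite linear combination of $\f{D}_2$-theta functions, the theorem forces $f' = \vartheta^{\f{D}_2}_{u,p}$, i.e., $f'$ is a theta function on the cluster variety for $S'$. The same argument applies on the $\s{X}$-side using the pinning-augmented seeds of \cite{GS:QGM}. The main obstacle will be the first step: verifying carefully that the $\s{A}$- and $\s{X}$-seeds for $S$ and $S'$ are related \emph{exactly} by the identify-and-unfreeze of the two frozen indices, with no collateral alteration of other exchange-matrix entries or of initial coefficients, so that $\f{D}_1 \subset \f{D}_2$ holds in the precise sense demanded by Theorem \ref{thm:ext-intro}. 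Once this seed-level compatibility is in hand, the rest of the argument is essentially formal.
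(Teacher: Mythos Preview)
Your overall strategy matches the paper's: reduce to the Theta Function Extension Theorem (Theorem~\ref{thm:ext-intro}). However, there are two genuine gaps.

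First, the step ``$f'$ is a finite linear combination of $\f{D}_2$-theta functions'' is not justified by what you cite. Theorem~\ref{thm:nu-intro} gives \emph{linear independence} of theta functions after specialization; it says nothing about spanning. What you need is that the theta functions span the upper cluster algebra of $\s{A}_{G,S'}$ (resp.\ $\s{X}_{G^{\vee},S'}$), i.e., the full Fock--Goncharov conjecture for these moduli spaces. This is a deep input, not a formality, and is exactly what the paper invokes: \cite[Thm.~2.14]{GS:QGM} establishes it for all the relevant cases except $G=\SL_2$ on a once-punctured closed surface, which is covered by \cite{ManQin}. Without this, you cannot conclude that $f'$ is a \emph{finite} combination of theta functions, and Theorem~\ref{thm:ext-intro} does not apply.

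Second, the passage from $S$ to $S'$ is not a single scattering-diagram extension. The two frozen indices $i_1,i_2$ live in a larger ambient lattice than the single merged index, so there is no literal inclusion $\f{D}_1\subset\f{D}_2$ with $\f{D}_1$ the diagram for $S$. The paper separates this into two steps: (i) identify the two frozen indices, under which theta functions go to theta functions by a general argument (cf.\ \cite[\S 6.3]{ManQin}); (ii) unfreeze the resulting single index, which \emph{is} a scattering-diagram extension in the sense required by Theorem~\ref{thm:ext-intro}. Your write-up acknowledges the two-step description of amalgamation but then collapses it into a single inclusion of diagrams, which is not correct as stated. The ``main obstacle'' you flag at the end is real, but it is not merely a bookkeeping check on exchange-matrix entries; step~(i) requires its own argument.
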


\begin{proof}
    In terms of the cluster structures, gluing together boundary arcs corresponds to first identifying pairs of frozen indices with each other and then unfreezing the resulting glued indices.  This process is called amalgamation in \cite{FG-amalg,GS:QGM}.  The fact that theta functions remain theta functions when gluing the frozen indices is a general phenomenon, cf. \cite[\S 6.3]{ManQin}.  So it remains to show that the theta functions remain theta functions after unfreezing.  By Theorem \ref{thm:ext-intro}, it suffices to show that $f'$ is a finite linear combination of theta functions.  By construction, $f'$ is a regular function on $\s{A}_{G,S'}$ or $\s{X}_{G^{\vee},S'}$, hence lives in the respective upper cluster algebra.  Finally, by \cite[Thm. 2.14]{GS:QGM}, other than the case of $G=\SL_2$ on a once-punctured closed surface (which follows from the results of \cite{ManQin}), these moduli of local systems always satisfy the full Fock-Goncharov conjecture, so being in the upper cluster algebra implies that $f'$ is indeed a finite linear combination of theta functions, as desired.
\end{proof}

\subsubsection{Extension theorem example: loop elements}\label{sub:loop}
We briefly assume the reader has some familiarity with cluster algebras. 
 Consider the cluster algebra associated to the Kronecker quiver $\begin{tikzpicture}[baseline=(a.base)] \node[mutable] (a) at (0,0) {$\scriptstyle x_1$}; \node[mutable] (b) at (1,0) {$\scriptstyle x_2$}; \draw[double,->] (a) to (b);\end{tikzpicture}$.  Here, $x_1$ and $x_2$ indicate the cluster variables associated to the two vertices.  Most theta functions in this case are global monomials (i.e., their restriction to some cluster is a monomial), but it is well-known there is a ray in $\R_{\geq 0}(1,-1)\subset M_{\R}$ whose corresponding theta functions are different; cf. \cite[Ex. 3.8 and 3.11]{CGMMRSW} and \cite[Ex. 3.10]{GHKK}.  In particular, we have $\ell\coloneqq \vartheta_{(1,-1)}= \frac{x_1^2+x_2^2+1}{x_1x_2}$.  More generally, for $k\geq 1$, $\vartheta_{(k,-k)}=T_k(\ell)$ where $T_k$ is the $k$th Chebyshev polynomial of the first kind, given recursively by $T_0(\ell)=2$, $T_1(\ell)=\ell$, and $T_{k+1}(\ell)=\ell T_k(\ell) -T_{k-1}(\ell)$; cf. \cite[Ex. 5.8]{ManQin}.

    Now consider a more general quiver $Q$---without loops or oriented $2$-cycles---which contains the Kronecker quiver $K$ from above as a full subquiver.  If all vertices of $Q\setminus K$ are frozen (but not $K$ itself), then $\vartheta_{(k,-k,0,0,\ldots,0)}$ is given by $T_k(\ell)$ as above, except that $\ell$ now has some extra frozen variables as coefficients.  Specifically, \begin{align}\label{eq:ell}
    \ell=x_1x_2^{-1}(1+x^{p^*(e_2)}+x^{p^*(e_1)+p^*(e_2)}),
    \end{align}
    where $p^*(e_i)=\sum_j b_{ij}e_j^*$ for $b_{ij}$ the number of arrows from $i$ to $j$ (negative if the arrows actually go from $j$ to $i$).  Here, $x^{e_j^*}$ indicates the cluster variable $x_j$ associated to vertex $j$.
    
    Now suppose some vertex $j$ of $Q\setminus K$ is actually not frozen.  Then $\ell$ (hence also each $T_k(\ell)$) remains a regular function after mutation by $j$ as long as each exponent of $\ell$ has non-negative $e_j^*$-coefficients.  By \eqref{eq:ell}, this is equivalent to having $b_{2j}\geq 0$ and $b_{1j}+b_{2j}\geq 0$.

    Assuming that the seed associated to $Q$ is ``totally coprime'' (which holds whenever $p^*$ is injective, e.g., whenever we work with principal coefficients), containment of $\ell$ in the upper cluster algebra is implied by containment in the initial and adjacent clusters; cf. \cite[Cor. 1.7]{BFZ} or \cite[Thm. 3.9(2)]{GHK3}.  Theorem \ref{thm:ext-intro} applies to give the following.

    \begin{prop}
        As above, consider a quiver $Q$ containing the Kronecker quiver $K$ as a full subquiver.  Suppose that for each non-frozen vertex $j\in Q\setminus K$, we have that the number of arrows from $2$ to $j$ is non-negative and is greater than or equal to the number of arrows from $j$ to $1$.  Assume furthermore that the associated cluster algebra $\s{A}(Q)$ (or $\s{A}^{\prin}(Q)$ if total coprimality fails) satisfies the full Fock-Goncharov conjecture.  Then the elements $T_k(\ell)$, $k\geq 1$, are theta functions in $\s{A}(Q)$.
    \end{prop}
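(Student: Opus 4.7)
The plan is to deduce this from the Theta Function Extension Theorem (Theorem \ref{thm:ext-intro}) together with the full Fock--Goncharov hypothesis. Let $\f{D}_1$ denote the scattering diagram associated to the quiver obtained from $Q$ by freezing every vertex in $Q\setminus K$, and let $\f{D}_2$ denote the scattering diagram of $Q$ itself. In the frozen-coefficient version the Kronecker cluster structure survives intact (the extra frozen variables appear only as coefficients in the exchange relations), so the classical identity recalled before the proposition gives $T_k(\ell)=\vartheta^{\f{D}_1}_{(k,-k,0,\ldots,0)}$. The goal is therefore to show that $T_k(\ell)$ is a finite linear combination of theta functions for $\f{D}_2$; Theorem \ref{thm:ext-intro} will then automatically upgrade this to $T_k(\ell)=\vartheta^{\f{D}_2}_{(k,-k,0,\ldots,0)}$.

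The first step is to show $T_k(\ell)$ lies in the upper cluster algebra of $\mathcal{A}(Q)$ (or, if total coprimality fails there, of $\mathcal{A}^{\mathrm{prin}}(Q)$, after which Theorem \ref{thm:nu-intro} lets us specialize back to the desired coefficient system). Under total coprimality, \cite[Cor.~1.7]{BFZ} or \cite[Thm.~3.9(2)]{GHK3} reduces the check to the initial cluster and the clusters obtained by a single mutation. The initial cluster case is immediate from \eqref{eq:ell}. Mutations at the Kronecker vertices $1$ and $2$ are handled by the classical Kronecker Laurent expansions of $T_k(\ell)$, which remain valid in the presence of frozen-variable coefficients. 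For a mutation at a non-frozen $j\in Q\setminus K$, the three exponents appearing in \eqref{eq:ell} have $e_j^*$-coefficients $0$, $b_{2j}$, and $b_{1j}+b_{2j}$, so the numerical hypothesis is precisely what makes $\ell$ a polynomial in $x_j$ in the initial cluster; substituting the exchange relation $x_j=(P_1+P_2)/x_j'$ then expresses $\ell$ as a Laurent polynomial in the $j$-mutated cluster. The Chebyshev recursion $T_k(\ell)=\ell\,T_{k-1}(\ell)-T_{k-2}(\ell)$ propagates Laurentness to all $T_k(\ell)$ by induction on $k$.

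With $T_k(\ell)$ placed inside the upper cluster algebra, the full Fock--Goncharov hypothesis writes it as a finite linear combination of theta functions $\vartheta^{\f{D}_2}_{u_j}$, and Theorem \ref{thm:ext-intro} then promotes this to the equality $T_k(\ell)=\vartheta^{\f{D}_2}_{(k,-k,0,\ldots,0)}$. The main obstacle is the Laurent check at the non-Kronecker vertices, which is exactly what the sign hypothesis on $b_{2j}$ and $b_{1j}+b_{2j}$ was engineered to deliver; once that input is in hand, the extension theorem carries out the remaining work automatically, and one never needs to describe the scattering diagram $\f{D}_2$ explicitly.
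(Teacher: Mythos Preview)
Your proposal is correct and follows essentially the same approach as the paper: the paragraphs preceding the proposition constitute the paper's proof, verifying that $\ell$ (hence each $T_k(\ell)$) lies in the upper cluster algebra via the initial-and-adjacent-clusters criterion under total coprimality, then invoking the full Fock--Goncharov hypothesis and Theorem~\ref{thm:ext-intro}. You are somewhat more explicit than the paper in separately addressing mutations at the Kronecker vertices $1$ and $2$ (which the paper leaves implicit, since $T_k(\ell)$ is already a theta function for $\f{D}_1$ and hence Laurent in those adjacent clusters), but the overall structure is identical.
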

    
    We refer to these elements $T_k(\ell)$ as loop elements since they include the bracelets basis elements associated to (annular) weighted loops in cluster algebras from surfaces.

\subsection{Representation-theoretic applications}\label{subsub:rep}

\cite[\S 9]{GHKK} discusses applications of their theta basis constructions to representation theory. The idea is as follows.
Many representation-theoretic concepts can be expressed in geometric terms.
For instance, the irreducible representations of a reductive group $G$ can be viewed as the spaces of global sections of line bundles over the flag variety $G/B$ for $B \subset G$ a Borel subgroup, 
and the structure constants for decomposing tensor products of these irreducible representations as direct sums can be computed as the dimensions of spaces of sections of line bundles $\s{L}$ over $G \backslash\lrp{G/B}^{\times 3} $.
Both of these varieties (as well as many others of interest) may be realized as minimal models $(Y,D)$ for cluster varieties $U$.
Moreover, the full Fock-Goncharov conjecture is known to hold for many such cluster varieties.
So, the integral tropical points of the mirror $U^{\vee}$ index a canonical basis $ {\bf{B}}_{U} $ of theta functions for $\s{O}(U)$, and one may hope that some subset $ {\bf{B}}_{\s{L}\to Y} $ of $ {\bf{B}}_{U} $ will be a canonical basis for the space of sections of the line bundle $\s{L}\to Y$ under consideration.  Indeed, the approach discussed \S \ref{sub:line} shows how to identify these canonical bases.
The particular example mentioned above of structure constants for decomposing tensor products of irreducible representations is studied in \cite{GonSh}, \cite{MageeGHK}, and \cite{Fei-mult}.

Valuative independence and theta reciprocity were conjectured by Gross-Hacking-Keel-Kontsevich \cite[Conjecture~9.8 and Remark 9.11, respectively]{GHKK}.  In order to carry out their representation-theoretic applications, \cite{GHKK} proved that these conjectures hold for a given valuation $\val_n$ if one assumes the existence of an \textit{optimized seed}; that is, if $\vartheta_n$ is a global monomial (i.e., its restriction to some cluster is a Laurent monomial).  See \cite[Proposition~9.7]{GHKK} and \cite[Lem.~9.10(3)]{GHKK}, respectively.  Our results establish their conjectures in general, thus allowing one to apply these ideas without verifying the optimized seed condition.

\subsection{Some further details and discussion}

\subsubsection{Mirror dual seeds}\label{subsub:mirror}  Here we briefly explain what ``mirror dual'' means in Theorem \ref{thm:main}, expanding on Footnote \ref{foot:mirror}.

In the construction of a cluster algebra, one starts with a seed, or more generally (as we define in \S \ref{sec:cluster}), a seed datum.  This includes the data of a pair of dual lattices $M$ and $N$, along with elements $u_1,\ldots,u_s\in M$ and $v_1,\ldots,v_s\in N$ such that the matrix $B=(u_j\cdot v_i)_{ij}$ is skew-symmetrizable, i.e., $B=DB^{\bullet}$ for some skew-symmetric matrix $B^{\bullet}$ and a diagonal matrix $D = \diag(d_1,\dots,d_r)$, $d_i\in \Q_{>0}$.  From this one can construct a consistent scattering diagram $\f{D}$ in $M_{\R}$ whose incoming walls are
$$\{(v_i^{\perp},1+x^{u_i}y^{e_i},v_i):i=1,\ldots,s\}$$
for $y^{e_i}$ a coefficient-variable---a scattering wall for us is a triple $(\f{d},f,n)$ where $n\in N$, $\f{d}\subset n^{\perp}$, and $f\in \Z[x^u]\llb y^{\sQ}\rrb$ with $u\in n^{\perp}$.  The associated wall-crossing automorphism is $E_{n,f}:x^my^q\mapsto x^my^qf^{m\cdot n}$.

By switching the roles of $N$ and $M$, one constructs the ``chiral dual'' scattering diagram $\f{D}^{\vee}$ in $N_{\R}$ whose incoming walls are
\begin{align}\label{eq:dual-scattering}
    \{(u_i^{\perp},1+x^{d_i^{-1}v_i}y^{e_i},d_iu_i):i=1,\ldots,s\}.
\end{align}
Theta reciprocity is proved for chiral dual pairs; cf. Claim \ref{conj: theta1} and Theorem \ref{thm:taut-pairing}.

If $D$ and $B^{\bullet}$ are both integral, Proposition \ref{lem:D-move} shows (using our coefficient specialization results from \S \ref{sub:lin-indep}) that we can leave out the $d_i^{-1}$ and $d_i$ factors in \eqref{eq:dual-scattering} and still have theta reciprocity.  This corresponds to taking the chiral-Langlands dual seed data rather than just the chiral dual seed data as defined in \S \ref{sub:lin-morph}.  Then Corollary \ref{cor:Langlands} states theta reciprocity for Langlands dual seed data, which has initial walls $\{(u_i^{\perp},1+x^{-v_i}y^{e_i},u_i):i=1,\ldots,s\}$.  We note that theta reciprocity in the Langlands dual setup requires us to expand our mirror theta functions in the negative chamber of the scattering diagram instead of the positive chamber.

\subsubsection{Valuative independence for more general mirrors}\label{subsub:general-VIT}
    Our proof of VIT applies to theta functions constructed from \textit{any} positive scattering diagram (i.e., a scattering diagram for which the scattering functions all have positive coefficients).  This positivity property is known to hold in the cluster setting \cite[Thm. 1.13]{GHKK}.  Additionally, for any smooth affine log CY, the associated scattering diagram of Keel and Yu \cite{KY,KY2} is known to be positive, so our arguments point towards\footnote{It remains to check that the valuations of loc. cit. agree with ours.  Additionally, we assume a convexity condition on scattering diagrams (cf. Footnote \ref{foot:N+}), and it may be necessary to develop an analog of this condition in the Keel-Yu setting.} a proof of \cite[Conjecture 13.6]{KY2}.  In cases containing a Zariski-dense torus, Johnston \cite{Johnston-comparison} shows that the Keel-Yu theta functions agree with those of Gross-Siebert \cite{GSInt2}.
   
    In fact, the need for positivity is subtle and can perhaps be overcome; cf. Theorem \ref{thm:min-taut} and Remark \ref{rem-not-positive}.  In particular, the condition $\vartheta_{ku,p}^{\trop}=k\vartheta_{u,p}^{\trop}$ for $k\in \Z_{\geq 0}$ is sufficient (this is Lemma \ref{lem:val-ku} under the positivity assumption).  Additionally, Theorem \ref{thm:nu-intro} above says that VIT applies to non-positive scattering diagrams which come from specializing the coefficients of positive ones.  

    Several more general or modified definitions of scattering diagrams are outlined in \S \ref{sub:gen}.  We believe our valuative independence arguments apply to all of these setups (assuming positivity).

\subsection{Additional related work}

Jiarui Fei has studied a similar dual pairing using generic bases. \cite[Thm. 1.15]{Fei-tropF} proves a generic basis version of theta reciprocity for acyclic cluster algebras and for cases where one of the basis elements is a cluster variable.  The general symmetry conjecture for generic bases is \cite[Conj. 6.8]{Fei-tropF}.  See \S \ref{sub:NP} for connections to other conjectures of Fei \cite{Fei-CombF} (proved by \cite{LP}) regarding the coefficients of extremal monomials in the Laurent expansions of cluster variables.

Tropical theta functions for log Calabi-Yau surfaces were previously studied by the third author. 
 In particular, \cite[Prop. 4.15]{Man1} is essentially the two-dimensional case of our Theorem \ref{thm:trop-determines-u} (from which VIT follows), while \cite[Thm. 4.13]{Man1} is essentially the two-dimensional case of theta reciprocity (reinterpreted as in our Claim \ref{conj: theta2}).

It is shown in \cite{ManAtomic} that the theta function basis is atomic, meaning that it consists precisely of the extremal universally positive elements of the cluster algebra; cf. \S \ref{sub:BL-and-Theta}.  In \S \ref{sub:atomic} we observe that VIT implies that the tropical theta functions are, in a certain sense, atomic in the semiring of tropicalized functions.

These tropicalized functions $f^{\trop}:M_{\R}\rar \R\cup \{\pm \infty\}$ can be used to construct polytopes $\Xi_{f,r}\coloneqq \{m\in M_{\R}|f^{\trop}(m)\geq r\}$ for $r\in \R$.  The BL-convexity of $f^{\trop}$ implies that $\Xi_{f,r}$ is broken line convex \cite[Prop. 41]{F-MM}, meaning that any broken line segment with endpoints in $\Xi_{f,r}$ is entirely contained in $\Xi_{f,r}$.  The geometry of such polytopes is explored in \cite{Man1} for the case of positive log Calabi-Yau surfaces, and for more general cluster varieties in \cite{F-MM} using our valuative independence and theta reciprocity results.

By \cite[Thm. 1.2]{CMN}, $\Xi_{f,r}$ being broken line convex implies that it is also positive in the sense of \cite[Def. 8.6]{GHKK}.  It follows that one can construct a graded algebra
\begin{align*}
    V=\bigoplus_{k\in \Z_{\geq 0}} \bigoplus_{m\in M\cap \Xi_{f,-k}} \Z[y^{\sQ}]\cdot \vartheta_m T^k \subset \Z[x^M]\llb y^{{\sQ}^{\gp}} \rrb[T]
\end{align*}
with grading given by the powers of $T$.  Assume that for each $k\in \Z_{\geq 0}$ and each $m\in \Xi_{f,-k}$, $\vartheta_m$ is a finite Laurent polynomial in each cluster---such theta functions form a $\Z[y^{\sQ^{\gp}}]$-basis for the middle cluster algebra $A^{\midd}$. Then $\s{V}\coloneqq \Proj V$ defines a partial compactification of the cluster variety $\s{A}^{\midd}\coloneqq \Spec A^{\midd}$.

To show that $\s{V}$ is a projective variety over $\Spec \Z[y^{\sQ}]$, one should show that $V$ is finitely generated over $\Z[y^{\sQ}]$.  This is done in \cite[Thm. 8.19]{GHKK} under the assumption of ``enough global monomials.''  Alternatively, when $\s{V}$ is the mirror to a log Calabi-Yau surface, an argument for this finite generation is given in \cite[Prop. 4.19]{LaiZhou}.  The argument of loc.~cit.~ can be  generalized to mirrors of higher-dimensional fibers $Y$ of cluster Poisson manifolds:~one uses \cite[\S 2.2]{ManFrob} to identify the monoid $\sQ$ with a cone of curve classes in $A_1(Y,\Q)$, then uses the techniques of \cite[\S 5]{AB} (generalizing techniques of \cite{GHK1}) to show that coefficients appearing in the structure constants correspond to effective curve classes.  With this setup, the proof of \cite[Prop. 4.19]{LaiZhou} applies with no significant changes.

In \S \ref{sub:Looijenga-Pair}, we focus on the log Calabi-Yau surfaces $(Y,D)$ considered in \cite{GHK1,Man1,LaiZhou}.  We give an explicit description of tropical theta functions and note that this description agrees with the one given in \cite[\S 4]{Man1} (thus confirming that our valuations are the same as those considered in loc.~cit.).  

Applications of valuative independence and theta reciprocity to mirror symmetry for log Calabi-Yau varieties are discussed in the recent work of Keel-Yu \cite[\S 13]{KY2}.  In particular, Conjectures 13.5 and 13.6 of loc.~cit.~ predict theta reciprocity and valuative independence for all affine log Calabi-Yau varieties with maximal boundary.  Other recent work of Keel-White \cite{KW} applies these ideas, together with our results, to achieve canonical theta function bases for Cox rings of positive log Calabi-Yau surfaces as discussed in \S \ref{subsub:Cox}. 

Since the initial posting of this article, Yang Li has used valuatively independent bases to prove results on degenerations of Calabi-Yau metrics \cite{Li-Degen} and the metric SYZ conjecture \cite{Li-SYZ}.  The latter uses recent work of Blum-Liu \cite{Blum-Liu} which proves the existence of (non-canonical) valuatively independent bases of functions on log Calabi-Yau varieties very generally.

\subsection{Outline of the paper}

In \S \ref{sec:Background} we review the constructions and basic properties of scattering diagrams and theta functions.  We then introduce the valuations $\val_v$ and prove some basic properties.

In \S \ref{sec:Taut-VIT} we introduce the notion of a rational broken line (allowing rational bends).  A valuation $\val_v$ is expressed by an infimum over rational broken lines, and in Theorem \ref{thm:min-taut} we show that rational broken lines cannot be locally minimizing unless they are $v$-taut as defined in \S \ref{sub:taut}.  This leads to Theorem \ref{thm:trop-determines-u}, which says that if $v$ is generic, then $\val_v$ will take different values for each theta function.  This implies valuative independence (Theorem \ref{thm:indep}) for generic $v$, and then continuity lets us extend to non-generic $v$. We next prove valuative independence---hence linear independence---for specialized coefficients (Theorem \ref{thm:nu}), followed by our first version of the theta function extension theorem (Theorem \ref{thm:theta-extension-general}).  Then in \S \ref{sub:NP} we discuss an application to Newton polytopes, finding that the coefficients of extremal monomials of theta functions must be $1$.  Atomicity of tropical theta functions is discussed in \S \ref{sub:atomic}, and upper semicontinuity of valuations $\val_v$ is shown in \S \ref{sub:up}.

In \S \ref{sec:cluster} we focus on theta functions for cluster varieties.  We introduce the notion of a ``seed datum'' which generalizes the usual concept of a seed, allowing us to consider a wider class of cluster-type varieties that includes all cluster varieties of $\s{A}$-type and $\s{X}$-type along with quotients of $\s{A}$-types and subfamilies of $\s{X}$-types.  Theorem \ref{thm: linearmorphism} describes the behavior of theta functions under morphisms of seed data, called linear morphisms, generalizing the cluster ensemble map.  Theorem \ref{thm:theta-ext-seed} applies the theta function extension theorem to the seed datum setting.  Claim \ref{conj: theta1} states our first version of theta reciprocity for chiral dual seed data.  Proposition \ref{lem:D-move} and Corollary \ref{cor:Langlands} give corresponding statements for chiral-Langlands dual and Langlands dual seed data.  

In \S \ref{sec:Lambda} we introduce the idea of a $\Lambda$-structure, motivated by the compatible pairs of \cite{BZ}.  These $\Lambda$-structures let us relate a seed datum to its chiral dual, allowing us to state a version of theta reciprocity between a seed datum and itself (Claim \ref{conj: theta2}).  We then spend \S \ref{sub:Ltaut}-\S \ref{sub:proof} proving theta reciprocity by first expressing our tropical theta functions in terms of infima over certain collections of ``$\Lambda$-taut'' broken lines.  These constructions are applied to log Calabi-Yau surfaces in \S \ref{sub:Looijenga-Pair}.

\subsection*{Acknowledgments}

We would like to thank Sean Keel for many years of helpful conversations about tropical theta functions and their applications, and for feedback about early drafts of this paper.  We also thank Luca Francone for pointing out subtle issues with how some claims in \S \ref{sub:line} were originally stated.  T. Magee gratefully acknowledges partial support of the Engineering and Physical Sciences Research Council through the grant EP/V002546/1.  T. Mandel was partially supported by a grant from the Research Council of the University of Oklahoma Norman Campus.

\section{Scattering diagrams, theta functions, and valuations}\label{sec:Background}

Roughly speaking, a scattering diagram is a real vector space endowed with a collection of \emph{walls}, which are hyperplanar regions decorated with \emph{elementary transformations}: automorphisms of a formal Laurent series ring which act by $x^m\mapsto x^mf^{m\cdot n}$ for a polynomial or series $f$ and a vector $n$ normal to the wall. A \emph{consistent} scattering diagram is one in which the composition of the elementary transformations along any path is invariant under homotopies of the path fixing its endpoints. By enumerating certain piecewise linear paths (\emph{broken lines}) in a consistent scattering diagram, one may define \emph{theta functions}.  In this section, we review these constructions before defining a certain class of valuations on the ring of theta functions.

\subsection{Scattering diagrams}\label{sub:scattering}

We fix a commutative monoid $\sQ$ and a finite-rank lattice $N$ with dual lattice $M$.  Denote the units in $\sQ$ by $\sQ^{\times}$.   We write $\sQ^{\gp}$ for the Grothendieck group of $\sQ$, and $\check{\sQ}^{\gp}$ for $\Hom(\sQ^{\gp},\Z)$. Denote $N_{\R} \coloneqq N\otimes \R$, $M_{\R} \coloneqq M\otimes \R$, $\sQ^{\gp}_{\R}=\sQ^{\gp}\otimes \R$, and $\sQ_{\R}=\sQ\otimes \R_{\geq 0}$, and similarly with $\Q$ in place of $\R$.  We use $\cdot$ to indicate the dual pairing between $M$ and $N$ (similarly for $M_\R$ and $N_\R$).  Let $A\coloneqq \Z[x^M][y^{\sQ}]$, and let $\s{I}$ be the ideal $A\langle y^q|q\in \sQ\setminus \sQ^{\times}\rangle$.  Define $A_k\coloneqq A/\s{I}^k$ and $\wh{A}\coloneqq \varprojlim A_k = \Z[x^M]\llb y^{\sQ}\rrb$, and let $\wh{\s{I}}=\varprojlim A_k \s{I}$ be the closure of $\s{I}$ in $\wh{A}$.  We also denote $\wh{A}^+ \coloneqq \N[x^M]\llb y^{\sQ}\rrb\subset \wh{A}$, and for nonzero $v\in M$, $$
A_v^{\parallel}\coloneqq \Z[x^v]\llb y^{\sQ}\rrb\cap (1+\wh{\s{I}})\subset \wh{A}.
$$  For convenience, we will sometimes write $x^m y^q$ as simply $z^{u}$ for $u=(m,q)\in M\oplus \sQ$.  Additionally, given $u\in M\oplus \sQ$, we may write the components of $u$ as $u_M$ and $u_{\sQ}$.

Given $n\in N$, nonzero $v\in n^{\perp}\subset M$, and $f\in A_v^{\parallel}$, we define the (formal) \textbf{elementary transformation} $E_{n,f}:\wh{A}\rar \wh{A}$ to be the automorphism given by
\begin{align*}
    E_{n,f}(x^my^q)=x^my^qf^{m\cdot n}
\end{align*}
extended linearly (and formally).  Note that $E_{n,f}^{-1}=E_{-n,f}=E_{n,f^{-1}}$.

A \textbf{wall} is a triple $(W,f,n)$ consisting of 
\begin{itemize}
    \item a nonzero element $n\in N$,
    \item a convex rational-polyhedral cone $W\subset n^{\perp}$ with codimension one in $M_{\R}$,
    \item an element $f\in A_v^{\parallel}$ for some nonzero $v\in n^{\perp}\cap M$.
\end{itemize}
For $v$ as above, the vector $-v$ is called the \textbf{direction} of the wall, and $f$ is called the \textbf{scattering function}.  The cone $W$ is called the \textbf{support} of the wall.  The wall is called \textbf{incoming} if its support contains $v$.  Otherwise it is called \textbf{outgoing}.  We say a wall is \textbf{positive} if the scattering function $f$ lies in $\wh{A}^+$, i.e., if the coefficients are always non-negative.

A \textbf{scattering diagram} $\f{D}$ is a set of walls such that, for each $k\in \N$, all but finitely many walls of $\f{D}$ are trivial in the projection to $A_k$ (i.e., the scattering functions are equivalent to $1$ modulo $\s{I}^k$).  We write $\f{D}^k$ for this finite set of walls in $\f{D}$ which are non-trivial in $A_k$.  We say $\f{D}$ is \textbf{positive} if every wall of $\f{D}$ is positive.

We shall require our scattering diagrams to satisfy one more technical condition:  let $\s{M}^{\circ}\subset M\oplus \sQ$ be the set of pairs $(m,q)$ which appear as exponents in the scattering functions for $\f{D}$.  Let $\s{M}^+$ be the closure of the $\R_{\geq 0}$-span of $\s{M}^{\circ}$.  Then we require that $\s{M}^+$ is strictly convex (i.e., contains no line through the origin).  Let $\s{N}^+=\Hom_{\text{monoid}}(\s{M}^+,\R_{\geq 0})\subset N_{\R}\oplus \check{\sQ}^{\gp}_{\R}$.  The strict convexity of $\s{M}^+$ is equivalent to $\s{N}^+$ having nonempty interior as a subset of $N_{\R}\oplus \check{\sQ}^{\gp}_{\R}$.\footnote{Explicitly imposing this condition is non-standard, but we do not expect this to impact any cases of interest.  E.g., the condition holds for all scattering diagrams constructed from a finite set of initial walls as in Theorem \ref{thm:ScatD}.  The point of the condition is to ensure that tropicalized functions are finite on some top-dimensional cone in $N_{\R}\oplus \check{\sQ}^{\gp}_{\R}$, e.g., what we call $\s{N}^{+}$.  This is used when proving Theorem \ref{thm:indep} for non-generic $v$.\label{foot:N+}}

The \textbf{support} of $\f{D}$, denoted $\Supp(\f{D})$, is the union of the supports of all of its non-trivial walls.  The \textbf{joints} of $\f{D}$, denoted $\Joints(\f{D})$, are the codimension-two intersections and boundaries of the walls of $\f{D}$.  A connected component of $M_{\R}\setminus \Supp(\f{D})$ is called a \textbf{chamber} of $\f{D}$.  

A continuous path $\gamma:I\rar M_{\R}$ is \textbf{generic} (with respect to $\f{D}$) if
\begin{itemize}
    \item the endpoints do not lie in $\Supp(\f{D})$,
    \item the path does not intersect $\Joints(\f{D})$, and
    \item if $\gamma(t)$ is in the support $W$ of a wall, then $\gamma(t-\epsilon)$ and $\gamma(t+\epsilon)$ lie on opposite sides of $W$ for all sufficiently small $\epsilon>0$. 
\end{itemize}
Given a generic path $\gamma$ crossing a set of walls $(W_i,f_i,n_i)$ at time $t$, define 
\begin{align}\label{eq:Egamma}
    E_{\gamma,t}\coloneqq \prod_i E_{n_i,f_i}^{s_i} \quad \text{where} \quad s_i\coloneqq \sign(\gamma(t-\epsilon)\cdot n_i)\in \{\pm 1\}.
\end{align}  
Genericness of $\gamma$ implies that these walls have parallel support, hence parallel $n_i$'s.  It follows that these automorphisms commute, so the order of the product (given by composition) does not matter.  One then defines the \textbf{path-ordered product} $E_{\gamma}:\wh{A}\rar \wh{A}$ as $\prod_t E_{\gamma,t}$ where the product is ordered right-to-left by increasing times $t\in I$.  We may write $E_{\gamma}^{\f{D}}$ if $\f{D}$ is not clear from context.

Two scattering diagrams $\f{D}_1,\f{D}_2$ in $M_\R$ are called \textbf{equivalent} if $E_{\gamma}^{\f{D}_1}=E_{\gamma}^{\f{D}_2}$ for all paths $\gamma$ which are generic with respect to both $\f{D}_1$ and $\f{D}_2$.  A scattering diagram is called \textbf{consistent} if the path-ordered products depend only on the endpoints of the paths.

The following is a fundamental feature of scattering diagrams which has been proved in various related contexts by \cite{GS11,WCS,GHKK}.

\begin{thm}\label{thm:ScatD}
    Let $\f{D}_{\In}$ be a scattering diagram such that the support of each wall of $\f{D}_{\In}$ is an entire hyperplane\footnote{\cite[Thm. 5.6]{AG} gives a version of Theorem \ref{thm:ScatD} which allows the initial walls to take more general forms---``widgets'' rather than just hyperplanes.}  in $M_{\bb{R}}$.  Then there exists a consistent scattering diagram $\f{D}=\Scat(\f{D}_{\In})$ containing $\f{D}_{\In}$ such that every wall of $\f{D}\setminus \f{D}_{\In}$ is outgoing.  Furthermore, $\f{D}$ is unique up to equivalence.
\end{thm}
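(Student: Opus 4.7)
My plan is to construct $\f{D}$ by induction on the order in $\s{I}$, using the pronilpotent structure of the group generated by elementary transformations $E_{n,f}$ with $f \in 1 + \wh{\s{I}}$. Concretely, I will build finite scattering diagrams $\f{D}_1 \subseteq \f{D}_2 \subseteq \cdots$ such that each $\f{D}_k$ extends the order-$k$ truncation of $\f{D}_{\In}$ by finitely many outgoing walls and is consistent modulo $\s{I}^k$; then $\f{D}$ is assembled from the compatible limits of these finite approximations.

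For the base case, $\f{D}_1$ is $\f{D}_{\In}$ with its scattering functions reduced modulo $\s{I}$, which is trivial and hence automatically consistent. For the inductive step, given $\f{D}_k$, refine all scattering functions to agree with those of $\f{D}_{\In}$ modulo $\s{I}^{k+1}$; this may break consistency at the new order. For each joint $J$ of the resulting diagram, compute the path-ordered product $E_\gamma$ for $\gamma$ a small loop around $J$, working modulo $\s{I}^{k+2}$. Pronilpotency identifies $E_\gamma$ with $\exp(\delta_J)$ for a derivation $\delta_J$ supported in the degree-$(k+1)$ graded piece of the associated Lie algebra. Using the transversality of the hyperplanes meeting at $J$ together with the strict convexity of $\s{M}^+$ (Footnote \ref{foot:N+}), one writes $\delta_J$ canonically as a finite sum of infinitesimal generators of wall-crossings along outgoing hyperplanes emanating from $J$. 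Adding the corresponding outgoing walls cancels the obstruction at $J$, producing $\f{D}_{k+1}$. Since only finitely many walls are added per order, the scattering-diagram finiteness condition is preserved in the limit.

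For uniqueness up to equivalence, suppose $\f{D}$ and $\f{D}'$ are two consistent extensions of $\f{D}_{\In}$ whose added walls are all outgoing, and compare them order-by-order. Assuming agreement modulo $\s{I}^k$, the equality of $E_\gamma^{\f{D}}$ and $E_\gamma^{\f{D}'}$ on every generic path forces their difference at the next order to be an outgoing wall configuration whose total infinitesimal wall-crossing is trivial. Linear independence in the associated graded Lie algebra of the generators indexed by distinct outgoing directions then forces the two diagrams to agree modulo $\s{I}^{k+1}$, completing the induction.

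The main obstacle is the key algebraic lemma underlying the inductive step: that every joint-obstruction decomposes canonically into outgoing contributions. This is where the hyperplanar hypothesis on the initial walls and the strict convexity of $\s{M}^+$ both enter essentially; without either, one cannot be sure the obstruction lives in a Lie subalgebra supported on outgoing directions, nor that an ``outgoing'' direction is even well-defined for the relevant monomials. This decomposition is the technical heart of the scattering-diagram algorithms developed in the references cited above, from which I would borrow the precise form of the canonical decomposition.
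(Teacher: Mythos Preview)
The paper does not prove this theorem; it is stated as a background result with the preface ``The following is a fundamental feature of scattering diagrams which has been proved in various related contexts by \cite{GS11,WCS,GHKK}.'' Your outline is essentially the standard order-by-order construction found in those references (notably the Kontsevich--Soibelman algorithm as adapted in \cite[Appendix~C]{GHKK}), so there is nothing to compare against in the paper itself.

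One small correction to your sketch: you invoke the strict convexity of $\s{M}^+$ (Footnote~\ref{foot:N+}) as part of the mechanism that decomposes the joint obstruction $\delta_J$ into outgoing pieces. That is not its role. As the footnote says, this convexity condition is imposed in the paper for a separate purpose (ensuring tropicalized functions are finite on a top-dimensional cone, used later in the proof of Theorem~\ref{thm:indep}), and it holds automatically for diagrams built from finitely many initial walls. In the inductive step of the scattering construction, the finiteness at each order comes directly from the definition of a scattering diagram (only finitely many walls are nontrivial modulo $\s{I}^k$), and the decomposition of $\delta_J$ into outgoing wall-contributions uses the $N_\R$-grading of the underlying Lie algebra of log derivations together with the fact that the relevant Lie-bracket terms inherit a nonzero $M$-direction from the contributing walls. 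Your sketch is otherwise in line with the literature arguments the paper cites.
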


We may refer to $\Scat(\f{D}_{\In})$ as the \textbf{consistent completion} of $\f{D}_{\In}$.

\subsection{Broken lines and theta functions}\label{sub:BL-and-Theta}

Let $\f{D}$ be a consistent scattering diagram in $M_{\bb{R}}$ (we shall implicitly assume consistency whenever we discuss broken lines or theta functions).  A \textbf{broken line} $\Gamma$ for $\f{D}$ with initial exponent $u=(u_M,u_\sQ)\in M\oplus \sQ$ and generic endpoint $p\in M_{\bb{R}}$, sometimes written $\Ends(\Gamma)=(u,p)$, is a continuous piecewise affine-linear map $\Gamma:\R_{\leq 0}\rar M_{\R}\setminus \Joints(\f{D})$, together with a monomial $cx^my^q\in \wh{A}$ attached to each straight segment, such that:
\begin{itemize}
    \item $\Gamma(0)=p$;
    \item the monomial on the initial (unbounded) straight segment is $x^{u_M}y^{u_\sQ}$.  In particular, an initial straight segment exists, i.e., $\Gamma$ has finitely many bends;
    \item if $\Gamma$ is straight for $t\in (a,b)$ with attached monomial $cx^my^q$, then $\Gamma'(t)=-m$ for all $t\in (a,b)$;
    \item if the attached monomial changes from $cx^my^q$ to $c'x^{m'}y^{q'}$ when crossing $t=t_0$, then the latter is a term in $E_{\Gamma,t}(cx^my^q)$.  By construction,  the exponents $\sign(\gamma(t-\epsilon)\cdot n) (m\cdot n)$ appearing in the application of the elementary transformations here will always be positive.
\end{itemize}
Note that if $\f{D}$ is positive, then the attached monomials of broken lines will always be positive as well.

Given a broken line $\Gamma$, the attached monomial of the final segment is denoted $c_{\Gamma}x^{m_{\Gamma}}y^{q_{\Gamma}}$ or $c_{\Gamma} z^{u_{\Gamma}}$.  For any $u=(u_M,u_\sQ)\in M\oplus \sQ$ and generic $p\in M_{\bb{R}}$, we define a \textbf{theta function}
\begin{align*}
    \vartheta_{u,p}=\sum_{\Gamma} c_{\Gamma}x^{m_\Gamma}y^{q_{\Gamma}}\in \wh{A}
\end{align*}
where the sum is over all broken lines $\Gamma$ with initial exponent $u$ and endpoint $p$. We may write $\vartheta_{u,p}^{\f{D}}$ if the scattering diagram $\f{D}$ is not clear from context.

The following consequence of the consistency of $\f{D}$ is a fundamental property of theta functions.

\begin{lem}[\cite{CPS}]\label{lem:CPS}
    Assume $\f{D}$ is consistent. 
 Let $\gamma$ be a generic path from $p_1$ to $p_2$ for $p_1,p_2\in M_{\R}$ generic.  Then for any $u\in M\oplus \sQ$, we have $\vartheta_{u,p_2}=E_{\gamma}(\vartheta_{u,p_1})$.
\end{lem}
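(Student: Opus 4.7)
My plan is to reduce the identity to two local moves and verify each. Because for each $k \in \N$ only finitely many walls of $\f{D}^k$ are nontrivial and only finitely many broken lines contribute to $\vartheta_{u,p}$ modulo $\s{I}^k$, I will verify the identity order by order in $\s{I}^k$, keeping the combinatorics finite at each stage. By consistency, $E_\gamma$ depends only on the endpoints $p_1,p_2$ and satisfies the cocycle relation $E_{\gamma_1\cdot\gamma_2}=E_{\gamma_2}\circ E_{\gamma_1}$, so it suffices to prove the identity for a convenient generic path subdivided into finitely many subpaths of two elementary types: (a) subpaths lying in a single chamber of $\f{D}$, and (b) short subpaths crossing a single wall of $\f{D}$ transversally, with both endpoints chosen close to that crossing point.

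For type (a), $E_\gamma=\id$, and I need $\vartheta_{u,p_1}=\vartheta_{u,p_2}$. The key observation is that as the endpoint $p$ varies continuously within a chamber, each broken line ending at $p$ deforms uniquely and continuously: the final straight segment translates in parallel while each earlier bend point slides along its wall to maintain the piecewise-linear shape, with every attached monomial preserved. Since broken lines neither appear nor disappear and the attached monomials do not change, the sum $\sum_\Gamma c_\Gamma z^{u_\Gamma}$ is locally constant in $p$.

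For type (b), suppose $\gamma$ crosses a single wall $(W,f,n)$ with sign $s=\sign(\gamma(t_0-\epsilon)\cdot n)$, and place $p_1,p_2$ close to a generic point of $W$ on opposite sides. I would establish a bijection between broken lines $\Gamma_2$ ending at $p_2$ and pairs $(\Gamma_1,\text{term})$, where $\Gamma_1$ is a broken line ending at $p_1$ with final monomial $c_{\Gamma_1}z^{u_{\Gamma_1}}$ and ``term'' selects a monomial in the expansion of
\[
E_{n,f}^{s}(c_{\Gamma_1}z^{u_{\Gamma_1}}) = c_{\Gamma_1}z^{u_{\Gamma_1}}\cdot f^{\,s\,(u_{\Gamma_1})_M\cdot n}.
\]
The constant term in $f^{\,s\,(u_{\Gamma_1})_M\cdot n}$ corresponds to extending $\Gamma_1$'s final segment straight across $W$ (no new bend, same final monomial); higher-order terms correspond to inserting exactly one new bend at $W$ between $p_1$ and $p_2$, with new final monomial equal to the selected term. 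Summing over all $\Gamma_1$ and all terms then yields $\sum_{\Gamma_1}E_{n,f}^{s}(c_{\Gamma_1}z^{u_{\Gamma_1}}) = E_{n,f}^{s}(\vartheta_{u,p_1})$ on the one hand and $\vartheta_{u,p_2}$ on the other.

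The main obstacle is making this last bijection rigorous. I would need to argue (i) that every broken line ending at $p_2$ arises from a unique such pair, using that $p_1,p_2$ lie close enough to $W$ on opposite sides to forbid intervening walls, and away from joints by genericity of $\gamma$, so that no other wall can interfere with the final extension; and (ii) that the combinatorics of attached monomials match the expansion of $f^{\,s\,(u_{\Gamma_1})_M\cdot n}$ exactly, with the correct multiplicities accounting for multiple ways the same exponent can arise from $f^{\,s\,(u_{\Gamma_1})_M\cdot n}$. The observation recorded after the definition of broken lines, that the relevant exponents $\sign(\gamma(t-\epsilon)\cdot n)(m\cdot n)$ are always positive, ensures that only non-negative powers of $f$ occur in the expansion, so everything remains a bona fide element of $\wh{A}$, and the identity reassembles from its order-by-order verifications to give the stated equality in $\wh{A}$.
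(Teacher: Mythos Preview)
The paper does not prove this lemma; it simply cites \cite{CPS}. Your outline---work modulo $\s{I}^k$, subdivide $\gamma$ into chamber moves and single wall crossings, handle each by a bijection of broken lines---is the standard approach and is structurally correct.

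There is, however, a genuine gap in step (b). The map ``extend $\Gamma_1$'s final segment across $W$ and insert a bend'' only applies when that forward extension actually reaches $W$, i.e.\ when $s\cdot(u_{\Gamma_1})_M\cdot n>0$. But $\Gamma_1$ may have a final segment that has \emph{already} crossed $W$ from the $p_2$-side to reach $p_1$; then $s\cdot(u_{\Gamma_1})_M\cdot n<0$, $E_\gamma$ applies a \emph{negative} power of $f$ to its final monomial, and your construction produces nothing for such $\Gamma_1$. Your appeal to the positivity remark after the definition of broken lines is a misreading: that remark concerns a broken line's own wall-crossing (the sign there is set by the broken line's approach side), not the application of a fixed $E_\gamma$ to an arbitrary final monomial. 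Dually, a $\Gamma_2$ whose final segment stays on the $p_2$-side of $W$ near the crossing does not arise from your construction at all. The fix is to treat this second case symmetrically: sliding such a $\Gamma_2$'s endpoint to $p_1$ creates a new $W$-crossing with a choice of bend, producing several $\Gamma_1$'s (precisely the ones with $s\cdot(u_{\Gamma_1})_M\cdot n<0$ you were missing), and summing $E_\gamma$ over them collapses $f^{-k}\cdot f^{k}$ to recover the single monomial $c_{\Gamma_2}z^{u_{\Gamma_2}}$.
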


It is straightforward to show that $\vartheta_{(u_M,u_\sQ),p}=y^{u_\sQ}\vartheta_{(u_M,0),p}$ and that $\vartheta_{u,p}\in x^{u_M}y^{u_\sQ}(1+\wh{\s{I}})$.  As a consequence of the latter property, one can show that, for any fixed generic $p\in M_{\R}$, the theta functions $\vartheta_{u,p}$ for $u\in M\oplus \sQ$ form a topological\footnote{Topological here essentially means that we allow linear combinations to be infinite.  I.e., $\wh{A}$ is not technically the span of the theta functions, but rather the closure of the span in the $\s{I}$-adic topology.   See \cite[\S 2.2.2]{DM} for details.} $\Z$-module basis for $\wh{A}$.

For each generic $p\in M_{\R}$, we associate a copy $\wh{A}_p$ of $\wh{A}$.  By Lemma \ref{lem:CPS}, path-ordered products canonically identify the algebras $\wh{A}_p$ while preserving the theta functions.  We therefore view these algebras $\wh{A}_p$ as copies of a single algebra\footnote{The hat over the direct sum here indicates that we must take the $(\sQ\setminus \sQ^{\times})$-adic completion.} $$\wh{A}^{\can}=\wh{\bigoplus}_{u\in M\oplus \sQ} \Z \cdot \vartheta_{u}$$ just presented in different coordinate systems via the identifications $\iota_p:\wh{A}^{\can}\risom \wh{A}_p$, $\vartheta_{u}\mapsto \vartheta_{u,p}$.

Any $f\in \wh{A}^{\can}$ may be written as a series in the theta functions
\[f=\sum_{u\in M\oplus \sQ} \alpha_{u}(f)\vartheta_{u}
\]
for some coefficients $\alpha_u(f)\in \Z$.  Similarly, for any generic basepoint $p\in M_{\bb{R}}$, $\iota_p(f)$ may be written as a Laurent series
\[ \iota_p(f)=\sum_{u\in M\oplus \sQ} \beta_{u,p}(f)x^{u_M}y^{u_\sQ}.
\]
The following says that the coefficients indexed by $u$ agree when the basepoint $p$ is close to $u_M$.
\begin{lem}\label{lem:coeff}
Let $u$, $\alpha_{u}$, and $\beta_{u,p}$ be as above. Then, for any $f\in \wh{A}^{\can}$ and for all generic $p$ sufficiently close to $u_M$, we have
$ \alpha_{u}(f) = \beta_{u,p}(f)$.
\end{lem}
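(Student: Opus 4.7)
The plan is to reduce the lemma to a geometric claim: for $p$ generic and sufficiently close to $u_M$, the only broken line $\Gamma$ with endpoint $p$ and final monomial $x^{u_M}y^{u_\sQ}$ is the straight broken line with initial exponent $u$. Granting this, expand $\iota_p(f)=\sum_{u'}\alpha_{u'}(f)\vartheta_{u',p}$ and let $c_{u',u,p}$ denote the coefficient of $x^{u_M}y^{u_\sQ}$ in $\vartheta_{u',p}$, so that $\beta_{u,p}(f)=\sum_{u'}\alpha_{u'}(f)c_{u',u,p}$. The geometric claim then gives $c_{u',u,p}=\delta_{u,u'}$, whence $\beta_{u,p}(f)=\alpha_u(f)$.

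For $u'=u$, any broken line contributing to $c_{u,u,p}$ has both initial and final exponent equal to $u$; since each bend changes the exponent by a nonzero element of $\s{M}^+$ and $\s{M}^+$ is strictly convex, the only such broken line has no bends and is the straight line with attached monomial $x^{u_M}y^{u_\sQ}$ throughout. So $c_{u,u,p}=1$.

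For $u'\neq u$, any contributing broken line $\Gamma$ must bend at least once. Its final segment has direction $-u_M$ and ends at $p$, so the last bend, at forward time $t_1<0$, occurs at $\Gamma(t_1)=p+|t_1|u_M$, a point on the ray $R_p:=p+\R_{>0}u_M$ lying in the support of some nontrivial wall of $\f{D}$. The key step is to show that for $p$ in a suitable neighborhood of $u_M$, $R_p$ meets none of the walls that can affect $c_{u',u,p}$. Since $x^{u_M}y^{u_\sQ}\notin\s{I}^K$ for $K$ large enough (depending only on $u_\sQ$), only walls in the finite set $\f{D}^K$ of walls nontrivial modulo $\s{I}^K$ are relevant. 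For each such wall, with associated $n\in N$ so that the support lies in $n^\perp$: if $u_M\cdot n\neq 0$, then by continuity $p\cdot n$ has the same sign as $u_M\cdot n$ for $p$ sufficiently close to $u_M$, forcing $R_p\cap n^\perp=\emptyset$; if $u_M\cdot n=0$, then $R_p\cap n^\perp=\emptyset$ as soon as $p\cdot n\neq 0$, a codimension-one condition on $p$. Intersecting these finitely many conditions produces an open set of admissible $p$ accumulating at $u_M$.

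The main obstacle I anticipate is this last geometric step, in particular the case $u_M\in n^\perp$, which requires a genericity condition on $p$ rather than just closeness to $u_M$. The reduction to the finite set $\f{D}^K$ is essential, since it permits intersecting only finitely many genericity and smallness conditions while preserving a full-dimensional neighborhood of $u_M$; applying the argument directly to all of $\f{D}$ would require intersecting infinitely many conditions and could leave no admissible $p$ near $u_M$.
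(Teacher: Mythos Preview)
Your argument is correct and is essentially the standard proof (the paper itself only cites \cite[Prop.~6.4(3)]{GHKK} and \cite[Lem.~5.11]{ManQin}).

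One simplification worth noting, since you flag it as the main obstacle: the case $u_M\cdot n=0$ is actually vacuous for the \emph{last} bend. A bend across a wall $(W,f,n)$ changes the $M$-part of the attached exponent by an element of $n^\perp$, so the pairing with $n$ is preserved; thus if the post-bend exponent $u_M$ satisfies $u_M\cdot n=0$, so does the pre-bend exponent, and then the elementary transformation acts by $f^0=1$, contradicting that the bend was nontrivial. Hence only walls of $\f{D}^K$ with $u_M\cdot n\neq 0$ can host the last bend, and for those your same-sign argument handles everything with closeness alone---no extra genericity beyond the standing hypothesis on $p$ is required.
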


The proof is essentially contained in the proof of \cite[Prop. 6.4(3)]{GHKK}; also cf. \cite[Lem. 5.11]{ManQin} (the present setting is more general but the proof is the same).

The \textbf{structure constants} of the theta function basis are the constants $\alpha(u_1,\ldots,u_s;u)\in \Z$ defined by
\begin{align*}
        \vartheta_{u_1}\cdots \vartheta_{u_s}=\sum_{u\in M\oplus \sQ} \alpha(u_1,\ldots,u_s;u) \vartheta_{u}.
\end{align*}
As in \cite[Prop. 6.4(3)]{GHKK} (also cf. \cite[Prop. 5.9]{ManQin}), Lemma \ref{lem:coeff} implies the following:
\begin{cor}\label{cor:str-const}
    For any $u_1,\ldots,u_s,u\in M\oplus \sQ$, the corresponding structure constant is given by
    \begin{align*}
        \alpha(u_1,\ldots,u_s;u) = \sum_{\substack{\Gamma_1,\ldots,\Gamma_s \\ \Ends(\Gamma_i)=(u_i,p) \\ \sum_{i=1}^s u_{\Gamma_i}=u}} c_{\Gamma_1} \cdots c_{\Gamma_s} \in \Z
    \end{align*}
    for any $p$ sufficiently close to $u$.
\end{cor}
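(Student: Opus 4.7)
The plan is to compute the structure constant $\alpha(u_1, \ldots, u_s; u)$ by extracting an appropriate monomial coefficient from the product of theta function expansions at a suitably chosen basepoint $p$, using Lemma~\ref{lem:coeff} as the bridge between the canonical theta coefficients $\alpha_u$ and the Laurent coefficients $\beta_{u,p}$.

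First I would set $f \coloneqq \vartheta_{u_1} \cdots \vartheta_{u_s} \in \wh{A}^{\can}$. By the definition of the structure constants, $\alpha_u(f) = \alpha(u_1, \ldots, u_s; u)$. For generic $p \in M_\R$ sufficiently close to $u_M$, Lemma~\ref{lem:coeff} then identifies $\alpha_u(f)$ with $\beta_{u,p}(f)$, the coefficient of $z^u = x^{u_M} y^{u_\sQ}$ in the Laurent expansion $\iota_p(f) \in \wh{A}_p$. So the task reduces to computing $\beta_{u,p}(f)$ directly in terms of broken lines.

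Next, since $\iota_p : \wh{A}^{\can} \risom \wh{A}_p$ is a ring isomorphism sending each $\vartheta_{u_i}$ to $\vartheta_{u_i, p}$, we have $\iota_p(f) = \vartheta_{u_1,p} \cdots \vartheta_{u_s,p}$. Expanding each factor via its defining broken-line sum $\vartheta_{u_i, p} = \sum_{\Gamma_i} c_{\Gamma_i} z^{u_{\Gamma_i}}$ (over broken lines $\Gamma_i$ with $\Ends(\Gamma_i)=(u_i,p)$) and distributing the product, the coefficient of $z^u$ in $\iota_p(f)$ is precisely
\[
\sum_{\substack{\Gamma_1,\ldots,\Gamma_s \\ \Ends(\Gamma_i)=(u_i,p) \\ \sum_i u_{\Gamma_i} = u}} c_{\Gamma_1}\cdots c_{\Gamma_s},
\]
which combined with the previous step yields the claimed formula.

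The one item requiring care is that this term-by-term expansion is legitimate for coefficient extraction, i.e., that only finitely many tuples $(\Gamma_1,\ldots,\Gamma_s)$ contribute to any fixed exponent $z^u$. This is guaranteed by the strict convexity condition on $\s{M}^+$ (cf.~Footnote~\ref{foot:N+}) together with the fact that each bend of a broken line shifts the attached monomial's exponent by a nonzero element of $\s{M}^+$, so that for fixed $u \in M\oplus \sQ$ only finitely many tuples can satisfy $\sum_i u_{\Gamma_i} = u$. I do not anticipate any serious obstacle; the corollary is essentially a bookkeeping consequence of Lemma~\ref{lem:coeff}, parallel to the arguments of \cite[Prop.~6.4(3)]{GHKK} and \cite[Prop.~5.9]{ManQin} cited in the statement.
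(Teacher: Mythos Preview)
Your proposal is correct and follows essentially the same approach as the paper, which simply states that the corollary follows from Lemma~\ref{lem:coeff} as in \cite[Prop.~6.4(3)]{GHKK} and \cite[Prop.~5.9]{ManQin}. You have filled in the standard details: apply Lemma~\ref{lem:coeff} to $f=\vartheta_{u_1}\cdots\vartheta_{u_s}$ to identify $\alpha(u_1,\ldots,u_s;u)$ with the $z^u$-coefficient of $\iota_p(f)=\vartheta_{u_1,p}\cdots\vartheta_{u_s,p}$, then expand via broken lines.
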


The following immediate consequence of Corollary \ref{cor:str-const} is also well-known.

\begin{cor}[Strong positivity]\label{cor:strong-pos}
    If $\f{D}$ is positive, then all of the structure constants are non-negative integers.
\end{cor}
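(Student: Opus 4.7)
The plan is to leverage Corollary \ref{cor:str-const}, which already expresses each structure constant $\alpha(u_1,\ldots,u_s;u)$ as a finite sum of products $c_{\Gamma_1}\cdots c_{\Gamma_s}$, where the $\Gamma_i$ are broken lines with prescribed initial exponents $u_i$, common endpoint $p$ (chosen close to $u$), and final exponents summing to $u$. Granting this formula, the corollary reduces to showing that each factor $c_{\Gamma_i}$ is a non-negative integer whenever $\f{D}$ is positive.

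For this, I would appeal to the observation already recorded in \S \ref{sub:BL-and-Theta}: if every wall has scattering function in $\wh{A}^+$, then the inductive rule for updating the monomial attached to a broken line when it crosses a wall --- namely, selecting a term of $E_{\Gamma,t}(cx^m y^q)=cx^my^q f^{m\cdot n}$, where the exponent $m\cdot n$ appearing here is positive by the sign convention in \eqref{eq:Egamma} --- produces only monomials with non-negative integer coefficients. A short induction on the number of bends of $\Gamma$ (with base case $c_{\Gamma}=1$ on the initial segment) then gives $c_{\Gamma_i}\in\Z_{\geq 0}$ for every broken line in a positive scattering diagram.

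Combining these two ingredients, each term $c_{\Gamma_1}\cdots c_{\Gamma_s}$ in the formula of Corollary \ref{cor:str-const} is a non-negative integer, and hence so is the (finite) sum defining $\alpha(u_1,\ldots,u_s;u)$. There is essentially no obstacle here beyond checking that Corollary \ref{cor:str-const} applies verbatim: in particular, that choosing $p$ sufficiently close to $u_M$ is compatible simultaneously with each $\Gamma_i$ being generic, which is automatic since genericness of $p$ is an open condition and only finitely many broken lines contribute.
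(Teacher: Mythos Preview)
Your proposal is correct and matches the paper's approach: the paper simply states that the corollary is an immediate consequence of Corollary~\ref{cor:str-const}, and your argument spells out precisely why---each $c_{\Gamma_i}$ is a non-negative integer by the observation in \S\ref{sub:BL-and-Theta} that positive scattering diagrams yield positive attached monomials, so the sum of products in Corollary~\ref{cor:str-const} is non-negative.
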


\begin{exam}\label{eg:str}
    Using Corollary \ref{cor:str-const}, one finds the following:
    \begin{enumerate}
        \item $\alpha(u_1,\ldots,u_s;u)$ equals $0$ whenever $u_{\sQ}\notin (u_1)_{\sQ}+\ldots+(u_s)_{\sQ}+\sQ$.
        \item If $u_{\sQ}=(u_1)_{\sQ}+\ldots+(u_s)_{\sQ}$, then $\alpha(u_1,\ldots,u_s;u)$ equals $1$ if $u_M=(u_1)_M+\ldots+(u_s)_M$, and it equals $0$ otherwise.
    \end{enumerate}
\end{exam}

We say that $f\in \wh{A}^{\can}$ is \textbf{universally positive} if $\iota_p(f)\in \wh{A}_p^+$ for each generic $p\in M_{\R}$.  Note that if $\f{D}$ is positive, then $\vartheta_{u}$ is universally positive for every $u\in M\oplus \sQ$.  Furthermore, using Lemma \ref{lem:coeff}, one can show that $f\in \wh{A}^{\can}$ is universally positive if and only if $f$ is a positive linear combination of theta functions---i.e., the basis of theta functions is \textbf{atomic} (cf. \cite{ManAtomic}).  We write $\wh{A}^{\can,+}$ for the semiring of universally positive elements, or equivalently, for the semiring $\wh{\bigoplus}_{u\in M\oplus \sQ} \N\cdot \vartheta_{u}$ of positive (formal) linear combinations of theta functions.

\subsection{Valuations}\label{sub:val}

Let $\mathbb{Z}^{\min}\coloneqq \mathbb{Z}\cup\{\pm \infty\}$.  If we define $(-\infty)+\infty=\infty$, then $\min$ and $+$ make $\Z^{\min}$ into a complete semiring with $\min$-identity $\infty$ and $+$-identity $0$.  Similarly for $\R^{\min}=\R\cup \{\pm \infty\}$.

For any $v=(w,s)\in N_\mathbb{R}\oplus \check{\sQ}^{\gp}_\R$, we associate a \textbf{valuation}
$\mathrm{val}_{v}:\wh{A} \rightarrow \mathbb{R}^{\min}$ defined by
\begin{align}\label{eq:valv-def} \mathrm{val}_{v} \left( \sum_{\substack{m\in M \\ q\in \sQ}} c_{m,q} x^m y^q\right) \coloneqq  \inf_{m,q\mid c_{m,q}\neq 0} [(m,q)\cdot (w,s)].
\end{align}
Here, $\cdot$ denotes the dual pairing between $M_\R \oplus Q^{\gp}_\R$ and $N_\R \oplus \check{Q}^{\gp}_\R$.  In particular, we define $\val_v(0)=\infty$.  If $(w,s)\in N\oplus \check{\sQ}^{\gp}$, then $\mathrm{val}_{(w,s)}$ will take values in $\Z^{\min}$.

\begin{warn}
As a function on formal series, $\mathrm{val}_{(w,s)}$ need not be a valuation in the traditional sense! Specifically, it may not send products of formal series to sums of their values. As an example, assume $m\cdot w =-1$, and consider
\begin{align*}
\val_{(w,0)} (1-x^m)
&= \inf(0,-1) = -1\\
\val_{(w,0)} (1+x^m+x^{2m}+x^{3m}+\cdots)
&= \inf(0,-1,-2,...) = -\infty \\
\val_{(w,0)} \left(
(1-x^m) (1+x^m+x^{2m}+x^{3m}+\cdots)
\right)
&= \val_{(w,0)}(1) = 0
\neq (-1) + (-\infty).
\end{align*}
Note that this can be avoided by working in the semiring $\wh{A}^+$ of positive Laurent series, in which case each $\val_{(w,s)}:\wh{A}^+ \rightarrow \mathbb{R}^{\min}$ is a genuine morphism of semirings.  Similarly, if $\f{D}$ is positive, then the restrictions $\val_{(w,s)}:\wh{A}^{\can,+} \rightarrow \mathbb{R}^{\min}$ are morphisms of semirings.
\end{warn}

For any $v\in N_\R \oplus \check{\sQ}^{\gp}_\R$ and generic $p\in M_{\R}$, we shall write $\val_{v,p}$ for the valuation $\val_{v}$ as above applied to $\wh{A}_p$, or for $\val_{v,p}\circ\iota_p:\wh{A}^{\can}\rar \R^{\min}$.  In particular, we will understand $\val_{v,p}(\vartheta_u)$ to mean $\val_{v}(\vartheta_{u,p})$.

Now suppose that $\f{D}$ is positive, so the final monomial $c_{\Gamma}z^{u_{\Gamma}}$ of every broken line is positive.  Then for each $u\in M\oplus \sQ$ and generic $p\in M_{\R}$, we can write
\begin{align}\label{eq:val-theta}
    \val_{v,p}(\vartheta_{u})=\inf_{\Gamma:\Ends(\Gamma)=(u,p)} (u_{\Gamma}\cdot v).
\end{align}

$$\text{\textbf{Unless otherwise stated, we shall assume from now on that $\f{D}$ is positive, so \eqref{eq:val-theta} holds.}}$$

In the next section, we will consider cases where the $\inf$ in \eqref{eq:val-theta} is actually a $\min$, i.e., is attained for a broken line $\Gamma$ which we call a $\val_{v,p}$-minimizing broken line.  We will show that such minimizing $\Gamma$ must satisfy a ``tautness'' condition.

\begin{exam}\label{ex:Nplus}
    Recall that $\s{M}^+\subset M_{\R}\oplus \sQ_{\R}$ denotes the $\R_{\geq 0}$-span of the exponents appearing in $\f{D}$, and $\s{N}^+$ denotes the dual monoid.  For any $u\in M\oplus \sQ$, generic $p\in M_{\R}$, and $v\in \s{N}^+$, we have $\val_v(\vartheta_{u,p})=u\cdot v$.  This is because when broken lines bend, their attached exponents can only change by adding an element of $\s{M}^+$, and this cannot decrease the dual pairing with $\s{N}^+$, so the minimal dual pairing is attained by the straight broken line.
\end{exam}

We will need the following lemma:

\begin{lem}\label{lem:val-ku}
    Assume $\f{D}$ is positive. 
 Let $v\in N_\R \oplus \check{\sQ}^{\gp}_\R$, $u\in M\oplus \sQ$, $p\in M_{\R}$ generic, and $k\in \Z_{\geq 0}$.  Then $\val_{v,p}(\vartheta_{ku})=k\val_{v,p}(\vartheta_{u})$.
\end{lem}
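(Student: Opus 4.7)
The plan is to establish the identity by proving both inequalities separately, using different techniques for each direction. The case $k = 0$ is immediate: the only broken line with initial exponent $(0,0)$ is the constant path at $p$ with attached monomial $1$, so $\vartheta_0 = 1$ and $\val_{v,p}(\vartheta_0) = 0 = 0 \cdot \val_{v,p}(\vartheta_u)$. I will assume $k \geq 1$ for the rest of the argument.

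For the inequality $\val_{v,p}(\vartheta_{ku}) \leq k\,\val_{v,p}(\vartheta_u)$, I would introduce a ``scaling'' construction on broken lines. Given any broken line $\Gamma$ for $\vartheta_u$ with endpoint $p$, I would construct a companion broken line $k\Gamma$ that traces the same piecewise linear path (legitimate because a segment's direction depends only on the direction, not the magnitude, of its attached $M$-exponent), starts with initial monomial $z^{ku}$, and at each bend selects the term $z^{ku'}$ in $f^{ks(m \cdot n)} = (f^{s(m \cdot n)})^k$ whenever $\Gamma$ selects $z^{u'}$ in $f^{s(m \cdot n)}$. Positivity of $\f{D}$ is essential here: it ensures that $z^{ku'}$ has positive coefficient in $(f^{s(m\cdot n)})^k$ since it appears in the ``all copies'' contribution to the multinomial expansion, with no cancellation possible. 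The resulting final exponent of $k\Gamma$ is exactly $ku_\Gamma$, so by \eqref{eq:val-theta},
\begin{align*}
\val_{v,p}(\vartheta_{ku}) \leq \inf_{\Gamma} (ku_\Gamma) \cdot v = k\,\val_{v,p}(\vartheta_u).
\end{align*}

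For the reverse inequality, I would use the multiplicative structure. Expanding the product gives
\begin{align*}
\vartheta_u^k = \sum_{u'} \alpha(u,\ldots,u;u')\,\vartheta_{u'},
\end{align*}
where Example \ref{eg:str}(2) yields $\alpha(u,\ldots,u;ku) = 1$, and Corollary \ref{cor:strong-pos} makes all structure constants nonnegative. By positivity of $\f{D}$, each $\vartheta_{u'}$ lies in $\wh{A}^{\can,+}$, so $\val_{v,p}$ acts as a semiring homomorphism, giving
\begin{align*}
k\,\val_{v,p}(\vartheta_u) = \val_{v,p}(\vartheta_u^k) = \min_{u':\,\alpha(u,\ldots,u;u')\neq 0} \val_{v,p}(\vartheta_{u'}) \leq \val_{v,p}(\vartheta_{ku}),
\end{align*}
since $ku$ is among the indices with nonzero structure constant. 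Combining both bounds gives the claimed equality.

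The main obstacle is verifying that the scaling construction $\Gamma \mapsto k\Gamma$ produces a genuine broken line with final exponent exactly $ku_\Gamma$, step-by-step along the path. Positivity of $\f{D}$ is indispensable here: without it, cancellations in the multinomial expansion of $(f^{s(m\cdot n)})^k$ could eliminate the target monomial $z^{ku'}$, breaking the construction. The rest of the argument is a straightforward combination of the algebraic structure constant identity and the semiring morphism property of $\val_{v,p}$ on positive elements.
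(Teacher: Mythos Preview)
Your proof is correct and follows essentially the same approach as the paper's: both arguments obtain $\val_{v,p}(\vartheta_{ku}) \leq k\,\val_{v,p}(\vartheta_u)$ via the scaling construction $\Gamma \mapsto k\Gamma$ on broken lines (using positivity to ensure the term $z^{ku'}$ survives in $(f^{s(m\cdot n)})^k$), and the reverse inequality from the expansion $\vartheta_u^k = \vartheta_{ku} + [\text{universally positive terms}]$ together with the semiring-morphism behavior of $\val_{v,p}$ on $\wh{A}^{\can,+}$. Your treatment is slightly more explicit about the $k=0$ case and about why positivity is needed in the multinomial step, but the substance is identical.
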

\begin{proof}
By Example \ref{eg:str}, $\vartheta_{u,p}^k=\vartheta_{ku,p}+\text{[higher $\sQ$-degree terms]}$.  By strong and universal positivity, all terms will have positive coefficients and therefore cannot cancel, so
\begin{align*}
    k\val_{v,p}(\vartheta_{u})  &= \val_{v,p}(\vartheta_{u}^k)\\
                            &=\val_{v,p}(\vartheta_{ku}+\text{[universally positive terms]}) \\
                            &\leq \val_{v,p}(\vartheta_{ku}).
\end{align*}

On the other hand, for each broken line $\Gamma$ contributing to $\vartheta_{u,p}$, there is a corresponding broken line $\Gamma'$ contributing to $\vartheta_{ku,p}$ with the same support but whose attached exponents are $k$ times the corresponding attached exponents of $\Gamma$.  Indeed, this is clear for the initial straight segment, and it follows inductively for later segments: if $c_{i+1}z^{u_{i+1}}$ is a term in $c_i z^{u_i} f^{u_i \cdot n_i}$, then $c'_i z^{ku_i}f^{ku_i\cdot n_i}$ includes a term of the form $c'_{i+1}z^{ku_{i+1}}$.  Thus, the exponents contributing to $\vartheta_{ku,p}$ include $kw$ for each exponent $w$ contributing to $\vartheta_{u,p}$, so $\val_{v,p}(\vartheta_{ku})\leq k\val_{v,p}(\vartheta_u)$.  The claim follows.
\end{proof}

\subsection{Tropical functions}\label{sub:trop-fun}
  Given any $f\in \wh{A}$, define the \textbf{tropicalization} of $f$ to be 
\begin{align*}
    f^{\trop}:N_{\R}\oplus \check{\sQ}^{\gp}_{\R}\rar \R^{\min}, \qquad v\mapsto \val_v(f).
\end{align*}
Note that $f^{\trop}$ is an infimum of linear functions, hence is convex and piecewise-linear on $N_{\R}\oplus \check{\sQ}^{\gp}_{\R}$.  Furthermore, we see that $f^{\trop}:N_{\R}\oplus \check{\sQ}^{\gp}_{\R}\rar \R^{\min}$ is upper semicontinuous, and it is continuous except possibly on the boundary between where it is finite and where it is $-\infty$.

\begin{lem}\label{lem:inf=min}
    Let $v\in N_{\R}\oplus \check{\sQ}^{\gp}_{\R}$ be a rational point where $f^{\trop}$ is finite or a (possibly irrational) point in the interior of the locus where $f^{\trop}$ is finite.  Then the infimum defining $f^{\trop}(v)$ is a minimum---that is, $f^{\trop}(v)=u\cdot v$ for some $u\in M\oplus \sQ$ appearing as an exponent of a term in $f$.
\end{lem}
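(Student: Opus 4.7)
The plan is to split into the two cases named in the statement: the rational case is a direct discreteness argument, and the interior case is a perturbation-plus-compactness argument that exploits the linearity of the functions being infimized.

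First I would unpack the structure of $f\in\wh A$. Writing $f=\sum_{u\in S}c_u z^u$ where $S\subset M\oplus\sQ$ is the support, the definition $\wh A=\varprojlim A/\s I^k$ forces, for each $q\in\sQ$, only finitely many $m$ with $(m,q)\in S$ (reduction mod $\s I^k$ must be a finite sum). This mainly serves to justify that the ``$\inf$'' in the definition of $f^{\trop}$ really is an infimum over a well-defined countable set of lattice points. Then by definition, $f^{\trop}(v)=\inf_{u\in S}u\cdot v$.

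For case 1, suppose $v=(w,s)$ is rational. Choose $d\in\Z_{>0}$ with $dw\in N$ and $ds\in\check\sQ^{\gp}$. For any $u=(m,q)\in M\oplus\sQ$ the pairing $u\cdot v=\tfrac1d(dm\cdot w+q\cdot ds)/1$ lies in the discrete set $\tfrac1d\Z$. Since $f^{\trop}(v)$ is assumed finite, $\{u\cdot v:u\in S\}$ is a bounded-below subset of $\tfrac1d\Z$, and thus its infimum is attained by some $u\in S$.

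For case 2, let $v$ lie in the interior of the finite locus and set $r=f^{\trop}(v)$. If no $u\in S$ attains $u\cdot v=r$, there is a sequence of distinct $u_i\in S$ with $u_i\cdot v\searrow r$. Distinct lattice points in $M\oplus\sQ^{\gp}$ cannot accumulate, so $|u_i|\to\infty$; passing to a subsequence, write $\hat u_i=u_i/|u_i|\to\hat u$ with $|\hat u|=1$. The identity $u_i\cdot v=|u_i|\,\hat u_i\cdot v$ together with $|u_i|\to\infty$ and $u_i\cdot v\to r$ finite forces $\hat u\cdot v=0$. Pick $\xi\in N_{\R}\oplus\check\sQ^{\gp}_{\R}$ with $\hat u\cdot\xi<0$ (possible since $\hat u\neq 0$ pairs nontrivially with the dual space). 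Since $v$ is interior, $v+\epsilon\xi$ lies in the finite locus for all sufficiently small $\epsilon>0$; but
\[
u_i\cdot(v+\epsilon\xi)=u_i\cdot v+\epsilon|u_i|\,\hat u_i\cdot\xi\longrightarrow-\infty,
\]
so $f^{\trop}(v+\epsilon\xi)=-\infty$, contradicting the interior hypothesis.

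The main obstacle is case 2: one must make precise the idea that any non-attaining minimizing sequence escapes to infinity in a direction orthogonal to $v$, and then convert this asymptotic direction into a local perturbation that drives the tropicalization to $-\infty$. Once the orthogonality $\hat u\cdot v=0$ is in hand, the rest is a routine duality argument, and case 1 is essentially free from the discreteness of the value group.
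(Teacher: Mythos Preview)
Your proof is correct and follows essentially the same approach as the paper. Case~1 is identical. For case~2 the paper phrases the argument in terms of the sets $H_\epsilon=\{u: c<u\cdot v<c+\epsilon\}$ and dual ``slabs'' $B_{c-K}^\vee$, whereas you use a subsequential limit on the unit sphere to extract a direction $\hat u$ orthogonal to $v$; both are packaging for the same idea that a non-attaining minimizing sequence escapes to infinity along a $v$-orthogonal direction, and a small perturbation of $v$ in a direction negative on $\hat u$ then forces $f^{\trop}=-\infty$ nearby.
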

\begin{proof}
Let $c=f^{\trop}(v)$ and let $F\subset M\oplus \sQ$ be the set of exponents of terms in $f$.  If $kv\in N\oplus \check{\sQ}^{\gp}$ for some $k\in \Z_{>0}$ (i.e., $v$ is rational) then the values $u\cdot v$ for $u\in F$ must lie in $\frac{1}{k}\Z$.  The claim for rational $v$ follows from the discreteness of this set.

Now suppose $v$ is possibly irrational and is in the interior of the set where $f^{\trop}$ is finite. 
 Define $H_{\epsilon}$ to be the set $\{u\in M_{\R}\oplus \sQ_{\R}|c<u\cdot v < c+\epsilon\}$.  If the infimum is not attained, then $H_{\epsilon}\cap F$ is nonempty for all $\epsilon>0$.  Let $B\subset N_{\R}\oplus \check{\sQ}^{\gp}_{\R}$ be a small neighborhood of $v$.  For any $K\in \R_{>0}$, the set $$B_{c-K}^{\vee} \coloneqq \{u\in M_{\R}\oplus \sQ_{\R}|u\cdot w\geq c-K \text{ for all } w\in B\}$$ has bounded intersection with $H_{\epsilon}$.  Since $F$ is discrete, $H_{\epsilon}\cap B_{c-K}^{\vee}\cap F$ must be empty for sufficiently small $\epsilon>0$.  But then the assumption that $H_{\epsilon}\cap F\neq \emptyset$ implies that there must be $u\in H_{\epsilon}\cap F$ that take values smaller than $c-K$ at points of $B$.  Since $B$ and $K$ are arbitrary, it follows that $f^{\trop}=-\infty$ at points arbitrarily close to $v$, so $v$ is not in the interior of the locus where $f^{\trop}$ is finite.
\end{proof}

For $f\in \wh{A}^{\can}$ and generic $p\in M_{\R}$, we define $f^{\trop}_p\coloneqq \iota_p(f)^{\trop}$.  We will be particularly interested in the tropical theta functions $\vartheta_{u,p}^{\trop}$.  Lemma \ref{lem:val-ku} can be reinterpreted as saying that
\begin{align*}
    \vartheta_{ku,p}^{\trop}=k\vartheta_{u,p}^{\trop}.
\end{align*}

Note that Example \ref{ex:Nplus} and our strict convexity assumption on $\s{M}^+$ imply that the locus where $\vartheta_{u,p}^{\trop}$ is finite always has nonempty interior (at least including the interior of $\s{N}^+$) where we can apply Lemma \ref{lem:inf=min}.

\subsection{Limiting representations of functions}\label{sub:lim}

Recall the canonical identifications $\iota_p:\wh{A}^{\can}\risom \wh{A}_p=\wh{A}$ as in \S \ref{sub:BL-and-Theta}.  These were defined for all generic $p\in M_{\R}$.  When we prove theta reciprocity in \S \ref{sec:Lambda}, it will be helpful to have the following construction that works even for non-generic $p$.

Let $p\in M_{\R}$ (not necessarily generic) and fix a generic direction $\mu\in M_{\R}$.  For each generic $\epsilon >0$, the point $p+\epsilon \mu$ will be generic, so we can define $\iota_{p+\epsilon \mu}$.  For any fixed $k>0$, there will be a chamber of $\f{D}^k$ which contains $p+\epsilon \mu$ for all sufficiently small $\epsilon > 0$.  Thus, as we decrease $\epsilon$, the morphism $\iota_{p+\epsilon \mu}$ stabilizes to order $k$ when $\epsilon$ gets sufficiently small---that is, the composition $\iota_{p+\epsilon \mu}:A^{\can} \risom \wh{A}_{p+\epsilon \mu} \rar A_k$ does not depend on the choice of sufficiently small $\epsilon>0$.  We thus obtain a well-defined isomorphism
\begin{align*}
    \lim_{\epsilon \rar 0^+} \iota_{p+\epsilon \mu}: \wh{A}^{\can} \risom \wh{A}.
\end{align*}
We may denote this copy of $\wh{A}$ by $\lim_{\epsilon\rar 0^+} \wh{A}_{p+\epsilon \mu}$.

Recall that for each $f\in \wh{A}^{\can}$, we write $f_p$ to denote $\iota_p(f)$.  We similarly write $$\lim_{\epsilon \rar 0^+} f_{p+\epsilon \mu}\coloneqq \lim_{\epsilon \rar 0^+} \iota_{p+\epsilon \mu} (f).$$  Note that $f_p=\lim_{\epsilon \rar 0^+} f_{p+\epsilon \mu}$ whenever $p$ is generic.  In particular, we can define $\lim_{\epsilon \rar 0^+} \vartheta_{u,p+\epsilon \mu}$ for each $u\in M\oplus \sQ$.\footnote{The exponents appearing in $\lim_{\epsilon \rar 0^+} \vartheta_{u,p+\epsilon \mu}$ can alternatively be understood as coming from a ``non-generic broken line'' in the sense of \cite[Def. 3.3]{CMN}.}

\begin{lem}\label{lem:epsilon}
    Let $\f{D}_p$ be the scattering diagram $\{(W+\R p, f,n) \mid (W,f,n)\in \f{D}, p\in W\}$.  That is, for each wall $(W,f,n)\in \f{D}$ with $p\in W$, there is a corresponding wall $(W+\R p,f,n) \in \f{D}_p$.  Let $\mu,\mu'\in M_{\R}$ be generic, and let $\gamma$ be a path from the chamber of $\f{D}_p$ containing $\mu$ to the chamber containing $\mu'$.  Then for any $f\in \wh{A}^{\can}$,
    \begin{align*}
        \lim_{\epsilon \rar 0^+} f_{p+\epsilon \mu'} = E_{\gamma}^{\f{D}_p}\left(\lim_{\epsilon \rar 0^+} f_{p+\epsilon \mu}\right).
    \end{align*}
\end{lem}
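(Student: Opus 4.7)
The plan is to establish the equality modulo $\s{I}^k$ for each $k \in \N$, since the limits are defined by such truncations. Fix $k$ and work with the finite sub-diagrams $\f{D}^k \subset \f{D}$ and $\f{D}_p^k \subset \f{D}_p$ consisting of walls nontrivial modulo $\s{I}^k$. I would compare the path-ordered product $E_{\gamma}^{\f{D}_p^k}$ with $E_{\sigma_\epsilon}^{\f{D}^k}$ along the ``rescaled'' path
\begin{align*}
\sigma_\epsilon(t) \coloneqq p + \epsilon\, \gamma(t),
\end{align*}
which connects $p + \epsilon \mu$ to $p + \epsilon \mu'$ in $M_\R$.

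By consistency of $\f{D}$ (Lemma \ref{lem:CPS}), $f_{p + \epsilon \mu'} = E_{\sigma_\epsilon}^{\f{D}}(f_{p + \epsilon \mu})$ whenever $\sigma_\epsilon$ is generic with respect to $\f{D}$, so the core task reduces to showing that for all sufficiently small $\epsilon > 0$ (depending on $k$) the operators $E_{\sigma_\epsilon}^{\f{D}^k}$ and $E_{\gamma}^{\f{D}_p^k}$ agree as endomorphisms of $\wh{A}/\s{I}^k$.

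The key step is a bijective correspondence between wall crossings. Walls $(W,f,n) \in \f{D}^k$ with $p \notin W$ are bounded away from $p$, so $\sigma_\epsilon$ misses them once $\epsilon$ is small. For walls with $p \in W$, the relation $p \cdot n = 0$ yields $\sign(\sigma_\epsilon(t \pm \delta) \cdot n) = \sign(\gamma(t \pm \delta) \cdot n)$, so hyperplane-crossings synchronize. Moreover, since $W$ is a convex cone containing $p$, one has $p + \epsilon \gamma(t) \in W \iff \gamma(t) \in W + \R p$ for small enough $\epsilon$: if $\gamma(t) = w + sp$ with $w \in W$, then $p + \epsilon \gamma(t) = (1+\epsilon s) p + \epsilon w \in W$ once $1 + \epsilon s > 0$; conversely $\gamma(t) = \frac{1}{\epsilon}(p + \epsilon \gamma(t)) - \frac{1}{\epsilon} p \in W + \R p$. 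This matches the walls of $\f{D}_p^k$ crossed by $\gamma$ with the walls of $\f{D}^k$ crossed by $\sigma_\epsilon$, preserving scattering functions, normals, and crossing signs, so the two path-ordered products agree modulo $\s{I}^k$.

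Passing to the limit $\epsilon \rar 0^+$ gives the identity modulo $\s{I}^k$, and since $k$ is arbitrary the full identity follows. I expect the main obstacle to be the genericity bookkeeping: choosing $\gamma$ generic with respect to $\f{D}_p$ so that the induced $\sigma_\epsilon$ is generic with respect to $\f{D}$ (in particular, avoids $\Joints(\f{D})$), and uniformly controlling how small $\epsilon$ must be across the finitely many crossings of $\f{D}_p^k$.
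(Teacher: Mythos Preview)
Your proposal is correct and follows essentially the same approach as the paper: reduce modulo $\s{I}^k$, observe that for small $\epsilon$ only the finitely many walls of $\f{D}^k$ containing $p$ are relevant, and invoke consistency (Lemma~\ref{lem:CPS}) to transport $f$ along the resulting short path. The paper is slightly less explicit---it simply notes that one can choose a path from $p+\epsilon\mu$ to $p+\epsilon\mu'$ crossing only walls through $p$---whereas you exhibit the concrete rescaling $\sigma_\epsilon = p + \epsilon\gamma$ and spell out the bijection of wall crossings; both arguments are the same in substance.
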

\begin{proof}
    Fix $k>0$, and choose $\epsilon$ small enough so that $p+\epsilon \mu$ and $p+\epsilon \mu'$ each share a (possibly different) chamber of $\f{D}^k$ with $p$.  Then we can choose $\gamma$ from $p+\epsilon \mu$ to $p+\epsilon \mu'$ which crosses no walls of $\f{D}^k$ other than those which contain $p$, so $E_{\gamma}^{\f{D}^k}=E_{\gamma}^{\f{D}_p^k}$.  Lemma \ref{lem:CPS} thus implies the claim to order $k$.  Since $k$ was arbitrary, the claim holds to arbitrary finite order, hence also in the desired limit.
\end{proof}

\subsection{Generalizations}\label{sub:gen}
There are several other flavors and generalizations of scattering diagrams to which our results also apply.  For the sake of readability, do not work in the most general possible setting, but instead briefly outline some generalizations here.

\begin{enumerate}
    \item In place of $\wh{A}=\Z[x^M]\llb y^{\sQ}\rrb$, one might use the formal Laurent series ring $$\wh{A}^{\circ}\coloneqq \Z[x^M]\llb y^{\sQ^{\gp}}\rrb\coloneqq \Z[x^M][y^{\sQ^{\gp}}]\otimes_{\Z[x^M][y^\sQ]} \Z[x^M]\llb y^\sQ\rrb.$$
The scattering functions are still required to lie in $\wh{A}$, but the indices for the theta functions may be any $u\in M\oplus \sQ^{\gp}$, and the resulting theta functions yield a topological basis for $\wh{A}^{\circ}$.
    \item Rather than requiring the support of a wall to be a cone, one can more generally allow affine cones; i.e., not requiring the apex to be at the origin, and allowing $W$ parallel to $n^{\perp}$ rather than contained in $n^{\perp}$.  Incoming walls are then those whose support is closed under addition by $\R_{\geq 0} v$ for $-v$ the direction of the wall.  This generality is used, for example, when applying the ``perturbation trick'' in \cite[\S C.3]{GHKK} and throughout \cite{Man3}.
    \item Sometimes, especially in the cluster setting, one might view the scattering diagram as living not in $M_{\R}$ but in $M_{\R}\oplus \sQ^{\gp}_{\R}$ (often without specifying a splitting).  Then, rather than lying in $n^{\perp}$ for $n\in N$, the walls will lie in $(n')^{\perp}$ for $n'\in N\oplus \check{\sQ}^{\gp}$.  This can be somewhat more restrictive for the scattering functions since now $f\in 1+z^{m'}\Z\llb z^{m'}\rrb$ for $m'\in (n')^{\perp}$ rather than the less restrictive condition $f\in A_m^{\parallel}$ for $m\in n^{\perp}$.  On the other hand, this approach has the advantage of allowing for many more coordinate charts $\iota_p:\wh{A}^{\can}\rar \wh{A}_p$ because broken lines can now end at any generic $p$ in $M_{\R}\oplus \sQ^{\gp}_{\R}$ rather than just $p\in M_{\R}$.  Results like universal positivity still hold by the same arguments.    
    \item In the Gross-Siebert program, one typically chooses a $\sQ$-convex multivalued integral piecewise-linear function $\varphi$ on $M_{\R}$ \cite{GS11}.  This is essentially the data of an element $\kappa_{\rho}\in \sQ$ for a number of codimension-one polyhedral cones $\rho\in M_{\R}$ such that there locally exists a $\sQ_{\R}$-valued piecewise-linear function $\varphi$ on $M_{\R}$ with kinks given by the elements $\kappa_{\rho}$ (i.e., $\varphi$ on one side $\sigma^+$ of a wall $\rho$ is equal to the linear extension of $\varphi$ from the opposite side $\sigma^-$ plus $n_{\rho}\otimes \kappa_{\rho}$ for $n_{\rho}\in N$ a primitive vector vanishing along $\rho$ and positive on $\sigma^-$).  One then modifies the scattering diagram by inserting the additional ``slabs'' $(\rho,y^{\kappa_{\rho}},n_{\rho})$ for each of these cones $\rho$.  These slabs are treated like walls even though they are equivalent to $0$ modulo $\wh{\s{I}}$ instead of $1$.
    \item In \cite{GHS}, one considers scattering diagrams not in a vector space $M_{\R}$, but on a real polyhedral affine pseudomanifold $B$, possibly with a codimension-two singular locus $\Delta$.  The lattices $M$ and $N$ are replaced with sheaves of integral tangent/cotangent vectors on $B\setminus \Delta$.  By locally identifying $B\setminus \Delta$ with a neighborhood of the origin in $M_{\R}$, we may define scattering diagrams on $B$.  Here one incorporates a $\sQ$-convex multivalued integral piecewise-linear function $\varphi$ on $B$ as above, so the scattering diagram may include slabs.  Theta functions then are associated to the ``asymptotic monomials'' of $B$ and are again constructed via broken lines.
\end{enumerate}

We believe that the techniques of the next section---particularly the proof of VIT---apply to the above setups with almost no significant changes (except that the convexity condition on $\s{M}^+$ seems to require some nontrivial modification for the settings described in (5)).

\section{Tautness and the Valuative Independence Theorem}\label{sec:Taut-VIT}

\subsection{Rational broken lines}\label{sub:rational-bl}

In this subsection and the next, the positivity assumption on $\f{D}$ is only used for Lemma \ref{lem:Theta} and Corollaries \ref{cor:Theta-theta} and \ref{cor:Q-CPS}.

We wish to characterize the broken lines $\Gamma$ which are $\val_{v,p}$-minimizing.  To do this, we first define ``rational broken lines'' which will allow us to utilize the fact that a global minimum must also be a local minimum.

Consider $u\in M\oplus \sQ$ and a generic path $\gamma$ for a scattering diagram $\f{D}$ such that $u$ and $\gamma(t-\epsilon)$ lie on the same side of the walls crossed at time $t$.  Let $S^{\circ}_{u,\gamma,t}$ be the set of exponents of terms appearing in $E_{\gamma,t}(z^u)$, and let $\?{S}_{u,\gamma,t}$ be the convex hull of $S^{\circ}_{u,\gamma,t}$ in $M_{\bb{Q}}\oplus \sQ^{\gp}_{\Q}$.

Equivalently, let $(W_i,f_i,n_i)$ be the walls containing $\gamma(t)$.  Up to equivalence, we can assume each $n_i$ equals some common element $n\in N$ such that $u\cdot n>0$.  Denote $k=u\cdot n$ and $f\coloneqq\prod_i f_i$.  Then $\?{S}_{u,\gamma,t}$ is the convex hull in $M_{\bb{Q}}\oplus \sQ^{\gp}_{\Q}$ of the set $S_{u,\gamma,t}$ of elements of the form $u+km$ for $m\in M\oplus \sQ$ an exponent of a term in $f$.  We use this characterization to define $S_{u,\gamma,t}$ and $\?{S}_{u,\gamma,t}$ for all $u\in  M_{\Q}\oplus \sQ^{\gp}_{\Q}$, not just the lattice points.\footnote{One can even extend the definitions of $S_{u,\gamma,t}$ and $\?{S}_{u,\gamma,t}$ to all $u\in  M_{\R}\oplus \sQ^{\gp}_{\R}$, not just the rational points.  In this case, the convex hull is computed not in $M_{\bb{Q}}\oplus \sQ^{\gp}_{\Q}$ but in the $\Q$-span of $S_{u,\gamma,t}$ (which we note is still countable).  With this we can define rational broken lines and the sets $\Theta_{u,p}$ even for irrational $u$, and the results of this section still hold via the same arguments.\label{foot:irr}} 

Now define a \textbf{rational broken line} with initial exponent $u\in M_{\Q}\oplus \sQ_{\Q}$ and generic endpoint $p\in M_{\R}$ to be a continuous piecewise affine-linear path $\Gamma:\R_{\leq 0}\rar M_{\R}\setminus \Joints(\f{D})$, with elements of $M_{\Q}\oplus \sQ^{\gp}_{\Q}$---referred to as \textbf{exponents}---associated to each straight segment, such that
\begin{itemize}
    \item $\Gamma(0)=p$;
    \item the exponent associated to the initial straight segment is $u$.  In particular, an initial straight segment exists, i.e., $\Gamma$ has finitely many bends;
    \item if $(m,q)$ is the exponent associated to a straight segment of $\Gamma$, then $-\Gamma'=m$ everywhere on this segment;
    \item if the associated exponent changes from $u_i$ to $u_{i+1}$ when crossing $t=t_i$, then $u_{i+1}\in \?{S}_{u_i,\Gamma,t_i}$.
\end{itemize}
We write $u_{\Gamma}=(m_{\Gamma},q_{\Gamma})$ for the exponent associated to the final straight segment.

Note that if $u\in M\oplus \sQ$, and if in the last condition we instead required $u_{i+1}\in S^{\circ}_{u_i,\Gamma,t_i}$ (as opposed to the convex hull $\?{S}_{u_i,\Gamma,t_i}$), then the resulting data would be the support and attached exponents for an ordinary broken line.  We may abuse terminology and refer to such rational broken lines as ordinary broken lines.

For each $u\in M_{\Q}\oplus \sQ^{\gp}_{\Q}$ and each generic\footnote{For $u\in M_{\R}\oplus \sQ^{\gp}_{\R}$ (possibly irrational, cf. Footnote \ref{foot:irr}), the set $S_u$ of elements of $M_{\R}\oplus \sQ^{\gp}_{\R}$ which could be attached to a segment of a rational broken line with initial exponent $u$ is countable.  Now, given $p\in M_{\R}\oplus \sQ^{\gp}_{\R}$, if we work backwards from $p$ to construct a rational broken line whose final exponent lies in $S_u$, there will only be countably many possibilities, so these rational broken lines will not run into joints so long as $p$ is sufficiently generic relative to $u$.} $p\in M_{\R}$, define
\begin{align}\label{eq:ThetaQ}
    \Theta_{u,p}=\{u_{\Gamma}\in M_{\Q}\oplus \sQ^{\gp}_{\Q}| \Gamma \text{ a rational broken line with } \Ends(\Gamma)=(u,p)\}.
\end{align}
Then for each $v\in N_{\R}\oplus \check{\sQ}^{\gp}_{\R}$, define 
$$\val_v(\Theta_{u,p})=\inf_{w\in \Theta_{u,p}} w\cdot v.$$

We make the following observation:
\begin{lem}\label{lem:Theta}
    For $u\in M\oplus \sQ$ and $p$ generic, $\Theta_{u,p}$ is the same as the set of elements in $M_{\Q}\oplus \sQ^{\gp}_{\Q}$ of the form $\frac{1}{k}u_{\Gamma}$ for $k\in \Z_{\geq 1}$ and $\Gamma$ an ordinary broken line with endpoint $p$ and initial exponent $ku$.\footnote{Without assuming positivity of $\f{D}$, we still see that $\Theta_{u,p}$ includes all elements of the form $\frac{1}{k}u_{\Gamma}$ as described in the lemma, but the reverse containment seems unclear.\label{foot:no-pos}}
\end{lem}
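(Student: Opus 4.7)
The plan is to prove the two inclusions separately, with the forward direction following from a rescaling trick and the reverse direction crucially using positivity of $\f{D}$.

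For ``$\supseteq$'', given an ordinary broken line $\Gamma$ with $\Ends(\Gamma)=(ku,p)$, I would build a rational broken line with $\Ends=(u,p)$ and final exponent $\tfrac{1}{k}u_\Gamma$ by retaining the same support (reparameterized) and dividing every attached exponent by $k$. The key observation is that the defining formula $S_{u',\gamma,t}=\{u'+(u'\cdot n)m:m\text{ an exponent of }f\}$ is homogeneous of degree one in $u'$, so $S^\circ_{\frac{1}{k}u_i,\Gamma,t_i}=\tfrac{1}{k}S^\circ_{u_i,\Gamma,t_i}$. Hence the ordinary bend condition $u_{i+1}\in S^\circ_{u_i,\Gamma,t_i}$ becomes $\tfrac{1}{k}u_{i+1}\in S^\circ_{\frac{1}{k}u_i,\Gamma,t_i}\subseteq \?{S}_{\frac{1}{k}u_i,\Gamma,t_i}$, which is exactly the rational bend condition.

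For ``$\subseteq$'', given a rational broken line $\Gamma'$ with $\Ends(\Gamma')=(u,p)$ and final exponent $w$, I would find $K\in\Z_{>0}$ so that scaling $\Gamma'$ by $K$ produces an ordinary broken line $\Gamma$ with $\Ends(\Gamma)=(Ku,p)$ and $u_\Gamma=Kw$. At each bend, say at time $t_i$, with combined scattering function $f$ and normal $n_i$ chosen so that $u'_i\cdot n_i>0$, Carath\'eodory together with the fact that a rational point in the convex hull of rational points is a rational convex combination of them lets me write
\[
u'_{i+1}=u'_i+(u'_i\cdot n_i)\sum_j\lambda_j^{(i)}m_j^{(i)}
\]
with $\lambda_j^{(i)}\in\Q_{\geq 0}$, $\sum_j\lambda_j^{(i)}=1$ (finitely many nonzero), and $m_j^{(i)}$ ranging over exponents of terms of $f$. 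I would then take $K$ sufficiently divisible so that $Ku'_i$ and each $K(u'_i\cdot n_i)\lambda_j^{(i)}$ are non-negative integers at every bend.

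Finally, to verify that the scaled path (same image in $M_\R$, exponents multiplied by $K$) is an ordinary broken line, expand
\[
f^L=\sum_{\ell_1+\cdots=L}\binom{L}{\ell_1,\ldots}\prod_j c_j^{\ell_j}z^{\sum_j\ell_j m_j^{(i)}}
\]
with $L=K(u'_i\cdot n_i)$ and $\ell_j=K(u'_i\cdot n_i)\lambda_j^{(i)}$. The exponent coming from $z^{Ku'_i}f^L$ is then exactly $Ku'_{i+1}$, with coefficient a positive multinomial times $\prod_j c_j^{\ell_j}$. The main obstacle is showing this coefficient does not vanish, but because $\f{D}$ is positive and the $m_j^{(i)}$ are genuine exponents of $f$ we have $c_j>0$, so the coefficient is strictly positive. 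This yields $w=\tfrac{1}{K}u_\Gamma$ as required, and makes transparent the failure alluded to in Footnote~\ref{foot:no-pos}: without positivity the multinomial sum could cancel and this inclusion could fail.
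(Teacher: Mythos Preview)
Your argument is correct and is precisely the reasoning the paper has in mind: the paper presents this lemma as an ``observation'' without a written proof, but the footnote and the closely related rescaling argument in the proof of Lemma~\ref{lem:val-ku} make clear that exactly this scale-by-$k$ (or $K$) trick is intended, with positivity ensuring the multinomial contribution survives in the $\subseteq$ direction. One small notational slip: $S^{\circ}_{u',\gamma,t}$ is only defined for integral $u'$, so in the $\supseteq$ direction you should write $S_{\frac{1}{k}u_i,\Gamma,t_i}=\tfrac{1}{k}S_{u_i,\Gamma,t_i}$ (hence $\bar S_{\frac{1}{k}u_i}=\tfrac{1}{k}\bar S_{u_i}$) and then use $u_{i+1}\in S^{\circ}_{u_i}\subseteq \bar S_{u_i}$ to conclude $\tfrac{1}{k}u_{i+1}\in \bar S_{\frac{1}{k}u_i}$.
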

Lemmas \ref{lem:Theta} and \ref{lem:val-ku} together imply the following:
\begin{cor}\label{cor:Theta-theta}
    For $u\in M\oplus \sQ$ and $p$ generic, $\val_v(\Theta_{u,p})=\val_v(\vartheta_{u,p})$.
\end{cor}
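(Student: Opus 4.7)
The plan is to derive the equality as an immediate consequence of Lemmas \ref{lem:Theta} and \ref{lem:val-ku}, both just established. The strategy is to rewrite $\val_v(\Theta_{u,p})$ as a double infimum---over $k \in \Z_{\geq 1}$ and over ordinary broken lines with initial exponent $ku$---then collapse each inner infimum to a valuation of $\vartheta_{ku,p}$, and finally apply the scaling identity to finish.

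Concretely, I would first use Lemma \ref{lem:Theta} to present
$$\Theta_{u,p} = \left\{\tfrac{1}{k}\, u_\Gamma \ \Big| \ k \in \Z_{\geq 1}, \ \Gamma \text{ an ordinary broken line with } \Ends(\Gamma) = (ku, p)\right\}.$$
Substituting into the definition of $\val_v(\Theta_{u,p})$ and pulling the factor $1/k$ out of the inner infimum yields
$$\val_v(\Theta_{u,p}) = \inf_{k \geq 1} \tfrac{1}{k} \inf_{\Gamma: \Ends(\Gamma) = (ku,p)} (u_\Gamma \cdot v) = \inf_{k \geq 1} \tfrac{1}{k}\, \val_v(\vartheta_{ku,p}),$$
where the second equality is formula \eqref{eq:val-theta}, valid because $\f{D}$ is assumed positive.

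Next I would invoke Lemma \ref{lem:val-ku} to replace $\val_v(\vartheta_{ku,p})$ with $k\, \val_v(\vartheta_{u,p})$, so every term in the outer infimum equals $\val_v(\vartheta_{u,p})$, giving the desired identity $\val_v(\Theta_{u,p}) = \val_v(\vartheta_{u,p})$. Note that the $k=1$ term already attains this common value, so no subtlety arises from the unbounded index $k$, and the equality remains valid even when $\val_v(\vartheta_{u,p}) = -\infty$. There is essentially no obstacle: the corollary is a purely formal consequence of the two preceding lemmas, and its role is to repackage the valuation of a theta function as an infimum over the more flexible family of rational broken lines, setting the stage for the local-minimum arguments that drive the tautness results to follow.
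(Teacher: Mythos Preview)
Your proof is correct and follows exactly the approach the paper intends: the paper simply states that Lemmas~\ref{lem:Theta} and~\ref{lem:val-ku} together imply the corollary, and your argument spells out precisely this deduction.
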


\begin{cor}\label{cor:Q-CPS}
Let $u\in M_{\Q}\oplus \sQ_{\Q}$.  If $p_1,p_2\in M_{\R}$ are generic points in the same chamber of $\f{D}$, then $\Theta_{u,p_1}=\Theta_{u,p_2}$.
\end{cor}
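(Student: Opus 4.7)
The plan is to reduce to the case $u \in M \oplus \sQ$ by a scaling argument, and then to exploit Lemma \ref{lem:Theta} in combination with Lemma \ref{lem:CPS} and the positivity of $\f{D}$.

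First, I would establish a scaling bijection. Fix any $\ell \in \Z_{>0}$ with $\ell u \in M \oplus \sQ$. Both the base exponent and the multiplier $u \cdot n$ appearing in the definition of $S_{u,\gamma,t}$ scale linearly in $u$, so $\?{S}_{\ell u, \gamma, t} = \ell \cdot \?{S}_{u,\gamma,t}$. Consequently, a rational broken line $\Gamma$ with initial exponent $u$ ending at $p$ can be converted into one with initial exponent $\ell u$ ending at the same $p$ by multiplying every attached exponent by $\ell$ and compressing the time parameter of each segment by a factor of $\ell$; the geometric image in $M_\R$ is preserved under this operation. The correspondence is a bijection, so $\Theta_{u,p} = \frac{1}{\ell}\Theta_{\ell u,p}$, and it suffices to prove $\Theta_{\ell u, p_1} = \Theta_{\ell u, p_2}$.

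Next, since $\ell u \in M \oplus \sQ$, Lemma \ref{lem:Theta} expresses $\Theta_{\ell u, p}$ as $\bigcup_{k \geq 1} \frac{1}{k} \{u_\Gamma : \Gamma \text{ ordinary, } \Ends(\Gamma) = (k\ell u, p)\}$. It therefore suffices to show, for each $k \geq 1$, that the set of final exponents of ordinary broken lines with initial exponent $k\ell u$ and endpoint $p_1$ coincides with the corresponding set for $p_2$. Since $p_1$ and $p_2$ lie in the same connected component of $M_\R \setminus \Supp(\f{D})$, I can choose a generic path $\gamma$ from $p_1$ to $p_2$ lying entirely within that chamber; this path crosses no wall, so $E_\gamma$ is the identity, and Lemma \ref{lem:CPS} yields $\vartheta_{k\ell u, p_1} = \vartheta_{k\ell u, p_2}$ as elements of $\wh{A}$. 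By positivity of $\f{D}$, every monomial attached to a broken line has a positive coefficient, so no cancellation is possible; the set of exponents appearing in $\vartheta_{k\ell u, p}$ agrees precisely with the set of final exponents $u_\Gamma$ of broken lines contributing to it. Equality of the two Laurent expansions therefore forces equality of these two sets of final exponents, which concludes the argument.

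The only subtle point is the scaling bijection in the first step, which reduces to the identity $\?{S}_{\ell u, \gamma, t} = \ell \cdot \?{S}_{u, \gamma, t}$ together with a routine verification that the reparametrized path satisfies the defining conditions for a rational broken line with initial exponent $\ell u$. Everything else is a direct application of previously established results.
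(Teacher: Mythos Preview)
Your proof is correct and follows the same approach as the paper, which simply says the result is immediate from Lemma~\ref{lem:Theta} combined with Lemma~\ref{lem:CPS}. Your explicit scaling reduction from rational $u$ to integral $\ell u$, together with the positivity argument identifying the exponent sets of $\vartheta_{k\ell u,p_1}$ and $\vartheta_{k\ell u,p_2}$, just fills in the details the paper leaves implicit (since Lemma~\ref{lem:Theta} is stated only for $u\in M\oplus\sQ$).
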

\begin{proof}
    This is immediate from Lemma \ref{lem:Theta} combined with Lemma \ref{lem:CPS}.
\end{proof}

Recall the limiting theta functions of \S \ref{sub:lim}.  In light of Lemma \ref{lem:Theta}, we define 
\begin{align}\label{eq:limT}
\lim_{\epsilon \rar 0^+} \Theta_{u,p+\epsilon \mu}\coloneqq \left\{\left.\frac{1}{k} w\in M_{\Q}\oplus \sQ_{\Q}^{\gp}\right|k\in \Z_{\geq 1}, w \text{ an exponent of a term in }\lim_{\epsilon\rar 0^+} \vartheta_{ku,p+\epsilon \mu}\right\}
\end{align}
for possibly non-generic $p$ and generic $\mu$.

\subsection{Taut broken lines}\label{sub:taut}

Let $\Gamma$ be a rational broken line in $M_{\R}$ with endpoint $p$.  Suppose $\Gamma$ crosses a wall $\f{d}\in \f{D}$ at time $t$. Let $u_s$ denote the element of $M_{\Q}\oplus \sQ^{\gp}_{\Q}$ attached to $\Gamma$ at generic time $s$, and let $T_t:M_{\R}\oplus \sQ^{\gp}_{\R}\rar M_{\R}\oplus \sQ^{\gp}_{\R}$ be the unique linear map which fixes $\f{d}\oplus \sQ^{\gp}_{\R}$ and maps $u_{t-\epsilon}$ to $u_{t+\epsilon}$ for small $\epsilon >0$.  Then define $T^{\vee}_t:N_{\R}\oplus \check{\sQ}^{\gp}_{\R}\rar N_{\R}\oplus \check{\sQ}^{\gp}_{\R}$ to be the adjoint map, so $T_t(u)\cdot v=u\cdot T_t^{\vee}(v)$ for all $u\in M_{\R}\oplus \sQ^{\gp}_{\R}$ and $v\in N_{\R}\oplus \check{\sQ}^{\gp}_{\R}$.

Now fix $v\in N_{\R}\oplus \check{\sQ}^{\gp}_{\R}$.  We can transport $v$ backwards along $\Gamma$, applying $T_t^{\vee}$ whenever crossing backwards over a point $\Gamma(t)$ where $\Gamma$ bends.  Let $v_t$ be the resulting element of $N_{\R}\oplus \check{\sQ}^{\gp}_{\R}$ associated to $\Gamma$ at generic time $t\in \R_{\leq 0}$.

Recall from our definition of rational broken lines that for each $t\in \R_{<0}$ and small $\epsilon>0$, $u_{t+\epsilon}\in  \?{S}_{u_{t-\epsilon},\Gamma,t}$.  We say that $\Gamma$ is $v$-\textbf{taut} if for each $t\in \R_{<0}$, we have \begin{align}\label{eq:taut-ineq}
u_{t+\epsilon} \cdot v_{t+\epsilon} \leq u'_t\cdot v_{t+\epsilon}
\end{align}
for all $u'_t\in \?{S}_{u_{t-\epsilon},\Gamma,t}$.  In other words, being $v$-taut means that $\Gamma$ always takes a bend which minimizes the pairing with $v_t$ immediately on the new side of the wall.

\begin{rem}\label{rem:taut-integral}
    We note that for $u\in M\oplus \sQ$ and $v$ generic, $v$-taut broken lines are in fact ordinary broken lines.  This is because each minimizing bend must correspond to a vertex of $\?{S}_{u_{t-\epsilon},\Gamma,t}$, hence an element of $S^{\circ}_{u_{t-\epsilon},\Gamma,t}$.
\end{rem}

\subsection{Minimizing broken lines are taut}

\begin{thm}\label{thm:min-taut}
Without assuming that $\f{D}$ is positive, let $\Gamma$ be a rational broken line with initial exponent $u\in M_{\Q}\oplus \sQ_{\Q}$ and generic endpoint $p$ such that $\Gamma$ is $\val_{v,p}$-minimizing amongst such rational broken lines; i.e., $u_{\Gamma} \cdot v=\val_{v,p}(\Theta_u)$.  Then $\Gamma$ is $v$-taut.  If $u\in M\oplus \sQ$ and $v$ is generic, then $\Gamma$ is an integral broken line and $\val_{v,p}(\Theta_u)=\val_{v,p}(\vartheta_u)$.

If $\f{D}$ is positive and $u\in M\oplus \sQ$, then we have $\val_v(\Theta_{u,p})=\val_v(\vartheta_{u,p})$ for all $v$, and the condition for $\Gamma$ to be $\val_{v,p}$-minimizing can be checked amongst ordinary broken lines; i.e., $\Gamma$ with ends $(u,p)$ must be $v$-taut whenever $u_{\Gamma} \cdot v=\val_{v,p}(\vartheta_u)$.
\end{thm}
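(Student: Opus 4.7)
The plan is to argue the first claim by contradiction: assume $\Gamma$ is $\val_{v,p}$-minimizing among rational broken lines from $u$ to $p$, but that some bend of $\Gamma$ at time $t_0$ is not $v$-taut. First I would establish the key identity
\[
u_{t+\epsilon}\cdot v_{t+\epsilon} \;=\; T_t(u_{t-\epsilon})\cdot v_{t+\epsilon} \;=\; u_{t-\epsilon}\cdot T_t^\vee(v_{t+\epsilon}) \;=\; u_{t-\epsilon}\cdot v_{t-\epsilon},
\]
which follows immediately from the definition of $T_t$ by adjointness. Telescoping over all bends of $\Gamma$ yields the global identity $u_\Gamma\cdot v = u\cdot v_{-\infty}$, where $v_{-\infty}$ is the backward transport of $v$ to the initial segment. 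Non-tautness at $t_0$ supplies some $u^{**}\in \?{S}_{u_{t_0-\epsilon},\Gamma,t_0}$ with $u^{**}\cdot v_{t_0+\epsilon} < u_{t_0+\epsilon}\cdot v_{t_0+\epsilon}$, and by passing to an extreme point we may take $u^{**}$ to be a vertex of the convex hull.

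The main step is to construct, for some small $\lambda>0$, a rational broken line $\tilde\Gamma_\lambda$ from $u$ to $p$ with strictly smaller valuation. I would take $\tilde\Gamma_\lambda$ to agree with $\Gamma$ on $(-\infty,t_0]$ and bend at $t_0$ to the convex combination $u^*(\lambda)\coloneqq (1-\lambda)u_{t_0+\epsilon}+\lambda u^{**}$, which still lies in $\?{S}_{u_{t_0-\epsilon},\Gamma,t_0}$ by convexity. The hard part will be ensuring the endpoint condition $\tilde\Gamma_\lambda(0)=p$: the perturbed direction $-m^*(\lambda)$ differs from $-m_{t_0+\epsilon}$, so each subsequent wall crossing of $\Gamma$ is shifted by $O(\lambda)$ and the bend there must be chosen within a slightly deformed convex hull $\?{S}$. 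I would argue via an implicit-function-style continuity argument: the convex hulls $\?{S}$ depend continuously on the incoming exponent, each subsequent bend provides convex-hull freedom transverse to its wall, and together these give enough degrees of freedom to impose the codimension-$(\dim M)$ constraint $\tilde\Gamma_\lambda(0)=p$ by bend adjustments of size $O(\lambda)$. Applying the preserved-pairing identity to $\tilde\Gamma_\lambda$ then expresses $u_{\tilde\Gamma_\lambda}\cdot v$ as a continuous function of $\lambda$ whose $\lambda=0$ value is $u_\Gamma\cdot v$ and whose first-order variation is dominated by $\lambda(u^{**}-u_{t_0+\epsilon})\cdot v_{t_0+\epsilon}<0$, contradicting minimality.

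For the integrality assertion, when $u\in M\oplus\sQ$ and $v$ is generic, each $v$-taut bend forces $u_{t+\epsilon}$ to minimize the generic linear functional $u'\mapsto u'\cdot v_{t+\epsilon}$ over the rational polytope $\?{S}_{u_{t-\epsilon},\Gamma,t}$; such a minimum is attained at a unique vertex, which lies in $S^\circ_{u_{t-\epsilon},\Gamma,t}\subset M\oplus\sQ$. Integrality of $u_{t-\epsilon}$ then inductively forces integrality of $u_{t+\epsilon}$, so starting from the integer initial exponent $u$ the whole broken line $\Gamma$ is ordinary and $\val_{v,p}(\Theta_u)=u_\Gamma\cdot v=\val_{v,p}(\vartheta_u)$. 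In the positive case, Corollary \ref{cor:Theta-theta} already gives $\val_v(\Theta_{u,p})=\val_v(\vartheta_{u,p})$ for every $v$, so any ordinary broken line $\Gamma$ realizing $u_\Gamma\cdot v=\val_{v,p}(\vartheta_u)$ automatically minimizes $\val_{v,p}$ among all rational broken lines from $u$ to $p$, and the first part of the theorem applies to conclude that $\Gamma$ is $v$-taut.
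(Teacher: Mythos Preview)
Your telescoping identity and the reduction to a single non-taut bend are correct and match the paper's setup, but the endpoint-fixing step is both overcomplicated and not adequately justified. You propose adjusting the bends at all subsequent walls via an implicit-function argument, claiming the convex-hull freedom there suffices to hit the $\dim M$-dimensional target $p$. This is not clear: there may be no bends after $t_0$, and in any case each bend contributes freedom only in a single direction (the scattering direction of that wall), so there is no reason these span $M_\R$. The paper's approach avoids this entirely by observing that the linear maps $T_t$ for $t>t_0$ can simply be \emph{reused unchanged}. Each $T_t$ has the form $w\mapsto w+(w\cdot n)m^*$ for some fixed $m^*$ in the convex hull of the scattering exponents, so applying the same $T_t$ to the perturbed exponent $\tilde u_{t-\epsilon}$ still lands in $\?{S}_{\tilde u_{t-\epsilon},\tilde\Gamma,t}$ and yields a valid rational broken line. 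With the same $T_t$'s, the telescoping identity gives $u_{\tilde\Gamma}\cdot v=\tilde u_{t_0+\epsilon}\cdot v_{t_0+\epsilon}$ \emph{exactly} (not just to first order), so the strict decrease is immediate. The endpoint is then fixed by translating the whole configuration and adjusting the crossing times (equivalently, tracing backward from $p$ with the prescribed sequence of directions); since one works in $\f{D}^k$ for fixed $k$, only finitely many walls are involved and joints are avoided for small perturbations.

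Your integrality paragraph also has a gap. You correctly note that genericity of $v$ forces each $v$-taut bend to a unique vertex, hence $\Gamma$ is ordinary; but the conclusion $u_\Gamma\cdot v=\val_{v,p}(\vartheta_u)$ requires knowing that the exponent $u_\Gamma$ actually appears in $\vartheta_{u,p}$, i.e.\ is not cancelled by other ordinary broken lines with the same final exponent. Without positivity this is not automatic. The paper closes this by observing that a $v$-taut broken line with prescribed endpoint $p$ and prescribed final exponent $u_\Gamma$ is \emph{unique}: tracing backward from $p$ in direction $(u_\Gamma)_M$, the $v$-taut bend at each wall is uniquely determined by generic $v$, so the entire broken line, including its initial exponent, is forced. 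Hence $\Gamma$ is the only contributor to the $z^{u_\Gamma}$-coefficient, and no cancellation occurs.
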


\begin{proof}
    Suppose $\Gamma$ is a rational broken line which has initial exponent $u$ and endpoint $p$ but which is not $v$-taut.  Fix $k\in \Z_{\geq 1}$ large enough so that all non-trivial bends and at least one non $v$-taut bend of $\Gamma$ are along walls of $\f{D}^k$.
    Choose $t_0\in \R_{<0}$ where the $v$-tautness condition fails along a wall of $\f{D}^k$.  While keeping $\Gamma|_{(-\infty,t_0]}$ fixed and all bends after $t_0$ unchanged---i.e., the linear maps $T_t$ for $t>t_0$ are unchanged (except that the values of $t$ where the wall-crossings occur might shift), so $v_{t_0+\epsilon}$ is unchanged---we can slightly deform the bend of $\Gamma$ at $t_0$ to get a new broken line $\wt{\Gamma}$ which has a smaller value for $-\wt{\Gamma}'(t_0+\epsilon)\cdot v_{t_0+\epsilon}$.  This deformation can be taken to be small enough to avoid deforming through $\Joints(\f{D}^k)$, so the walls of $\f{D}^k$ crossed by $\wt{\Gamma}$ are unchanged.  Furthermore, with some additional deformation that avoids $\Joints(\f{D}^k)$---translating $\wt{\Gamma}$ and then possibly adjusting the lengths of the straight segments without adjusting the bends---we can make it so that $\wt{\Gamma}$ still ends at the same generic point $p$.  Thus, $u_{\wt{\Gamma}}=-\wt{\Gamma}'(0)\in \Theta_{u,p}$.  Since
    \begin{align*}
     -\wt{\Gamma}'(0)\cdot v = -\wt{\Gamma}'(t_0+\epsilon)\cdot v_{t_0+\epsilon} < -\Gamma'(t_0+\epsilon)\cdot v_{t_0 +\epsilon}= -\Gamma'(0)\cdot v,
    \end{align*}
    we see that $\Gamma$ could not have been $\val_{v,p}$-minimizing.  The first claim follows.

    The claim that $v$-taut $\Gamma$ is an ordinary broken line whenever $u\in M\oplus \sQ$ and $v$ is generic follows from Remark \ref{rem:taut-integral}.  Furthermore, this $\Gamma$ is the unique broken line with ends $(u,p)$ and final exponent $u_{\Gamma}$ because any other such broken line $\wt{\Gamma}$ would also be $\val_{v,p}$-minimizing, hence $v$-taut, hence  uniquely determined by starting at $\wt{\Gamma}(0)=p$ with exponent $u_{\Gamma}$, then tracing backwards in the $(u_{\Gamma})_M$-direction, taking $v$-taut bends at each wall-crossing (and these bends are uniquely determined since $v$ is generic).  It follows that $m_{\Gamma}$ appears as an exponent in $\vartheta_{u,p}$ (it will not be canceled by other broken lines, even without assuming positivity). The equality $\val_v(\vartheta_{u,p})\leq \val_v(\Theta_{u,p})$ follows immediately, while the reverse inequality follows from Footnote \ref{foot:no-pos} (the rational broken lines include all the ordinary broken lines).
    
    With the positivity assumption, the equality $\val_v(\Theta_{u,p})=\val_v(\vartheta_{u,p})$ is just Corollary \ref{cor:Theta-theta}.  The fact that it suffices to consider ordinary broken lines follows immediately from this equality.
\end{proof}

\begin{rem}\label{rem-not-positive}
    We note that the positivity assumption is important because we do not generally know how to determine the existence of a $\val_{v,p}$-minimizing rational broken line, considered amongst all rational broken lines. 
 Lemma \ref{lem:inf=min} shows that $\val_{v,p}$-minimizing ordinary broken lines almost always exist if $\val_{v,p}$ is finite, but extending this to the rational setting relies on Corollary \ref{cor:Theta-theta}, which relies on Lemma \ref{lem:val-ku}, which utilizes positivity.
\end{rem}

\subsection{Tropicalizations of distinct theta functions are distinct}

We return to always assuming that $\f{D}$ is positive.

\begin{thm}\label{thm:trop-determines-u}
    Fix generic elements $v\in N_{\R}\oplus \check{\sQ}_{\R}^{\gp}$ and $p\in M_{\R}$.  If $\val_{v,p}(\vartheta_{u_1})$ and $\val_{v,p}(\vartheta_{u_2})$ are finite and equal, then $u_1=u_2$.
\end{thm}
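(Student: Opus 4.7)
My plan is to show that the $v$-taut minimizing broken lines associated to $\vartheta_{u_1,p}$ and $\vartheta_{u_2,p}$ must coincide, forcing $u_1=u_2$. The key technical step is to establish that, for generic $v$, a $v$-taut broken line is uniquely determined by its endpoint and final exponent.

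First, I will upgrade the finite infima $\val_{v,p}(\vartheta_{u_i}) = c$ to attained minima. By placing $v$ in the interior of the (nonempty, by Example \ref{ex:Nplus}) locus where $\vartheta_{u_i,p}^{\trop}$ is finite, Lemma \ref{lem:inf=min} guarantees that the infima in \eqref{eq:val-theta} are realized by broken lines $\Gamma_i$ with $\Ends(\Gamma_i) = (u_i, p)$ and $u_{\Gamma_i}\cdot v = c$. By Theorem \ref{thm:min-taut}, each $\Gamma_i$ is $v$-taut and an ordinary broken line. Since the set of exponents appearing in either $\vartheta_{u_1,p}$ or $\vartheta_{u_2,p}$ is countable, further genericness of $v$ (avoiding the countable family of hyperplanes $\{v : u\cdot v = u'\cdot v\}$ for distinct exponents $u, u'$ in this set) ensures that distinct exponents yield distinct pairings with $v$. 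In particular, the equality $u_{\Gamma_1}\cdot v = u_{\Gamma_2}\cdot v = c$ forces $u_{\Gamma_1} = u_{\Gamma_2} =: u^\ast$.

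Next, I will prove the uniqueness claim by tracing a $v$-taut broken line backward from $p$ in direction $m^\ast := (u^\ast)_M$. If no wall is encountered, the broken line is straight with initial exponent $u^\ast$ and we are done. Otherwise, at the first backward wall crossing at time $t_0$ with normal $n$, any preceding exponent has the form $u_{t_0-\epsilon} = u^\ast - km$, where $k = (u^\ast)_M \cdot n$ (determined by $u^\ast$ alone, and nonzero, since otherwise the bend is trivial) and $m$ is an exponent of the scattering function at this wall. The $v$-taut condition---that $u^\ast$ minimizes $(u_{t_0-\epsilon} + km')\cdot v = u^\ast \cdot v + k(m'-m)\cdot v$ over all such $m'$---reduces to $m$ being the $v$-optimal exponent (a minimum or maximum of $m'\cdot v$ according to $\sgn(k)$) among the exponents of the scattering function. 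For $v$ avoiding further countably many hyperplanes of this flavor, this optimizer is unique, uniquely pinning down $u_{t_0-\epsilon}$.

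Iterating this argument using the backward-transported valuation $v_t$ at each subsequent crossing, every bend and attached exponent of the backward trace is uniquely determined. Since both $\Gamma_1$ and $\Gamma_2$ are $v$-taut broken lines with the same endpoint $p$ and the same final exponent $u^\ast$, and each has only finitely many bends by definition, both must coincide with this uniquely determined backward trace. In particular, their initial exponents agree: $u_1 = u_2$. I expect the main obstacle to be the uniqueness step---carefully verifying that the $v$-taut condition at each backward wall crossing, combined with the genericness of $v$, indeed singles out a unique preceding exponent from the given subsequent one.
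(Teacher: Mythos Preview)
Your proposal is correct and follows essentially the same approach as the paper's proof: both arguments use Lemma~\ref{lem:inf=min} to realize the finite infima by minimizing broken lines, invoke Theorem~\ref{thm:min-taut} to conclude these are $v$-taut, use genericness of $v$ to force the final exponents to coincide, and then trace backward from $(p,u^\ast)$ using the $v$-tautness condition (with generic $v$ ensuring each bend is uniquely determined) to conclude the broken lines---hence their initial exponents---agree. The paper is terser, citing the backward-trace uniqueness from the proof of Theorem~\ref{thm:min-taut} rather than spelling it out, but your more detailed treatment of that step is accurate.
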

\begin{proof}
 Since $v$ is generic, there can only be one element $u'\in M\oplus \sQ$ for which $\val_{v,p}(\vartheta_u)=u'\cdot v$.  This $u'$ must be the final exponent $u_{\Gamma}$ for a $\val_{v,p}$-minimizing broken line $\Gamma$ with ends $(u,p)$---here we use Lemma \ref{lem:inf=min} and the genericness of $v$ to ensure such a broken line exists.  By Theorem \ref{thm:min-taut}, $\Gamma$ must be $v$-taut.  Furthermore, as we saw in the proof of Theorem \ref{thm:min-taut}, a broken line $\Gamma$ with endpoint $p$ and given $u_{\Gamma}$ is uniquely determined by the $v$-tautness condition---one simply starts tracing $\Gamma$ backwards from $p$ in the direction $-\Gamma'(0)=(u_{\Gamma})_M$, bending as necessary to ensure the $v$-tautness (the genericness of $v$ ensures the uniqueness of the $v$-taut bends).  In particular, the initial exponent $u$ is uniquely determined, so the claim follows.
\end{proof}

In the language of \cite[Def. 3.1]{JP}, Theorem \ref{thm:trop-determines-u} essentially says that tropical theta functions are tropically independent.

\subsection{The Valuative Independence Theorem}

\begin{thm}[Valuative independence]\label{thm:indep}
For any $v\in N_{\R}\oplus \sQ^{\gp}_{\R}$ and generic $p\in M_{\R}$,
    \begin{align}\label{eq:VIT}
        \val_{v,p}\left(\sum_{u\in M\oplus \sQ} c_u\vartheta_u\right)=\inf_{u:c_u\neq 0} \val_{v,p}(\vartheta_u)
    \end{align}
assuming that the value on the right-hand side is finite. 
\end{thm}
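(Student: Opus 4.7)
The inequality $\val_{v,p}(\sum c_u \vartheta_u) \geq \inf_{u: c_u \neq 0} \val_{v,p}(\vartheta_u) =: r(v)$ is immediate for all $v$: every exponent $w$ appearing with nonzero coefficient in $\iota_p(\sum c_u \vartheta_u)$ is an exponent of some $\iota_p(\vartheta_u)$ with $c_u \neq 0$, so $w \cdot v \geq \val_{v,p}(\vartheta_u) \geq r(v)$. The content of the theorem is the reverse inequality, which I would establish first for generic $v$ and then extend by continuity.

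Suppose $v$ is generic, meaning that it lies in the interior of the finiteness locus of the concave function $r$ and avoids the countable union of hyperplanes on which the conclusion of Theorem \ref{thm:trop-determines-u} could fail. The argument of Lemma \ref{lem:inf=min} (which only uses discreteness of the exponent set in $M \oplus \sQ$) applied to the countable set $W \coloneqq \bigcup_{u: c_u \neq 0} W_u$, where $W_u$ denotes the exponent set of $\iota_p(\vartheta_u)$, shows that $r(v) = \inf_{w \in W} w \cdot v$ is attained by some $w_0 \in W_{u_0}$ with $c_{u_0} \neq 0$. Theorem \ref{thm:trop-determines-u}, combined with $r(v) \leq \val_{v,p}(\vartheta_{u_1})$ for every $u_1$ with $c_{u_1} \neq 0$, forces each $u_1 \neq u_0$ with $c_{u_1} \neq 0$ to satisfy $\val_{v,p}(\vartheta_{u_1}) > r(v)$; hence every exponent of $\iota_p(\vartheta_{u_1})$ pairs with $v$ strictly above $r(v)$ and thus cannot equal $w_0$. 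Consequently, by positivity of $\f{D}$, the coefficient of $z^{w_0}$ in $\iota_p(\sum c_u \vartheta_u)$ equals $c_{u_0}$ times the strictly positive coefficient of $z^{w_0}$ in $\iota_p(\vartheta_{u_0})$, which is nonzero. Therefore $\val_{v,p}(\sum c_u \vartheta_u) \leq w_0 \cdot v = r(v)$.

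For arbitrary $v$ with $r(v)$ finite, I would approximate by choosing $v''$ in the interior of $\s{N}^+$ (nonempty by Footnote \ref{foot:N+}) at which $r$ is finite, and setting $v_\epsilon \coloneqq (1-\epsilon) v + \epsilon v''$ for small $\epsilon > 0$. By convexity of the finiteness loci of $r$ and of $\val_{\cdot, p}(\sum c_u \vartheta_u)$, the points $v_\epsilon$ lie in the interior of both loci and---after a small perturbation---may be assumed generic in the sense of Step 1; Step 1 then yields $\val_{v_\epsilon, p}(\sum c_u \vartheta_u) = r(v_\epsilon)$. Both sides are concave upper semicontinuous functions of $v$, hence continuous on the interior of their finiteness loci; concavity along the segment from $v$ to $v''$ combined with upper semicontinuity forces $\lim_{\epsilon \to 0^+}$ of each side to equal its value at $v$, so the equality extends to $v$. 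The hard part is verifying that the infimum defining $r(v)$ is attained for generic $v$ in Step 1; this is essential to isolate a single minimizing theta function whose distinguished exponent cannot be cancelled, and it relies on both the discreteness of $M \oplus \sQ$ and on Footnote \ref{foot:N+} providing a suitable $v''$ so the continuity argument is non-vacuous.
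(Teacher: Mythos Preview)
Your proof is correct and follows essentially the same route as the paper's: the easy inequality, then Theorem~\ref{thm:trop-determines-u} for generic $v$ to rule out cancellation of the minimizing monomial, then a convexity/continuity argument using the top-dimensional cone $\s{N}^+$ to pass from generic $v$ to arbitrary $v$. Your version spells out more explicitly the isolation of the minimizing exponent $w_0$ and the concavity-plus-upper-semicontinuity argument along a segment, but the ideas and the dependencies (Lemma~\ref{lem:inf=min}, Theorem~\ref{thm:trop-determines-u}, Example~\ref{ex:Nplus}) are the same as in the paper.
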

\begin{proof}
    Note that it is always true, for any (possibly infinite) collection of functions $f_i\in \wh{A}^{\can}$ indexed by $i$ such that $\sum_i f_i$ is well-defined in $\wh{A}^{\can}$, that $$\val_{v,p}\left(\sum_i f_i\right)\geq \inf_i\left(\val_{v,p}(f_i)\right).$$ 
    If $v$ is generic and the right-hand side is finite (hence a $\min$ by Lemma \ref{lem:inf=min}), then the inequality can only fail to be an equality if two or more elements $f_i$ have the same valuation---this is necessary for the $\val_{v,p}$-minimizing terms to be able to cancel.  We know from Theorem \ref{thm:trop-determines-u} that we will never have $\val_{v,p}(c_{u_1}\vartheta_{u_1})=\val_{v,p}(c_{u_2}\vartheta_{u_2})$ for generic $v$ and distinct $u_1,u_2$, so the claim follows for generic $v$.  
    
    To extend to arbitrary (not necessarily generic) $v$, first recall from Example \ref{ex:Nplus} that every $\vartheta_{u,p}^{\trop}$ is finite on $\s{N}^+$, which we've assumed is top-dimensional in $N_{\R}\oplus \sQ^{\gp}_{\R}$.  By convexity of $\vartheta_{u,p}^{\trop}$, the set where the right-hand side of \eqref{eq:VIT} is finite is a single convex cone, necessarily a top-dimensional cone since it contains $\s{N}^+$.  So every $v$ for which the right-hand side of \eqref{eq:VIT} is finite must be arbitrarily close to some \textit{generic} values $v'$ where the right-hand side is finite and a min, and we know from above that the equality holds for such $v'$.  The claim for $v$ now follows from the fact that tropicalized functions (hence both sides of \eqref{eq:VIT} when viewed as functions of $v$) are continuous on the loci where they are finite.
\end{proof}

\subsection{Valuative independence of theta functions with specialized coefficients}\label{sub:lin-indep}

Here we restrict to theta functions $\vartheta_m\coloneqq \vartheta_{(m,0)}$ with $m\in M$ and, similarly, valuations $\val_{n,p}\coloneqq \val_{(n,0),p}$ with $n\in N_{\R}$.  All uses of the dual pairing between $M\oplus \sQ^{\gp}$ and $N\oplus \check{\sQ}^{\gp}$ are restricted to the dual pairing between $M$ and $N$ (and similarly when tensored with $\R$).  E.g.,  $\vartheta_{m,p}^{\trop}\coloneqq \vartheta_{(m,0),p}^{\trop}|_{(N_\R,0)}$.  Moreover, the definition of $v$-taut from \S \ref{sub:taut} is adapted to define $n$-taut via the following modifications:
\begin{itemize}
    \item when defining rational broken lines, we ignore (i.e., mod out) the $\sQ^{\gp}_{\Q}$-directions to define $\?{S}_{m_i,\Gamma,t_0}$ and $\Theta_{m,p}$ in $M_{\Q}$;
    \item the linear maps $T_t$ are replaced with their restrictions/projections to $M_{\R}\rar M_{\R}$, and the adjoint maps $T_t^{\vee}$ are the resulting adjoint maps $N_{\R}\rar N_{\R}$.  We thus define elements $n_t$ similarly to the definition of $v_t$ in \S \ref{sub:taut};
    \item the tautness condition \eqref{eq:taut-ineq} is modified by restricting the dual pairing and using $n_{t+\epsilon}$ in place of $v_{t+\epsilon}$.
\end{itemize}
    With these changes, the proof of Theorem \ref{thm:min-taut} is easily adapted to show that $\val_n$-minimizing broken lines are $n$-taut for this modified version of tautness.

  We will need the following additional restriction on our scattering diagrams.

  \begin{ass}\label{assum:A-parallel}
      Recall that each scattering function $f$ lies in $A_m^{\parallel}\coloneqq \Z[x^m]\llb y^{\sQ}\rrb\cap (1+\wh{\s{I}})\subset \wh{A}$ for some $m\in M$.  We assume that $1$ is the only term of $f$ whose $x$-exponent is $0$.  Additionally, let $x^{km}$ be the highest power of $x^m$ appearing in $f$ (assuming such a highest power exists).  Then there is only one term in $f$ whose $x$-exponent is this $km$.

      In particular, it suffices to assume that each scattering function $f$ is contained in $1+z^u\Z\llb z^u\rrb$ for some $u=(m,q)$.  This is always the case in the cluster setting (cf. Lemma \ref{lem:D-skew}) and in the setup of \S \ref{sub:gen}(3).
  \end{ass}
  
  Now let $A^{\midd}_p\subset \wh{A}_p$ be the subalgebra generated over $\Z[y^{\sQ}]$ by only those theta functions $\vartheta_{m,p}$, $m\in M$, which are \textit{finite} Laurent polynomials.  Consider any ring $R$ and any ring homomorphism $\nu:\Z[y^{\sQ}]\rar R$ which does not map monomials to $0$ or to zero divisors---i.e., for all $q\in \sQ$ and $b\in \Z\setminus \{0\}$, $\nu(by^q)$ is nonzero and is not a zero divisor.  Extend $\nu$ to $A^{\midd}_p\rar R[x^M]$ in the natural way.  Given $$f=\sum_{m\in M} \left(\sum_{q\in \sQ} b_{m,q} y^q\right) x^m\in A_p^{\midd}$$ let us denote $a_m\coloneqq \sum_q \nu(b_{m,q}y^{q})\in R$ for each $m\in M$, so $\nu(f)=\sum_m a_m x^m$.  Then for each $n\in N_{\R}$ we define
  \begin{align}\label{eq:valnuf}
 \val_n(\nu(f))\coloneqq \nu(f)^{\trop}(n)\coloneqq \min_{m:a_m\neq 0}  m\cdot n
  \end{align}
where $\min(\emptyset)\coloneqq \infty$.  In this way we define $\nu(f)^{\trop}:N_{\R}\rar \R\cup \{\infty\}$.
\begin{thm}\label{thm:nu}
    Under Assumption \ref{assum:A-parallel}, for $\nu$ as above and any $\Z$-linear combination of theta functions $f=\sum_m b_m \vartheta_{m,p}\in A_p^{\midd}$, $b_m\in \Z$, we have $f^{\trop}=\nu(f)^{\trop}$.  Furthermore, the theta functions $\nu(\vartheta_{m,p})$ for $\vartheta_{m,p}\in A_p^{\midd}$ satisfy valuative independence:
    \begin{align}\label{eq:val-ind-specialized}
        \val_{n,p}\left(\sum_{m\in M} c_m \nu(\vartheta_m) \right) = \min_{m:c_m\neq 0} \val_{n,p}\nu(\vartheta_m)
    \end{align} for any coefficients $c_m\in R$ with only finitely many being nonzero.  In particular, the theta functions $\nu(\vartheta_{m,p})$ for $\vartheta_{m,p}\in A_p^{\midd}$ are linearly independent.
\end{thm}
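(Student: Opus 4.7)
The plan is to prove the theorem in three stages: first establish $\vartheta_{m,p}^{\trop}=\nu(\vartheta_{m,p})^{\trop}$ for a single theta function, then extend to $\Z$-linear combinations via VIT, and finally derive valuative independence (hence linear independence) for $R$-linear combinations of the $\nu(\vartheta_{m,p})$'s.  Writing $\vartheta_{m,p}=\sum_{m'\in M} B_{m'} x^{m'}$ with $B_{m'}=\sum_q b_{m',q} y^q\in \Z[y^{\sQ}]$, the inequality $\nu(\vartheta_{m,p})^{\trop}\geq \vartheta_{m,p}^{\trop}$ is automatic since $\nu$ can only shrink the $M$-support.  Using Lemma \ref{lem:inf=min}, for generic $n\in N_{\R}$ the value $\vartheta_{m,p}^{\trop}(n)$ is attained uniquely at a vertex $m^*$ of $\Newt_M(\vartheta_{m,p})\subset M_{\R}$; the reverse inequality will follow from showing $\nu(B_{m^*})\neq 0$.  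I will in fact prove the stronger claim that $B_{m^*}$ is a single monomial $c_* y^{q^*}$ with $c_*\in \Z_{>0}$, so $\nu(B_{m^*})=c_*\nu(y^{q^*})\neq 0$ by the monomial-preservation hypothesis; upper semicontinuity and continuity-on-finite-locus (Section \ref{sub:trop-fun}) then extend the resulting equality from generic $n$ to all $n$.

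To prove the single-monomial claim, pick $n$ in the interior of the normal cone of $m^*$ and perturb to $v=(n,\epsilon s)\in N_{\R}\oplus \check{\sQ}^{\gp}_{\R}$ with $s$ generic and $\epsilon>0$ small; then $v$ is generic.  By Theorem \ref{thm:trop-determines-u}, $\val_v(\vartheta_{m,p})$ is uniquely attained at some $u^*=(m^*,q^*)$ in the support of $\vartheta_{m,p}$, and by Theorem \ref{thm:min-taut} this exponent comes from a unique $v$-taut broken line $\Gamma$ with coefficient $c_\Gamma=c_*\in \Z_{>0}$.  Under Assumption \ref{assum:A-parallel} (especially in the stronger sufficient form where each scattering function has shape $1+\sum_k c_k z^{k u_w}$ for a single direction $u_w=(m_w,q_w)$), every bend of a broken line rigidly couples its $q$-increment to its $m$-increment along the wall's direction.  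Varying $s$ within the normal cone of $m^*$ keeps the $M$-minimizer fixed at $m^*$, and for each such generic $s$ Theorem \ref{thm:trop-determines-u} produces a $v$-taut broken line ending at some $(m^*,q)$; uniqueness of this $v$-taut broken line together with the coupled-increment structure then forces the achievable $q$-values to collapse to the single value $q^*$, proving the claim.

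For the linear combination $f=\sum_m b_m \vartheta_{m,p}$, VIT (Theorem \ref{thm:indep}) gives $f^{\trop}(n)=\min_m \vartheta_m^{\trop}(n)$, uniquely attained at generic $n$ by some $m_0$ at a vertex $m^*$ of $\Newt_M(\vartheta_{m_0,p})$.  For every $m\neq m_0$ with $b_m\neq 0$, the strict inequality $\vartheta_m^{\trop}(n)>m^*\cdot n$ forces $m^*\notin \supp_M(\vartheta_{m,p})$, so the $x^{m^*}$-coefficient of $f$ is $b_{m_0} c_* y^{q^*}$, a single monomial whose image under $\nu$ is nonzero; equality extends to all $n$ by continuity.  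The identical argument handles $g=\sum_m c_m \nu(\vartheta_{m,p})$ with $c_m\in R$: the $x^{m^*}$-coefficient of $g$ equals $c_{m_0}\nu(B_{m^*,m_0})=c_{m_0} c_* \nu(y^{q^*})$, nonzero because $c_*\nu(y^{q^*})=\nu(c_* y^{q^*})$ is a non-zero-divisor and $c_{m_0}\neq 0$.  Hence the LHS of \eqref{eq:val-ind-specialized} is at most $m^*\cdot n$, matching the RHS; the reverse inequality is the general $\val_n(\sum h_m)\geq \min_m \val_n(h_m)$.  Linear independence then follows immediately, for a nontrivial relation $\sum c_m \nu(\vartheta_m)=0$ would force the LHS of \eqref{eq:val-ind-specialized} to be $+\infty$ while the RHS remains finite for some $n$.

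The main obstacle is the single-monomial property at vertices $m^*$ of $\Newt_M(\vartheta_{m,p})$.  Theorem \ref{thm:min-taut} yields uniqueness of the $v$-taut broken line at each generic $v$, but that identifies only one $(m^*,q^*)$ per $v$; ruling out alternative broken lines with the same $M$-projection but a different $\sQ$-exponent requires careful use of the rigid coupling of $x$- and $y$-increments imposed by Assumption \ref{assum:A-parallel}, and this step is where most of the technical labor is concentrated.
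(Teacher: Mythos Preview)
Your three-stage plan and the later steps (using VIT for $\Z$-linear combinations, then deriving valuative and linear independence for $R$-linear combinations) are correct and match the paper's approach closely.  The gap is exactly where you flag it: the argument that the $x^{m^*}$-coefficient $B_{m^*}$ of $\vartheta_{m,p}$ is a single monomial in $y$ is not justified by your varying-$s$ maneuver.  Perturbing to $v=(n,\epsilon s)$ and invoking Theorem~\ref{thm:trop-determines-u} gives, for each $s$, one $v$-taut broken line ending at some $(m^*,q(s))$, but this does not preclude a second, non-$v$-taut broken line with final $M$-exponent $m^*$ and a different $\sQ$-exponent.  Your ``coupled-increment'' remark does not close this: two broken lines with the same final $M$-exponent could traverse different walls, so even if each individual bend rigidly couples its $M$- and $\sQ$-increments, the cumulative $\sQ$-exponent need not coincide unless the two broken lines are literally the same path.

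The paper resolves this by working directly with the \emph{$n$-taut} framework set up at the start of \S\ref{sub:lin-indep} (projecting out $\sQ$), rather than detouring through $v=(n,\epsilon s)$.  The point is that for generic $n\in N_{\R}$, the $M$-minimizer $m^*$ is unique, so \emph{every} broken line with final $M$-exponent $m^*$ is $\val_{n,p}$-minimizing, hence $n$-taut by (the $N$-restricted version of) Theorem~\ref{thm:min-taut}.  Genericness of $n$ forces each $n$-taut bend to pick out an \emph{extremal} $x$-exponent of the wall's scattering function, and Assumption~\ref{assum:A-parallel} says precisely that the extremal $x$-exponent is achieved by a unique term, hence a unique $y$-exponent.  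Thus backward-tracing from $(p,m^*)$ determines the entire broken line (including its $\sQ$-data) uniquely, giving $B_{m^*}=c_\Gamma y^{q_\Gamma}$ in one stroke.  Your $\epsilon$-perturbation argument can be salvaged along these lines---for $\epsilon$ small the $v$-taut bend first selects the $n$-extremal $M$-exponent and then breaks ties by $s$, but Assumption~\ref{assum:A-parallel} says there are no ties, so the broken line is $s$-independent; combined with the observation that the $v$-minimum must then be $(m^*,q^*)\cdot v$ for all $s$, any other $(m^*,q')$ in the support would contradict this for suitable $s$---but this is a roundabout way of recovering exactly the $n$-tautness argument.  Note also that only the weak form of Assumption~\ref{assum:A-parallel} (uniqueness at the two extremal $x$-exponents) is needed, not the stronger sufficient form you invoke.
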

\begin{proof}
     As in Theorem \ref{thm:trop-determines-u} but restricted to $N_{\R}\oplus \{0\}$, for each $\vartheta_{m,p}\in A^{\midd}_p$ and any generic $n\in N_{\R}$, 
     $\val_{n,p}(\vartheta_m)$ equals $m_{\Gamma}\cdot n$ where $m_{\Gamma}$ is the final $x$-exponent for some $n$-taut broken line $\Gamma$ with ends $(m,p)$.    Furthermore, $\Gamma$ is the unique $n$-taut broken line ending at $p$ with final exponent $m_{\Gamma}$---the genericness of $n$ forces the $M$-component of each $n$-taut bend to be extremal, and then Assumption \ref{assum:A-parallel} ensures that the extremal bend can only be attained for one term.  By assumption, $\nu(c_{\Gamma}y^{q_{\Gamma}})$ is neither $0$ nor a zero divisor.
    
    Now consider $f=\sum b_m \vartheta_{m,p}\in A^{\midd}_p$.  By our above observations, the monomials determining the linear parts of $f^{\trop}$ all have coefficients which do not vanish or cancel when applying $\nu$.  Thus, the tropicalization is unchanged after applying $\nu$.  
    
    The valuative independence claim \eqref{eq:val-ind-specialized} is more general since we allow coefficients $c_m\in R$ which might not lie in the image of $\nu$.  But we know that for generic $n$, the minimum on the right-hand side of \eqref{eq:val-ind-specialized} is attained for a unique monomial $\nu(b_{\Gamma}x^{m_{\Gamma}}y^{q_{\Gamma}})$ appearing as a term in a unique $\vartheta_m$ with $c_m\neq 0$.  Then \eqref{eq:val-ind-specialized} follows since $c_m\nu(b_{\Gamma}y^{q_{\Gamma}})$ is nonzero by assumption on $\nu$. Continuity lets us extend from generic $n$ to all $n$.
    
    In particular, if $g\coloneqq \sum_m c_m \nu(\vartheta_m)=0$, then $\val_n(g)=\infty$, hence every coefficient $c_m$ must have been $0$ to make the right-hand side of \eqref{eq:val-ind-specialized} equal to infinity.  This proves the linear independence. 
\end{proof}

\cite{GHKK} considers the case where $\nu$ is the map specializing from a cluster algebra with principal coefficients to a coefficient-free cluster algebra or to more general cluster algebras with specialized coefficients.  The injectivity of this $\nu$, and thus the linear independence of the theta functions with specialized coefficients, was conjectured to hold in general but has previously only been proved in special cases.\footnote{\cite[Thm. 0.3(7)]{GHKK} shows injectivity of $\nu$ under a certain convexity assumption on the seed data, while \cite[Thm. 7.16(7)]{GHKK} shows injectivity of $\nu$ for \textit{generic} specialized coefficients.  Injectivity of $\nu$ on the cluster complex is shown in \cite[Thm. 7.20]{GHKK}.  This is extended to the closure of the cluster complex in \cite[Prop. 5.19]{ManQin}.\label{foot:inj-nu}}  We have now proved the following:
\begin{cor}\label{cor:nu}
    The map $\nu$ of \cite[Thm. 0.3]{GHKK} is always injective.
\end{cor}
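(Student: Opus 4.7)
The plan is to deduce Corollary \ref{cor:nu} as a direct consequence of the linear independence portion of Theorem \ref{thm:nu}. The map $\nu$ of \cite[Thm. 0.3]{GHKK} is the coefficient-specialization homomorphism from a cluster algebra with principal coefficients to a cluster algebra with arbitrary specialized coefficients, and by construction it sends each principal-coefficient theta function $\vartheta_{m,p}$ (viewed inside the middle cluster algebra $A_p^{\midd}$) to the corresponding theta function $\nu(\vartheta_{m,p})$ in the target.

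First I would verify that the cluster scattering diagram falls into the framework of Theorem \ref{thm:nu}. The initial walls in the cluster setting have scattering functions of the form $1+z^u$, so each lies in $1+z^u\Z\llb z^u\rrb$, and this property is preserved throughout the consistent completion (cf.\ the remark just before Theorem \ref{thm:nu}, together with Lemma \ref{lem:D-skew}). Thus Assumption \ref{assum:A-parallel} is satisfied. Moreover, a coefficient specialization $\nu$ of the form considered in \cite[Thm. 0.3]{GHKK} sends each monomial $by^q$ to a nonzero non-zero-divisor of the target ring $R$, which is precisely the hypothesis imposed on $\nu$ before Theorem \ref{thm:nu}.

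Now I would use that the theta functions $\{\vartheta_{m,p}\}_{\vartheta_{m,p}\in A_p^{\midd}}$ form a $\Z[y^{\sQ}]$-basis of the domain of $\nu$. Any element of the domain may be written uniquely as $f=\sum_m b_m \vartheta_{m,p}$ with $b_m\in \Z[y^{\sQ}]$. Applying $\nu$ yields $\nu(f)=\sum_m \nu(b_m)\,\nu(\vartheta_{m,p})$. If $\nu(f)=0$, then setting $c_m\coloneqq \nu(b_m)\in R$, the linear independence statement in Theorem \ref{thm:nu} (the final sentence, taking coefficients in $R$) forces $c_m=\nu(b_m)=0$ for every $m$. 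Expanding $b_m=\sum_q b_{m,q}y^q$ and using that $\nu$ sends each monomial $b_{m,q}y^q$ to a nonzero element (and that distinct monomials specialize to linearly independent elements in the relevant setup of GHKK), one concludes $b_m=0$ for all $m$, so $f=0$.

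The only potentially nontrivial step is the final bookkeeping reduction from $\nu(b_m)=0$ to $b_m=0$; the main obstacle, such as it is, lies in matching conventions between the excerpt and \cite[Thm. 0.3]{GHKK} and confirming that $\nu$ there indeed satisfies the monomial-nondegeneracy required by Theorem \ref{thm:nu}. Once this identification is made, the corollary is immediate, and no further scattering-diagram machinery is required beyond what Theorem \ref{thm:nu} already supplies.
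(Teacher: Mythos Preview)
Your core strategy---invoke the linear-independence conclusion of Theorem \ref{thm:nu}---matches the paper exactly; the paper offers no separate proof, treating the corollary as immediate from that theorem.

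However, your description of the map $\nu$ from \cite[Thm.~0.3]{GHKK} is off, and this leads you to add a final step that is both unnecessary and, as you suspected, problematic. In \cite{GHKK}, $\nu$ is not a ring homomorphism between two cluster algebras; it is the $\kk$-linear map $\can(V)=\bigoplus_q \kk\cdot\vartheta_q \to \Gamma(V,\mathcal{O}_V)$ sending each abstract basis element to the corresponding theta function in the coordinate ring of the (possibly coefficient-specialized) cluster variety $V$. Injectivity of this $\nu$ is therefore \emph{literally} the statement that the theta functions $\nu(\vartheta_{m,p})$ are linearly independent over $\kk$, which is exactly the last sentence of Theorem \ref{thm:nu}. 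The paper's phrase ``The injectivity of this $\nu$, and thus the linear independence of the theta functions with specialized coefficients'' signals this identification.

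Your added reduction ``$\nu(b_m)=0 \Rightarrow b_m=0$'' for $b_m\in\Z[y^{\sQ}]$ would fail for a generic coefficient specialization (e.g.\ all $y^q\mapsto 1$ kills $1-y^q$), and your parenthetical ``distinct monomials specialize to linearly independent elements'' is not part of the GHKK setup. Once you correct the domain of $\nu$ to the free $\kk$-module on theta symbols, that step simply disappears: Theorem \ref{thm:nu} gives linear independence of the images, hence injectivity, and you are done.
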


    \subsubsection{Enough global functions and canonical singularities}\label{sec:sing}  

    Continuing with the notation from above, consider a subalgebra $A\subset A_p^{\midd}$.  We say that $A$ (respectively, $\nu(A)$) has enough global functions (EGF) if, for every nonzero $n\in N$, there exists an $f\in A$ (respectively, $f\in \nu(A)$) such that $\val_n(f)<0$.  In particular, if $\vartheta_{m,p}\in A_p^{\midd}$ for all $m\in M$, then $A_p^{\midd}$ has EGF.  Indeed, given nonzero $n\in N$, we can always find $m\in M$ such that $m\cdot n<0$, and then, since $z^m$ is a term in $\vartheta_{m,p}$, we have $\val_n(\vartheta_{m,p})\leq \val_n(z^m)<0$.
    
    Theorem \ref{thm:nu} immediately implies the following.
    \begin{cor}
        Under Assumption \ref{assum:A-parallel}, $A$ has enough global functions if and only if $\nu(A)$ has enough global functions.
    \end{cor}

    Much of \cite[\S8-\S9]{GHKK} makes the assumption of enough global monomials (EGM), but it is suggested just before their Prop.~ 8.13 that many of these results might extend to the setting of enough global functions (particularly now that the min-convexity result of loc.~cit.~ has been proved very generally; cf. Prop. \ref{prop: BL} and the subsequent references to \cite[Lem. 15.6]{KY} and \cite[\S 10.7]{KY2}).  Here we present one geometric consequence of EGF that was shown to us by Sean Keel.

    Let us briefly recall the notion of canonical singularities \cite{Reid}.  Let $U$ be a normal variety over a field with $\Q$-Cartier canonical class $K_U$, and let $f:U'\rar U$ be a resolution of singularities of $U$ with exceptional divisors $E_1,\ldots,E_k$.  Then $$K_{U'}=f^* K_U + \sum_{i=1}^k a_i E_i$$ for some $a_i\in \Q$ called the discrepancies.  One says that $U$ has (at worst) \textbf{canonical singularities} if $a_i \geq 0$ for all $i$.

    Now, under Assumption \ref{assum:A-parallel}, consider $\nu:A_p^{\midd}\rar R[x^M]$ as above with $R$ a field.  Let $A\subset A^{\midd}_p$ be a subalgebra such that $U\coloneqq \Spec \nu(A)$ is a (possibly singular) affine log Calabi-Yau variety with $U(\Z^{\min})=N$.

    \begin{cor}\label{cor:sing}
        If $\nu(A)$ has EGF (equivalently, if $A$ has EGF), then $U$ as above has at worst canonical singularities.
    \end{cor}

    \begin{proof}
        Since $K_U$ is assumed to be trivial, $U$ having canonical singularities means that $K_{U'}$ is effective; i.e., the (pullback of the) log volume form $\Omega$ on $U$ does not have a pole along any $E_i$.  Recall that $U(\Z^{\min})$ can be understood as the set of divisorial discrete valuations on the function field $K(U)$ along which $\Omega$ has a pole (together with the $0$-valuation).   So showing that $U$ has canonical singularities is equivalent to showing that none of the exceptional divisors $E_i$ correspond to points of $U(\Z^{\min})=N$.

        Since each exceptional divisor $E_i$ has center in $U=\Spec \nu(A)$, we have $\val_{E_i}(f)\geq 0$ for each $i$ and every $f\in \nu(A)$.  So, to show that $U$ has canonical singularities, it suffices to show that every $n\in U(\Z^{\min})\setminus \{0\}$ satisfies $\val_n(f)<0$ for some $f\in \nu(A)$ (because then $n$ cannot correspond to an $E_i$).  This is exactly the EGF condition.
    \end{proof}

    In fact, the above argument shows that EGF implies canonical singularities for any log CY variety $U$ with maximal boundary.  In this context, EGF means that, for every nonzero $v\in U(\Z^{\min})$, there exists a global function $f\in \Gamma(U,\s{O}_U)$ such that $\val_v(f)<0$.

    We note that \cite[Thm. 8.32]{GHKK} shows that general fibers of mirrors to cluster varieties with enough global monomials are log canonical (i.e., discrepancies are $\geq -1$).  Corollary \ref{cor:sing} suggests that we should have \textit{canonical} singularities on all fibers, not just general fibers.  In particular, we expect one could apply this approach to recover \cite[Thm. 4.8]{BMRS} (locally acyclic cluster algebras have at worst canonical singularities).

    The program of Keel-Yu \cite{KY,KY2} applies to smooth affine log Calabi-Yau varieties with maximal boundary, but it is expected that the smoothness condition can be weakened to allow for canonical singularities.  Corollary \ref{cor:sing} provides evidence for the conjecture in \cite[\S 13]{KY2} that fibers of the mirror family have canonical singularities, which in turn is useful for stating the double mirror conjecture \cite[Conj. 13.3]{KY2}.

\subsection{The Theta Function Extension Theorem}\label{sub:Theta-ext}

Let $\sQ_1=\sQ$.  By an extension of $\sQ_1$, we shall mean a monoid $\sQ_2$ which contains $\sQ_1$ as a face; that is, $\sQ_2\supset \sQ_1$, and there exists some $w\in \check{\sQ}^{\gp}_2$ with $w|_{\sQ_1}=0$ and $w|_{\sQ_2\setminus (\sQ_2\cap \sQ_{1,\R})} < 0$.  Note that scattering diagrams over $\sQ_1$ can naturally be viewed as scattering diagrams over $\sQ_2$, and similarly, for $\wh{A}_i$ denoting $\Z[M]\llb \sQ_i\rrb$, we naturally have $\wh{A}_1\subset \wh{A}_2$.

\begin{thm}[Theta function extension theorem]\label{thm:theta-extension-general}
    Let $\sQ_2$ be an extension of $\sQ_1$ as above.  Let $\f{D}_1$ and $\f{D}_2$ be positive scattering diagrams in $M_{\R}$ for the monoids $\sQ_1$ and $\sQ_2$, respectively, such that $\f{D}_1 \subset \f{D}_2$.  Furthermore, suppose that for every monomial $cx^my^q\neq 1$ of every scattering function of $\f{D}_2\setminus \f{D}_1$, we have $q\notin \sQ_{1,\R}$.  If $\vartheta_{u,p}^{\f{D}_1}$ is a finite linear combination of theta functions $\vartheta_{u_i,p}^{\f{D}_2}$ for $\f{D}_2$, then $\vartheta^{\f{D}_1}_{u,p}=\vartheta^{\f{D}_2}_{u,p}$.
\end{thm}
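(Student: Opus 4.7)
The plan is to apply the Valuative Independence Theorem (Theorem~\ref{thm:indep}) with a valuation designed to detect whether broken lines use walls of $\f{D}_2 \setminus \f{D}_1$. Using the face condition, fix $w \in \check{\sQ}_2^{\gp}$ with $w|_{\sQ_1} = 0$ and $w|_{\sQ_2 \setminus (\sQ_2 \cap \sQ_{1,\R})} < 0$, and set $v = (0,w) \in N_\R \oplus \check{\sQ}_{2,\R}^{\gp}$. The hypothesis on new scattering functions guarantees that every nontrivial term $cx^m y^q$ of a scattering function of $\f{D}_2 \setminus \f{D}_1$ satisfies $w \cdot q < 0$ strictly, while scattering functions of $\f{D}_1$ have $y$-exponents in $\sQ_1$, where $w$ vanishes.

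First, every broken line in $\f{D}_1$ with initial exponent $u \in M \oplus \sQ_1$ has final exponent $u_\Gamma$ with $(u_\Gamma)_\sQ \in u_\sQ + \sQ_1 \subset \sQ_{1,\R}$, so $v \cdot u_\Gamma = 0$. Hence $\val_v(\vartheta^{\f{D}_1}_{u,p}) = 0$. Writing the assumed decomposition as $\vartheta^{\f{D}_1}_{u,p} = \sum_i a_i \vartheta^{\f{D}_2}_{u_i,p}$ (a finite $\Z$-linear combination with $a_i \neq 0$), Theorem~\ref{thm:indep} applied to $\f{D}_2$ gives $0 = \min_i \val_v(\vartheta^{\f{D}_2}_{u_i,p})$. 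For each $i$, the straight broken line from $u_i$ to $p$ contributes a non-cancellable monomial (by positivity) of value $w \cdot (u_i)_\sQ \leq 0$, so $\val_v(\vartheta^{\f{D}_2}_{u_i,p}) \leq 0$; combined with the minimum being $0$, we get $\val_v(\vartheta^{\f{D}_2}_{u_i,p}) = 0$ for every $i$.

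The crucial step is to show that $\val_v(\vartheta^{\f{D}_2}_{u_i,p}) = 0$ forces both $(u_i)_\sQ \in \sQ_1$ and that no broken line contributing to $\vartheta^{\f{D}_2}_{u_i,p}$ bends nontrivially at a wall of $\f{D}_2 \setminus \f{D}_1$. The first follows from $w \cdot (u_i)_\sQ = 0$ combined with the face condition $\sQ_2 \cap \sQ_{1,\R} = \sQ_1$. The second uses that any nontrivial bend at a new wall shifts the attached exponent by a term with strictly negative $w$-value, and by positivity (Corollary~\ref{cor:strong-pos}) such a contribution cannot be cancelled by other broken lines---so its presence would force $\val_v(\vartheta^{\f{D}_2}_{u_i,p}) < 0$. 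Consequently the broken lines contributing to $\vartheta^{\f{D}_2}_{u_i,p}$ bend only at walls of $\f{D}_1$, yielding $\vartheta^{\f{D}_2}_{u_i,p} = \vartheta^{\f{D}_1}_{u_i,p}$. Substituting back gives $\vartheta^{\f{D}_1}_{u,p} = \sum_i a_i \vartheta^{\f{D}_1}_{u_i,p}$, and linear independence of the theta basis for $\f{D}_1$ (another consequence of Theorem~\ref{thm:indep}) forces $a_u = 1$ and all other $a_i = 0$, giving $\vartheta^{\f{D}_1}_{u,p} = \vartheta^{\f{D}_2}_{u,p}$.

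The main obstacle I anticipate is the clean identification of broken lines in $\f{D}_2$ that bend only at $\f{D}_1$-walls with broken lines in $\f{D}_1$ sharing the same ends: a broken line in $\f{D}_2$ may still cross (without bending at) walls of $\f{D}_2 \setminus \f{D}_1$, and one must verify that for generic $p$ this correspondence preserves attached monomials and that genericity of $p$ for $\f{D}_2$ yields the genericity needed for $\f{D}_1$. A related technicality is confirming that the face condition as stated really yields $\sQ_2 \cap \sQ_{1,\R} = \sQ_1$, which is implicit in the standard notion of a face but worth verifying under the precise hypothesis.
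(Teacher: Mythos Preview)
Your approach is correct and shares the paper's key ingredients---the separating functional $w$ and the Valuative Independence Theorem---but the paper takes a shorter path.  Rather than analyzing every $u_i$ in the expansion, the paper observes directly from the triangularity $\vartheta^{\f{D}_j}_{v,p}\in z^v(1+\wh{\s{I}}_j)$ (with $\wh{\s{I}}_1\subset\wh{\s{I}}_2$) that the coefficient of $\vartheta^{\f{D}_2}_{u,p}$ in the expansion of $\vartheta^{\f{D}_1}_{u,p}$ must be~$1$.  A single application of VIT then yields $0=\val_w(\vartheta^{\f{D}_1}_{u,p})\le\val_w(\vartheta^{\f{D}_2}_{u,p})$, which contradicts the observation that any broken line bending at a wall of $\f{D}_2\setminus\f{D}_1$ forces $\val_w(\vartheta^{\f{D}_2}_{u,p})<0$.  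So the paper never needs to show that \emph{every} $\vartheta^{\f{D}_2}_{u_i,p}$ coincides with a $\f{D}_1$-theta function, nor to invoke linear independence at the end.

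Regarding your flagged technicalities: your instinct is right that the face hypothesis as stated does \emph{not} give $\sQ_2\cap\sQ_{1,\R}=\sQ_1$ (e.g.\ $\sQ_1=2\N e_1\subset\sQ_2=\N^2$), so your claim $(u_i)_\sQ\in\sQ_1$ is not justified.  Fortunately it is also not needed: once you know $\val_v(\vartheta^{\f{D}_2}_{u_i,p})=0$, every contributing broken line has final $\sQ$-exponent of $w$-value~$0$, hence never picks up a nontrivial term from a new wall; this identifies it with a $\f{D}_1$-broken line regardless of whether $(u_i)_\sQ$ lies in $\sQ_1$, and the same triangularity that the paper exploits gives linear independence of these functions in $\wh{A}_2$.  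Your other concern---that a $\f{D}_2$-broken line bending only at $\f{D}_1$-walls really is a $\f{D}_1$-broken line and vice versa---is used in both proofs (the paper's first sentence is exactly this), and is handled by noting that genericity of $p$ for $\f{D}_2$ implies genericity for $\f{D}_1\subset\f{D}_2$, and that ``not bending at a new wall'' means selecting the constant term~$1$ from each new scattering factor.
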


A typical example (as we will demonstrate in Theorem \ref{thm:theta-ext-seed}) is where $\f{D}_1$ and $\f{D}_2$ are determined as in Theorem \ref{thm:ScatD} by some initial walls $\f{D}_{1,\In}$ and $\f{D}_{2,\In}$, respectively, where $\f{D}_{2,\In}$ consists of all walls of $\f{D}_{1,\In}$ plus some additional walls whose coefficients are independent of those from $\f{D}_1$.

\begin{proof}
    Pick $w\in \check{\sQ}_{\R}^{\gp}$ with $w|_{\sQ_1}= 0$ and $w|_{\sQ_2\setminus (\sQ_2\cap \sQ_{1,\R})} < 0$.
    
    If $\vartheta_{u,p}^{\f{D}_1}\neq \vartheta_{u,p}^{\f{D}_2}$, then there must be some broken line contributing to $\vartheta_{u,p}^{\f{D}_2}$ which bends at a wall of $\f{D}_2\setminus \f{D}_1$.  By our assumptions, the $y$-exponent of the corresponding monomial must lie in $\sQ_2\setminus (\sQ_2\cap \sQ_{1,\R})$.  Hence, $\val_w(\vartheta_{u,p}^{\f{D}_2})<0$ even though $\val_w(\vartheta_{u,p}^{\f{D}_1})=0$.  It therefore suffices to show that in fact $\val_w(\vartheta_{u,p}^{\f{D}_1}) \leq \val_w(\vartheta_{u,p}^{\f{D}_2})$.
    
    For any $u\in M\oplus \sQ$ and either $j=1$ or $2$, we have $\vartheta_{u,p}^{\f{D}_j}\in z^u\left(1+\wh{\s{I}}_j\right)$.  Here, $\wh{\s{I}}_j$ is the ideal $\wh{\s{I}}$ in $\wh{A}_j$ defined as in \S \ref{sub:scattering} for $\sQ=\sQ_j$. 
 In particular, $\wh{\s{I}}_1\subset \wh{\s{I}}_2$.  It follows that in the expansion $\vartheta_{u,p}^{\f{D}_1}=\sum_v c_v \vartheta_{v,p}^{\f{D}_2}$, we must have $c_u=1$.  So by the Theorem \ref{thm:indep}, we have $\val_w(\vartheta_{u,p}^{\f{D}_1}) \leq \val_w(\vartheta_{u,p}^{\f{D}_2})$, as desired.
\end{proof}

\subsection{Vertices of Newton polytopes}\label{sub:NP}

Let $f=\sum a_u z^u\in \wh{A}$.  The \textbf{Newton polytope} of $f$, denoted $\Newt(f)$, is the convex hull in $M_{\R}\oplus \sQ^{\gp}_{\R}$ of the elements $u\in M\oplus \sQ$ for which $a_u\neq 0$.  We say that $f$ is \textbf{monic} if $a_u=1$ whenever $u$ is a vertex of $\Newt(f)$.  We call $\f{D}$ monic if all of its scattering functions are monic.  E.g., cluster scattering diagrams are monic by \cite[Thm. 1.13]{GHKK}.

\begin{thm}\label{thm:Newt}
    Assume $\f{D}$ is monic.  Let $$f=\sum_{u\in M\oplus \sQ} a_u z^u = \sum_{w\in M\oplus \sQ} b_w \vartheta_{w,p}\in \wh{A}_p.$$  If $u$ is a vertex of $\Newt(f)$, then $a_u=b_w$ for some $w$.  In particular, if $f$ is a theta function or a finite sum of distinct theta functions (each with coefficient $1$), then $f$ is monic; that is, $a_u=1$ for each vertex $u$ of $\Newt(f)$.
\end{thm}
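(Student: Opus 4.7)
The plan is to use valuative independence (Theorem \ref{thm:indep}) together with the uniqueness of $v$-taut broken lines to locate the vertex $u$ inside a single theta function and a single broken line, then invoke monicity of $\f{D}$ to show the corresponding broken-line coefficient equals $1$.

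First, I would pick $v \in N_\R \oplus \check{\sQ}^{\gp}_\R$ generic in the sense required by Theorems \ref{thm:trop-determines-u} and \ref{thm:min-taut} and such that $u \cdot v < u' \cdot v$ for every other exponent $u'$ appearing in $f$. This is possible because $u$ is a vertex of $\Newt(f)$, so strict separation is an open condition, and genericness excludes only a countable union of hyperplanes. Then $\val_{v,p}(f) = u \cdot v$ is finite, and Theorem \ref{thm:indep} gives $\val_{v,p}(f) = \min_w \val_{v,p}(\vartheta_{w,p})$, where the minimum is taken over $w$ with $b_w \neq 0$. By Theorem \ref{thm:trop-determines-u}, distinct theta functions take distinct finite $v$-valuations, so the minimum is achieved by a unique index $w^\ast$: $\val_{v,p}(\vartheta_{w^\ast,p}) = u \cdot v$. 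Theorem \ref{thm:min-taut} then supplies a $v$-taut ordinary broken line $\Gamma^\ast$ with ends $(w^\ast,p)$ whose final exponent $u^\ast$ satisfies $u^\ast \cdot v = u \cdot v$.

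Next, I would show $u^\ast = u$. For $w \neq w^\ast$ with $b_w \neq 0$ we have $\val_{v,p}(\vartheta_{w,p}) > u \cdot v$, so $u^\ast$ cannot appear as an exponent of $\vartheta_{w,p}$; meanwhile, by positivity of $\f{D}$, $u^\ast$ appears as an exponent in $\vartheta_{w^\ast,p}$ with a strictly positive integer coefficient (no broken-line contributions cancel within a single theta function). Hence $a_{u^\ast} = b_{w^\ast}\, c_{u^\ast}(\vartheta_{w^\ast,p}) \neq 0$, so $u^\ast$ is an exponent of $f$, and the strict separation of $u$ by $v$ forces $u^\ast = u$.

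Finally, I would compute $c_{\Gamma^\ast}$ using monicity. By the uniqueness part of the tautness argument (as in the proof of Theorem \ref{thm:trop-determines-u}), $\Gamma^\ast$ is the unique broken line with ends $(w^\ast,p)$ and final exponent $u$, so $c_u(\vartheta_{w^\ast,p}) = c_{\Gamma^\ast}$. At each bend, $v$-tautness selects an exponent of the form $u_i + k m$ with $m$ a vertex of $\Newt(h)$ for the relevant scattering function $h$ and $k = u_i \cdot n$; by monicity the coefficient of $z^m$ in $h$ is $1$, and the coefficient of $z^{km}$ in $h^k$ is also $1$ since $km$ is a vertex of $\Newt(h^k) = k\Newt(h)$ and can only be produced by selecting the monic term $z^m$ from each of the $k$ factors. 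Multiplying across all bends gives $c_{\Gamma^\ast} = 1$, so $a_u = b_{w^\ast}$. The ``in particular'' statement then follows immediately: under its hypothesis each $b_w \in \{0,1\}$, and $a_u \neq 0$ forces $b_{w^\ast} = 1$. I expect the main obstacle to be the step identifying $u^\ast = u$, since it requires carefully combining valuative independence, the distinctness of valuations of different theta functions, and positivity of $\f{D}$ to prevent the contribution of $\Gamma^\ast$ from being cancelled by other theta functions.
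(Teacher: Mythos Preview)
Your proposal is correct and follows essentially the same approach as the paper's own proof: pick a generic $v$ exposing the vertex $u$, use tautness (Theorem~\ref{thm:min-taut}) and the uniqueness of $v$-taut broken lines with prescribed endpoint and final exponent (as in the proof of Theorem~\ref{thm:trop-determines-u}) to single out one broken line $\Gamma^\ast$ and one index $w^\ast$, then invoke monicity to get $c_{\Gamma^\ast}=1$ and conclude $a_u=b_{w^\ast}$.

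The only organizational difference is the direction of the argument. The paper starts from the vertex $u$, traces the unique $v$-taut broken line backwards from $(p,u)$ to discover its initial exponent $w$, and immediately concludes. You instead first invoke VIT to locate $w^\ast$, then pass to the minimizing broken line $\Gamma^\ast$ for $\vartheta_{w^\ast,p}$ and check that its final exponent $u^\ast$ equals $u$. This detour is unnecessary (for generic $v$, the equality $u^\ast\cdot v=u\cdot v$ with both in the lattice already forces $u^\ast=u$), but it does no harm. Your explicit invocation of VIT (Theorem~\ref{thm:indep}) makes transparent what the paper's terse sentence ``the $\val_{v,p}$-minimizing broken lines for $f^{\trop}(v)$ must be $v$-taut'' is really using: one needs $\val_{v,p}(\vartheta_{w,p})\geq u\cdot v$ for every $w$ with $b_w\neq 0$ in order to conclude that any broken line with final exponent $u$ contributing to $f$ is minimizing for its own theta function, hence taut. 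Your detailed verification that the extremal coefficient in $h^k$ is $1$ is also a welcome elaboration of the paper's one-line appeal to monicity.
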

\begin{proof}
    If $u$ is a vertex of $\Newt(f)$, then there is some open cone $\sigma\subset N_{\R}\oplus \check{\sQ}^{\gp}_{\R}$ such that $f^{\trop}(v)=u\cdot v$ for all $v\in \sigma$.  By Theorem \ref{thm:min-taut},  the $\val_{v,p}$-minimizing broken lines for $f^{\trop}(v)$ must be $v$-taut.  As in the proof of Theorem \ref{thm:trop-determines-u}, for each generic $p\in M_{\R}$ and generic $v\in \sigma$, there is a unique $v$-taut broken line $\Gamma$ with endpoint $p$ and final exponent $u$.  Let $w$ be the initial exponent of $\Gamma$.  Since every bend of $\Gamma$ is extremal (by $v$-tautness and genericness of $v$), the monic condition on $\f{D}$ ensures that the coefficient of the final monomial for $\Gamma$ is $1$.  Thus, the coefficient $a_u$ of $z^u$ in $f$ must equal $b_w$.
\end{proof}

Note that one could similarly define Newton polytopes in $M_{\R}$ rather than $M_{\R}\oplus \sQ^{\gp}_{\R}$ after specializing each $y^q$ to, say, $\nu(y^q)=1$.  The monic assumption on $\f{D}$ is modified to say that, for each scattering function $f$, if $m$ is a vertex of the projection of $\Newt(f)$ to $M_{\R}$, then the coefficient of $x^m$ in $\nu(f)$ equals $1$.  An analog of Theorem \ref{thm:Newt} holds for this viewpoint via a similar argument using the modified version of tautness from \S \ref{sub:lin-indep}.

Since cluster variables are special cases of $\vartheta_{m,p}$ (for $m$ and $p$ in the ``cluster complex''), we obtain the following:

\begin{cor}
     Let $f=\sum a_m x^m$ be the Laurent expansion of a cluster variable in some cluster.  Let $u$ be a vertex of $\Newt(f)\subset M_{\R}$.  Then $a_u=1$.
\end{cor}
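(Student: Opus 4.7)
The plan is to realize this as a direct application of the $M_\R$-variant of Theorem \ref{thm:Newt} outlined in the paragraph immediately following its proof. First I would invoke the standard fact from \cite{GHKK} that every cluster variable is itself a theta function $\vartheta_{m,p}$ for some $m\in M$ and some $p$ in the chamber of the cluster scattering diagram $\f{D}$ corresponding to the chosen cluster. Thus the Laurent expansion $f=\sum_m a_m x^m$ is just $\iota_p(\vartheta_m)$, possibly after specializing the coefficient variables via some $\nu$ as in \S\ref{sub:lin-indep} with $\nu(y^q)=1$.

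Next I would verify the two hypotheses needed for the $M_\R$-analog of Theorem \ref{thm:Newt}. The cluster scattering diagram $\f{D}$ is monic by \cite[Thm.~1.13]{GHKK}, and its initial scattering functions have the form $1+x^{v_i}y^{e_i}$, so they lie in $1+z^u\Z\llb z^u\rrb$, whence Assumption \ref{assum:A-parallel} is satisfied. After specializing $\nu(y^q)=1$, each initial function becomes $1+x^{v_i}$, whose two extremal terms both have coefficient $1$. Combined with monicness of all of $\f{D}$, this yields the modified monic condition for the projected scattering diagram at every wall.

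With these in place, the argument of Theorem \ref{thm:Newt} carries over, replacing $v$-tautness by the $n$-tautness of \S\ref{sub:lin-indep}. Specifically: for a vertex $u$ of $\Newt(f)\subset M_\R$ and a generic $n\in N_\R$ in the outward normal cone at $u$, there is a unique $n$-taut broken line $\Gamma$ ending at $p$ with final $x$-exponent $u$; genericness of $n$ forces every bend of $\Gamma$ to be extremal (i.e.\ at a vertex of the relevant convex hull); and the modified monic condition then forces the specialized coefficient attached to the final monomial of $\Gamma$ to be $1$. Since $f=\vartheta_{m,p}$, the initial exponent of $\Gamma$ must be $m$, and by uniqueness of $\Gamma$ no other broken lines contribute to $a_u$, so $a_u=1$.

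The main potential obstacle is ensuring that coefficient specialization does not produce cancellations between contributions of different broken lines that would spoil the vertex coefficient. This is precisely what Theorem \ref{thm:nu} rules out under Assumption \ref{assum:A-parallel}---which the cluster case satisfies---so the single extremal $n$-taut broken line indeed survives specialization and contributes coefficient $1$ to $a_u$.
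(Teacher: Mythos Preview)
Your proposal is correct and follows essentially the same approach as the paper: the paper simply notes that cluster variables are theta functions $\vartheta_{m,p}$ (for $m$ and $p$ in the cluster complex) and deduces the corollary from the $M_{\R}$-variant of Theorem~\ref{thm:Newt} described in the paragraph preceding the corollary. You have spelled out in more detail the verification of the modified monic hypothesis after specialization (which is straightforward here since each scattering function lies in $1+x^{Pv}y^v\Z\llb x^{Pv}y^v\rrb$, so the $x$-exponent determines the $y$-exponent and monicness survives specialization), but the underlying argument is the same.
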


This reproves one direction of \cite[Conj. 4.15]{Fei-CombF}.   For another proof---along with a proof of the converse ($a_u=1$ implies $u$ is a vertex of $\Newt(f)$) and a related saturation result, cf. \cite[\S 4.2]{LP}.

\subsection{Tropical theta bases are atomic}\label{sub:atomic}  Recall from the end of \S \ref{sub:BL-and-Theta} that theta bases are atomic.  Let $A^{\can}\subset \wh{A}^{\can}$ be the subalgebra generated over $\Z$ by the theta functions, and let $A^{\can,+}=A^{\can}\cap \wh{A}^{\can,+}$. If the theta function multiplication is polynomial, then the theta functions form a $\Z$-basis for $A^{\can}$ (generally they may just be a topological basis).  Atomicity then says the theta functions are a minimal set of generators (under $+$) for the semiring $A^{\can,+}$.

Let $A^{\trop}_p$ be the $(\min,+)$-semiring of tropicalized functions $f^{\trop}:N_{\R}\oplus \check{\sQ}^{\gp}_{\R}\rar \R^{\min}$ for $f\in A_p^{\can}$.  Theorems \ref{thm:trop-determines-u} and \ref{thm:indep} imply the following:

\begin{cor}\label{cor:atomic}
    Assume theta function multiplication is polynomial, so $A^{\can}$ consists precisely of finite $\Z$-linear combinations of theta functions.  Then the tropical theta functions $\vartheta_{(m,q),p}^{\trop}$ are the atomic basis for the semiring $A_p^{\trop}$ in the sense that they are the unique minimal set of elements which generate $A_p^{\trop}$ under the operation $\min$.
\end{cor}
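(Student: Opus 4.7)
The plan is to establish three facts: \textup{(i)} the tropical thetas generate $A_p^{\trop}$ under $\min$; \textup{(ii)} each $\vartheta_{u_0,p}^{\trop}$ is atomic in $A_p^{\trop}$, meaning $\vartheta_{u_0,p}^{\trop} = \min(g_1,\ldots,g_k)$ with $g_i \in A_p^{\trop}$ forces some $g_{i^*} = \vartheta_{u_0,p}^{\trop}$; and \textup{(iii)} deduce minimality and uniqueness formally.

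For \textup{(i)}, the polynomial multiplication hypothesis writes every $f \in A_p^{\can}$ as a finite integer combination $f = \sum_u c_u \vartheta_{u,p}$, and Theorem~\ref{thm:indep} identifies $f^{\trop}(v)$ with $\min_{u:\, c_u \neq 0} \vartheta_{u,p}^{\trop}(v)$ wherever the right-hand side is finite.  Since $\s{N}^+$ is top-dimensional in $N_{\R} \oplus \check{\sQ}_{\R}^{\gp}$ and every $\vartheta_{u,p}^{\trop}$ is finite on $\s{N}^+$ by Example~\ref{ex:Nplus}, the convexity and upper semicontinuity of tropicalized functions recorded in \S\ref{sub:trop-fun} extend the identity to an equality of functions $N_\R \oplus \check{\sQ}_\R^{\gp} \to \R^{\min}$, so $f^{\trop} = \min_{u:\, c_u \neq 0} \vartheta_{u,p}^{\trop}$ in $A_p^{\trop}$.

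For \textup{(ii)}, I would use \textup{(i)} to rewrite $\min(g_1,\ldots,g_k)$ as a single finite min $\min_j \vartheta_{u_j,p}^{\trop}$, where each $u_j$ appears in the expansion of some $g_{i(j)}$.  Evaluating at a generic $v$ in the interior of $\s{N}^+$---where every value is finite---Lemma~\ref{lem:inf=min} guarantees that the minimum is attained, and Theorem~\ref{thm:trop-determines-u} forces the attaining index to equal $u_0$.  Hence $\vartheta_{u_0,p}^{\trop}$ itself appears in the decomposition of some $g_{i^*}$, giving $g_{i^*} \leq \vartheta_{u_0,p}^{\trop}$; combined with the trivial $g_{i^*} \geq \min_i g_i = \vartheta_{u_0,p}^{\trop}$, this yields $g_{i^*} = \vartheta_{u_0,p}^{\trop}$.

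From \textup{(i)} and \textup{(ii)} the remaining conclusions are formal.  Minimality of $\{\vartheta_{u,p}^{\trop}\}_u$ is immediate, since dropping any $\vartheta_{u_0,p}^{\trop}$ and re-expressing it as a min of the remaining tropical thetas would contradict atomicity.  For uniqueness, any minimal generating set $S$ must contain each $\vartheta_{u_0,p}^{\trop}$: by \textup{(i)} the latter is a finite min of elements of $S$, and by \textup{(ii)} one such element must equal $\vartheta_{u_0,p}^{\trop}$; hence $S$ contains the full tropical theta set, and its minimality forces equality.  The main technical obstacle I anticipate is making step \textup{(i)} precise at points outside the finiteness locus of the right-hand side, where VIT does not directly apply; this should be resolved by the convexity and upper semicontinuity arguments of \S\ref{sub:trop-fun} together with the top-dimensionality of $\s{N}^+$.
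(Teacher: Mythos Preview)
Your proof is correct and follows exactly the approach the paper intends—the paper simply cites Theorems~\ref{thm:trop-determines-u} and~\ref{thm:indep} without further argument, and you have filled in the details in the natural way.  Your anticipated obstacle in step~(i) is genuine but resolvable just as you suggest: since both $f^{\trop}$ and $\min_{u}\vartheta_{u,p}^{\trop}$ are concave (as infima of linear functions), agreement on the finite locus together with the line-segment concavity argument (a concave function on $[0,1]$ that is finite at $0$ and tends to $-\infty$ along an interior sequence must equal $-\infty$ at and beyond the limit point) forces agreement on the $-\infty$ locus as well.
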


 Let $A^{\trop}_{p,N}$ denote the semiring obtained by restricting the above tropicalized functions from $N_{\R}\oplus \check{\sQ}^{\gp}_{\R}$ to $N_{\R}$ (we shall abuse notation and denote $f^{\trop}|_{N_{\R}}$ as simply $f^{\trop}$).  The minimality condition in Corollary \ref{cor:atomic} may be lost in this setting since there may not be an open cone (like $\s{N}^+$) where each restricted tropical theta function is finite (hence generically distinct by Theorem \ref{thm:trop-determines-u}).  E.g., for negative definite Looijenga pairs as in \cite{GHK1}, all tropical theta functions other than $\vartheta_{0,p}^{\trop}=1^{\trop}\equiv 0$ are equal to $-\infty$ on all of $N_{\R}\setminus \{0\}$.  Excluding such possibilities by assuming $A_{p}^{\midd}=A_p^{\can}$ (so the tropicalizations never equal $-\infty$), we may apply Theorem \ref{thm:nu} and its proof  to recover the following.

\begin{cor}
    Suppose Assumption \ref{assum:A-parallel} holds, and assume all theta functions $\vartheta_{m,p}$ for a given $p$ are polynomial, i.e., $A_{p}^{\midd}=A_p^{\can}$.  Then the restricted tropical theta functions $\vartheta_{m,p}^{\trop}$ are the atomic basis (i.e., the minimal set of generators under $\min$) for the semiring $A^{\trop}_{p,N}$.
\end{cor}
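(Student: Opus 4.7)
The plan is to mimic the proof of Corollary \ref{cor:atomic}, substituting for Theorems \ref{thm:trop-determines-u} and \ref{thm:indep} the restricted analogs supplied by Theorem \ref{thm:nu} and its proof, since $(n,0) \in N_{\R} \oplus \check{\sQ}^{\gp}_{\R}$ fails the genericity hypothesis of Theorem \ref{thm:trop-determines-u}.

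For the generation step, I would observe that any $f \in A_p^{\can} = A_p^{\midd}$ is a finite $\Z$-linear combination of theta functions; regrouping by $M$-index, this can be written as $f = \sum_m a_m \vartheta_{m,p}$ with $a_m \in \Z[y^{\sQ}]$ and only finitely many nonzero. Theorem \ref{thm:nu}, applied with $\nu$ the identity on $\Z[y^{\sQ}]$ and $R = \Z[y^{\sQ}]$, then yields
\[
\val_{n,p}(f) = \min_{m:\, a_m \neq 0} \val_{n,p}(\vartheta_m)
\]
for all $n \in N_{\R}$, so every element of $A^{\trop}_{p,N}$ is a finite $\min$ of tropical theta functions $\vartheta_{m,p}^{\trop}$.

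For the atomicity step, I would use the fact extracted from the proof of Theorem \ref{thm:nu} that, for generic $n \in N_{\R}$, $\val_{n,p}(\vartheta_m)$ is realized uniquely by the final $x$-exponent $m_{\Gamma(m,n)}$ of the unique $n$-taut broken line $\Gamma(m,n)$ with ends $(m,p)$, and moreover $\Gamma(m,n)$ is uniquely reconstructable by tracing backwards from $p$ and taking the unique $n$-taut bend at each wall---Assumption \ref{assum:A-parallel} ensuring that each extremal bend corresponds to a unique monomial. Suppose $\vartheta_{m_0,p}^{\trop} = \min_{m \in T} \vartheta_{m,p}^{\trop}$ for finite $T \subset M \setminus \{m_0\}$. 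Choosing $n$ generic in $N_{\R}$---avoiding the finitely many hyperplanes determined by differences of Newton polytope vertices for the theta functions indexed by $T \cup \{m_0\}$---the minimum is attained at a unique $m^* \in T$, giving $m_{\Gamma(m_0,n)} \cdot n = m_{\Gamma(m^*,n)} \cdot n$. Genericity of $n$ forces $m_{\Gamma(m_0,n)} = m_{\Gamma(m^*,n)}$, and uniqueness of reconstruction then gives $m_0 = m^*$, a contradiction.

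Finally, for uniqueness of the minimal generating set, I would argue as follows. Suppose $G \subseteq A^{\trop}_{p,N}$ generates under $\min$ and $\vartheta_{m_0,p}^{\trop} \notin G$. Write $\vartheta_{m_0,p}^{\trop} = \min_i g_i$ with $g_i \in G$, and expand each $g_i = \min_{j \in S_i} \vartheta_{m_{ij},p}^{\trop}$ via generation. If some $m_{ij} = m_0$, then the chain $g_i \leq \vartheta_{m_0,p}^{\trop} = \min_k g_k \leq g_i$ forces $g_i = \vartheta_{m_0,p}^{\trop}$ as a function, contradicting $\vartheta_{m_0,p}^{\trop} \notin G$; otherwise the expansion realizes $\vartheta_{m_0,p}^{\trop}$ as a $\min$ of other tropical theta functions, contradicting atomicity. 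The main obstacle will be the atomicity step, which requires the restricted uniqueness of $n$-taut broken lines in place of the generic-$v$ statement of Theorem \ref{thm:trop-determines-u}; Assumption \ref{assum:A-parallel} is essential here to ensure that extremal bends correspond to unique monomials of the scattering functions.
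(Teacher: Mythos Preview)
Your proposal is correct and follows precisely the approach the paper indicates: the paper gives no explicit proof beyond stating that one ``may apply Theorem \ref{thm:nu} and its proof,'' and you have filled in those details faithfully. Your recognition that restricting to $N_{\R}$ prevents direct use of Theorem \ref{thm:trop-determines-u}, and that the restricted $n$-tautness argument from the proof of Theorem \ref{thm:nu} together with Assumption \ref{assum:A-parallel} supplies the missing uniqueness of backwards reconstruction, is exactly the point.
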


It seems natural to ask for a characterization of which functions can appear in $A^{\trop}_{p,N}$. As in \cite[Def. 4.18]{Man1}, let us define a \textbf{tropical function} $N_{\R}\rar \R\cup \{\infty\}$ to be an integral piecewise-linear function which is convex along broken lines (cf. \S \ref{sub:BL-convex}).  Here we include $0^{\trop}\equiv \infty$.  For positive Looijenga pairs, \cite[Thm. 4.20]{Man1} shows that $A^{\trop}_{p,N}$ consists precisely of the tropical functions.  We expect this holds more generally for all cluster algebras satisfying $A^{\can}=A^{\midd}$.

\begin{eg}
    In cases with $A^{\can}\neq A^{\midd}$, there may be tropical functions which are not tropicalizations of regular functions or of elements from $A^{\can}$.  For example, in the language of \cite{GHK1}, consider the case of a Looijenga pair $(\bb{P}^2,\?{D})$ for $\?{D}$ an irreducible nodal cubic.  We obtain a new Looijenga pair $(Y,D)$ by blowing up $9$ smooth points of $\?{D}$. (For another example, one may take $\?{D}$ to be a union of three generic lines in $\bb{P}^2$ and then blow up $3$ points on each component).  If these $9$ points lie on a second cubic, then there is a pencil map $f:Y\rar \bb{P}^1$ with the cubics as fibers, and $\varphi\coloneqq f^{\trop}$ is a tropical function.  However, if the $9$ points are chosen generically, then the tropical function $\varphi$ is not the tropicalization of any regular function on $Y$ nor of any finite combination of theta functions.
\end{eg}

\subsection{Upper semicontinuity for valuations}\label{sub:up}

We noted in \S \ref{sub:trop-fun} that tropicalized functions like $\vartheta_{u,p}^{\trop}$ are upper semicontinuous.  Here we prove the analogous claim for valuations.

\begin{lem}\label{lem:v-upper-semi}
    For $v\in N_{\R}\oplus \check{\sQ}^{\gp}_{\R}$ and generic $p\in M_{\R}$, the function $\val_{v,p}:M_{\Q}\oplus \sQ_{\Q} \rar \R^{\min}$, $u\mapsto \val_v(\Theta_{u,p})$ is upper semicontinuous.  Similarly for the analogous functions $\val_{n,p}:M_{\Q}\rar \R^{\min}$, $m\mapsto \val_n(\Theta_{m,p})$ with $n\in N_{\R}$ and $m\in M_{\Q}$ as in \S \ref{sub:lin-indep}.
\end{lem}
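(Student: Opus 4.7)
The plan is to prove upper semicontinuity directly from the formula $\val_v(\Theta_{u,p})=\inf_\Gamma u_\Gamma\cdot v$, by perturbing a near-optimal rational broken line. Fix $u_0\in M_{\Q}\oplus \sQ_{\Q}$ and $c>\val_v(\Theta_{u_0,p})$, and choose a witness $\Gamma_0$ with ends $(u_0,p)$ satisfying $u_{\Gamma_0}\cdot v<c$. I will construct, for every $u\in M_{\Q}\oplus \sQ_{\Q}$ in some neighborhood of $u_0$, a rational broken line $\Gamma_u$ with ends $(u,p)$ whose final exponent $u_{\Gamma_u}$ depends linearly on $u$ and equals $u_{\Gamma_0}$ at $u=u_0$; continuity will then force $u_{\Gamma_u}\cdot v<c$ for $u$ near $u_0$, giving the result.

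To build the deformation, record the combinatorial data of $\Gamma_0$: the ordered list of walls $(W_1,f_1,n_1),\ldots,(W_k,f_k,n_k)$ at which $\Gamma_0$ bends, together with rational convex coefficients $\mu_j^{(i)}\in \Q_{\geq 0}$ realizing each bend as $u_i-u_{i-1}=(u_{i-1}\cdot n_i)\sum_j\mu_j^{(i)}w_j^{(i)}$, where $w_j^{(i)}$ ranges over exponents of terms in $f_i$. For $u$ near $u_0$, iteratively define $u'_i\coloneqq u'_{i-1}+(u'_{i-1}\cdot n_i)\sum_j\mu_j^{(i)}w_j^{(i)}$ starting from $u'_0\coloneqq u$; then $u'_k=(T_k\circ\cdots\circ T_1)(u)$ for the linear maps $T_i(x)\coloneqq x+(x\cdot n_i)\sum_j\mu_j^{(i)}w_j^{(i)}$. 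Build $\Gamma_u$ geometrically by tracing backward from $p$ in direction $+(u'_k)_M$ until meeting $W_k$, then in direction $+(u'_{k-1})_M$ until meeting $W_{k-1}$, and so on down to $W_1$, whose unbounded extension gives the initial segment. Rationality of $u$, of the $\mu_j^{(i)}$, and of the $w_j^{(i)}$ ensures each $u'_i\in M_{\Q}\oplus \sQ^{\gp}_{\Q}$, so $\Gamma_u$ is a candidate rational broken line.

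Three open conditions must be verified to confirm that $\Gamma_u$ is genuinely a rational broken line: (i) the backward trace meets $W_k,\ldots,W_1$ transversally in this order and hits no other walls in between, (ii) each crossing is non-trivial, i.e., $(u'_{i-1})_M\cdot n_i\neq 0$ with the correct sign, and (iii) $\Gamma_u$ avoids $\Joints(\f{D})$. Conditions (i) and (ii) hold for $u$ close to $u_0$ by continuity, since the bounded portion of $\Gamma_u$ is $C^0$-close to that of $\Gamma_0$. Condition (iii), which I expect to be the main point of care, uses that $\Gamma_0$'s support past its first wall crossing is compact and meets only finitely many walls, and hence only finitely many joints; these form a closed set disjoint from $\Gamma_0$'s support, and for $u$ sufficiently close to $u_0$ the path $\Gamma_u$ stays in a small tubular neighborhood of $\Gamma_0$ disjoint from the joints. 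The restricted statement for $\val_{n,p}:M_{\Q}\rar \R^{\min}$ follows by the identical argument with the modifications from \S\ref{sub:lin-indep} (ignoring $\sQ^{\gp}$ components throughout).
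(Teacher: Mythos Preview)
Your approach is essentially the paper's: deform a near-optimal rational broken line $\Gamma_0$ via the same linear bending maps, so nearby initial exponents yield nearby final exponents; the paper parametrizes by the final exponent $w$ in an open cone $C_L$ around $u_{\Gamma_0}$ and recovers the initial exponent as $T_\Gamma^{-1}(w)$, which is just your construction read backwards.

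One caution on your verifications: in (i), ``hits no other walls in between'' is both unnecessary and generally false---broken lines may cross walls without bending, and $\f{D}$ can have infinitely many walls meeting any compact region; in (iii), the same issue undermines your claim that only finitely many joints lie near the bounded part of $\Gamma_0$, so $\Joints(\f{D})$ need not be closed there. The paper's proof glosses over joint-avoidance as well; the honest fix is that for a single generic $p$, the countable family of rational broken lines ending at $p$ all miss the codimension-two joint locus, so each $\Gamma_u$ is automatically a valid rational broken line without any tubular-neighborhood argument.
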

\begin{proof}
    Given a broken line $\Gamma$, let $t_1<t_2<\ldots <t_k$ be the times when $\Gamma$ bends, and let $T_{\Gamma}=T_{t_k}\circ \ldots \circ T_{t_2} \circ T_{t_1}$ be the composition of the corresponding linear transformations $T_{t_i}$ from \S \ref{sub:taut}.  Note that $T_{\Gamma}$ is an invertible linear transformation.

    Now fix $L>\val_{v,p}(u)$.  Let $\Gamma$ be a broken line with ends $(u,p)$ such that $u_{\Gamma}\cdot v<L$.  We can find some open cone $C_{L}\subset M_{\Q}\oplus \check{\sQ}^{\gp}_{\Q}$ around $u_{\Gamma}$ such that $w\cdot v< L$ for all $w\in C_{L}$.  Furthermore, if $C_{L}$ is small enough, then given any $w\in C_{L}$, we can produce a broken line $\beta$ with endpoint $p$ and final exponent $u_{\beta}=w$ which takes the same bends as $\Gamma$, so $T_{\beta}=T_{\Gamma}$.  Then the initial exponent of $\beta$ is $T_{\Gamma}^{-1}(w)$.  Thus, for any $u'\in T_{\Gamma}^{-1}(C_L)$ (which is an open cone containing $u$), there will be a broken line $\beta$ with ends $(u',p)$ such that $u_{\beta}\cdot v<L$, hence $\val_{v,p}(u')<L$.  Since $L$ and $u$ were arbitrary, the upper semicontinuity follows.

    The argument for $n\in N_{\R}$ and $m\in M_{\Q}$ is the same, mutatis mutandis.
\end{proof}

\section{Seed data and linear morphisms}\label{sec:cluster}

For simplicity, throughout this section and the next, we restrict to $\vartheta_m$ and $\val_{n,p}$ with $m\in M$ and $n\in N_{\R}$ as defined at the start of \S \ref{sub:lin-indep}.  The arguments extend to the setting using the full lattices $M\oplus \sQ^{\gp}$ and $N\oplus \check{\sQ}^{\gp}$, but this extension can also be obtained from our restricted setting using a principal coefficients setup.

We restrict now to scattering diagrams of the form $\f{D}=\Scat(\f{D}_{\In})$ (as in Theorem \ref{thm:ScatD}) for 
\begin{align}\label{eq:cluster-Din}
    \f{D}_{\In} = \{(n_i^{\perp},f_i=1+x^{m_i}y^{e_i},n_i)|i\in I\}
\end{align} where $I$ is a finite index-set, $n_i\in N$, $m_i \in n_i^{\perp} \subset M$, and $\sQ=\N^I$.  Furthermore, we will require that the matrix $\left(m_j \cdot n_i\right)_{ij}$ is skew-symmetrizable (see \S \ref{sec:sd}).

The vectors $m_i$ and $n_i$ determining $\f{D}_{\In}$ may be presented differently in the literature depending on the context.  E.g., when constructing theta functions on a cluster $\s{A}$-variety as in \cite{GHKK}, the vectors $n_i$ form part of a basis---they're the seed vectors of \cite{FG1}.  \cite{GHKK} also constructs theta functions for cluster Poisson varieties $\s{X}$ by taking a certain subalgebra of $\s{A}^{\prin}$, but one can instead construct a scattering diagram for $\s{X}$ directly with the $m_i$'s being the seed vectors; cf. \cite[\S 4.3]{DM}.  Similarly, one may construct scattering diagrams for certain torus-quotients of $\s{A}$ and fibers or subfamilies of $\s{X}$ as considered in \cite{CMN}.  The initial scattering diagrams for all these cases have the form of \eqref{eq:cluster-Din}.  We introduce a new framework, called a seed datum, which allows us to conveniently address all these cases at once.

\subsection{Seed datum}\label{sec:sd}
Let $\lrc{e_1,\dots, e_r}$ be the standard basis for the rank $r$ lattice $\Z^r$.
In this paper, a skew-symmetric \textbf{seed datum} of rank $r$ consists of a pair of group homomorphisms
\[ P:\mathbb{Z}^r\longrightarrow M \text{ and }Q:\mathbb{Z}^r\longrightarrow N, \]
where $M$ and $N$ are a pair of mutually dual finite-rank lattices, such that the \emph{exchange matrix} $B$ defined by
$ B_{i,j} \coloneqq  (Pe_j)\cdot (Qe_i) $ is skew-symmetric.  We additionally assume that each $Qe_i$ is nonzero.

More generally, we consider \textbf{skew-symmetrizable} seed data.  Here we additionally have:
\begin{itemize}
\item a lattice $N^\bullet$ such that both $N^\bullet$ and $N$ are finite-index sublattices of a common refinement,
\item a homomorphism $Q^{\bullet}:\Z^r \to N^{\bullet}$,
\item and a diagonal matrix $D = \diag(d_1,\dots,d_r)$ with each $d_i \in \Q_{>0}$ such that $Q= Q^{\bullet}\circ D$.\footnote{We define $Q^{\bullet}\circ D$ on $\Z^r$ by extending $Q^{\bullet}$ linearly from $\Z^r$ to $\Q^r\supset D\Z^r$.  Similarly for the other pre-compositions with $D$ and $D^{-1}$ which we shall consider.}
\end{itemize}
Rather than requiring $B$ to be skew-symmetric, we require that $B^{\bullet}_{i,j}\coloneqq  (P e_j)\cdot (Q^\bullet e_i)$ is skew-symmetric. 
The exchange matrix $B= D B^\bullet$ is said to be {\it{skew-symmetrizable}}.
We also consider the dual lattice $M^\bullet$ to $N^\bullet$.  We may view $M^\bullet$ and $M$ as finite-index sublattices of a common refinement.    In addition to $Q^{\bullet}=Q\circ D^{-1}:\Z^r\rar N^{\bullet}$, we shall consider $P^{\bullet}\coloneqq P\circ D:\Z^r\rightarrow M^{\bullet}$.

A seed datum as above may be denoted $(P,Q)$.

The vectors $Pe_i$ and $Q e_i$ here correspond to the elements $m_i\in M$ and $n_i\in N$ from \eqref{eq:cluster-Din}, respectively.  That is, we define $\sQ\coloneqq \N^r\subset \Z^r$, an initial scattering diagram
 \begin{align}\label{eq:DPQin}
     \f{D}^{(P,Q)}_{\In}\coloneqq\lrc{\left.\lrp{(Q e_i)^{\perp},1+x^{Pe_i}y^{e_i},Qe_i}\, \right| \, i=1,\ldots,r},
     \end{align} 
and an associated consistent scattering diagram $\f{D}^{(P,Q)}\coloneqq\Scat(\f{D}^{(P,Q)}_{\In})$ as in Theorem \ref{thm:ScatD}.  So, e.g., the elementary transformation associated to the initial wall indexed by $i$ is
     \begin{align*}
         E_{Qe_i,1+x^{Pe_i}y^{e_i}}(x^my^q)=x^my^q(1+x^{Pe_i}y^{e_i})^{m\cdot Qe_i}.
     \end{align*}

\begin{rem}\label{rem:scale}
    Rescaling $D$ and $P^{\bullet}$ by a constant $k\in \Q_{>0}$ while rescaling $Q^{\bullet}$ by $k^{-1}$ will not affect $(P,Q)$, hence will not affect $\f{D}^{(P,Q)}$.  One may therefore always rescale to assume that $D$ is $\Z_{>0}$-valued.
\end{rem}

It is often convenient to fix an identification $M\simeq \mathbb{Z}^d$; this determines an identification $N\coloneqq \mathbb{Z}^d$ under which the pairing between $M$ and $N$ becomes the dot product. Given such identifications, $P$ and $Q$ may be expressed as integral $d\times r$-matrices such that ${B}^{\bullet}=(Q^{\bullet})^{\top} P$ and $B=Q^\top P=DB^{\bullet}$.  Our notation will typically be based on this matrix viewpoint.

\begin{exam}\label{eg:clusterA}
Consider an integral skew-symmetrizable $r\times r$-matrix $B=DB^{\bullet}$ with $M$ and $N$ identified with $\Z^d$ as above.  If $d=r$, we may consider the seed datum $(B,\Id)$.  Then the theta functions associated to $\f{D}^{(B,\Id)}$  include the cluster variables in the \emph{coefficient-free cluster algebra $\s{A}$ with exchange matrix} $B$.  Theta functions on the corresponding cluster Poisson variety $\s{X}$ are constructed using the seed datum $(\Id, B^{\top})$.  

More generally, if $d\geq r$ and $\widetilde{B}=\begin{bmatrix} B \\ C\end{bmatrix}$ is a $d\times r$-matrix whose top $r\times r$ submatrix is $B$, then the seed datum 
\[\left(\widetilde{B}, \begin{bmatrix} \Id_{r\times r} \\ 0_{(d-r)\times r}\end{bmatrix}\right)\]
defines a scattering diagram and theta functions corresponding to the \emph{cluster algebra with extended exchange matrix} $\wt{B}$ and principal exchange matrix $[\Id_{r\times r} ~ 0_{r\times (d-r)}]\widetilde{B}=B$. The associated scattering diagrams are called \emph{$\s{A}$-type} in \cite{GHKK}.  A corresponding $\s{X}$-type scattering diagram is given by 
\[\left(\begin{bmatrix} \Id_{r\times r} \\ 0_{(d-r)\times r}\end{bmatrix}, \begin{bmatrix} B^{\top} \\ C'\end{bmatrix}\right)\]
for any integral  $(d-r)\times r$-matrix $C'$.
\end{exam}

An important special feature of these scattering diagrams $\f{D}^{(P,Q)}$ is the following:
\begin{lem}\label{lem:D-skew} 
    Up to equivalence, every wall $(W,f,n)\in \f{D}^{(P,Q)}$ satisfies $$f\in 1+x^{Pv}y^v\Z\llb x^{Pv}y^v\rrb$$ for some $v\in \N^r$ such that $n\in N$ is a positive rational multiple of $Q^{\bullet}v\neq 0$.
\end{lem}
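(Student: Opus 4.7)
The plan is to argue by induction on the $\sQ$-adic order, using the order-by-order nature of the consistency procedure underlying Theorem \ref{thm:ScatD}. For the base case, each initial wall $(W_i,1+x^{Pe_i}y^{e_i},Qe_i)\in\f{D}^{(P,Q)}_{\In}$ is of the claimed form with $v=e_i$: the scattering function is $1+x^{Pv}y^v$, and $Qe_i=d_iQ^{\bullet}e_i$ is a positive rational multiple of $Q^{\bullet}e_i\neq 0$ (using $d_i\in\Q_{>0}$ and the standing assumption that each $Qe_i$ is nonzero). The inductive step asserts: if the truncation $\f{D}^k$ consists of walls of the claimed form, then the ``correction walls'' added at order $k+1$ to restore consistency can also be chosen of the claimed form.

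To verify this, I would pass to the pronilpotent Lie algebra of log-derivations of $\wh{A}$ in which the wall-crossing automorphisms live. A wall of the claimed form with $f\in 1+x^{Pv}y^v\Z\llb x^{Pv}y^v\rrb$ and normal $\lambda Q^{\bullet}v$ ($\lambda\in\Q_{>0}$) contributes exactly Lie elements of the shape
\begin{align*}
X_{v,k,c}\colon\ z^w\longmapsto c\cdot(w\cdot Q^{\bullet}v)\cdot z^{w+k(Pv,v)},\qquad v\in\N^r\!\setminus\!\{0\},~k\in\Z_{\geq 1},~c\in\Q.
\end{align*}
The central computation is the bracket identity
\begin{align*}
[X_{v_1,k_1,c_1},\, X_{v_2,k_2,c_2}]\ =\ X_{\hat v,\,1,\,\hat c},\qquad \hat v=k_1v_1+k_2v_2,~\hat c=-c_1c_2\,(Pv_1)\!\cdot\!(Q^{\bullet}v_2).
\end{align*}
A direct expansion of $[X_1,X_2](z^w)$ produces two summands quadratic in $w$; these cancel precisely because of the skew-symmetry of $B^\bullet$ (which gives $(Pv_1)\cdot(Q^{\bullet}v_2)=-(Pv_2)\cdot(Q^{\bullet}v_1)$), and the residual linear-in-$w$ terms recombine into $\hat c\,(w\cdot Q^{\bullet}\hat v)\,z^{w+(P\hat v,\hat v)}$. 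Hence the Lie subalgebra generated by the initial $X_{e_i,1,1}$'s lies in the span of the $X_{v,k,c}$'s, and the correction walls introduced at order $k+1$ can therefore be realized as $(W',f',n')$ with $f'\in 1+x^{P\hat v}y^{\hat v}\Z\llb x^{P\hat v}y^{\hat v}\rrb$ and $n'\in\R\,Q^{\bullet}\hat v$. If the scalar $\hat c$ forces $n'$ to be a negative multiple of $Q^{\bullet}\hat v$, I would apply the identity $E_{n',f'}=E_{-n',(f')^{-1}}$ and note that $(f')^{-1}$ remains of the required form, yielding the positivity demanded by the statement.

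The main obstacle is the bracket computation, specifically the cancellation of the two quadratic-in-$w$ terms---this is exactly where the skew-symmetry of $B^\bullet$ is indispensable, and without it the subalgebra generated by the initial elements would fail to close on elements of the claimed form. The skew-symmetrizable setup (with a nontrivial diagonal $D$) introduces no extra difficulty, since $D$ merely rescales initial normals $Q^{\bullet}e_i\mapsto Qe_i=d_iQ^{\bullet}e_i$ by positive rationals, which is precisely the freedom permitted by the claim. A secondary bookkeeping issue is the equivalence reorganization: walls with parallel support and identical normal direction may need to be combined (or conversely separated by distinct $\hat v$) to present each wall in the single-$v$ form stated, but this is a standard maneuver allowed by the equivalence relation on scattering diagrams.
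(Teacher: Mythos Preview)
Your proposal is correct and follows essentially the same route as the paper: both check the base case on initial walls, pass to the Lie algebra of log derivations, and verify via a direct bracket computation (using the skew-symmetry of $B^\bullet$) that elements of the form $x^{Pv}y^v\otimes\partial_{Q^\bullet v}$ span a Lie subalgebra, so the walls produced by consistent completion inherit the desired shape. One minor wording point: the quadratic-in-$w$ terms in your commutator cancel simply by symmetry of the product, and it is the recombination of the linear-in-$w$ terms into $w\cdot Q^\bullet\hat v$ that genuinely requires the skew-symmetry of $B^\bullet$.
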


In particular, Assumption \ref{assum:A-parallel} holds.

\begin{proof} 
First note that the initial scattering diagram \eqref{eq:DPQin} has the desired form since $Qe_i=d_iQ^{\bullet}e_i$.  Given this, the claim follows from standard constructions as in \cite{GPS,WCS,GHKK,Man3}.\footnote{Indeed, Lemma \ref{lem:D-skew} is essentially already known in the cluster setting, cf. \cite[Equation 1.1 and the surrounding discussion]{GHKK}.  Here we just reframe this fact in the seed datum perspective.}  More precisely, one considers the module of log derivations $\wh{\f{g}}=\QQ[x^M]\llb y^{\sQ}\rrb \otimes N$ with Lie bracket $$[x^{m_1}\otimes \partial_{n_1},x^{m_2}\otimes \partial_{n_2}]=x^{m_1+m_2}\otimes \partial_{(m_2\cdot n_1)n_2-(m_1\cdot n_2)n_1}$$  Then the elementary transformations $E_{n,f}$ are understood as actions of $\exp (\log f\otimes \partial_n)$ for $\log f\otimes \partial_n\in \wh{\f{g}}$.  The claim then is equivalent to saying that for each wall, the associated element in $\wh{\f{g}}$ lies in $x^{Pv}y^v\Q\llb x^{Pv}y^v\rrb \otimes \partial_{Q^{\bullet}v}$ for some $v$.  Elements of this form are closed under the Lie bracket:
\begin{align*}
    \left[x^{Pv_1}y^{v_1}\otimes \partial_{Q^{\bullet}v_1},x^{Pv_2}y^{v_2}\otimes \partial_{Q^{\bullet}v_2}\right] &=  x^{P(v_1+v_2)}y^{v_1+v_2}\otimes \partial_{(Pv_2\cdot Q^{\bullet}v_1)Q^{\bullet}v_2-(Pv_1\cdot Q^{\bullet}v_2)Q^{\bullet}v_1}\\
    &=B^{\bullet}(v_1,v_2)\left(x^{P(v_1+v_2)}y^{v_1+v_2}\otimes \partial_{Q^{\bullet}(v_1+v_2)}\right)
\end{align*}
where in the last line we used that $B^{\bullet}=(Q^{\bullet})^{\top}P$ is skew-symmetric, hence $-Pv_1\cdot Q^{\bullet}v_2=Pv_2\cdot Q^{\bullet}v_1=B^{\bullet}(v_1,v_2)$.  
So elements of $\wh{\f{g}}$ with the desired form are a Lie subalgebra of $\wh{\f{g}}$.  Since we saw that the initial walls come from this Lie subalgebra, it follows that all the walls of $\f{D}^{(P,Q)}$ come from this Lie subalgebra as well.  This proves the claim---the condition $Q^{\bullet} v\neq 0$ may be assumed to hold because otherwise the wall would act trivially.
\end{proof}

\begin{exam}
Let $M\coloneqq  \mathbb{Z}^2=:N$, and consider the seed
\[ P = \begin{bmatrix} 0 & -1 \\ 1 & 0 \end{bmatrix} \text{ and }
Q =\begin{bmatrix} 1 & 0 \\ 0 & 1 \end{bmatrix} \]
This seed defines two incoming walls in $M_\mathbb{R}\coloneqq \mathbb{R}^2$ which are supported on the axes. Consistent completion introduces one additional wall $(\R_{\leq 0}(-1,1),1+x^{(-1,1)}y^{(1,1)},(1,1))$.  The resulting scattering diagram $\f{D}^{(P,Q)}$ is depicted below.
\[ 
\begin{tikzpicture}
\begin{scope}[scale=.45]
	\clip (-5.5,-5.5) rectangle (5.5,5.5);
    	\draw[step=1,draw=black!10,very thin] (-5.5,-5.5) grid (5.5,5.5);
	\draw[blue, thick] (0,-6) to (0,6);
	\draw[blue, thick] (-6,0) to (6,0);
	\draw[blue, thick] (0,0) to (6,-6);
\end{scope}
\end{tikzpicture}\]
\end{exam}

\subsection{Linear morphisms}\label{sub:lin-morph}

If $(P,Q)$ and $(P',Q')$ are seed data of rank $r$ with lattices $M$ and $M'$, respectively, an \textbf{(integral) linear morphism} from $(P,Q)$ to $(P',Q')$ is a linear map $A:M\rightarrow M'$ such that $AP=P'$ and $A^\top Q'=Q$, where $A^\top:N'\rightarrow N$ is the adjoint map.  Equivalently, $A$ makes the following diagram commute.
\[\begin{tikzpicture}[scale=1.5]
    \node (a) at (-1.414,0) {$\mathbb{Z}^r$};
    \node (b) at (1.414,0) {$\mathbb{Z}^r$};
    \node (c) at (0,.5) {$M$};
    \node (d) at (0,-.5) {$M'$};
    \draw[->] (a) to node[above] {$P$} (c);
    \draw[->] (a) to node[below] {$P'$} (d);
    \draw[->] (c) to node[above] {$Q^\top$} (b);
    \draw[->] (d) to node[below] {$Q'^\top$} (b);
    \draw[->] (c) to node[left] {$A$} (d);
\end{tikzpicture}\]
Note that such a map can only exist if $Q^\top P= Q'^\top P'$; that is, if $(P,Q)$ and $(P',Q')$ have the same exchange matrix $B=B'$.  It follows that the two seed data have the same diagonal matrix $D$ up to some overall re-scaling\footnote{If any rows/columns of $B$ are identically $0$, there may be additional freedom to alter the corresponding entries of $D$.  We still assume for convenience that $D$ and $D'$ are equal whenever we consider a linear morphism.} as in Remark \ref{rem:scale}.  For convenience, we will always assume that we have scaled to ensure $D=D'$ when we consider a linear morphism $(P,Q)\rar (P',Q')$.  In particular, the equalities $AP=P'$ and $A^{\top}Q'=Q$ are equivalent to $AP^{\bullet}=(P')^{\bullet}$ and $A^{\top} (Q')^{\bullet} =Q^{\bullet}$, respectively.

More generally, a \textbf{rational linear morphism} is defined the same way but with $A:M_{\Q}\rar M'_{\Q}$ and $A^\top:N'_{\Q}\rar N_{\Q}$.

\begin{exam}\label{eg:ensemble}
Consider a seed datum $(B,\Id)$ as in Example \ref{eg:clusterA} with corresponding $\s{X}$-type seed datum $(\Id,B^{\top})$.  Then there is a linear morphism
\[ B: (\mathrm{Id},B^{\top}) \rightarrow (B,\Id). \]
The induced map on cluster varieties is the \emph{cluster ensemble map} of \cite{FG1}.  In fact, if $B=Q^{\top} P$ for some seed datum $(P,Q)$, we can factor the above linear morphism $B$ as
\begin{align}\label{eq:ensemble}
    (\Id,B^{\top})\stackrel{P}{\longrightarrow} (P,Q)\stackrel{Q^{\top}}{\longrightarrow} (B,\Id).
\end{align}

Similarly, for the setup from the second part of Example \ref{eg:clusterA}, a cluster ensemble map $$\left(\begin{bmatrix} \Id_{r\times r} \\ 0_{(d-r)\times r}\end{bmatrix}, \begin{bmatrix} B^{\top} \\ C'\end{bmatrix}\right)\rar \left(\widetilde{B}, \begin{bmatrix} \Id_{r\times r} \\ 0_{(d-r)\times r}\end{bmatrix}\right)$$
is given by $A=\begin{bmatrix} B & (C')^{\top} \\ C & E\end{bmatrix}$ for any integral $(d-r)\times (d-r)$-matrix $E$.  If $C$, $C'$, and $E$ are such that there exist matrices $F$ and $G$ satisfying $Q^{\top}G=(C')^{\top}$, $FP=C$, and $FG=E$, then we can factor $A$ as 
\begin{align}\label{eq:ext-ebsemble}
    \left(\begin{bmatrix} \Id_{r\times r} \\ 0_{(d-r)\times r}\end{bmatrix}, \begin{bmatrix} B^{\top} \\ C'\end{bmatrix}\right)\stackrel{\begin{bmatrix} P & G\end{bmatrix}}{\longrightarrow} (P,Q)\stackrel{\begin{bmatrix} Q^{\top} \\ F\end{bmatrix}}{\longrightarrow}  \left(\widetilde{B}, \begin{bmatrix} \Id_{r\times r} \\ 0_{(d-r)\times r}\end{bmatrix}\right).
\end{align}
\end{exam}

Recall that a sublattice $M'\subset M$ is saturated if $M/M'$ is torsion-free; equivalently, $M = M' \oplus M''$ for some sublattice $M''\subset M$. A linear map of lattices $A:M'\rightarrow M$ is called \textbf{saturated} if its image is a saturated sublattice.  We say that a seed datum $(P,Q)$ is \textbf{saturated-injective} if $P$ and $Q$ are saturated injections. 

\begin{lem}\label{lemma: saturatedresolution}
Given any seed $(P,Q)$, there are linear morphisms of seeds 
\[ (P,Q) \stackrel{A}{\hookrightarrow} (P',Q') \stackrel{B}{\twoheadleftarrow} (P'',Q'') \]
such that $A$ is injective, $B$ is surjective, and $(P'',Q'')$ is saturated-injective.
\end{lem}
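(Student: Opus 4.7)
My plan is to construct the resolution explicitly in two stages: first I enlarge $M$ and $N$ by appending auxiliary copies of $\Z^r$ to form the seed datum $(P', Q')$, and then further enlarge by copies of $M$ and $N$ to form the saturated-injective $(P'', Q'')$. The letter $B$ is overloaded (the lemma uses it for the surjection, and the paper uses it for the exchange matrix $Q^{\top}P$); to disambiguate I reserve $B$ for the exchange matrix throughout and write the surjection as an unnamed block matrix.

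For Stage~1, I take $M' \coloneqq M \oplus \Z^r$ and $N' \coloneqq N \oplus \Z^r$, and set
\[ P' \coloneqq \begin{pmatrix} P \\ 0 \end{pmatrix}, \qquad Q' \coloneqq \begin{pmatrix} Q \\ \Id \end{pmatrix}. \]
The exchange-matrix identity is immediate: $(Q')^{\top} P' = Q^{\top} P = B$. The inclusion $A : M \hookrightarrow M'$ given by $m \mapsto (m,0)$ satisfies $AP = P'$ and $A^{\top} Q' = Q$. Putting the identity block into $Q'$ (rather than zero) makes $Q'$ itself a saturated injection, which will be needed to close up Stage~2.

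For Stage~2, I take $M'' \coloneqq \Z^r \oplus \Z^r \oplus M$ and $N'' \coloneqq \Z^r \oplus \Z^r \oplus N$, with $P''(e_i) \coloneqq (e_i, 0, 0)$ and $Q''(e_i) \coloneqq (B^{\top}e_i,\, e_i,\, 0)$. Since the $\Z^r$-identity components project to the identity of $\Z^r$, both $P''$ and $Q''$ are saturated injections; the exchange matrix of $(P'', Q'')$ equals $B$ by a direct block computation. I then define the candidate surjection $M'' \twoheadrightarrow M'$ by the block matrix
\[ \begin{pmatrix} P & 0 & \Id_M \\ 0 & \Id & -Q^{\top} \end{pmatrix}. \]
Verifying the two linear-morphism identities reduces, slot-by-slot, to the exchange-matrix relation $P^{\top} Q = B^{\top}$ (for the first component of $Q''$), a trivial equality (for the second), and the cancellation $Qe_i - Qe_i = 0$ forced by the $-Q^{\top}$ entry (for the third). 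Surjectivity is clear because the last column block sends $m \in M$ to $(m, -Q^{\top}m)$, which combined with the middle column block produces every element of $M \oplus \Z^r$.

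The only choice requiring any real care is the entry $-Q^{\top}$. Augmenting $M''$ by a copy of $M$ in the third slot is forced, since $P$ need not surject onto $M$, and this extra summand in turn compels a compensating entry in the $N$-factor of $N''$ whose image under the transpose of the surjection must cancel against $Q'$ in order to recover $Q''$; the entry $-Q^{\top}$ is the unique such choice. For the skew-symmetrizable version of the lemma, one rescales via Remark~\ref{rem:scale} to arrange $D = D' = D''$ and repeats the construction with $P^{\bullet}, Q^{\bullet}$ in place of $P, Q$; all the block formulas commute with diagonal scaling, so the lattices $(N')^{\bullet}$ and $(N'')^{\bullet}$ are determined automatically.
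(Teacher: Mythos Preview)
Your proof is correct. Stage~1 is identical to the paper's construction. Stage~2 differs in a genuine but minor way: the paper takes $M'' = M \oplus \Z^r \oplus \Z^r$ with $P''(v) = (Pv, 0, v)$, $Q''(v) = (Qv, v, 0)$, and lets the surjection $M'' \to M'$ be the bare coordinate projection $(m,a,b) \mapsto (m,a)$. You instead replace the $M$-slot of $P''$ by an identity on a fresh $\Z^r$ (so $(P'',Q'')$ is built from $B$ alone, with $M$ appearing only as a dummy summand), and pay for this by pushing the maps $P$ and $Q^\top$ into the surjection. The paper's choice buys a trivially checkable surjection and transpose; your choice buys a seed $(P'',Q'')$ whose saturated-injectivity is visible by inspection and which depends only on the exchange matrix. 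Either route works, and the verifications are of comparable length. Your handling of the skew-symmetrizable data is slightly loose (one does not literally ``repeat with $P^\bullet, Q^\bullet$'' --- rather, all three seeds share the same $B$ and hence the same $D$, and the $\bullet$-lattices are then determined), but the paper is equally informal on this point.
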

\begin{proof}
Let $M'=M\oplus \Z^r$ and $M'' = M\oplus \Z^r \oplus \Z^r$.
Now define the following lattice maps:

\noindent
\begin{minipage}{.24\textwidth}
\eqn{P':\Z^r &\to M'\\
    v&\mapsto (Pv,0), }    
\end{minipage}
\begin{minipage}{.24\textwidth}
\eqn{P'':\Z^r &\to M''\\
    v&\mapsto (Pv,0,v), }
\end{minipage}
\begin{minipage}{.24\textwidth}
\eqn{Q':\Z^r &\to N'\\
    v&\mapsto (Qv,v), }
\end{minipage}
\begin{minipage}{.24\textwidth}
\eqn{Q'':\Z^r &\to N''\\
    v&\mapsto (Qv,v,0), }
\end{minipage}

Our candidates for the linear morphisms are

\noindent
\begin{minipage}{.45\textwidth}
\eqn{A:M &\to M'\\
    m&\mapsto (m,0), }
\end{minipage}
and
\begin{minipage}{.45\textwidth}
\eqn{B:M'' &\to M'\\
    (m,a,b)&\mapsto (m,a). }
\end{minipage}

We compute:
\eqn{m\cdot A^\top (n,q) &= A m \cdot (n,q) \\
&=(m,0) \cdot (n,q)\\
&= m\cdot n,}
so
\eqn{A^\top : N' &\to N\\
    (n,q) &\mapsto n.
}
Similarly,
\eqn{(m,a,b) \cdot B^\top (n,q) &= B (m,a,b) \cdot (n,q)\\
&= (m,a)\cdot(n,q)\\
&=(m,a,b)\cdot (n,q,0),}
so 
\eqn{B^\top : N' &\to N''\\
    (n,q) &\mapsto (n,q,0).
}
Then $APv=(Pv,0)=P'v$ and $A^\top Q'v=Qv$, so $A$ is indeed a linear morphism.
Meanwhile,
$BP''v= (Pv,0) = P'v$ and $B^\top Q'v = (Qv,v,0)=Q''v$, so $B$ is a linear morphism as well.

Finally, it is easy to see that $A$ is injective, $B$ is surjective, and $(P'',Q'')$ is saturated-injective. 
\end{proof}

Given a linear morphism $A:(P,Q)\rightarrow (P',Q')$, define a homomorphism 
$\rho_A:\mathbb{Z}[x^M]\llb y^{\N^r}\rrb \rightarrow \mathbb{Z}[x^{M'}]\llb y^{\N^r}\rrb$ by
\[ \rho_A(x^my^q) \coloneqq  x^{Am}y^q. \]
When it is not clear from context, we will write theta functions with the superscript $(P,Q)$ to indicate that they are associated to the scattering diagram $\f{D}^{(P,Q)}$.  The following lemmas will be used to prove Theorem \ref{thm: linearmorphism}, which relates theta functions associated with $(P,Q)$ to those associated with $(P',Q')$.

\begin{lem}\label{lem:ImQperp}
    All walls of $\f{D}^{(P,Q)}$ are invariant under  translations by $\im(Q)^\perp$.
\end{lem}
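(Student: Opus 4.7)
The plan is to use Lemma \ref{lem:D-skew} together with the uniqueness assertion of Theorem \ref{thm:ScatD} to show that $\f{D}^{(P,Q)}$ is invariant up to equivalence under any translation $T_{m_0}: M_\R\to M_\R$, $x\mapsto x+m_0$, for $m_0\in \im(Q)^\perp$, and then to realize this invariance at the level of actual walls via an inductive construction of the consistent completion.

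First, by Lemma \ref{lem:D-skew} every wall $(W,f,n)\in \f{D}^{(P,Q)}$ has $n$ a positive rational multiple of $Q^\bullet v = Q(D^{-1}v)$ for some $v\in \N^r$, so $n$ lies in the $\Q$-linear span of $\im(Q)\subset N$. Consequently $\im(Q)^\perp\subset n^\perp$, so for any $m_0\in \im(Q)^\perp$ we have $m_0\cdot n=0$; this means both the ambient hyperplane $n^\perp$ containing $W$ and the elementary transformation $E_{n,f}$ are preserved by $T_{m_0}$. Moreover, each initial wall of $\f{D}^{(P,Q)}_{\In}$ has support $(Qe_i)^\perp$, a full hyperplane containing $\im(Q)^\perp$, and is thus literally $T_{m_0}$-invariant. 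Hence the translated scattering diagram $T_{m_0}(\f{D}^{(P,Q)})$ is still consistent—its path-ordered products along a path $\gamma$ coincide with those of $\f{D}^{(P,Q)}$ along $\gamma-m_0$—contains $\f{D}^{(P,Q)}_{\In}$, and has all non-initial walls outgoing. By the uniqueness clause of Theorem \ref{thm:ScatD}, $T_{m_0}(\f{D}^{(P,Q)})\sim \f{D}^{(P,Q)}$.

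To upgrade this equivalence to a literal choice of $T_{m_0}$-invariant walls, I would reconstruct $\f{D}^{(P,Q)}$ inductively modulo $\s{I}^{k+1}$ following \cite{GS11,WCS,GHKK,Man3}, proving by induction on $k$ that the walls can be chosen $T_{m_0}$-invariant for every $m_0\in \im(Q)^\perp$. The base case $k=0$ consists only of the initial walls, handled above. For the inductive step, the codimension-$2$ joints of the order-$k$ diagram are intersections of $T_{m_0}$-invariant supports and hence remain $T_{m_0}$-invariant; at each joint $J$, the defect of consistency is cancelled by inserting outgoing walls whose supports have the form $J+\R_{\geq 0}(-Pv)$ for suitable $v\in \N^r$. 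Since $J$ is $T_{m_0}$-invariant and we adjoin only a ray, the new support is $T_{m_0}$-invariant as well, and the attached scattering function is $T_{m_0}$-invariant by the first paragraph.

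The main obstacle is verifying that the perturbative construction really does produce new walls of the stated form ``joint plus a ray''. This is essentially built into the standard procedure of cancelling the defect around a small loop transverse to $J$: working in the Lie algebra $\wh{\f g}$ of \S \ref{sec:cluster} (cf.\ the proof of Lemma \ref{lem:D-skew}), the defect is a sum of elements $c_\alpha\, z^{Pv_\alpha}y^{v_\alpha}\otimes \partial_{Q^\bullet v_\alpha}$, and matching these term-by-term forces each new wall to emanate from $J$ in the direction $-Pv_\alpha$. Alternatively, one can bypass this technicality entirely by arguing directly from $T_{m_0}(\f{D}^{(P,Q)})\sim \f{D}^{(P,Q)}$ that any equivalence-class representative with maximally extended wall supports must, by the uniqueness of such a ``maximal'' representative, be $T_{m_0}$-invariant simultaneously for every $m_0\in \im(Q)^\perp$.
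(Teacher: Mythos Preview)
Your inductive argument in the second paragraph is exactly the paper's proof: the initial walls are $\im(Q)^\perp$-invariant, and by induction on $k$ each joint $\f{j}$ of $(\f{D}^{(P,Q)})^k$ is invariant, hence so is every new wall support $\f{j}-\R_{\geq 0}m$ added at the next order. Your first paragraph's detour through the uniqueness clause of Theorem~\ref{thm:ScatD} is superfluous---and in fact slightly off, since translating a non-initial wall by $m_0$ produces an affine cone rather than a cone with apex at the origin, so $T_{m_0}(\f{D}^{(P,Q)})$ is not a scattering diagram in the paper's sense and the uniqueness statement does not literally apply---but this does not matter because the induction is self-contained.
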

\begin{proof}
    Observe that this holds for $\f{D}^{(P,Q)}_{\In}$.  Consider the recursive construction of the consistent completion $\f{D}^{(P,Q)}$ as in \cite[\S C.1]{GHKK}.  In each step, the supports of new walls have the form $\f{j}-\R_{\geq 0} m$ for some joint $\f{j}$ of $(\f{D}^{(P,Q)})^k$ and some vector $m\in M$.  The claim follows from induction on $k$: the inductive assumption ensures that each joint $\f{j}$ of $(\f{D}^{(P,Q)})^{k}$ is invariant under translations by $\im(Q)^{\perp}$, so $(\f{D}^{(P,Q)})^{k+1}$ will be as well.
\end{proof}

\begin{lem}\label{lem:LinMorResInj}
Let $A:(P,Q) \to (P',Q')$ be a linear morphism, and fix a splitting of the domain $M_{\R}$ of $A$ (tensored with $\R$) as $M_{\R}\cong \im(Q)^* \oplus \im(Q)^\perp$.
Then 
\begin{enumerate}
    \item $A(\im(Q)^{\perp})\subset \im(Q')^{\perp}$, and
    \item the composition $\left.A\right|_{\im(Q)^*}:\im(Q)^*\rightarrow M'_{\R}\rightarrow M'_{\R}/\im(Q')^{\perp}$ is injective.
\end{enumerate}
    Thus, there is a splitting $M'_{\R}\cong \im(Q')^*\oplus \im(Q')^{\perp}$ such that $A$ splits as $A_1\oplus A_2$ for $$A_1=A|_{\im(Q)^*}:\im(Q)^*\hookrightarrow M'_{\R}/\im(Q')^{\perp}\cong \im(Q')^*$$ injective and $$A_2=A|_{\im(Q)^{\perp}}:\im(Q)^\perp\rightarrow \im(Q')^{\perp}.$$
\end{lem}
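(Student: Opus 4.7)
The plan is to derive both claims (1) and (2) directly from the adjointness relation $A^\top Q' = Q$ that defines a linear morphism, and then to bootstrap (2) into the desired splitting of $M'_{\R}$.

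For claim (1): given $m \in \im(Q)^\perp$, we compute $(Am)\cdot Q'v = m \cdot A^\top Q' v = m \cdot Qv = 0$ for every $v \in \Z^r$, hence $Am \in \im(Q')^\perp$. For claim (2): if $m \in \im(Q)^* \subset M_{\R}$ satisfies $Am \in \im(Q')^\perp$, then the same computation gives $m \cdot Qv = (Am)\cdot Q' v = 0$ for all $v$, so $m \in \im(Q)^\perp$. Combined with the direct-sum hypothesis $\im(Q)^* \cap \im(Q)^\perp = 0$, this forces $m = 0$, proving injectivity of the composition.

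For the final assertion, set $V \coloneqq A(\im(Q)^*) \subset M'_{\R}$. By (2), the projection $M'_{\R} \to M'_{\R}/\im(Q')^\perp$ restricts to an injection on $V$, so $V \cap \im(Q')^\perp = 0$. A dimension count then shows $\dim V = \rank(\im Q) \leq \rank(\im Q') = \dim M'_{\R} - \dim \im(Q')^\perp$; here the middle inequality uses the factorization $\im Q = A^\top(\im Q')$ coming from $A^\top Q' = Q$. Consequently, $V$ can be extended to a linear complement $\im(Q')^* \subset M'_{\R}$ of $\im(Q')^\perp$. With this choice of splitting, (1) and the construction together yield $A(\im(Q)^\perp) \subset \im(Q')^\perp$ and $A(\im(Q)^*) \subset \im(Q')^*$, so $A$ decomposes as $A_1 \oplus A_2$ in the required form, with $A_1$ injective into $\im(Q')^*$ (equivalently, into $M'_{\R}/\im(Q')^\perp$).

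There is no substantive obstacle here: the only delicate point is making the complement $\im(Q')^*$ of $\im(Q')^\perp$ in $M'_{\R}$ contain $A(\im(Q)^*)$, and this is exactly what (2) and the dimension comparison enable. Everything else reduces to routine adjointness.
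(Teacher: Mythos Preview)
Your proof is correct and follows essentially the same approach as the paper: both arguments derive (1) and (2) directly from the adjointness relation $A^\top Q' = Q$ via the identical computation $(Am)\cdot Q'v = m\cdot Qv$. You actually provide more detail on the final splitting assertion than the paper does (it simply says ``the final claim follows from (1) and (2)''); note that your dimension count, while correct, is redundant since $V\cap \im(Q')^\perp = 0$ already guarantees $V$ extends to a complement of $\im(Q')^\perp$.
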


\begin{proof}
    For (1), let $m\in \im(Q)^{\perp}$.  Then for any $u\in \sQ=\N^r$, we have
    \begin{align*}
        Am\cdot Q'u=m\cdot A^{\top} Q'u = m\cdot Qu =0
    \end{align*}
    as desired.

    Now for (2), $m\in \im(Q)^*$ being nonzero implies that there is some $u\in \sQ$ such that
    \begin{align*}
        0\neq m\cdot Qu = m\cdot A^{\top} Q'u = Am\cdot Q'u.
    \end{align*}
    So then $Am$ is nonzero in $M_{\R}/\im(Q')^{\perp}\cong \im(Q')^*$, as desired.

    The final claim follows from (1) and (2).
\end{proof}

Note that since $Q=Q^{\bullet} D$ for $D$ diagonal and nondegenerate, we have $\im(Q)=\im(Q^{\bullet})$ over $\R$.  Similarly for $\im(Q')=\im((Q')^{\bullet})$ over $\R$.

\begin{thm}\label{thm: linearmorphism}
Given a linear morphism of seeds $A:(P,Q)\rightarrow (P',Q')$, we have
\[ \rho_A(\vartheta^{(P,Q)}_{m,p}) = \vartheta^{(P',Q')}_{Am,Ap} \]
for all $m\in M$ and generic\footnote{The endpoint $Ap$ is not generic in $M'_{\R}$ since it is contained in the subspace $AM_{\R}$.  So to avoid the possibility of broken lines passing through joints of $\f{D}^{(P',Q')}$, we take $\vartheta^{(P',Q')}_{Am,Ap}$ to mean $\lim_{\epsilon \rar 0^+} \vartheta^{(P',Q')}_{Am,Ap+\epsilon \mu}$ for generic $\mu \in M'_{\R}$.  The choice of $\mu$ does not matter because if $Ap+\epsilon \mu$ and $Ap+\epsilon \mu'$ lie on opposite sides of a wall $(W',f',n')\in \f{D}^{(P',Q')}$ for generic $p$ and arbitrarily small $\epsilon>0$, then we must have $AM_{\R}\subset (n')^{\perp}$, so this wall acts trivially on broken lines parallel to $AM_{\R}$ (and we'll see that all broken lines contributing to any $\vartheta_{Am,p'}^{(P',Q')}$ are indeed parallel to $AM_{\R}$).\label{foot:generic}} $p\in M_{\R}$.
\end{thm}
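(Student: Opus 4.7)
The strategy is to exhibit a bijection between broken lines for $\f{D}^{(P,Q)}$ with ends $(m,p)$ and broken lines for $\f{D}^{(P',Q')}$ with ends $(Am,Ap)$ (interpreted via the limit of Footnote \ref{foot:generic}), under which the attached monomials correspond via $\rho_A$. The forward direction sends $\Gamma$ to $A\circ \Gamma$ with each attached monomial replaced by its $\rho_A$-image; summing final monomials then yields the theorem.

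The main technical input is a correspondence between walls of $\f{D}^{(P,Q)}$ and walls $(W',f',n')$ of $\f{D}^{(P',Q')}$ with $A^\top n'\neq 0$. By Lemma \ref{lem:D-skew}, every wall of $\f{D}^{(P,Q)}$ takes the form $(W,f,n)$ with $f\in 1+x^{Pv}y^v\Z\llb x^{Pv}y^v\rrb$ and $n$ a positive rational multiple of $Q^\bullet v$ for some $v\in \N^r$, and the analogous statement holds for $\f{D}^{(P',Q')}$ with $P'v=APv$ and $(Q')^\bullet v$. Because $(P,Q)$ and $(P',Q')$ share the same $B^\bullet$, the assignment $x^{Pv}y^v\otimes \partial_{Q^\bullet v}\mapsto x^{P'v}y^v\otimes \partial_{(Q')^\bullet v}$ defines a bracket-preserving map on the relevant Lie subalgebras of $\wh{\f{g}}$, by the same computation displayed in the proof of Lemma \ref{lem:D-skew}. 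Inducting on the $\wh{\s{I}}$-adic order and using Lemma \ref{lem:ImQperp} to extend supports in the $\im(Q)^\perp$-directions, the scattering completion commutes with this Lie algebra map, producing a bijection $(W,f,n)\leftrightarrow (W',f',n')$ satisfying $f'=\rho_A(f)$ and $A^\top n'=n$. The remaining walls of $\f{D}^{(P',Q')}$, namely those with $A^\top n'=0$, satisfy $AM_\R\subset (n')^\perp$ and so act trivially on monomials whose $x$-exponent lies in $AM$.

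Granted the wall correspondence, $A\Gamma$ satisfies the broken-line axioms: the derivative condition is immediate, and the bend condition uses $Am\cdot n'=m\cdot A^\top n'=m\cdot n$, giving $\rho_A(f^{m\cdot n})=(f')^{Am\cdot n'}$ term-by-term so that monomial changes match. Conversely, any broken line $\Gamma'$ for $\f{D}^{(P',Q')}$ with initial exponent $Am\in AM$ has all subsequent $x$-exponents in $AM$ (each bend contributes a term with $x$-exponent in $\N\cdot P'v=\N\cdot APv\subset AM$ by Lemma \ref{lem:D-skew}), so the trajectory of $\Gamma'$ lies in $Ap+AM_\R=AM_\R$; walls with $A^\top n'=0$ produce trivial bend factors, and the remaining walls correspond to walls of $\f{D}^{(P,Q)}$, so $\Gamma'$ descends uniquely to a broken line in $M_\R$ with ends $(m,p)$ (the choice of lift for each exponent is forced by the choice of $m$ for the initial segment). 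The limit interpretation of Footnote \ref{foot:generic} is handled by observing that the bijection is compatible with perturbations $Ap\mapsto Ap+\epsilon\mu$ and stabilizes to any finite order as $\epsilon\to 0^+$, as in \S\ref{sub:lim}. The principal obstacle is establishing the wall correspondence rigorously: one must show that the scattering completion of Theorem \ref{thm:ScatD} commutes with the Lie algebra map, which can be carried out either by induction on the $\wh{\s{I}}$-adic order using that new walls are built from brackets of existing ones, or by applying the uniqueness clause of Theorem \ref{thm:ScatD} to a candidate scattering diagram in $M'_\R$ built from the image of $\f{D}^{(P,Q)}$.
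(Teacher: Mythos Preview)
Your proposal follows essentially the same strategy as the paper: show that the scattering diagrams correspond (via $\rho_A$ on functions and $A^\top$ on normal vectors), then deduce a bijection of broken lines. The paper executes this by working inside the subspace $AM_\R\subset M'_\R$: it defines the restriction $\f{D}^{(P',Q')}|_{AM_\R}$ and the push-forward $\rho_A(\f{D}^{(P,Q)})$ as two scattering diagrams in $AM_\R$, checks they share the same initial walls and are both consistent (your commutative-square computation is exactly the paper's diagram \eqref{eq:rhoA}), and then invokes the uniqueness clause of Theorem \ref{thm:ScatD}.

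There is one step you are missing, and it is the nontrivial content of the argument. To apply uniqueness from Theorem \ref{thm:ScatD}, you must verify that $\rho_A$ sends \emph{outgoing} walls to \emph{outgoing} walls: if $Pv\notin W$, then $P'v=APv\notin AW$. This does not follow from Lemma \ref{lem:ImQperp} alone (which only says walls are $\im(Q)^\perp$-invariant). The paper proves it using Lemma \ref{lem:LinMorResInj}: splitting $M_\R\cong\im(Q)^*\oplus\im(Q)^\perp$, the failure of $Pv$ to lie in $W$ must occur in the $\im(Q)^*$-component, and $A$ is \emph{injective} on $\im(Q)^*$ modulo $\im(Q')^\perp$. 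Without this injectivity, your ``bijection of walls'' is unjustified at the level of supports---the Lie-algebra map controls $(f,n)$ but says nothing about $W$ versus $W'$. Your inductive alternative faces the same issue: tracking joints order-by-order still requires knowing that the geometry of supports matches, which again reduces to the injectivity in Lemma \ref{lem:LinMorResInj}.

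A smaller point: your candidate diagram should live in $AM_\R$, not $M'_\R$. Pushing walls of $\f{D}^{(P,Q)}$ into $M'_\R$ yields subsets of codimension $\geq 1+\dim\ker A^\top$ there, not genuine walls; restricting $\f{D}^{(P',Q')}$ to $AM_\R$ (handling non-transverse joints as the paper does) and comparing there is the correct formulation.
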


\begin{proof}
We start by defining the restriction $\f{D}^{(P',Q')}|_{AM_{\R}}$ of $\f{D}^{(P',Q')}$ to $AM_{\R}\subset M'_{\R}$.  First, note that each scattering function $f'\in \Z[x^{M'}]\llb y^{\sQ}\rrb$ in fact lies in the subring $\Z[x^{AM}]\llb y^{\sQ}\rrb$ because the exponents have the form $P'v=APv$ for $v\in \sQ$.  In particular, this means that broken lines with initial exponent in $AM_{\R}$ remain parallel to $AM_{\R}$.

Now, we would like to say that for each wall $(W',f',n')\in \f{D}^{(P',Q')}$, the restriction $\f{D}^{(P',Q')}|_{AM_{\R}}$ will contain a wall $(W'\cap AM_{\R},f',n'|_{AM_{\R}})$.   However, if a joint $\f{j}$ of $\f{D}^{(P',Q')}$ intersects $AM_{\R}$ non-transversely, we must be more careful.  In this case, given a small generic path $\gamma$ in $AM_{\R}$ through $\f{j}\cap AM_{\R}$, we choose a small generic perturbation $\gamma'$ in $M'_{\R}$ with the same endpoints as $\gamma$, and we assign the scattering function along $\f{j}$ so that $E_{\gamma}=E_{\gamma'}$.  One similarly defines $(\f{D}^{(P',Q')}_{\In})|_{AM_{\R}}$. 

Now, by construction, $\vartheta_{Am,Ap}^{\f{D}^{(P',Q')}|_{AM_{\R}}} = \vartheta_{Am,Ap}^{(P',Q')}.$  Our goal is to see that the broken lines $\Gamma'$ for $\f{D}^{(P',Q')}|_{AM_{\R}}$ with $\Ends(\Gamma')= (Am,Ap)$ are precisely obtained by applying $A$ to the supports of broken lines $\Gamma$ for $\f{D}^{(P,Q)}$ with $\Ends(\Gamma)= (m,p)$ while applying $\rho_A$ to the monomials decorating each linear segment of $\Gamma$.  We write $\rho_A(\Gamma)$ to denote the broken line $\Gamma'$ obtained from $\Gamma$ in this way.

Next, given $\f{D}^{(P,Q)}$, we define a scattering diagram $\rho_A(\f{D}^{(P,Q)})$ in $AM_{\R}$ as follows.  Consider a wall $(W,f,n)\in \f{D}^{(P,Q)}$.  By Lemma \ref{lem:D-skew}, $n=kQ^{\bullet}v\in N$ for some $k\in \Q_{>0}$ and $v\in \sQ=\N^r$ with $f\in 1+ x^{Pv}y^v \Z\llb x^{Pv}y^v\rrb$.  Let us view $f$ as $f(x^{Pv}y^v)$ for $f(t)\in 1+t\Z\llb t\rrb$, so $\rho_A(f)=f(x^{APv}y^v)=f(x^{P'v}y^v)$.  Then $\rho_A(\f{D}^{(P,Q)})$ will contain a corresponding wall $(AW,\rho_A(f),k(Q')^{\bullet}v)$.\footnote{The lattice of $x$-exponents for $\rho_A(\f{D}^{(P,Q)})$ is $AM\subset M'$, so the dual vectors should lie in $(AM)^*$.  Indeed, for a wall $(AW,\rho_A(f),k(Q')^{\bullet}v)$ as in the construction above and for any $m\in M$, we have $Am\cdot k(Q')^{\bullet}v=m\cdot kQ^{\bullet} v$, and this is integral since $kQ^{\bullet} v=n\in N$.}

Let us check compatibility of the elementary transformations associated to wall-crossing. 
The elementary transformation associated to $(W,f,n)$ as above is 
\eqn{E_{kQ^{\bullet}v,f(x^{Pv}y^v)}(x^{m}y^q) = x^{m}y^q f(x^{Pv}y^v)^{m \cdot kQ^{\bullet} v}.}
So $\rho_A\circ E_{kQ^{\bullet}v,f(x^{Pv}y^v)}$ is given by
\eqn{x^{m}y^q \mapsto x^{A m}y^q f(x^{APv}y^v)^{m \cdot kQ^{\bullet} v}&=
x^{A m}y^q f(x^{APv}y^v)^{m \cdot A^{\top} k(Q')^{\bullet} v}\\
&=x^{A m}y^q f(x^{P'v}y^v)^{A m \cdot k(Q')^{\bullet} v}.}
We now see that the following diagram commutes:
\eq{\begin{tikzpicture}[scale=1.5]
    \node (tl) at (-2,1) {$x^{m}y^{q}$};
    \node (tr) at (2,1) {$x^{m}y^q f(x^{Pv}y^v)^{m \cdot Q^{\bullet} v}$};
    \node (bl) at (-2,-1) {$x^{Am}y^{q}$};
    \node (br) at (2,-1) {$x^{Am}y^q f(x^{P'v}y^v)^{Am \cdot (Q')^{\bullet} v}$};
    \draw[|->] (tl) to node[above] {$E_{kQ^{\bullet}v,f(x^{Pv}y^v)}$} (tr);
    \draw[|->] (bl) to node[below] {$E_{k(Q')^{\bullet}v,f(x^{P'v}y^v)}$} (br);
    \draw[|->] (tl) to node[left] {$\rho_A$} (bl);
    \draw[|->] (tr) to node[right] {$\rho_A$} (br);
\end{tikzpicture}}{eq:rhoA}

We claim that \begin{align}\label{eq:AD}
\rho_A(\f{D}^{(P,Q)}) = \f{D}^{(P',Q')}|_{AM_{\R}}    
\end{align}
(up to equivalence).  It is straightforward to check this for the initial scattering diagrams:
$$\rho_A\lrp{(\f{D}^{(P,Q)})_{\mathrm{in}}} = \left.\lrp{\f{D}^{(P',Q')}}_{\mathrm{in}}\right|_{AM_{\R}}.$$
We see from \eqref{eq:rhoA} that consistency of $\f{D}^{(P,Q)}$ implies the consistency of $\rho_A(\f{D}^{(P,Q)})$.  Moreover, the consistency of $\f{D}^{(P',Q')}|_{AM_{\R}}$ is immediate from the consistency of $\f{D}^{(P',Q')}$.  It is also clear that outgoing walls of $\f{D}^{(P',Q')}$ produce outgoing walls of $\f{D}^{(P',Q')}|_{AM_{\R}}$. So by the uniqueness in Theorem \ref{thm:ScatD}, it now suffices to show that $\rho_A$ takes outgoing walls of $\f{D}^{(P,Q)}$ to outgoing walls of $\rho_A(\f{D}^{(P,Q)})$.

Let $(W,f(x^{Pv}y^q), kQ^{\bullet}v)\in \f{D}^{(P,Q)}$ be outgoing, so $Pv \notin W$.  We wish to show that $P'v\notin AW$ so that $(AW,f(x^{P'v}y^q),k(Q')^{\bullet}v)$ will be outgoing.
To see this, split $M_{\Q}$ as $\im(Q^{\bullet})^*\oplus \im(Q^{\bullet})^\perp$ as in Lemma~\ref{lem:LinMorResInj}.
Now use this splitting to write $Pv= (Pv)_1 +(Pv)_2$ and $W= W_1 + W_2$.
By Lemma~\ref{lem:ImQperp}, $W_2 = \im(Q^{\bullet})^\perp$ so failure of $Pv$ to lie in $W$ must come from failure of $(Pv)_1$ to lie in $W_1$.
Next, by Lemma~\ref{lem:LinMorResInj}(2), $(Pv)_1\notin W_1$ implies $A(Pv)_1\notin A(W_1)$, hence $APv\notin AW$ by the final claim of Lemma \ref{lem:LinMorResInj}.  Since $AP=P'$, the claim $P'v\notin AW$ follows.

We have thus shown that $\rho_A(\f{D}^{(P,Q)}) = \f{D}^{(P',Q')}|_{AM_{\R}}$.  By \eqref{eq:rhoA}, we see that broken lines $\Gamma$ with ends $(m,p)$ for $\f{D}^{(P,Q)}$ correspond to the broken lines $\Gamma'=\rho_A(\Gamma)$ with ends $(Am,Ap)$ for $\rho_A(\f{D}^{(P,Q)}) = \f{D}^{(P',Q')}|_{AM_{\R}}$.  The desired equality $\rho_A(\vartheta^{(P,Q)}_{m,p}) = \vartheta_{Am,Ap}^{\f{D}^{(P',Q')}|_{AM_{\R}}} = \vartheta_{Am,Ap}^{(P',Q')}$ follows. 
\end{proof}

Recall the sets $\Theta_{u,p}$ consisting of final exponents for rational broken lines as in \eqref{eq:ThetaQ} and the limiting sets $\lim_{\epsilon \rar 0^+} \Theta_{u,p+\epsilon \mu}$ as in \eqref{eq:limT}.  Theorem \ref{thm: linearmorphism} immediately implies the following:

\begin{cor}\label{thm:rho-rational-Theta}
    Given a rational linear morphism of seeds $A:(P,Q)\rightarrow (P',Q')$, we have
\[ \rho_A(\Theta^{(P,Q)}_{m,p}) = \Theta^{(P',Q')}_{Am,Ap} \]
for all $m\in M_{\Q}$ and generic $p\in M_{\R}$.
\end{cor}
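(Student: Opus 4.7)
The plan is to reduce Corollary \ref{thm:rho-rational-Theta} to the case already handled by Theorem \ref{thm: linearmorphism} via two intermediate observations: (i) that $\Theta_{u,p}$ scales linearly in $u$, and (ii) that the proof of Theorem \ref{thm: linearmorphism} extends without change to rational linear morphisms. Given these, the corollary follows by clearing denominators and passing through Lemma \ref{lem:Theta}.

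First I would establish the scaling identity $\Theta^{(P,Q)}_{ku,p}=k\cdot \Theta^{(P,Q)}_{u,p}$ for every $u\in M_{\Q}$ and $k\in \Z_{\geq 1}$. This is a direct observation from the definition of rational broken lines: scaling all attached exponents of a rational broken line $\Gamma$ by $k$ multiplies the slopes $-\Gamma'$ on each segment by $k$, and is equivalent to reparametrizing the \emph{same} path in $M_{\R}$ so that it traverses $k$ times faster; the endpoint $\Gamma(0)=p$ is unchanged, and the bending condition $u_{i+1}\in \?{S}_{u_i,\Gamma,t_i}$ transforms correctly since $\?{S}_{ku,\Gamma,t}=k\?{S}_{u,\Gamma,t}$. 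Both inclusions follow at once.

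Next I would observe that the proof of Theorem \ref{thm: linearmorphism} works verbatim for a rational linear morphism $A:(P,Q)\to (P',Q')$. All three key ingredients---the image scattering diagram $\rho_A(\f{D}^{(P,Q)})$ having integral scattering functions (since $APv=P'v\in M'$ and $kQ^{\bullet}v\cdot Am=m\cdot A^{\top}kQ^{\bullet}v\in \Z$), the commutative square \eqref{eq:rhoA} of elementary transformations, and the outgoing-wall argument using Lemmas \ref{lem:ImQperp} and \ref{lem:LinMorResInj}---only use $AP=P'$ and $A^{\top}Q'=Q$, and never integrality of $A$ itself. Consequently, for any $m'\in M$ with $Am'\in M'$ (which always holds after clearing the denominator of $A$), the conclusion $\rho_A(\vartheta^{(P,Q)}_{m',p})=\vartheta^{(P',Q')}_{Am',Ap}$ and the underlying bijection between ordinary broken lines hold.

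With these two ingredients in hand, the corollary is routine. Given $m\in M_{\Q}$ and generic $p$, choose $j\in \Z_{\geq 1}$ large enough so that both $jm\in M$ and $jAm\in M'$. Using the scaling identity on both sides and Lemma \ref{lem:Theta},
\[
\rho_A(\Theta^{(P,Q)}_{m,p}) \;=\; \tfrac{1}{j}\,\rho_A\bigl(\Theta^{(P,Q)}_{jm,p}\bigr) \;=\; \tfrac{1}{j}\,\rho_A\bigl\{\tfrac{1}{k}u_{\Gamma}:k\geq 1,\ \Gamma\text{ ord.\ b.l.\ ends }(kjm,p)\bigr\}.
\]
The rational version of Theorem \ref{thm: linearmorphism} identifies this last set with $\tfrac{1}{j}\{\tfrac{1}{k}u_{\Gamma'}:\Gamma'\text{ ord.\ b.l.\ for }\f{D}^{(P',Q')}\text{ ends }(kjAm,Ap)\}$, which by Lemma \ref{lem:Theta} and the scaling identity is exactly $\tfrac{1}{j}\Theta^{(P',Q')}_{jAm,Ap}=\Theta^{(P',Q')}_{Am,Ap}$. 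The only mild point requiring care is verifying that the extension of Theorem \ref{thm: linearmorphism} to rational $A$ really is mechanical; this is the ``main obstacle,'' but since the proof of that theorem already manipulates rational convex hulls and never invokes integrality of $A$ beyond formal lattice considerations, the extension is direct.
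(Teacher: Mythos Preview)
Your proof is correct and essentially agrees with the paper's intent, which dispatches the corollary in one line as an immediate consequence of Theorem \ref{thm: linearmorphism}. The crucial observation in both is that the proof of Theorem \ref{thm: linearmorphism}---in particular the scattering-diagram identity \eqref{eq:AD} and the commutative square \eqref{eq:rhoA}---uses only the relations $AP=P'$ and $A^\top Q'=Q$, so it goes through unchanged for rational $A$.

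Your route is slightly more roundabout than necessary: having established \eqref{eq:AD} for rational $A$, the correspondence $\Gamma\mapsto\rho_A(\Gamma)$ applies directly to \emph{rational} broken lines (the sets $\?{S}_{u,\Gamma,t}$ transform under $\rho_A$ just as the exponent sets of $E_{\gamma,t}(z^u)$ do), so $\rho_A(\Theta^{(P,Q)}_{m,p})=\Theta^{(P',Q')}_{Am,Ap}$ follows without passing through the scaling identity or Lemma \ref{lem:Theta}. Your detour---clearing denominators, invoking Lemma \ref{lem:Theta} to reduce to ordinary broken lines, then applying the ordinary-broken-line bijection---is a valid alternative that has the minor advantage of only using the \emph{statement}-level conclusion of (the rational extension of) Theorem \ref{thm: linearmorphism} rather than the internal bijection of broken lines in its proof.
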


It will also be helpful to recall that following important fact:

\begin{thm}[Positivity for cluster scattering diagrams \cite{GHKK}]\label{thm:pos-scattering}
    The scattering diagram $\f{D}^{(P,Q)}$ associated to a skew-symmetrizable seed datum $(P,Q)$ is positive.
\end{thm}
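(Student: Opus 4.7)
The plan is to reduce the claim to the principal-coefficients cluster case covered by [GHKK, Thm. 1.13], using the functorial framework built up in this section. By Lemma \ref{lemma: saturatedresolution}, any seed datum $(P,Q)$ fits into a resolution
\[ (P,Q) \stackrel{A}{\hookrightarrow} (P',Q') \stackrel{B}{\twoheadleftarrow} (P'',Q'') \]
where $A$ is injective, $B$ is surjective, and $(P'',Q'')$ is saturated-injective. For a saturated-injective seed datum, $P''$ and $Q''$ identify $\Z^r$ with a direct summand of $M''$ and $N''$ respectively, so the scattering diagram $\f{D}^{(P'',Q'')}$ matches the skew-symmetrizable principal-coefficients cluster scattering diagram of [GHKK]. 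Positivity in this base case is then their main scattering-diagram result.

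Next I propagate positivity along $B$. From the proof of Theorem \ref{thm: linearmorphism} we have $\rho_B(\f{D}^{(P'',Q'')}) = \f{D}^{(P',Q')}|_{BM''_\R}$, and since $B$ is surjective this restriction is all of $\f{D}^{(P',Q')}$. The substitution $\rho_B\colon x^{m}y^{q}\mapsto x^{Bm}y^{q}$ preserves non-negativity of coefficients of each scattering function, so $\f{D}^{(P',Q')}$ inherits positivity from $\f{D}^{(P'',Q'')}$. Finally I descend to $(P,Q)$ along $A$: again by Theorem \ref{thm: linearmorphism}, $\rho_A(\f{D}^{(P,Q)})$ equals the restriction $\f{D}^{(P',Q')}|_{AM_\R}$, which is positive as a restriction of a positive scattering diagram. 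Because $A$ is injective, the induced map on monomials $x^{m}\mapsto x^{Am}$ is injective as well, so no cancellation between terms of a given scattering function $f\in\f{D}^{(P,Q)}$ can occur under $\rho_A$. Hence non-negativity of the coefficients of $\rho_A(f)$ forces the same for $f$, and $\f{D}^{(P,Q)}$ is positive.

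The main obstacle is the base case: identifying $\f{D}^{(P'',Q'')}$ with a GHKK cluster scattering diagram precisely enough to apply their deep positivity theorem. Everything else is formal functoriality, already packaged in Theorem \ref{thm: linearmorphism} and Lemma \ref{lemma: saturatedresolution}. One minor technical point is that the equality $\rho_A(\f{D}^{(P,Q)}) = \f{D}^{(P',Q')}|_{AM_\R}$ holds only up to the equivalence of Theorem \ref{thm:ScatD}; this is harmless because the GHKK algorithmic construction produces a positive representative of each equivalence class, so one may take positive representatives throughout the argument.
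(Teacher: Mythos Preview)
Your proposal is correct and follows essentially the same route as the paper's own proof: reduce via the resolution of Lemma~\ref{lemma: saturatedresolution} to a saturated-injective seed datum where \cite[Thm.~1.13]{GHKK} applies, then transport positivity along the linear morphisms using the relation $\rho_A(\f{D}^{(P,Q)}) = \f{D}^{(P',Q')}|_{AM_\R}$ established in the proof of Theorem~\ref{thm: linearmorphism}. You have simply spelled out more of the details (the surjective/injective propagation steps and the no-cancellation argument) that the paper leaves implicit; one small terminological slip is that the saturated-injective case lands in the general GHKK fixed-data setup of Example~\ref{eg:clusterA}, not necessarily the \emph{principal-coefficients} case, but \cite[Thm.~1.13]{GHKK} covers this.
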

\begin{proof}
    This is essentially \cite[Thm. 1.13]{GHKK}.  Our skew-symmetrizable seed data are a bit more general than those of loc.~ cit., but we can always reduce to the setting of loc.~ cit.~ using linear morphisms (e.g., as in Lemma \ref{lemma: saturatedresolution}) and the relationship \eqref{eq:AD} between scattering diagrams connected by linear morphisms.
\end{proof}

\subsection{Theta function extension for cluster algebras}\label{sub:theta-extension-cluster}

For $i=1,2$, let $(P_i,Q_i)$ be seed data with the same target spaces $M$ and $N$, and let $\sQ_i^{\gp}$ be the domain of $P_i$ and $Q_i$.  We say that $(P_2,Q_2)$ is an extension of $(P_1,Q_1)$ if $\sQ_1$ is a face of $\sQ_2$ (i.e., $\sQ_2\supset \sQ_1$ is an extension in the sense of \S \ref{sub:Theta-ext}), $\sQ_1\subset \sQ_2$ is saturated, $P_2|_{\sQ_1}=P_1$, and $Q_2|_{\sQ_1}=Q_1$.  For example, in the usual language of cluster algebras, this situation arises when one ``unfreezes'' a frozen index.  The theta function extension theorem (Theorem \ref{thm:theta-extension-general}) has the following corollary in the present setting.

\begin{thm}[Theta function extension theorem, seed datum version]\label{thm:theta-ext-seed}
    Let $(P_1,Q_1)$ and $(P_2,Q_2)$ be skew-symmetrizable seed data such that $(P_2,Q_2)$ is an extension of $(P_1,Q_1)$.  If $\vartheta_{m,p}^{(P_1,Q_1)}$ is a finite linear combination of theta functions $\vartheta_{m_i,p}^{(P_2,Q_2)}$, then $\vartheta_{m,p}^{(P_1,Q_1)}=\vartheta_{m,p}^{(P_2,Q_2)}$.
\end{thm}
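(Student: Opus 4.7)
The plan is to deduce Theorem~\ref{thm:theta-ext-seed} from Theorem~\ref{thm:theta-extension-general} applied to $\f{D}_1 \coloneqq \f{D}^{(P_1,Q_1)}$ and $\f{D}_2 \coloneqq \f{D}^{(P_2,Q_2)}$. Both diagrams are positive by Theorem~\ref{thm:pos-scattering}, and $\sQ_2$ extends $\sQ_1$ by hypothesis. So the two nontrivial things to check are that $\f{D}_1 \subseteq \f{D}_2$ up to equivalence, and that every non-unit monomial $c\,x^m y^q$ appearing in a scattering function of a wall in $\f{D}_2 \setminus \f{D}_1$ satisfies $q \notin \sQ_{1,\R}$.

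The organizing tool is Lemma~\ref{lem:D-skew}: each wall of $\f{D}_2$ carries a vector $v \in \sQ_2$ whose scattering function lies in $1 + x^{P_2 v} y^v\,\Z\llb x^{P_2 v} y^v\rrb$. In the present setup $\sQ_i = \N^{I_i}$ with $I_1 \subseteq I_2$, so the face condition combined with the weight $w \in \check{\sQ}_2^{\gp}$ provided by the definition of extension (vanishing on $\sQ_1$, strictly negative on $\sQ_2 \setminus (\sQ_2 \cap \sQ_{1,\R})$) yields the clean identification $\sQ_2 \cap \sQ_{1,\R} = \sQ_1$ and shows that $\sQ_2 \setminus \sQ_1$ is closed under addition by $\sQ_2$. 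To prove the containment $\f{D}_1 \subseteq \f{D}_2$, I would use this to introduce the well-defined ring map $\pi : \wh{A}_2 \twoheadrightarrow \wh{A}_1$ sending $y^q$ to itself when $q \in \sQ_1$ and to $0$ otherwise. Applying $\pi$ termwise to the scattering functions of $\f{D}_2$ kills exactly the walls with $v \in \sQ_2 \setminus \sQ_1$ and fixes those with $v \in \sQ_1$. Since $\pi$ intertwines every elementary transformation, the image is a consistent scattering diagram which contains $\f{D}^{(P_1,Q_1)}_{\In}$ (the extra initial walls of $\f{D}^{(P_2,Q_2)}_{\In}$ are killed) and whose remaining walls are outgoing. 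By the uniqueness in Theorem~\ref{thm:ScatD} it is equivalent to $\f{D}^{(P_1,Q_1)}$, identifying $\f{D}_1$ with the subset of walls of $\f{D}_2$ indexed by $v \in \sQ_1$.

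With this identification in hand, the remaining hypothesis is immediate: a wall in $\f{D}_2 \setminus \f{D}_1$ has $v \in \sQ_2 \setminus \sQ_1 = \sQ_2 \setminus (\sQ_2 \cap \sQ_{1,\R})$, and by Lemma~\ref{lem:D-skew} the $y$-exponents of its non-unit monomials lie in $\Z_{\geq 1} \cdot v$, hence outside the cone $\sQ_{1,\R}$. Theorem~\ref{thm:theta-extension-general} then applies to give $\vartheta^{(P_1,Q_1)}_{m,p} = \vartheta^{(P_2,Q_2)}_{m,p}$. The main obstacle I anticipate is the identification step: one must verify carefully that $\pi$ is compatible with the recursive construction of consistent completion (order by order in the truncations $\f{D}^{k}$) and that the resulting quotient diagram genuinely satisfies the uniqueness hypotheses of Theorem~\ref{thm:ScatD}, even accounting for the equivalence relation on scattering diagrams. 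Once this bookkeeping is settled, the argument is a direct appeal to the general Theta Function Extension Theorem.
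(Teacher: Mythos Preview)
Your proposal is correct and follows essentially the same route as the paper: reduce to Theorem~\ref{thm:theta-extension-general} using the positivity of Theorem~\ref{thm:pos-scattering}, and verify the remaining hypothesis by quotienting $\wh{A}_2$ by the ideal $\langle y^q \mid q\in \sQ_2\setminus \sQ_1\rangle$ (your map $\pi$), then invoking the uniqueness in Theorem~\ref{thm:ScatD} to identify the image of $\f{D}^{(P_2,Q_2)}$ with $\f{D}^{(P_1,Q_1)}$. The paper states the same argument more tersely; your explicit use of Lemma~\ref{lem:D-skew} to track the $y$-exponents of the surviving walls is a helpful elaboration but not a different idea.
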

\begin{proof}
Given Theorem \ref{thm:theta-extension-general} and the positivity from Theorem \ref{thm:pos-scattering}, we just have to check the condition that, for every monomial $cx^my^q\neq 1$ of every scattering function of $\f{D}_2\setminus \f{D}_1$, we have $q\notin \sQ_{1,\R}$.  This follows from considering $\f{D}^{(P_2,Q_2)}$ modulo the ideal $\langle y^q|q\in \sQ_2\setminus \sQ_1\rangle$.  Indeed, the initial scattering diagrams agree modulo this ideal, so the uniqueness from Theorem \ref{thm:ScatD} ensures that the consistent completions $\f{D}^{(P_2,Q_2)}$ and $\f{D}^{(P_1,Q_1)}$ will be equivalent modulo this ideal.  The claim follows.
\end{proof}

\subsection{The positive chamber}\label{sub:chambers}

Recall that a chamber in a scattering diagram is a connected component of the complement of its support. By Lemma \ref{lem:CPS}, moving a basepoint to a different point in the same chamber does not change the local expressions of the theta functions, so we can unambiguously refer to the basepoint being in a certain chamber without specifying the basepoint precisely.  Corollary \ref{cor:Q-CPS} lets us extend this to $\Theta_m$ for any $m\in M_{\Q}$.

By Lemma~\ref{lem:D-skew}, every wall of $\f{D}^{(P,Q)}$ has support in $(Q^{\bullet}v)^{\perp}$ for some $v\in \N^r$.  The \textbf{positive side} of the wall is then defined to be the half-space where $Q^{\bullet}v$ is positive.  The \textbf{positive chamber} $C^+$ of $\f{D}^{(P,Q)}$ is the unique chamber (if it exists) on the positive side of every wall (equivalently, on the positive side of every initial wall). Let $\vartheta^{(P,Q)}_{m,+}$ denote a theta function with a generic basepoint in the positive chamber. 

If $\f{D}^{(P,Q)}$ does not have a positive chamber, there are two methods to define $\vartheta^{(P,Q)}_{m,+}$ which give the same result. First, one may pick a linear inclusion of seed data $A:(P,Q)\rightarrow (P',Q')$ for which $Q'$ is an inclusion. Such an inclusion always exists (cf. the map $A$ from the proof of Lemma \ref{lemma: saturatedresolution}), and $\f{D}^{(P',Q')}$ always has a positive chamber. Then define $\vartheta^{(P,Q)}_{m,+}$ to be the unique element of $\mathbb{Z}[x^M]\llb y^{\N^r}\rrb$ for which
\[ \rho_A(\vartheta^{(P,Q)}_{m,+}) =\vartheta^{(P',Q')}_{Am,+} . \]
An alternative approach is to rigidly translate the initial walls of $\f{D}^{(P,Q)}$ so that the origin is on the positive side of each initial wall. The corresponding consistent completion (which works mutatis mutandis for affine walls---cf. \S \ref{sub:gen}(2)) will still contain this positive chamber around the origin, and one may define $\vartheta^{(P,Q)}_{m,+}$ using broken lines with basepoint in this positive chamber.

\begin{exam}\label{eg:pos-chamber}
Let $M\coloneqq  \mathbb{Z}^2 =: N$, and consider the seed
\[ 
{P} = \begin{bmatrix} 0 & -1 & 1 \\ 1 & 0 & -1 \end{bmatrix},
\hspace{1cm}
{Q} = \begin{bmatrix} 1 & 0 & -1 \\ 0 & 1 & -1 \end{bmatrix}
\]
Since there are no points $p=(p_1,p_2)\in M_\mathbb{R}=\mathbb{R}^2$ for which the functionals
\[ p\cdot Qe_1 = p_1 , 
\hspace{1cm}
p\cdot Qe_2 = p_2,
\hspace{1cm}
p\cdot Qe_3 = -p_1-p_2
\]
are all positive, the incoming walls must be translated away from the origin to allow for a positive chamber. A consistent scattering diagram obtained from such a translation is depicted below, with the positive chamber in green.
\[\begin{tikzpicture}
\begin{scope}[scale=.4]
	\clip (-6.5,-6.5) rectangle (6.5,6.5);
    	\draw[step=1,draw=black!10,very thin] (-7.5,-7.5) grid (7.5,7.5);
    \path[fill=green,opacity=.25] (-1,-1) to (3,-1) to (-1,3) to (-1,-1);
	\draw[blue, thick] (-1,-8) to (-1,8);
	\draw[blue, thick] (-8,-1) to (8,-1);
	\draw[blue, thick] (-6,8) to (8,-6);
	\draw[blue, thick] (3,-1) to (3,8);
	\draw[blue, thick] (-1,3) to (-8,3);
	\draw[blue, thick] (-1,-1) to (6,-8);
\end{scope}
\end{tikzpicture}\]
Alternatively, one may consider the seed with $M'\coloneqq \mathbb{Z}^3=:N'$ given by
\[ 
{P}' = \begin{bmatrix} 0 & -1 & 1 \\ 1 & 0 & -1 \\ -1 & 1 & 0 \end{bmatrix},
\hspace{1cm}
{Q}' = \begin{bmatrix} 1 & 0 & 0 \\ 0 & 1 & 0 \\ 0 & 0 & 1 \end{bmatrix}
\]
Since $Q'$ is injective, $\f{D}^{(P',Q')}$ has a positive chamber. 
There is a linear inclusion $A:(P,Q)\rightarrow (P',Q')$ given by 
\[ A =\begin{bmatrix}
1 & 0 \\
0 & 1 \\
-1 & -1 
\end{bmatrix} \]
The scattering diagram with translated walls depicted above can be realized as an affine slice of $\f{D}^{(P',Q')}$ parallel to the image of $A$; the green triangle is then the intersection of this slice with the positive chamber of $\f{D}^{(P',Q')}$.  This is a general phenomenon.
\end{exam}

Analogously, we may define a \textbf{negative chamber} $C^-$ of $\f{D}^{(P,Q)}$ and the corresponding theta functions $\vartheta^{(P,Q)}_{m,-}$ (with analogous workarounds if a negative chamber does not exist). 

\begin{lem}\label{lemma: swap}
For all $(P,Q)$,
$ \vartheta^{(P,Q)}_{m,-} = \vartheta^{(P,-Q)}_{m,+}$.
\end{lem}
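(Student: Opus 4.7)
The plan is to show that the scattering diagrams $\f{D}^{(P,Q)}$ and $\f{D}^{(P,-Q)}$ agree as sets of walls up to negating every normal vector, and that this sign-flip interchanges the positive and negative chambers while leaving all broken lines untouched.

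First I would compare the initial diagrams: since $(Qe_i)^\perp = (-Qe_i)^\perp$, the sets $\f{D}^{(P,Q)}_{\In}$ and $\f{D}^{(P,-Q)}_{\In}$ have identical supports and scattering functions, differing only in that each normal $Qe_i$ is replaced by $-Qe_i$. To propagate this to the consistent completions, define the operation $\iota \colon (W,f,n)\mapsto (W,f,-n)$ on walls and note the key identity
\[
E_{-n,f}^{\,s'} = E_{n,f}^{\,s}, \qquad s'\coloneqq \sign(\gamma(t-\epsilon)\cdot(-n)) = -s,
\]
which follows from $E_{-n,f} = E_{n,f}^{-1}$. Thus $\iota$ preserves every path-ordered product, hence consistency; it also preserves wall supports, wall directions $-v$ (so the incoming/outgoing dichotomy is unchanged). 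By the uniqueness clause of Theorem~\ref{thm:ScatD}, $\iota(\f{D}^{(P,Q)})$ is equivalent to $\f{D}^{(P,-Q)}$.

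Next I would identify the relevant chambers. By Lemma~\ref{lem:D-skew}, each wall of $\f{D}^{(P,Q)}$ has normal a positive rational multiple of $Q^\bullet v$ for some $v\in\N^r$, so the corresponding sign-flipped wall in $\f{D}^{(P,-Q)}$ has normal a positive multiple of $(-Q)^\bullet v = -Q^\bullet v$. The positive side of this wall in $\f{D}^{(P,-Q)}$, namely $\{-Q^\bullet v>0\}$, is precisely the negative side of the original wall in $\f{D}^{(P,Q)}$. Hence the positive chamber of $\f{D}^{(P,-Q)}$ equals the negative chamber of $\f{D}^{(P,Q)}$.

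Finally, since broken lines see only the wall supports, scattering functions, and the wall-crossing automorphisms $E_{\Gamma,t}$ (all unchanged by $\iota$), the collections of broken lines with initial exponent $m$ and endpoint in this common chamber are identical, with identical attached monomials. Summing gives $\vartheta^{(P,Q)}_{m,-} = \vartheta^{(P,-Q)}_{m,+}$. When neither diagram admits a genuine positive chamber, the same argument applies after either of the two workarounds described in \S\ref{sub:chambers} (a shared linear inclusion into a saturated-injective seed datum, or a shared rigid translation of the initial walls). The only obstacle is sign-bookkeeping, which is routine.
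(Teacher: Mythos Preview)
Your proposal is correct and follows essentially the same approach as the paper's proof: both observe that negating all normal vectors leaves supports, scattering functions, and wall-crossing automorphisms unchanged (since the sign of $n$ and the sign $s$ in $E_{n,f}^s$ flip simultaneously), then invoke the uniqueness in Theorem~\ref{thm:ScatD} and note that the positive/negative chambers swap. Your write-up is somewhat more explicit—naming the involution $\iota$, invoking Lemma~\ref{lem:D-skew} for the chamber identification, and addressing the no-positive-chamber case—but the underlying argument is identical.
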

\begin{proof}
The support and wall-crossing transformations of the initial walls in $\f{D}^{(P,Q)}$ and $\f{D}^{(P,-Q)}$ coincide---the signs of the $n_i$'s and $s_i$'s in \eqref{eq:Egamma} both change, and these pairs of changes cancel.  The only difference is which side is considered ``positive.'' By the uniqueness of the consistent completions, the same is true for the non-initial walls, and the broken lines in the two scattering diagrams coincide. Since the negative chamber in $\f{D}^{(P,Q)}$ is the positive chamber in $\f{D}^{(P,-Q)}$, the result follows.
\end{proof}

A linear morphism $A:(P,Q)\rightarrow (P',Q')$ of seed data sends positive chambers to positive chambers and negative chambers to negative chambers. Therefore, as a special case of Theorem \ref{thm: linearmorphism},
\begin{align}\label{eq: rhoA}
     \rho_A(\vartheta^{(P,Q)}_{u,+}) = \vartheta^{(P',Q')}_{Au,+}\quad \text{and} \quad
\rho_A(\vartheta^{(P,Q)}_{u,-}) = \vartheta^{(P',Q')}_{Au,-} 
\end{align}

We note that all the above results of this subsection are easily extended to the rational broken line analogs $\Theta_m$ for $m\in M_{\Q}$, either directly or using Lemma \ref{lem:Theta}.

\subsection{Theta reciprocity}\label{sub:TR}

Given a seed datum $(P,Q)$, we define the \textbf{chiral dual} seed datum to be $(Q^{\bullet},P^{\bullet})$ with the same diagonal matrix $D$.  Note that the double chiral dual is $(P,Q)$ again.  Similarly, the \textbf{chiral-Langlands dual} seed datum to $(P,Q)$ is $(Q,P)$.  This terminology is based on \cite[\S 1.2]{FG1}---one computes that $(P^{\bullet})^{\top}Q^{\bullet}=-B$ and $P^{\top}Q=B^{\top}$.

Given a linear morphism $A:(P,Q)\rightarrow (P',Q')$, there are \textbf{adjoint linear morphisms}
$$A^\top:((Q')^{\bullet},(P')^{\bullet}) \rightarrow (Q^{\bullet},P^{\bullet}) \qquad \text{and} \qquad A^{\top}:(Q',P')\rar (Q,P).$$ 
This defines a contravariant equivalence between the category of seed data with exchange matrix $B$ and those with exchange matrix $-B$ (respectively, $B^\top$). 

Recall from  \S \ref{sub:val} that each $n\in N_{\R}$ determines a valuation $\val_n:\wh{A}\rar \R^{\min}$.  Here and below we continue to employ the modified valuation and theta function notation described in \S \ref{sub:lin-indep}---in particular, we restrict to indices in $N$ and $M$ rather than $N\oplus \check{\sQ}^{\gp}$ and $M\oplus \sQ^{\gp}$.\footnote{An extension to the larger lattices is possible as well, either by direct modification or by applying some carefully chosen linear morphisms that serve to extend $N$ and $M$ to $N\oplus \check{\sQ}^{\gp}$ and $M\oplus \sQ^{\gp}$.}  Linear morphisms of seed data satisfy the following adjunction property with respect to valuations.

\begin{prop}[Adjunction]\label{prop: adjunction}
For any (rational) linear morphism of seed data $A:(P,Q)\rightarrow (P',Q')$, $m \in M_{\Q}$, and $n\in N'_\mathbb{R}$,
\[ \mathrm{val}_n(\Theta^{(P',Q')}_{Am,+}) = \mathrm{val}_{A^\top n} (\Theta^{(P,Q)}_{m,+}).\]
\end{prop}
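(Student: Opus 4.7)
The plan is to invoke Corollary~\ref{thm:rho-rational-Theta}, which says $A(\Theta^{(P,Q)}_{m,p}) = \Theta^{(P',Q')}_{Am,Ap}$ for generic $p \in M_\R$, and then rewrite both sides of the claimed equality as an infimum of dual pairings over the same underlying set, using the adjunction $Aw \cdot n = w \cdot A^\top n$ built into the definition of linear morphism.

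Concretely, take $p$ in the positive chamber of $\f{D}^{(P,Q)}$. The key geometric point is that $Ap$ then lies in (the closure of) the positive chamber of $\f{D}^{(P',Q')}$: by Lemma~\ref{lem:D-skew}, every wall of $\f{D}^{(P',Q')}$ has normal a positive rational multiple of $(Q')^\bullet v$ for some $v \in \N^r$, and the positivity condition $Ap \cdot (Q')^\bullet v > 0$ is equivalent, via $A^\top (Q')^\bullet = Q^\bullet$ (from $A^\top Q' = Q$ together with $D' = D$), to the condition $p \cdot Q^\bullet v > 0$ for $p$ in the positive chamber of $\f{D}^{(P,Q)}$. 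Combined with the compatibility $\rho_A(\f{D}^{(P,Q)}) = \f{D}^{(P',Q')}|_{AM_\R}$ of equation~\eqref{eq:AD}, this upgrades Corollary~\ref{thm:rho-rational-Theta} to the positive-chamber identity $A(\Theta^{(P,Q)}_{m,+}) = \Theta^{(P',Q')}_{Am,+}$. The computation is then immediate:
\[ \val_n(\Theta^{(P',Q')}_{Am,+}) = \inf_{w \in \Theta^{(P,Q)}_{m,+}} (Aw) \cdot n = \inf_{w \in \Theta^{(P,Q)}_{m,+}} w \cdot (A^\top n) = \val_{A^\top n}(\Theta^{(P,Q)}_{m,+}). \]

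The main technical nuisance is non-genericity. On one hand, $Ap$ need not be generic in $M'_\R$ when $AM_\R$ is a proper subspace; this is handled by the limiting construction of \S~\ref{sub:lim}, interpreting $\Theta^{(P',Q')}_{Am,Ap}$ as $\lim_{\epsilon \to 0^+}\Theta^{(P',Q')}_{Am,Ap+\epsilon\mu}$ for a generic perturbation $\mu$ chosen inside the positive chamber of $\f{D}^{(P',Q')}$, and noting (cf.\ Footnote~\ref{foot:generic}) that any wall strictly containing $Ap$ must be parallel to $AM_\R$ and hence acts trivially on the broken lines involved. On the other hand, a true positive chamber may not exist for either seed datum; this is circumvented by the workarounds of \S~\ref{sub:chambers}, either translating initial walls to place a positive chamber near the origin, or embedding both seed data into larger ones with injective $Q$-maps via Lemma~\ref{lemma: saturatedresolution} and transporting the identity back using naturality of $\rho_A$.
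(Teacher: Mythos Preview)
Your proof is correct and takes essentially the same approach as the paper: both reduce to the identity $A(\Theta^{(P,Q)}_{m,+}) = \Theta^{(P',Q')}_{Am,+}$ (the paper records this as the $\Theta$-analog of \eqref{eq: rhoA}) and then apply the adjunction $Aw\cdot n = w\cdot A^{\top}n$. The only difference is that you re-derive the positive-chamber compatibility and discuss the non-genericity workarounds explicitly, whereas the paper has already established these in \S\ref{sub:chambers} just before stating the proposition and simply cites \eqref{eq: rhoA}.
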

\begin{proof}
Both sides are equal to $\mathrm{val}_{n} (\rho_A(\Theta^{(P,Q)}_{m,+}))$.  See \eqref{eq: rhoA} for the left-hand side.  This equality for the right-hand side follows from the fact that $Au_{\Gamma}\cdot n=u_{\Gamma}\cdot A^{\top} n$ for all $u_{\Gamma}\in \Theta_{m,+}^{(P,Q)}$.
\end{proof}

We now state our first version of Theta Reciprocity for chiral dual pairs.

\begin{claim}[Theta reciprocity, version 1]\label{conj: theta1}
For any seed datum $(P,Q)$, $m\in M_{\Q}$, and $n\in N_{\Q}$,  
\[ \mathrm{val}_n(\Theta^{(P,Q)}_{m,+}) = \mathrm{val}_m(\Theta^{(Q^{\bullet},P^{\bullet})}_{n,+}) \]
whenever both sides are finite.
\end{claim}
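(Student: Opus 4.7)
The plan is to prove Claim \ref{conj: theta1} by first reducing to the case of saturated-injective seed data and then constructing a pairing-preserving bijection between the taut rational broken lines that realize each side. By Lemma \ref{lemma: saturatedresolution}, any seed datum $(P,Q)$ fits into a diagram $(P,Q) \hookrightarrow (P',Q') \twoheadleftarrow (P'',Q'')$ with $(P'',Q'')$ saturated-injective. The chiral dual operation is a contravariant involution compatible with linear morphisms, and the adjunction property (Proposition \ref{prop: adjunction}) transports valuations along such morphisms. This should allow me to propagate theta reciprocity from the saturated-injective case to the general one, with the two halves of the zig-zag handling the injective and surjective reductions respectively.

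In the saturated-injective case, both $\f{D}^{(P,Q)}$ and $\f{D}^{(Q^\bullet,P^\bullet)}$ admit positive chambers. Positivity of cluster scattering diagrams (Theorem \ref{thm:pos-scattering}) together with Lemma \ref{lem:inf=min} and Theorem \ref{thm:min-taut} guarantee that each finite side is attained: $\val_n(\Theta^{(P,Q)}_{m,+}) = u_\Gamma \cdot n$ for an $n$-taut rational broken line $\Gamma$ with ends $(m,+)$ in $\f{D}^{(P,Q)}$, and similarly $\val_m(\Theta^{(Q^\bullet,P^\bullet)}_{n,+}) = u_{\Gamma^\vee}\cdot m$ for an $m$-taut rational broken line $\Gamma^\vee$ in the chiral dual. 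The core technical task is thus to construct a bijection $\Gamma \leftrightarrow \Gamma^\vee$ with $u_\Gamma \cdot n = u_{\Gamma^\vee}\cdot m$. The existence of such a bijection is strongly suggested by Lemma \ref{lem:D-skew}: walls of both scattering diagrams share a common $v\in\N^r$ indexing, with $\f{D}^{(P,Q)}$-walls supported on $(Q^\bullet v)^\perp \subset M_\R$ with direction $-P^\bullet v$, and $\f{D}^{(Q^\bullet,P^\bullet)}$-walls on $(Pv)^\perp \subset N_\R$ with direction $-Q^\bullet v$. One expects to build $\Gamma^\vee$ by traversing the same sequence of wall-indices $v_1,\ldots,v_k$ as $\Gamma$, interpreted in the dual diagram.

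To formalize this bijection I will introduce the $\Lambda$-structure framework alluded to in the introduction---a compatible-pair-style device (in the spirit of Berenstein--Zelevinsky) relating a seed datum to its chiral dual. The main obstacle will be verifying that $n$-tautness of $\Gamma$ translates to $m$-tautness of $\Gamma^\vee$ under this correspondence: the bend of $\Gamma$ minimizing $u'_t\cdot n_{t+\epsilon}$ among $u'_t\in\overline{S}_{u_{t-\epsilon},\Gamma,t}$ must correspond to the bend of $\Gamma^\vee$ minimizing the analogous quantity against the transported $m_t$. This transfer should be controlled by the skew-symmetry of $B^\bullet = (Q^\bullet)^\top P$, which governs how the $M$- and $N$-coordinates interact across each wall-crossing. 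Once tautness transfers, the pairing equality $u_\Gamma\cdot n = u_{\Gamma^\vee}\cdot m$ should reduce to an inductive calculation tracking the $v_i$-contributions to each bend, using that the bend displacements in $M_\R$ and $N_\R$ are respectively multiples of $P^\bullet v_i$ and $Q^\bullet v_i$ and are dually paired via $B^\bullet$.
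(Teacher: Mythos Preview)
Your reduction to the saturated-injective case via Lemma~\ref{lemma: saturatedresolution} and adjunction is correct and matches the paper (Lemmas~\ref{lemma: deduceTR1} and~\ref{lemma: satTR}), and you are right that a $\Lambda$-structure is the key device. However, the role you assign to $\Lambda$ is different from the paper's, and your core strategy has a genuine gap.

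You propose to build a bijection $\Gamma\leftrightarrow\Gamma^\vee$ between $n$-taut broken lines with ends $(m,C^+)$ in $\f{D}^{(P,Q)}$ and $m$-taut broken lines with ends $(n,C^+)$ in $\f{D}^{(Q^\bullet,P^\bullet)}$, by ``traversing the same sequence of wall-indices $v_1,\ldots,v_k$.'' This is where the difficulty lies. The natural map furnished by an invertible $\Lambda$-structure is the linear morphism $\Lambda:(P,Q)\to(Q^\bullet,-P^\bullet)$; via Theorem~\ref{thm: linearmorphism} it identifies $\f{D}^{(P,Q)}$ with $\f{D}^{(Q^\bullet,-P^\bullet)}$, which by Lemma~\ref{lemma: swap} is $\f{D}^{(Q^\bullet,P^\bullet)}$ with the positive and negative sides of every wall swapped. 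Hence $\Lambda$ sends a broken line with ends $(m,C^+)$ in $\f{D}^{(P,Q)}$ to one with ends $(\Lambda m,\,C^-)$ in $\f{D}^{(Q^\bullet,P^\bullet)}$---wrong initial exponent and wrong chamber. There is no evident way to manufacture from this a broken line with ends $(n,C^+)$ sharing the same wall-crossing combinatorics: the non-initial walls of the two consistent completions are generated independently by the scattering process, and a broken line's trajectory (hence which walls it meets, and in what order) is governed by geometry in $M_\R$ versus $N_\R$, not by an abstract list of indices. Your heuristic does not bridge this.

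The paper avoids this obstruction by using $\Lambda$ not to pass between diagrams but to reformulate the statement entirely within $\f{D}^{(P,Q)}$. The equivalent Claim~\ref{conj: theta2} compares $\val_{\Lambda m_1}(\Theta^{(P,Q)}_{m_2,+})$ with $\val_{\Lambda^\top m_2}(\Theta^{(P,Q)}_{m_1,-})$: same scattering diagram, opposite chambers. The substantive work is then a \emph{transport} argument rather than a bijection. One introduces the notions of $\Lambda$-taut and $\Lambda^\top$-taut broken lines (characterized concretely by which extremal bend is taken depending on the sign of the wall-crossing, Lemmas~\ref{lem:Lambda-taut}--\ref{lem:LambdaT-taut}) and proves the key Lemma~\ref{lem:L-const}: the function $t\mapsto\Theta^{\trop}_{m,\Gamma(t)}(\Lambda m_t)$ is constant along any $\Lambda$-taut broken line $\Gamma$. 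Combined with the basepoint-moving Lemma~\ref{lem:Lambda-p-val}, this lets one slide the basepoint from $C^+$ out to infinity along a straight ($\Lambda$-taut) ray and back in along a $\Lambda^\top$-taut minimizer, producing the inequality $\Theta^{\trop}_{m_2,+}(\Lambda m_1)\geq\Theta^{\trop}_{m_1,-}(\Lambda^\top m_2)$; the reverse inequality is symmetric. No cross-diagram bijection is ever constructed.
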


 By the symmetry of the statement, we see that Claim \ref{conj: theta1} holds for $(P,Q)$ if and only if it holds for $(Q^{\bullet},P^{\bullet})$.

We shall prove Claim \ref{conj: theta1} in \S \ref{sec:Lambda}. The next lemmas show that it suffices to prove the claim under certain simplifying assumptions.

\begin{lem}\label{lemma: deduceTR1}
Let $A:(P,Q)\rightarrow (P',Q')$ be a linear morphism of seed data.
\begin{enumerate}
    \item If $A$ is surjective and Claim \ref{conj: theta1} holds for $(P,Q)$, then Claim \ref{conj: theta1} holds for $(P',Q')$.
    \item If $A$ is injective and Claim \ref{conj: theta1} holds for $(P',Q')$, then Claim \ref{conj: theta1} holds for $(P,Q)$.
\end{enumerate}
\end{lem}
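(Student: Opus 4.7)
The plan is to prove both parts by a short diagram chase combining Proposition \ref{prop: adjunction} (adjunction) with the fact that chiral duality is itself functorial: the adjoint of $A:(P,Q)\to(P',Q')$ is a linear morphism $A^\top:((Q')^\bullet,(P')^\bullet)\to(Q^\bullet,P^\bullet)$ going the other direction, and $(A^\top)^\top=A$. Since valuations and $\Theta$-sets used in Claim \ref{conj: theta1} can be transported through linear morphisms via adjunction, one direction of the reciprocity (for the source or target of $A$) forces the other once combined with adjunction for $A^\top$.

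For part (1), assume $A$ is surjective and fix $m'\in M'_\Q$, $n'\in N'_\Q$ for which both sides of the desired reciprocity for $(P',Q')$ are finite. Choose $m\in M_\Q$ with $Am=m'$, using rational surjectivity of $A$. I would then chain three equalities:
\[
\val_{n'}(\Theta^{(P',Q')}_{m',+})
=\val_{A^\top n'}(\Theta^{(P,Q)}_{m,+})
=\val_{m}(\Theta^{(Q^\bullet,P^\bullet)}_{A^\top n',+})
=\val_{Am}(\Theta^{((Q')^\bullet,(P')^\bullet)}_{n',+}),
\]
where the first and third use Proposition \ref{prop: adjunction} (the third applied to the adjoint morphism $A^\top$), and the middle uses Claim \ref{conj: theta1} for $(P,Q)$. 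Since $Am=m'$, this is the reciprocity for $(P',Q')$. One small check: the finiteness hypothesis in Claim \ref{conj: theta1} at $(P,Q)$ with indices $(m,A^\top n')$ is needed for the middle step, but both intermediate valuations are equal to the left-hand valuation by the first identity, which is assumed finite, so finiteness transports along.

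For part (2), assume $A$ is injective and fix $m\in M_\Q$, $n\in N_\Q$ for which both sides of the desired reciprocity for $(P,Q)$ are finite. Since $A$ is an injective lattice morphism, its rational transpose $A^\top:N'_\Q\to N_\Q$ is surjective, so choose $n'\in N'_\Q$ with $A^\top n'=n$. The chain is now
\[
\val_n(\Theta^{(P,Q)}_{m,+})
=\val_{n'}(\Theta^{(P',Q')}_{Am,+})
=\val_{Am}(\Theta^{((Q')^\bullet,(P')^\bullet)}_{n',+})
=\val_m(\Theta^{(Q^\bullet,P^\bullet)}_{A^\top n',+})
=\val_m(\Theta^{(Q^\bullet,P^\bullet)}_{n,+}),
\]
using Proposition \ref{prop: adjunction} for $A$ and then for $A^\top$, with Claim \ref{conj: theta1} for $(P',Q')$ in between.

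The only step that requires any thought is verifying that the finiteness hypotheses needed to invoke Claim \ref{conj: theta1} at the intermediate seed data are met. In each case, the very first application of adjunction rewrites the assumed-finite value at the source seed as a value at the target seed with the transported index, so the corresponding Claim-\ref{conj: theta1} instance is automatically on the ``finite'' side and may be applied. Beyond this bookkeeping the argument is a pure formal diagram chase, so I do not foresee any real obstacle.
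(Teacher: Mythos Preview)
Your approach for part (1) is exactly the paper's: apply adjunction (Proposition \ref{prop: adjunction}) to both sides and invoke Claim \ref{conj: theta1} for $(P,Q)$ in the middle. For part (2), the paper takes a slightly slicker route: since Claim \ref{conj: theta1} for $(P',Q')$ is equivalent (by symmetry of the statement) to Claim \ref{conj: theta1} for $((Q')^\bullet,(P')^\bullet)$, and since $A^\top:((Q')^\bullet,(P')^\bullet)\to(Q^\bullet,P^\bullet)$ is surjective when $A$ is injective, part (1) applied to $A^\top$ immediately yields Claim \ref{conj: theta1} for $(Q^\bullet,P^\bullet)$, hence for $(P,Q)$. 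Your direct chain is simply the unrolled version of this, so the two arguments are essentially the same.

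There is one small slip in your finiteness bookkeeping. In part (1) you write that ``both intermediate valuations are equal to the left-hand valuation by the first identity.'' That is not correct: the first adjunction identity only gives finiteness of $\val_{A^\top n'}(\Theta^{(P,Q)}_{m,+})$. The other side of the intermediate Claim, namely $\val_{m}(\Theta^{(Q^\bullet,P^\bullet)}_{A^\top n',+})$, is not equal to the left-hand side but rather (via adjunction for $A^\top$, your third identity) to the \emph{right}-hand side $\val_{m'}(\Theta^{((Q')^\bullet,(P')^\bullet)}_{n',+})$, which is also assumed finite. So both finiteness hypotheses are indeed met, but you need \emph{both} adjunction identities (one for each side) to see this, not just the first. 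The same remark applies to the finiteness check in part (2). This is a trivial fix, and once corrected your argument goes through.
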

\begin{proof}
Suppose that $A$ is surjective and Claim \ref{conj: theta1} holds for $(P,Q)$. For any $m'\in M'$, we may find an $m\in M$ such that $A m=m'$. Then, for any $n'\in N'$, 
\[ \mathrm{val}_{n'}(\vartheta^{(P',Q')}_{m',+}) 
= \mathrm{val}_{n'}(\vartheta^{(P',Q')}_{Am,+}) 
= \mathrm{val}_{A^\top n'}(\vartheta^{(P,Q)}_{m,+}) 
\]
where for the last equality we use Proposition \ref{prop: adjunction}.  Similarly,
\[ \mathrm{val}_{m'}(\vartheta^{((Q')^{\bullet},(P')^{\bullet})}_{n',+}). 
 = \mathrm{val}_{A m}(\vartheta^{((Q')^{\bullet},(P')^{\bullet})}_{n',+})  =  \mathrm{val}_{m}(\vartheta^{(Q^{\bullet},P^{\bullet})}_{A^{\top} n',+}). 
\]
So if $\mathrm{val}_{n'}(\vartheta^{(P',Q')}_{m',+})$ and $\mathrm{val}_{m'}(\vartheta^{((Q')^{\bullet},(P')^{\bullet})}_{n',+})$ are both finite, then  $\mathrm{val}_{A^\top n'}(\vartheta^{(P,Q)}_{m,+})$ and $\mathrm{val}_{m}(\vartheta^{(Q^{\bullet},P^{\bullet})}_{A^{\top} n',+})$ are also finite, hence equal by the application of Claim \ref{conj: theta1}.  So $\mathrm{val}_{n'}(\vartheta^{(P',Q')}_{m',+})=\mathrm{val}_{m'}(\vartheta^{((Q')^{\bullet},(P')^{\bullet})}_{n',+})$, thus proving Claim (1).

Next, assume that $A$ is injective and Claim \ref{conj: theta1} holds for $(P',Q')$. Then the claim holds for $((Q')^{\bullet},(P')^{\bullet})$, and the adjoint linear morphism $A^\top:((Q')^{\bullet},(P')^{\bullet})\rightarrow (Q^{\bullet},P^{\bullet})$ is surjective.  So by the previous case, Claim \ref{conj: theta1} holds for $(Q^{\bullet},P^{\bullet})$ and therefore for $(P,Q)$.
\end{proof}

One consequence is that we may reduce to the case of saturated-injective seed data.

\begin{lem}\label{lemma: satTR}
If Claim \ref{conj: theta1} holds for all saturated-injective seed data, then it holds for all seed data.
\end{lem}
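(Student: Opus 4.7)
The plan is to chain together the two tools we already have: the saturated resolution from Lemma \ref{lemma: saturatedresolution} and the transfer rules from Lemma \ref{lemma: deduceTR1}. Given an arbitrary seed datum $(P,Q)$, Lemma \ref{lemma: saturatedresolution} produces a zigzag of linear morphisms
\[ (P,Q) \stackrel{A}{\hookrightarrow} (P',Q') \stackrel{B}{\twoheadleftarrow} (P'',Q'') \]
with $A$ injective, $B$ surjective, and $(P'',Q'')$ saturated-injective. So it is enough to walk Claim \ref{conj: theta1} across this zigzag.

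By hypothesis, Claim \ref{conj: theta1} holds for the saturated-injective seed datum $(P'',Q'')$. Applying Lemma \ref{lemma: deduceTR1}(1) to the surjective morphism $B$ transfers the claim from $(P'',Q'')$ to $(P',Q')$. Then applying Lemma \ref{lemma: deduceTR1}(2) to the injective morphism $A$ transfers the claim from $(P',Q')$ to $(P,Q)$, which is what we wanted.

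Since both transfer steps are already packaged as Lemma \ref{lemma: deduceTR1}, there is no real obstacle here: the proof is a two-line application of the preceding results. The only subtlety worth emphasizing is that the zigzag goes through an \emph{intermediate} seed datum $(P',Q')$ which need not be saturated-injective itself; this is why we need both directions of Lemma \ref{lemma: deduceTR1} rather than a single transfer.
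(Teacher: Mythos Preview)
Your proof is correct and follows exactly the approach the paper takes: the paper's own proof simply states that the lemma is an immediate consequence of Lemmas \ref{lemma: saturatedresolution} and \ref{lemma: deduceTR1}, and you have spelled out precisely how those two lemmas combine.
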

\begin{proof}
This is an immediate consequence of Lemmas \ref{lemma: saturatedresolution} and \ref{lemma: deduceTR1}.
\end{proof}

The following proposition applies ideas from \S \ref{sub:lin-indep} to show that when $D$ and $B^{\bullet}$ are both integral, one may use the chiral-Langlands dual seed datum $(Q,P)$ for Claim \ref{conj: theta1} in place of the chiral dual seed datum $(Q^{\bullet},P^{\bullet})$.

\begin{prop}\label{lem:D-move}
    Assume that $D$ and $B^{\bullet}$ are both integral. Then  $\val_{n}(\Theta^{(P,Q)}_{m,+})=\val_{n}(\Theta^{(P^{\bullet},Q^{\bullet})}_{m,+})$ for all $m\in M_{\Q}$ and $n\in N_{\Q}$.  In particular, Claim \ref{conj: theta1} in these cases implies \[ \mathrm{val}_n(\Theta^{(P,Q)}_{m,+}) = \mathrm{val}_m(\Theta^{(Q,P)}_{n,+}) \]
    whenever both sides are finite.
\end{prop}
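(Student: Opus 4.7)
Since $n \in N_{\Q}$ pairs trivially with $\sQ^{\gp}$, the valuation $\val_n$ of a Laurent series depends only on the $M$-components of the exponents appearing in its support. To establish the main equality it therefore suffices to show that the images under the projection $\pi_M \colon M_\Q \oplus \sQ^{\gp}_\Q \twoheadrightarrow M_\Q$ of the rational broken-line exponent sets $\Theta^{(P,Q)}_{m,+}$ and $\Theta^{(P^\bullet,Q^\bullet)}_{m,+}$ coincide. The final implication concerning $(Q,P)$ then follows immediately: combining Claim~\ref{conj: theta1} applied to $(P,Q)$ with the main equality applied to the chiral dual $(Q^\bullet, P^\bullet)$ (noting $(Q^\bullet)^\bullet = Q$ and $(P^\bullet)^\bullet = P$) gives $\val_n(\Theta^{(P,Q)}_{m,+}) = \val_m(\Theta^{(Q^\bullet,P^\bullet)}_{n,+}) = \val_m(\Theta^{(Q,P)}_{n,+})$.

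At the level of initial walls the matching is direct. Because $Q^\bullet e_i = d_i^{-1} Q e_i$, the supports $(Q^\bullet e_i)^\perp$ and $(Q e_i)^\perp$ agree. At such a wall, a rational broken line with incoming $M$-component $u_M$ can bend by $k\, P e_i$ for $k \in [0,\, u_M \cdot Q e_i]\cap\Q$ in $\f{D}^{(P,Q)}$ and by $k\,(d_i P e_i)$ for $k \in [0,\, d_i^{-1}\, u_M \cdot Q e_i]\cap\Q$ in $\f{D}^{(P^\bullet,Q^\bullet)}$. The change of variables $k \leftrightarrow d_i k$, bijective precisely because $d_i \in \Z_{>0}$, identifies the two sets of $M$-shifts.

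For the non-initial walls I would appeal to the coefficient-specialization machinery of \S\ref{sub:lin-indep}. Consider the substitution $\sigma \colon y^{e_i} \mapsto x^{(d_i-1)\, P e_i}\, y^{e_i}$, which converts each initial scattering function $1 + x^{P e_i}y^{e_i}$ of $\f{D}^{(P,Q)}$ into $1 + x^{d_i P e_i}y^{e_i}$, matching the scattering function of $\f{D}^{(P^\bullet,Q^\bullet)}$ up to a rescaling of the normal vector by $d_i^{-1}$. Since $\sigma$ sends monomials to nonzero, non-zero-divisor monomials in $\Z[x^M]\llb y^{\N^r}\rrb$, the $n$-taut analysis underlying Theorem~\ref{thm:nu}---which rests on the uniqueness of extremal terms guaranteed by Assumption~\ref{assum:A-parallel}, verified for $\f{D}^{(P,Q)}$ via Lemma~\ref{lem:D-skew}---adapts to show that the $M$-component of $\val_n$ is preserved under $\sigma$. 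A final rescaling of wall normals by the positive integers $d_i$, which leaves the $M$-projection of rational broken-line bends invariant, then completes the identification.

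The principal technical obstacle lies in this last step: $\sigma$ has target $\Z[x^M]\llb y^{\N^r}\rrb$ rather than a coefficient ring $R$ of the form permitted in Theorem~\ref{thm:nu}, so one must check that the selection of extremal $n$-taut bends continues to isolate unique monomials whose images under $\sigma$ remain extremal in $M$-direction, and one must verify that the integer rescaling of normals between $\f{D}^{(P^\bullet,Q)}$ and $\f{D}^{(P^\bullet,Q^\bullet)}$ does not affect the $M$-projected broken-line exponents---both points relying essentially on the integrality of $D$ and $B^\bullet$.
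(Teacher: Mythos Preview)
Your proposal has a genuine gap, and you have in fact already put your finger on it in the final paragraph. The substitution $\sigma: y^{e_i}\mapsto x^{(d_i-1)Pe_i}y^{e_i}$ alters $M$-exponents, which is precisely what $\val_n$ detects; so this is not a coefficient specialization in the sense of Theorem~\ref{thm:nu}, and the $n$-taut analysis there gives no control over what $\sigma$ does to valuations. Similarly, the ``rescaling of wall normals'' step is not innocuous: writing $E_{Q^\bullet e_i,f}=E_{Qe_i,f^{1/d_i}}$ forces the replacement function $f^{1/d_i}$ to be a non-polynomial power series with non-positive coefficients, so positivity (on which the entire taut-broken-line machinery rests) is lost. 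Your observation that the $M$-projected rational bends at the \emph{initial} walls agree is correct, but it does not propagate to the non-initial walls of the consistent completion without further argument.

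The paper's proof avoids all of this by introducing a single auxiliary seed datum $(P',Q')$ with an enlarged coefficient monoid $\sQ'=\bigoplus_{i=1}^r\bigoplus_{j=1}^{d_i}\N\cdot e_{ij}$, where each index $i$ is ``unfolded'' into $d_i$ copies. One then realizes \emph{both} $\f{D}^{(P,Q)}_{\In}$ and $\f{D}^{(P^\bullet,Q^\bullet)}_{\In}$ as pure coefficient specializations of $\f{D}^{(P',Q')}_{\In}$: the first via $y^{e_{ij}}\mapsto y^{e_i}$ (giving $(1+x^{Pe_i}y^{e_i})^{d_i}$, equivalent to the wall with normal $Qe_i=d_iQ^\bullet e_i$), the second via $y^{e_{ij}}\mapsto -\zeta_j\, y^{e_i/d_i}$ for $\zeta_j$ the $d_i$-th roots of $-1$ (giving $1+x^{d_iPe_i}y^{e_i}$). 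Since both are genuine coefficient specializations of a common positive scattering diagram, Theorem~\ref{thm:nu} applies twice to give $\val_n(\Theta^{(P,Q)}_{m,+})=\val_n(\Theta^{(P',Q')}_{m,+})=\val_n(\Theta^{(P^\bullet,Q^\bullet)}_{m,+})$. The integrality of $D$ is needed so that $\sQ'$ makes sense, and the integrality of $B^\bullet=(Q^\bullet)^\top P$ is needed so that $(P',Q')$ is a legitimate (integral) seed datum. Your derivation of the final displayed equality from the first claim is correct.
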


\begin{proof}
Recall that $D=\diag(d_1,\ldots,d_r)$ is assumed to be integral.  Let $$\sQ'=\bigoplus_{i=1}^r \bigoplus_{j=1}^{d_i} \N \cdot e_{ij}$$ and consider the seed datum $(P',Q')$ with $$P':e_{ij}\mapsto Pe_i, \qquad Q':e_{ij}\mapsto Q^{\bullet}e_i$$ with $D'$ the identity matrix.  Here we use the integrality of $B^{\bullet}=(Q^{\bullet})^{\top}P$ to ensure that $P'$ and $Q'$ take values in some dual lattices $M'\subset M_{\Q}$ and $N'\subset N_{\Q}$, respectively; i.e. $B'=(Q')^{\top}P'$ is integral.  Then $$\f{D}^{(P',Q')}_{\In}=\left\{((Q^{\bullet}e_i)^{\perp},1+x^{Pe_i}y^{e_{ij}},Q^{\bullet}e_i)|i=1,\ldots,r; ~ j=1,\ldots,d_i\right\}.$$  

Note that $$\f{D}^{(P,Q)}_{\In}=\lrc{\left.\lrp{(Q e_i)^{\perp},(1+x^{Pe_i}y^{e_i})^{d_i},Q^{\bullet}e_i}\, \right| \, i=1,\ldots,r},$$ and this is equivalent to the scattering diagram obtained from $\f{D}_{\In}^{(P',Q')}$ via the specialization of coefficients $y^{e_{ij}}\mapsto y^{e_i}$.  Similarly, $$\f{D}_{\In}^{(P^{\bullet},Q^{\bullet})}=\lrc{\left.\lrp{(Q^{\bullet}e_i)^{\perp},1+x^{P^{\bullet}e_i}y^{e_i},Q^{\bullet}e_i}\, \right| \, i=1,\ldots,r}$$
 is equivalent to the scattering diagram obtained from $\f{D}_{\In}^{(P',Q')}$ via the specialization of coefficients $y^{e_{ij}}\mapsto -\zeta_j y^{e_i/d_i}$  (using a refinement of $\sQ$ containing each $e_i/d_i$) where $\zeta_1,\ldots,\zeta_{d_i}$ are the $d_i$th roots of $(-1)$.  The first claim then follows from Theorem \ref{thm:nu}, which implies that valuations do not depend on the precise specialization of coefficients.

The final claim follows from applying the first claim to relate the seed $(Q^{\bullet},P^{\bullet})$ to $(Q,P)$, noting that $Q=(Q^{\bullet})^{\bullet}$, $P=(P^{\bullet})^{\bullet}$, and $((P^{\bullet})^{\bullet})^{\top}Q^{\bullet}=(B^{\bullet})^{\top}$ is integral if and only if $B^{\bullet}$ is integral.\footnote{As in Remark \ref{rem:scale}, one may always rescale $D$ and $P$ by any $k\in \Q_{>0}$ while simultaneously rescaling $Q^{\bullet}$ by $k^{-1}$ (fixing $P$ and $Q$), hence rescaling $B^{\bullet}$ by $k^{-1}$.  So it is always possible to ensure that $D$ or $B^{\bullet}$ is integral, but making both integral simultaneously is not always possible.  Without this integrality, the scattering diagram $\f{D}_{\In}^{(P',Q')}$ in the proof of Proposition \ref{lem:D-move} is not well-defined.  One might hope to use the equivalence $E_{kn,f}\simeq E_{n,f^k}$ to define $(\f{d},f,n)$ with $n\in N_{\Q}$---e.g., defining $(Q^{\bullet}e_i^{\perp},1+x^{Pe_i}y^{e_{ij}},Q^{\bullet}e_i)$ to be $(Qe_i^{\perp},(1+x^{Pe_i}y^{e_{ij}})^{1/d_i},Qe_i)$ where $(1+x^{Pe_i}y^{e_{ij}})^{1/d_i}$ is defined in $\Q[x^M]\llb y^{\sQ}\rrb$, but this loses the positivity that we require prior to specialization of coefficients.}
\end{proof}

The mirror constructions in \cite{GHKK} are typically understood as relating a cluster $\s{A}$-variety to its Langlands dual cluster $\s{X}$-variety.  As in \cite[\S 1.2]{FG1}, given a seed with exchange matrix $B$, the Langlands dual seed should have exchange matrix $-B^{\top}$ (which we note equals $B$ in the skew-symmetric cases).  So using the descriptions from Example \ref{eg:clusterA}, an $\s{A}$-type seed datum $(B,\Id)$ should be mirror to the $\s{X}$-type seed datum $(\Id,-B)$.  More generally, given a seed datum $(P,Q)$, we define the \textbf{Langlands dual} seed datum to be $(Q,-P)$.  We note that the double Langlands dual $(-P,-Q)$ is isomorphic to $(P,Q)$ via the invertible linear morphism $-\Id$.

\begin{cor}[Theta reciprocity for Langlands dual pairs]\label{cor:Langlands}
    Consider a seed datum $(P,Q)$ for which $D$ and $B^{\bullet}$ are both integral.  Then Claim \ref{conj: theta1}  implies
    \[ \mathrm{val}_n(\Theta^{(P,Q)}_{m,+}) = \mathrm{val}_m(\Theta^{(Q,-P)}_{n,-})\]
    whenever both sides are finite.
\end{cor}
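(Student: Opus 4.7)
The plan is to deduce Corollary \ref{cor:Langlands} as a short composition of results already in the paper. Under the integrality hypotheses on $D$ and $B^{\bullet}$, Proposition \ref{lem:D-move} already establishes the chiral-Langlands version of theta reciprocity, namely
\[
\mathrm{val}_n(\Theta^{(P,Q)}_{m,+})=\mathrm{val}_m(\Theta^{(Q,P)}_{n,+}),
\]
granting Claim \ref{conj: theta1}. So all that remains is to identify the right-hand side with $\mathrm{val}_m(\Theta^{(Q,-P)}_{n,-})$.

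For this identification I would invoke Lemma \ref{lemma: swap}, which states $\vartheta^{(P',Q')}_{u,-}=\vartheta^{(P',-Q')}_{u,+}$ for any seed datum $(P',Q')$. Applying this with $(P',Q')=(Q,-P)$ and $u=n$ gives $\vartheta^{(Q,-P)}_{n,-}=\vartheta^{(Q,P)}_{n,+}$, and via the extension of this subsection's results to the rational-broken-line sets $\Theta$ noted at the end of \S \ref{sub:chambers}, also $\Theta^{(Q,-P)}_{n,-}=\Theta^{(Q,P)}_{n,+}$. Taking $\mathrm{val}_m$ of both sides yields $\mathrm{val}_m(\Theta^{(Q,-P)}_{n,-})=\mathrm{val}_m(\Theta^{(Q,P)}_{n,+})$, which combined with the previous display gives the desired equality.

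Regarding the finiteness condition, I would briefly observe that it is preserved through the chain: the finiteness of $\mathrm{val}_n(\Theta^{(P,Q)}_{m,+})$ and $\mathrm{val}_m(\Theta^{(Q,-P)}_{n,-})$ is assumed, and via the Lemma \ref{lemma: swap} identification the latter equals $\mathrm{val}_m(\Theta^{(Q,P)}_{n,+})$, which is exactly the finiteness needed to invoke Proposition \ref{lem:D-move}. No other hypothesis intervenes.

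There is no main obstacle: the argument is pure bookkeeping against the machinery already built in \S \ref{sub:chambers} and \S \ref{sub:TR}. The only point that warrants a sentence of justification in the final write-up is the passage from the ordinary theta-function version of Lemma \ref{lemma: swap} to its rational-broken-line analogue, for which one appeals to Lemma \ref{lem:Theta} (or simply to the remark at the end of \S \ref{sub:chambers}).
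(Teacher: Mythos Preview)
Your proposal is correct and follows essentially the same argument as the paper: first invoke Proposition \ref{lem:D-move} to pass from $(P,Q)$ to $(Q,P)$ in the positive chamber, then apply Lemma \ref{lemma: swap} with $(P',Q')=(Q,-P)$ to identify $\Theta^{(Q,P)}_{n,+}$ with $\Theta^{(Q,-P)}_{n,-}$. The paper's proof is just the two-line display of these two steps; your additional remarks on finiteness and the passage to the rational-broken-line sets $\Theta$ are accurate elaborations.
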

\begin{proof}
    We have
    \begin{align*}
        \mathrm{val}_n(\Theta^{(P,Q)}_{m,+}) &= \mathrm{val}_m(\Theta^{(Q,P)}_{n,+}) \qquad \text{(Prop. \ref{lem:D-move})} \\
        &=\mathrm{val}_m(\Theta^{(Q,-P)}_{n,-}) \qquad \text{(Lem. \ref{lemma: swap})}
    \end{align*}
\end{proof}

\section{$\Lambda$-structures and the proof of Theta Reciprocity}\label{sec:Lambda}

The key to our proof of theta reciprocity is to translate it into a relation between pairs of theta functions on the same scattering diagram.  This is accomplished by choosing a $\Lambda$-structure.

\subsection{$\Lambda$-structures}

Let $(P,Q)$ be a seed datum. An \textbf{integral (or rational) $\Lambda$-structure} on $(P,Q)$ is an integral (respectively, rational) linear morphism of seed data $\Lambda:(P,Q)\rar (Q^{\bullet},-P^{\bullet})$.    I.e., 
\begin{align}\label{eq:LambdaP}
    \Lambda P = Q^{\bullet} \quad \text{and} \quad \Lambda^\top P = -Q^{\bullet}
\end{align}
(the latter equation is equivalent to $\Lambda^{\top}(-P^{\bullet})=Q$).  Note that given the first condition $\Lambda P = Q^{\bullet}$, the condition $\Lambda^\top P = -Q^{\bullet}$ is equivalent to saying that $\Lambda|_{\im(P)}$ is skew-symmetric.

\begin{rem}[$\Lambda$ as a bilinear pairing]\label{rem:omega}
    A rational $\Lambda$-structure may equivalently be defined in terms of a $\Q$-valued bilinear pairing $\omega$ on $M$ which satisfies \begin{align}\label{eq:omega}
    \omega(m,Pv)=m\cdot Q^{\bullet}v=-\omega(Pv,m)
\end{align} for all $m\in M$ and $v\in \N^r$.  The pairing $\omega$ is related to $\Lambda$ by $$\omega(m_1,m_2)=m_1\cdot \Lambda m_2.$$  Indeed, for $\omega$ defined in terms of $\Lambda$ in this way, we have 
$$\omega(m,Pv)=m\cdot \Lambda Pv = m\cdot Q^{\bullet}v
$$
giving the left equality from \eqref{eq:omega}, and
$$\omega(Pv,m)=Pv\cdot \Lambda m = m \cdot \Lambda^{\top} Pv = -m\cdot Q^{\bullet}v
$$ giving the other half of \eqref{eq:omega}. Conversely, for $\Lambda$ defined by $m_1\cdot \Lambda m_2 = \omega(m_1,m_2)$ for $\omega$ satisfying \eqref{eq:omega}, we have $m \cdot \Lambda Pv = \omega(m ,Pv)=m \cdot Q^{\bullet}v$, so $\Lambda P=Q^{\bullet}$.  Furthermore, $$m\cdot \Lambda^\top P v=Pv \cdot \Lambda m=\omega(Pv,m)=-m\cdot Q^{\bullet}v$$ so $\Lambda^\top P=-Q^{\bullet}$ holds as well.
\end{rem}

\begin{rem}[$\Lambda$-structures and quantization]
If the pairing $\omega$ from Remark \ref{rem:omega} is skew-symmetric, it determines a Poisson structure on $\Z[x^M][y^{\sQ}]$ via 
\[ \{ x^{m_1},x^{m_2} \} = \omega(m_1,m_2) x^{m_1+m_2}. \]
This yields a quantization in which the Laurent polynomial rings become quantum tori.

Given an integral skew-symmetrizable matrix $B$, the $\s{X}$-type seed datum $(\Id,B^{\top})$ always admits a $\Lambda$-structure $\Lambda:(\Id,B^{\top})\rar (-B^{\bullet},-D)$ with $\Lambda=-B^{\bullet}$.  This is used in \cite{FG1} to quantize the cluster $\s{X}$-varieties.  On the other hand, the $\s{A}$-type seed datum $(B,\mathrm{Id})$ only admits a $\Lambda$-structure when $B$ is invertible, in which case $\Lambda$ is represented by a ``\emph{compatible matrix}'' as in the quantization of cluster algebras from \cite{BZ}.

\end{rem}

\begin{lem}\label{lem:Lambda-exist}
 $(P,Q)$ admits a rational $\Lambda$-structure if and only if $\ker(P)\subset \ker(Q^{\bullet})$.  In particular, rational $\Lambda$-structures exist whenever $P$ is injective.
\end{lem}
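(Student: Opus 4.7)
The forward direction is immediate from the definition: if a $\Lambda$ exists, then $\Lambda P = Q^{\bullet}$, so any $v \in \ker(P)$ satisfies $Q^{\bullet} v = \Lambda(Pv) = 0$.

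For the converse, I would work with the bilinear-pairing reformulation from Remark \ref{rem:omega}: constructing $\Lambda$ is equivalent to producing a $\Q$-valued bilinear pairing $\omega$ on $M_{\Q}$ satisfying $\omega(m,Pv) = m \cdot Q^{\bullet} v = -\omega(Pv,m)$ for all $m \in M_{\Q}$ and $v \in \Q^r$. Set $V \coloneqq \im(P_{\Q}) \subset M_{\Q}$. The hypothesis $\ker(P) \subset \ker(Q^{\bullet})$ (which over $\Q$ reads $\ker(P_{\Q}) \subset \ker(Q^{\bullet}_{\Q})$) says exactly that $Q^{\bullet}$ factors through $P$: there is a well-defined linear map $\widetilde{Q} \: V \to N_{\Q}$ with $\widetilde{Q}(Pv) = Q^{\bullet} v$.

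Next, pick any complementary subspace $W \subset M_{\Q}$ so that $M_{\Q} = V \oplus W$, and define $\omega$ on the four blocks by
\[
\omega(Pv_1, Pv_2) \coloneqq Pv_1 \cdot Q^{\bullet} v_2, \qquad \omega(w, Pv) \coloneqq w \cdot Q^{\bullet} v, \qquad \omega(Pv, w) \coloneqq -w \cdot Q^{\bullet} v, \qquad \omega(w_1, w_2) \coloneqq 0,
\]
for $v, v_1, v_2 \in \Q^r$ and $w, w_1, w_2 \in W$. Well-definedness of the first three formulas follows from $Q^{\bullet} = \widetilde{Q} \circ P$, so the right-hand sides depend only on the elements of $V$ (and not on the chosen preimages under $P$). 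Bilinearity is clear. The only compatibility to check is that the two formulas on $V \times V$ agree, i.e.\ that $Pv_1 \cdot Q^{\bullet} v_2 = -Pv_2 \cdot Q^{\bullet} v_1$, which is exactly the skew-symmetry of $B^{\bullet} = (Q^{\bullet})^{\top} P$ built into the definition of a skew-symmetrizable seed datum. A one-line check then verifies $\omega(m, Pv) = m \cdot Q^{\bullet} v$ and $\omega(Pv, m) = -m \cdot Q^{\bullet} v$ for a general $m = v' + w$, yielding the desired $\Lambda$.

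The final sentence is a special case: if $P$ is injective then $\ker(P) = 0 \subset \ker(Q^{\bullet})$ trivially. There is no substantive obstacle here; the only point requiring mild care is using the hypothesis $\ker(P) \subset \ker(Q^{\bullet})$ at precisely the right spot to guarantee well-definedness of $\omega$ on the block $V \times V$ (and on the mixed blocks) before invoking the skew-symmetry of $B^{\bullet}$.
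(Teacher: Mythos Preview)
Your proof is correct and takes essentially the same approach as the paper: both choose a complement to $\im(P_{\Q})$ and build $\Lambda$ block-by-block, invoking $\ker(P)\subset\ker(Q^{\bullet})$ for well-definedness and the skew-symmetry of $B^{\bullet}$ for the consistency on $V\times V$. The only cosmetic difference is that the paper writes down an explicit block matrix for $\Lambda$ in adapted bases (with an arbitrary lower-right block $A$), whereas you work coordinate-free via the bilinear form $\omega$ of Remark~\ref{rem:omega} and take the $W\times W$ block to be zero; your construction is exactly the paper's with $A=0$.
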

\begin{proof}
    The necessity of $\ker(P)\subset \ker(Q^{\bullet})$ is immediate from the condition $ \Lambda P = Q^{\bullet}$.  
    
    Conversely, assume $\ker(P)\subset \ker(Q^{\bullet})$.  For $s=\rank(P)$, $r=\rank(\sQ^{\gp})$, and $d=\rank(M)=\rank(N)$, we may choose bases for $\sQ_{\Q}^{\gp}$ and $M_{\Q}$ such that 
    \[ P\simeq \begin{bmatrix} \mathrm{Id}_{s\times s} & 0_{s\times (r-s)} \\ 0_{(d-s)\times s} & 0_{(d-s)\times (r-s)} \end{bmatrix}.\]
    Then since $\ker(P)\subset \ker(Q^{\bullet})$, we know that for this same basis of $\sQ_{\Q}^{\gp}$, the last $(r-s)$ columns of $Q^{\bullet}$ will also be $0$.  So $Q^{\bullet}$ will have the form
    \[ Q^{\bullet}\simeq \begin{bmatrix} -\bar{B}^{\bullet}_{s\times s} & 0_{s\times (r-s)}\\C & 0_{(d-s)\times (r-s)}\end{bmatrix}\]
    where $-\bar{B}^{\bullet}_{s\times s}$ is skew-symmetric and $C$ is some $(d-s)\times s$-matrix.  Indeed, we compute $$B^{\bullet}=(Q^{\bullet})^{\top}P=\begin{bmatrix} -(\bar{B}^{\bullet}_{s\times s})^{\top} & 0_{s\times (r-s)}\\0_{(r-s)\times s} & 0_{(r-s)\times (r-s)}\end{bmatrix}$$
    so the skew-symmetry of $B^{\bullet}$ is equivalent to the skew-symmetry of $\bar{B}^{\bullet}_{s\times s}$ (which then equals the top-left $s\times s$ submatrix of $B^{\bullet}$)

    Now take $$\Lambda=\begin{bmatrix} -\bar{B}^{\bullet}_{s\times s} & -C^{\top} \\C & A\end{bmatrix}$$
    for any $(d-s)\times(d-s)$-matrix $A$.  One checks that $\Lambda P=Q^{\bullet}$ and $\Lambda^{\top} P=-Q^{\bullet}$, so this gives a rational $\Lambda$-structure as desired.
\end{proof}

Rational $\Lambda$-structures will suffice for our purposes, but we include the following sufficient condition for the existence of integral $\Lambda$-structures.

\begin{lem}\label{lem: satlambda}
If $P$ is saturated and $\ker(P)\subset \ker(Q^{\bullet})$, then $(P,Q)$ admits an integral $\Lambda$-structure.
\end{lem}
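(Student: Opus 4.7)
The plan is to rerun the proof of Lemma \ref{lem:Lambda-exist} using integer bases throughout, where saturation of $P$ is precisely the hypothesis that permits the basis-change step to be carried out over $\Z$ rather than only over $\Q$. Since $P$ is saturated of rank $s$, Smith normal form provides unimodular changes of basis for $\Z^r$ and $M$ putting the matrix of $P$ in the block form
\[
P = \begin{bmatrix} \mathrm{Id}_{s\times s} & 0 \\ 0 & 0 \end{bmatrix}.
\]
Without saturation, the elementary divisors of $P$ would be nontrivial integers and this integer reduction would be impossible; with it, all elementary divisors equal $1$.

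Fixing any integer basis of $N^\bullet$, the hypothesis $\ker(P)\subset \ker(Q^\bullet)$ forces the last $r-s$ columns of the matrix of $Q^\bullet$ to vanish, giving
\[
Q^\bullet = \begin{bmatrix} -\bar{B}^\bullet & 0 \\ C & 0 \end{bmatrix}
\]
for integer matrices $\bar{B}^\bullet$ (of size $s\times s$) and $C$ (of size $(d-s)\times s$). The skew-symmetry of $B^\bullet$ forces $\bar{B}^\bullet$ to be skew-symmetric by exactly the same block computation as in the proof of Lemma \ref{lem:Lambda-exist}. I then define $\Lambda\colon M\to N^\bullet$ by the integer block matrix
\[
\Lambda = \begin{bmatrix} -\bar{B}^\bullet & -C^\top \\ C & 0 \end{bmatrix},
\]
which has integer entries in the chosen integer bases of $M$ and $N^\bullet$ and therefore automatically sends the lattice $M$ into the lattice $N^\bullet$.

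Verifying the $\Lambda$-structure conditions $\Lambda P = Q^\bullet$ and $\Lambda^\top P = -Q^\bullet$ reduces to the identical block-matrix check carried out in the proof of Lemma \ref{lem:Lambda-exist}, which uses only the skew-symmetry of $\bar{B}^\bullet$ and the block form of $P$, together with the freedom to take the bottom-right block of $\Lambda$ to be $0$. I expect the subtlest point to be making sure the integer basis of $N^\bullet$ is chosen coherently with the basis of $M$ (ideally dual after tensoring with $\Q$) so that the matrix transpose of $[\Lambda]$ correctly represents the adjoint $\Lambda^\top$; once saturation of $P$ has supplied the integer block reduction, no further rational denominators enter the construction, and integrality of $\Lambda$ follows from integrality of $Q^\bullet$ alone.
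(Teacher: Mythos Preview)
Your proposal is correct and follows exactly the approach the paper takes: the paper's own proof simply states that the argument of Lemma~\ref{lem:Lambda-exist} goes through with bases chosen over $\Z$ rather than $\Q$, which is precisely what saturation of $P$ allows. Your elaboration via Smith normal form and the explicit block matrix for $\Lambda$ spells out the details the paper leaves implicit, and your flagged subtlety about the coherence of the bases for $M$ and $N^\bullet$ (so that the matrix transpose represents the adjoint) is a genuine technical point the paper glosses over.
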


\begin{proof}
This follows by essentially the same argument as Lemma \ref{lem:Lambda-exist}, except that $P$ being saturated ensures that we can choose our bases over $\Z$ instead of over $\Q$.
\end{proof}

\begin{lemdfn}\label{lemdfn:comp}
If $A:(P,Q)\rightarrow (P',Q')$ is a linear morphism and $\Lambda'$ is a $\Lambda$-structure for $(P',Q')$, then $\Lambda\coloneqq A^\top \Lambda' A$ is a $\Lambda$-structure for $(P,Q)$.  In this case, we say that the $\Lambda$-structures $\Lambda$ and $\Lambda'$ are \textbf{congruent}.
\end{lemdfn}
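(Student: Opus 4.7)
The plan is to verify directly that $\Lambda \coloneqq A^\top \Lambda' A$ satisfies the two defining equations of a $\Lambda$-structure on $(P,Q)$, namely $\Lambda P = Q^\bullet$ and $\Lambda^\top P = -Q^\bullet$. Since $A:(P,Q)\to (P',Q')$ is a linear morphism, I have $AP = P'$ and $A^\top Q' = Q$, together with their ``bullet versions'' $AP^\bullet = (P')^\bullet$ and $A^\top (Q')^\bullet = Q^\bullet$ (which follow because, by our convention, $D = D'$ for seed data related by a linear morphism). The $\Lambda$-structure hypothesis on $(P',Q')$ gives $\Lambda' P' = (Q')^\bullet$ and $(\Lambda')^\top P' = -(Q')^\bullet$.

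First I would compute
\[
\Lambda P \;=\; A^\top \Lambda' A P \;=\; A^\top \Lambda' P' \;=\; A^\top (Q')^\bullet \;=\; Q^\bullet,
\]
using $AP = P'$, then the first $\Lambda'$-equation, and finally the bullet-version of the adjunction. Next I would compute
\[
\Lambda^\top P \;=\; (A^\top \Lambda' A)^\top P \;=\; A^\top (\Lambda')^\top A P \;=\; A^\top (\Lambda')^\top P' \;=\; -A^\top (Q')^\bullet \;=\; -Q^\bullet,
\]
by exactly the same steps applied to the transpose. This completes the verification.

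There is no real obstacle here; the identity is forced by how the definitions chain together. The only point worth flagging is the implicit use of the convention (set up just before Example \ref{eg:ensemble}) that linear morphisms preserve the diagonal matrix $D$, so that $A$ intertwines the bullet versions as well as the unadorned maps --- this is what legitimizes writing $A^\top (Q')^\bullet = Q^\bullet$ rather than merely $A^\top Q' = Q$. Given this, the statement is essentially the observation that pulling back a compatible bilinear form along a linear morphism of seed data yields a compatible bilinear form, which is transparent from the $\omega$-pairing reformulation in Remark \ref{rem:omega}: if $\omega'(m_1',m_2') = m_1' \cdot \Lambda' m_2'$ is the form associated to $\Lambda'$, then $\omega(m_1,m_2) \coloneqq \omega'(Am_1, Am_2)$ is the form associated to $\Lambda = A^\top \Lambda' A$, and the required identities for $\omega$ follow immediately from those for $\omega'$ together with $AP = P'$ and $A^\top (Q')^\bullet = Q^\bullet$.
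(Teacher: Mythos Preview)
Your proof is correct and is essentially identical to the paper's own proof, which consists of exactly the same two-line computation chaining $AP=P'$, the $\Lambda'$-equations, and $A^\top (Q')^\bullet = Q^\bullet$. The additional remarks you make about the $D=D'$ convention and the $\omega$-pairing interpretation are accurate and helpful context, though the paper omits them.
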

\begin{proof}
This is a direct check: 
\begin{align*}
    &(A^\top \Lambda' A) P = A^\top \Lambda' P' = A^\top {Q'}^{\bullet} = Q^{\bullet} \\ &(A^\top \Lambda' A)^{\top} P = A^\top (\Lambda')^{\top} A P = A^\top (\Lambda')^{\top} P' = - A^\top {Q'}^{\bullet} = -Q^{\bullet}.
\end{align*}
\end{proof}

We note that the congruence condition above is equivalent to the condition that $$m_1\cdot \Lambda m_2=Am_1 \cdot \Lambda' Am_2$$ for all $m_1,m_2\in M$.  We will not actually utilize the congruence condition, but it seems to be a natural condition to ask for in Lemma \ref{lemma: invlambda} below.

We say that an integral (respectively, rational) $\Lambda$-structure is \textbf{invertible} if the map $\Lambda:M\rar N^\bullet$ (respectively, $\Lambda:M_\Q\rar N^\bullet_\Q$) is invertible. 

\begin{lem}\label{lemma: invlambda}
Every seed datum with an integral (respectively, rational) $\Lambda$-structure $\Lambda$ embeds into a seed datum with an invertible integral (respectively, rational) $\Lambda$-structure.  That is, there exists a linear morphism $A:(P,Q)\rightarrow (P',Q')$ with $A$ injective and $(P',Q')$ admitting an invertible $\Lambda$-structure $\Lambda'$.  Moreover, $A$ and $\Lambda'$ can be chosen so that $\Lambda$ and $\Lambda'$ are congruent in the sense of Lemma/Definition \ref{lemdfn:comp}.  Additionally, if $(P,Q)$ is saturated-injective, then we may take $(P',Q')$ to be saturated-injective as well.
\end{lem}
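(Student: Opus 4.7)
The plan is to enlarge $(P,Q)$ by an auxiliary direct summand so that the given $\Lambda$-structure extends in block form to an invertible $\Lambda$-structure. In the skew-symmetric case, where $M = M^{\bullet}$ and $N = N^{\bullet}$, I set $M' := M \oplus N$ and $N' := N \oplus M$ (mutually dual), and extend
\[
P'v := (Pv, 0), \qquad Q'v := (Qv, Pv),
\]
so that the inclusion $A\colon m \mapsto (m,0)$ is an injective linear morphism $(P,Q) \hookrightarrow (P',Q')$ with exchange matrix $B' = B$. A short direct check confirms that $(P',Q')$ is a seed datum of rank $r$ and that $AP = P'$, $A^{\top}Q' = Q$.

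I would then define
\[
\Lambda' := \begin{pmatrix} \Lambda & -\mathrm{Id} \\ \mathrm{Id} & 0 \end{pmatrix} \colon M \oplus N \longrightarrow N \oplus M.
\]
A direct block-matrix computation gives $\Lambda' P' = Q'$ and $(\Lambda')^{\top} P' = -Q'$, so $\Lambda'$ is a $\Lambda$-structure for $(P',Q')$. Its antidiagonal block form is invertible with inverse $\bigl(\begin{smallmatrix} 0 & \mathrm{Id} \\ -\mathrm{Id} & \Lambda \end{smallmatrix}\bigr)$, and congruence $\Lambda = A^{\top} \Lambda' A$ is an immediate one-line calculation. For the saturated-injective claim, $\im P' = \im P \oplus 0$ is saturated in $M \oplus N$ precisely when $\im P$ is saturated in $M$, and the saturation and injectivity of $\im Q' = \{(Qv, Pv) : v \in \Z^r\}$ reduce to those of $\im Q$ and $\im P$ (any rational $v$ with $(Qv,Pv) \in N \oplus M$ forces $v \in \Z^r$ when $Q$ is saturated-injective).

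For the general skew-symmetrizable case, I would apply the same recipe with $M' := M \oplus N^{\bullet}$ and $(N')^{\bullet} := N^{\bullet} \oplus M^{\bullet}$, adjusting the off-diagonal entries of $\Lambda'$ to use the canonical identifications between the starred and unstarred lattices. The rational version then follows verbatim from the skew-symmetric case after identifying $M_{\Q} = M^{\bullet}_{\Q}$ and $N_{\Q} = N^{\bullet}_{\Q}$. The main obstacle will be the integral bookkeeping: one must ensure that the off-diagonal blocks of $\Lambda'$ genuinely map $M \to M^{\bullet}$ and $N^{\bullet} \to N$ with integer matrix entries, while the saturated-injective condition continues to hold. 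I would handle this by combining the rescaling freedom of Remark~\ref{rem:scale} (to normalize $D$ to be integral) with a possible replacement of the auxiliary summand by an appropriate finite-index sublattice of $N$, so that all blocks of $\Lambda'$ land in the correct lattices, and then verifying that the block structure of $(P',Q')$ still inherits saturated-injectivity from $(P,Q)$ via the same direct-sum argument.
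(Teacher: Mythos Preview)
Your approach is essentially the same as the paper's: the paper also sets $M' = M \oplus N^{\bullet}$, $P'v = (Pv,0)$, $Q'v = (Qv, P^{\bullet}v)$, $A\colon m \mapsto (m,0)$, and $\Lambda' = \begin{pmatrix} \Lambda & -\mathrm{Id} \\ \mathrm{Id} & 0 \end{pmatrix}$, then checks invertibility, the $\Lambda$-structure conditions, congruence, and saturated-injectivity directly. The only place you are vaguer than the paper is the skew-symmetrizable bookkeeping: the paper takes $(N')^{\bullet} = N^{\bullet} \oplus M$ (not $N^{\bullet} \oplus M^{\bullet}$) and $(Q')^{\bullet}v = (Q^{\bullet}v, Pv)$, which makes all integrality work out immediately with no need for rescaling or finite-index adjustments.
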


\begin{proof}
Let $(P,Q)$ be a seed datum with diagonal matrix $D$ and $\Lambda$-structure $\Lambda$.
Define a seed datum $(P',Q')$ with $D'=D$ and \begin{align*}
    M'=M\oplus N^\bullet, \qquad &N'=N\oplus M^{\bullet}\\
    P':v\mapsto(Pv,0)\in M', \qquad &Q':v\mapsto (Qv,P^{\bullet} v)\in N'.
\end{align*} 
Then 
\eqn{
\Lambda':M' &\to {N'}^\bullet=N^{\bullet}\oplus M\\
    (m,n)   &\mapsto \lrp{\Lambda m - n,m}
}
is an invertible $\Lambda$-structure for $(P',Q')$.  The inverse is $(\Lambda')^{-1}:(n,m)\mapsto (m,\Lambda m-n)$.  Checking the conditions $\Lambda'P'=(Q')^{\bullet}$ and $(\Lambda')^{\top}P'=-(Q')^{\bullet}$ is straightforward after representing the maps with matrices:
\[
P'=
\begin{bmatrix}
P \\ 
0_{d\times r}
\end{bmatrix},
\qquad 
(Q')^{\bullet}
=\begin{bmatrix}
Q^{\bullet} \\ 
P
\end{bmatrix},
\qquad \text{and} \qquad  \Lambda'=\begin{bmatrix}
\Lambda & -\mathrm{Id}_{d\times d} \\
\mathrm{Id}_{d\times d} & 0_{d\times d}
\end{bmatrix}.
\]

Finally, we have an injective linear morphism $A:(P,Q)\rightarrow (P',Q')$, $m\mapsto (m,0)$.  In terms of matrices, $A=\begin{bmatrix}
\mathrm{Id}_{d\times d} \\
0_{d\times d} 
\end{bmatrix}$.  The congruence equation $A^{\top}\Lambda'A=\Lambda$ is now straightforward to check.  For the final claim, note that our $P'$ and $Q'$ are indeed saturated inclusions whenever $P$ and $Q$ are.
\end{proof}

Using a $\Lambda$-structure, we can translate theta reciprocity into a statement relating valuations in the same seed datum.

\begin{claim}[Theta reciprocity, version 2]\label{conj: theta2}
Given a seed datum $(P,Q)$ with a rational $\Lambda$-structure $\Lambda$ and $m_1,m_2\in M_{\Q}$,
\[ \mathrm{val}_{\Lambda m_1}(\Theta^{(P,Q)}_{m_2,+})
= \mathrm{val}_{\Lambda^\top m_2}(\Theta^{(P,Q)}_{m_1,-}).
\]
whenever both sides are finite.
\end{claim}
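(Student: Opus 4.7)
The plan is to express both sides of the desired identity as infima over a common combinatorial collection of broken lines, using the $\Lambda$-structure to identify them. First, I would reduce to simpler data: by Lemmas \ref{lemma: satTR} and \ref{lemma: invlambda}, together with adjunction arguments in the spirit of Lemma \ref{lemma: deduceTR1}, it suffices to consider a saturated-injective seed datum $(P,Q)$ equipped with an invertible $\Lambda$-structure. Then by Theorem \ref{thm:min-taut}, the left-hand side is attained by some $\Lambda m_1$-taut rational broken line $\Gamma$ with initial exponent $m_2$ and endpoint in the positive chamber $C^+$, and the right-hand side by some $\Lambda^\top m_2$-taut broken line with initial exponent $m_1$ and endpoint in $C^-$.

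The core step is to introduce a notion of \emph{$\Lambda$-taut broken line} that is symmetric in $(m_1,m_2)$ and encodes enough combinatorial data to compute both valuations at once. Recall from Lemma \ref{lem:D-skew} that each wall of $\f{D}^{(P,Q)}$ has direction $Pv$ and normal a positive rational multiple of $Q^\bullet v$; the defining relations $\Lambda P = Q^\bullet$ and $\Lambda^\top P = -Q^\bullet$ therefore convert bend directions into wall normals, with a sign that differs according to $\Lambda$ versus $\Lambda^\top$. I would use this to construct a dualization sending a $\Lambda m_1$-taut broken line $\Gamma$ from $m_2$ toward $C^+$ to a broken line $\Gamma^\vee$ from $m_1$ toward $C^-$, built by interchanging the roles of initial exponent and transported covector via $\Lambda$. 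One then aims to show that $\Gamma^\vee$ is automatically $\Lambda^\top m_2$-taut, that this operation is a bijection between the relevant taut broken lines, and that $u_\Gamma \cdot \Lambda m_1 = u_{\Gamma^\vee} \cdot \Lambda^\top m_2$.

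I expect the main obstacle to lie in making this dualization precise and verifying the bijection, especially the compatibility of tautness conditions. The supporting heuristic is that if $u_\Gamma = m_2 + \sum_j c_j P v_j$ is the telescoped bend data of $\Gamma$, then using the pairing $\omega(a,b) \coloneqq a \cdot \Lambda b$ and its skew-type identity $\omega(m,Pv) = -\omega(Pv,m)$ from Remark \ref{rem:omega}, the quantity $u_\Gamma \cdot \Lambda m_1$ rearranges into a form symmetric in the roles of $m_1$ and $m_2$ and should match the parallel expansion of $u_{\Gamma^\vee} \cdot \Lambda^\top m_2$. The bijection itself should be set up recursively bend-by-bend, tracking the covectors $v_t = (\Lambda m_1)_t$ transported along $\Gamma$ versus $v'_t = (\Lambda^\top m_2)_t$ transported along $\Gamma^\vee$, and showing that the extremality condition defining tautness (selecting a vertex of the convex hull $\overline{S}_{u_{t-\epsilon},\Gamma,t}$) is preserved under the duality. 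Invertibility of $\Lambda$ should be crucial here to recover all the data of $\Gamma^\vee$ from that of $\Gamma$.

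Finally, once the bijection is established in the generic regime for $(m_1,m_2)$, upper semicontinuity of valuations (Lemma \ref{lem:v-upper-semi}) together with continuity of tropicalized functions on their finite loci (\S \ref{sub:trop-fun}) extends the identity to all rational $m_1,m_2$ for which both sides are finite.
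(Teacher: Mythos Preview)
Your reduction steps are fine, but the core bijection $\Gamma \mapsto \Gamma^\vee$ is not actually constructed and I do not see how it could be. A $\Lambda m_1$-taut broken line $\Gamma$ from $m_2$ into $C^+$ has a specific geometric support in $M_\R$; ``interchanging initial exponent and transported covector via $\Lambda$'' does not specify a support, a sequence of walls crossed, or bends for $\Gamma^\vee$. Your telescoping heuristic only manipulates the final exponent: writing $u_\Gamma = m_2 + \sum_j c_j P v_j$ gives $u_\Gamma \cdot \Lambda m_1 = m_1 \cdot \Lambda^\top m_2 - \sum_j c_j (m_1 \cdot Q^\bullet v_j)$, but each $c_j$ already contains a factor $(m_{t-\epsilon}\cdot Q^\bullet v_j)$ that depends on the running exponents of $\Gamma$, so this expression is not symmetric in $(m_1,m_2)$, and there is no candidate $\Gamma^\vee$ whose bend data would reproduce it.

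The paper's argument is structurally different: it proves an inequality in each direction by \emph{transporting the basepoint} rather than matching broken lines. First, Lemma~\ref{lem:Lambda-p-val} shows $\val_{\Lambda p}(\Theta_{m_2,+}) = \val_{\Lambda p}(\Theta_{m_2,p})$ for any $p$, so one computes the left-hand side with basepoint $p$ near $m_1$ (not in $C^+$). The minimizer $\Gamma_+$ there is $(\Lambda p)$-taut, which by Lemma~\ref{lem:Lambda-T-taut} coincides with an intrinsic notion called $\Lambda^\top$-\emph{taut} (minimal bend when crossing positive-to-negative, maximal bend negative-to-positive; Lemma~\ref{lem:LambdaT-taut}). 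The crucial missing idea in your proposal is the \emph{constancy} lemma, Lemma~\ref{lem:L-const}: the function $t \mapsto \Theta_{m_1,\Gamma_+(t)}^{\trop}(\Lambda^\top m_t)$ is constant along the $\Lambda^\top$-taut $\Gamma_+$. Evaluating at $t=0$ gives $\Theta_{m_1,p}^{\trop}(\Lambda^\top m_+) \leq m_1 \cdot \Lambda^\top m_+ = \Theta_{m_2,+}^{\trop}(\Lambda m_1)$ via the straight broken line; letting $t\to -\infty$ and using basepoint-moving again identifies this with $\Theta_{m_1,-}^{\trop}(\Lambda^\top m_2)$. A symmetric argument gives the reverse inequality. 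The relations $\Lambda P = Q^\bullet$ and $\Lambda^\top P = -Q^\bullet$ that you correctly single out are precisely what makes the constancy computation go through, but they are used to prove this transport property, not to build a duality map on broken lines.
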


\begin{lem}\label{lem:theta2-theta1}
Claim \ref{conj: theta2} is equivalent to Claim \ref{conj: theta1}.
\end{lem}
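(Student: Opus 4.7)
The plan is to deduce each direction from a single identity that transports the negative-chamber theta function $\Theta^{(P,Q)}_{m_1,-}$ to a positive-chamber theta function on the chiral dual. The observation driving everything is that $\Lambda$ is not only a linear morphism $(P,Q)\to(Q^{\bullet},-P^{\bullet})$, but simultaneously a linear morphism $(P,-Q)\to(Q^{\bullet},P^{\bullet})$: the equation $\Lambda P=Q^{\bullet}$ is unchanged, while $\Lambda^{\top}P^{\bullet}=\Lambda^{\top}PD=-Q^{\bullet}D=-Q$ provides the other half of the linear-morphism axiom. Applying Proposition~\ref{prop: adjunction} to this second viewpoint of $\Lambda$, together with Lemma~\ref{lemma: swap} identifying $\Theta^{(P,Q)}_{m_1,-}$ with $\Theta^{(P,-Q)}_{m_1,+}$, yields the unconditional identity
\[
\val_{\Lambda^{\top}m_2}\bigl(\Theta^{(P,Q)}_{m_1,-}\bigr) \;=\; \val_{m_2}\bigl(\Theta^{(Q^{\bullet},P^{\bullet})}_{\Lambda m_1,+}\bigr),
\]
which I will refer to as $(\star)$.

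For Claim~\ref{conj: theta1}$\Rightarrow$Claim~\ref{conj: theta2}, I will apply Claim~\ref{conj: theta1} to $(P,Q)$ at the pair $(m_2,\Lambda m_1)\in M_{\Q}\times N_{\Q}$ to obtain $\val_{\Lambda m_1}(\Theta^{(P,Q)}_{m_2,+})=\val_{m_2}(\Theta^{(Q^{\bullet},P^{\bullet})}_{\Lambda m_1,+})$, and then substitute $(\star)$ on the right to land on $\val_{\Lambda^{\top}m_2}(\Theta^{(P,Q)}_{m_1,-})$. Finiteness of the two outer valuations in the hypothesis of Claim~\ref{conj: theta2} transfers through $(\star)$ to supply the finiteness hypothesis of Claim~\ref{conj: theta1}.

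For the reverse direction, I first invoke Lemma~\ref{lemma: satTR} to reduce to proving Claim~\ref{conj: theta1} for saturated-injective seed data. For such a seed $(P,Q)$, Lemma~\ref{lem: satlambda} supplies an integral $\Lambda$-structure, and Lemma~\ref{lemma: invlambda} embeds $(P,Q)\hookrightarrow(P',Q')$ where $(P',Q')$ admits an \emph{invertible} rational $\Lambda$-structure $\Lambda'$. Combining Claim~\ref{conj: theta2} with $(\star)$ for $(P',Q')$ gives $\val_{\Lambda' m_1}(\Theta^{(P',Q')}_{m_2,+})=\val_{m_2}(\Theta^{((Q')^{\bullet},(P')^{\bullet})}_{\Lambda' m_1,+})$ for all $m_1,m_2\in M'_{\Q}$; since $\Lambda'$ is a $\Q$-linear bijection, every $n\in N'_{\Q}$ has the form $\Lambda' m_1$, so this is exactly Claim~\ref{conj: theta1} for $(P',Q')$. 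Lemma~\ref{lemma: deduceTR1}(2) then descends Claim~\ref{conj: theta1} from $(P',Q')$ to $(P,Q)$.

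The main bookkeeping hazard is that $\Lambda$ nominally maps $M\to N^{\bullet}$ while $\Lambda^{\top}$ maps $M^{\bullet}\to N$, so integral subtleties appear when the skew-symmetrizer $D$ is nontrivial; however, both claims are stated over $\Q$, where $N_{\Q}=N^{\bullet}_{\Q}$ and $M_{\Q}=M^{\bullet}_{\Q}$, so these technicalities evaporate. The only other step requiring care is tracking finiteness of each valuation through the equalities, which the unconditional nature of $(\star)$ handles automatically.
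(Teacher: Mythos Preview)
Your proof is correct and follows essentially the same approach as the paper. The only cosmetic difference lies in how you derive the key identity $(\star)$: you first apply Lemma~\ref{lemma: swap} and then Proposition~\ref{prop: adjunction} via the linear morphism $\Lambda:(P,-Q)\to(Q^{\bullet},P^{\bullet})$, whereas the paper first applies adjunction via $\Lambda:(P,Q)\to(Q^{\bullet},-P^{\bullet})$ and then swaps; these are simply reorderings of the same two steps, and the reduction to the invertible case via Lemmas~\ref{lemma: satTR}, \ref{lem: satlambda}, \ref{lemma: invlambda}, and \ref{lemma: deduceTR1}(2) is identical.
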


\begin{proof}
We have
\begin{align*}
\val_{\Lambda^\top m_2} (\Theta^{(P,Q)}_{m_1,-}) 
&= \val_{m_2}(\Theta^{(Q^{\bullet},-P^{\bullet})}_{\Lambda m_1,-})  \qquad \text{(by adjunction, Proposition \ref{prop: adjunction})} \\
&= \val_{m_2}( \Theta^{(Q^{\bullet},P^{\bullet})}_{\Lambda m_1,+}) \qquad \text{(by Lemma \ref{lemma: swap})} 
\end{align*}
and so the equality in Claim \ref{conj: theta2} is equivalent to 
\[ \mathrm{val}_{\Lambda m_1}(\Theta^{(P,Q)}_{m_2,+})
= \val_{m_2}( \Theta^{(Q^{\bullet},P^{\bullet})}_{\Lambda m_1,+}).
\]
This is the case of Claim \ref{conj: theta1} in which $n=\Lambda m_1$ and $m=m_2$; furthermore, if $\Lambda$ is invertible (or at least surjective), then every case of Claim \ref{conj: theta1} for the seed datum $(P,Q)$ is of this form.

Therefore, Claim \ref{conj: theta1} implies Claim \ref{conj: theta2}, and the reverse holds whenever $(P,Q)$ has an invertible $\Lambda$-structure.  By Lemma \ref{lemma: satTR}, to prove Claim \ref{conj: theta1}, it suffices to consider saturated-injective seed data, and these always admit rational $\Lambda$-structures by Lemma \ref{lem:Lambda-exist} (in fact \textit{integral} $\Lambda$-structures by  Lemma \ref{lem: satlambda}).  Finally, we may assume that these $\Lambda$-structures are invertible by Lemma~\ref{lemma: invlambda} combined with Lemma~\ref{lemma: deduceTR1}(2).
\end{proof}

Our main goal in the remainder of this section is to prove Claim \ref{conj: theta2}. We extract the following from the proof of Lemma \ref{lem:theta2-theta1}.  
\begin{lem}\label{lem:sat}
    To prove Claim \ref{conj: theta2}, it suffices to prove the cases where $(P,Q)$ is saturated-injective.  In particular, we may assume that a full-dimensional positive chamber $C^+$ exists.
\end{lem}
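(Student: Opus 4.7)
The plan is to extract the reduction already carried out inside the proof of Lemma~\ref{lem:theta2-theta1}, but now package it as a stand-alone statement. Concretely, I would chain together three facts: the equivalence Claim~\ref{conj: theta2}$\Leftrightarrow$Claim~\ref{conj: theta1} in the presence of an invertible $\Lambda$-structure, the reduction of Claim~\ref{conj: theta1} to saturated-injective seed data (Lemma~\ref{lemma: satTR}), and the existence/invertibility results for $\Lambda$-structures on saturated-injective seeds.

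First I would suppose that Claim~\ref{conj: theta2} has been verified for every saturated-injective seed datum (equipped with any rational $\Lambda$-structure). Given such a $(P,Q)$, Lemma~\ref{lemma: invlambda} produces an injective linear morphism $A\:(P,Q)\into (P',Q')$ with $(P',Q')$ again saturated-injective and with $(P',Q')$ carrying an invertible $\Lambda$-structure $\Lambda'$ congruent to the given $\Lambda$. Applying the assumed Claim~\ref{conj: theta2} to $(P',Q')$ with its invertible $\Lambda'$, the computation at the start of the proof of Lemma~\ref{lem:theta2-theta1} (which uses adjunction, Proposition~\ref{prop: adjunction}, and the swap Lemma~\ref{lemma: swap}) shows that Claim~\ref{conj: theta1} holds for $(P',Q')$; invertibility of $\Lambda'$ is what guarantees that every pair $(m,n)$ appearing in Claim~\ref{conj: theta1} is realized as $(m_2,\Lambda' m_1)$, so one really gets all instances. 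Hence Claim~\ref{conj: theta1} holds for every saturated-injective seed datum. Lemma~\ref{lemma: satTR} (which is just Lemmas~\ref{lemma: saturatedresolution} and~\ref{lemma: deduceTR1} packaged together) then promotes Claim~\ref{conj: theta1} to all seed data.

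Finally, to deduce Claim~\ref{conj: theta2} for an arbitrary seed datum $(P,Q)$ equipped with a rational $\Lambda$-structure $\Lambda$, I would simply run the adjunction-and-swap calculation from the proof of Lemma~\ref{lem:theta2-theta1} in reverse: the identity
\[
\val_{\Lambda^\top m_2}(\Theta^{(P,Q)}_{m_1,-})=\val_{m_2}\bigl(\Theta^{(Q^{\bullet},P^{\bullet})}_{\Lambda m_1,+}\bigr)
\]
combined with the now-known Claim~\ref{conj: theta1} for $(P,Q)$ gives exactly the equality demanded by Claim~\ref{conj: theta2}. For the parenthetical assertion about $C^+$: saturated-injectivity forces $Q\:\Z^r\into N$ to be a (saturated) inclusion, and the discussion in \S\ref{sub:chambers} shows that any seed datum with $Q$ injective has a genuine positive chamber $C^+$ in $M_{\R}$, which moreover is top-dimensional since the intersection of the open half-spaces $\{p\cdot Qe_i>0\}$ is nonempty and open. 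There is no significant obstacle here — the entire lemma is a bookkeeping extraction from material already established, and the only subtle point to highlight is that invertibility of the $\Lambda$-structure on the enlarged seed $(P',Q')$ is essential for transferring Claim~\ref{conj: theta2} (which only constrains the restricted valuations $\val_{\Lambda m}$) back to the full strength of Claim~\ref{conj: theta1}.
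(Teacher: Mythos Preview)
Your argument is essentially the paper's own proof, and the overall structure is correct. There is one small logical omission: after showing Claim~\ref{conj: theta1} for the enlarged seed $(P',Q')$, you jump directly to ``Hence Claim~\ref{conj: theta1} holds for every saturated-injective seed datum,'' but you have only established it for the particular $(P',Q')$ produced by Lemma~\ref{lemma: invlambda}, not for the original saturated-injective $(P,Q)$. The missing step is an explicit application of Lemma~\ref{lemma: deduceTR1}(2) along the injective morphism $A:(P,Q)\hookrightarrow(P',Q')$ to descend Claim~\ref{conj: theta1} to $(P,Q)$; this is exactly what the paper does at that point. Once inserted, your chain is identical to the paper's.
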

\begin{proof}
    Suppose $(P,Q)$ is saturated-injective.  By Lemma \ref{lemma: invlambda}, there is an injective linear morphism $A:(P,Q)\rar (P',Q')$ for $(P',Q')$ saturated-injective and admitting an invertible $\Lambda$-structure.  By the proof of Lemma \ref{lem:theta2-theta1}, proving Claim \ref{conj: theta2} for $(P',Q')$ is sufficient for proving Claim \ref{conj: theta1} for $(P',Q')$.  Then Lemma \ref{lemma: deduceTR1}(2) implies Claim \ref{conj: theta1} for $(P,Q)$.  Now Lemma \ref{lemma: satTR} yields Claim \ref{conj: theta1} for all $(P,Q)$, and then Lemma \ref{lem:theta2-theta1} implies Claim \ref{conj: theta2} for all $(P,Q)$. 
\end{proof}

To simplify notation, we will begin writing $\Theta_{m,p}^{(P,Q)}$ as $\Theta_{m,p}$ and $\vartheta_{m,p}^{(P,Q)}$ as $\vartheta_{m,p}$, the superscript $(P,Q)$ being understood unless indicated otherwise.

\begin{rem}\label{rmk:irrational-exp}
    We noted in Footnote \ref{foot:irr} that the definition of rational broken lines could be extended to allow exponents in $M_{\R}$, not just in $M_{\Q}$.  In the present setting, the idea is that when a rational broken line $\Gamma$ crosses a wall $(W,f,n)$, $f\in 1+x^{Pv}y^v \Z\llb x^{Pv} y^v\rrb$ with $\deg f = K\in \Z_{\geq 0}\cup \{\infty\}$, the attached exponent $m\in M_{\R}$ can change to $m+\sigma k(m\cdot n)Pv$ for $\sigma = \sign(\Gamma(t-\epsilon)\cdot n)$ and $k$ any \emph{rational} number with $0\leq k\leq K$.  In what follows, we may allow our rational broken lines to take irrational exponents unless otherwise specified. 
\end{rem}

\subsection{$\Lambda$-tautness}\label{sub:Ltaut}

Let $(P,Q)$ be a seed datum with $\Lambda$-structure $\Lambda$.  Let $\Gamma$ be a corresponding rational broken line in $M_{\R}$.  Recall our notion of $v$-taut broken lines from \S \ref{sub:taut}, modified as in \S \ref{sub:lin-indep}.  In our present setting, we say that $\Gamma$ is $\Lambda$-\textbf{taut} if it is $n$-taut for $n=\Lambda m_{\Gamma}$ (recall $m_{\Gamma}$ is the $x$-exponent attached to the final straight segment).  Similarly, we say that $\Gamma$ is $\Lambda^{\top}$-taut if it is $n$-taut for $n=\Lambda^{\top}m_{\Gamma}$.  We shall now derive simple characterizations for these notions of tautness.

First, as in \S \ref{sub:taut} (modified as in \S \ref{sub:lin-indep}), let $t$ be a time when $\Gamma$ crosses some walls $\{(W_i,f_i,n_i)\}_i$ of $\f{D}^{(P,Q)}$, say with $f_i\in 1+x^{Pv}y^{v}\Z\llb x^{Pv}y^v\rrb$.  By Lemma \ref{lem:D-skew}, each $n_i\in N$ is a positive rational multiple of $Q^{\bullet} v$, so $W_i \subset (Q^{\bullet}v)^{\perp}$ for each $i$.  Using the equivalence $E_{kn,f}=E_{n,f^k}$, we can further assume that each $n_i$ is the primitive element $\nu\in N$ with direction $Q^{\bullet}v$ (primitive meaning $\nu$ is not a positive integer multiple of another element of $N$). Let $c\in \Q_{>0}$ be the number such that $\nu=cQ^{\bullet}v$.

Let $f_t=\prod_i f_i$ and let $K_t$ be the supremum of all $k\in \Z_{\geq 0}$ such that $kPv$ appears as an exponent in $f_t$.  Then $T_t:M_{\R}\rar M_{\R}$ (as in \S \ref{sub:lin-indep}) has the form 
\begin{align*}
    T_t:m&\mapsto m+\sigma ck (m\cdot Q^{\bullet}v) Pv 
\end{align*} for some $0\leq k\leq K_t$ and $\sigma \coloneqq \sign(\Gamma(t-\epsilon)\cdot Q^{\bullet}v)$.  The adjoint map is
\begin{align*}
    T_t^{\vee}:n\mapsto n+\sigma ck(Pv\cdot n)Q^{\bullet}v.
\end{align*}
In particular, for generic $s\in (-\infty,0]$, let $m_s=-\Gamma'(s)$ and $n_s=\Lambda m_s$.   Then \begin{align*}
    T_t^{\vee}(n_{t+\epsilon})=T_t^{\vee}(\Lambda m_{t+\epsilon})&=\Lambda m_{t+\epsilon}+\sigma ck(Pv\cdot \Lambda m_{t+\epsilon})Q^{\bullet}v \\
    &=\Lambda m_{t+\epsilon}-\sigma 
 ck( m_{t+\epsilon} \cdot Q^{\bullet} v)\Lambda Pv \\
    &= \Lambda \left(m_{t+\epsilon} -\sigma ck(m_{t+\epsilon} \cdot Q^{\bullet}v) Pv \right) \\
    &= \Lambda \left( T_t^{-1}(m_{t+\epsilon}) \right)\\
    &=\Lambda m_{t-\epsilon} \\
    &=n_{t-\epsilon}.
\end{align*}
Thus, $n_s=\Lambda m_s$ is the same as the vector (also denoted $n_s$) used for defining $\Lambda m_{\Gamma}$-tautness.  That is, the tautness condition \eqref{eq:taut-ineq} at time $t$ becomes
\begin{align*}
    \sigma ck(m_{t-\epsilon} \cdot Q^{\bullet}v)Pv \cdot \Lambda m_{t+\epsilon} \leq \sigma  ck'(m_{t-\epsilon}\cdot Q^{\bullet}v) Pv\cdot \Lambda m_{t+\epsilon}
\end{align*}
for all $0\leq k'\leq K_t$.  Since $m_{t-\epsilon}\cdot Q^{\bullet}v = m_{t+\epsilon} \cdot Q^{\bullet}v$ (because $Pv\cdot Q^{\bullet}v=0$ by the skew-symmetry of $B^{\bullet}$), and since $Pv\cdot \Lambda m_{t+\epsilon} = -m_{t+\epsilon}\cdot Q^{\bullet}v$, the inequality can be rewritten as
\begin{align*}
    - \sigma ck (m_{t+\epsilon}\cdot Q^{\bullet}v)^2 \leq -\sigma ck' (m_{t+\epsilon}\cdot Q^{\bullet}v)^2,
\end{align*}
or equivalently,
\begin{align*}
\sigma(k-k')\geq 0.
\end{align*}
Recall that the positive side of the above walls is where $Q^{\bullet}v$ is positive, so $\sigma$ is positive if we cross from the positive side of the walls to the negative side, and $\sigma$ is negative otherwise. Thus, $\Lambda$-tautness means the following:
\begin{lem}\label{lem:Lambda-taut}
    A broken line is $\Lambda$-taut if it takes:
    \begin{itemize}
        \item the largest possible bend\footnote{In particular, if a broken line crosses from the positive side to negative side of a wall with an infinite-degree scattering function (or infinitely many non-trivial walls with finite degree), then it cannot be $\Lambda$-taut.} ($k=K_t$) whenever crossing from the positive side of a wall to the negative side, and 
        \item the smallest possible bend ($k=0$) whenever crossing from the negative side of the wall to the positive side.
    \end{itemize}
\end{lem}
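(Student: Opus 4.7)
The plan is to observe that essentially all the work has already been done in the paragraphs leading up to the statement: the analysis there reduces the $\Lambda$-tautness condition at a wall-crossing time $t$ to the simple inequality $\sigma(k-k') \geq 0$ for every admissible $k' \in [0, K_t] \cap \Q$, where $\sigma = \sign(\Gamma(t-\epsilon)\cdot Q^{\bullet}v)$ and $k$ is the bending parameter actually taken by $\Gamma$. So the lemma is just a reading-off of this inequality.

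More concretely, I would proceed as follows. First, I would recall (or rederive in one line) that the positive side of the wall is by definition the half-space where $Q^{\bullet}v > 0$, so $\sigma = +1$ precisely when $\Gamma$ crosses from positive to negative, and $\sigma = -1$ precisely when it crosses from negative to positive. Second, I would fix $t$ and interpret the condition $\sigma(k-k') \geq 0$ for all $0 \leq k' \leq K_t$ case by case:
\begin{itemize}
    \item If $\sigma = +1$, then $k \geq k'$ for every such $k'$, forcing $k = K_t$ (the maximal admissible bend); this is the ``largest possible bend'' case. If $K_t = \infty$ then no finite $k$ can satisfy this, so no $\Lambda$-taut broken line can cross such a wall in this direction, justifying the footnote.
    \item If $\sigma = -1$, then $k \leq k'$ for every admissible $k'$, forcing $k = 0$; this is the ``smallest possible bend'' case, which corresponds to $\Gamma$ not bending at all at $t$.
\end{itemize}
Since this condition must hold at every wall-crossing time $t$, the characterization of $\Lambda$-taut broken lines follows.

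There is no genuine obstacle here, since the lemma is really a restatement of the inequality derived just above. The one point worth being slightly careful about is ensuring that the convention on which $k$'s are ``admissible'' is consistent with the definition of rational broken lines (Remark \ref{rmk:irrational-exp}): namely that $k \in [0, K_t] \cap \Q$, with $K_t$ possibly $+\infty$. This is what justifies the footnote's remark that $\Lambda$-tautness fails at any crossing from positive to negative through an infinite-degree scattering function, so I would include one sentence addressing that edge case explicitly.
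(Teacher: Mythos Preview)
Your proposal is correct and matches the paper's approach exactly: the paper derives the inequality $\sigma(k-k')\geq 0$ in the discussion immediately preceding the lemma, then interprets $\sigma$ in terms of which side of the wall $\Gamma$ is crossing from, and the lemma is simply a restatement of that case analysis. There is nothing to add.
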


Similarly, one shows that $\Lambda^{\top}$-tautness is characterized by the following:
\begin{lem}\label{lem:LambdaT-taut}
    A broken line is $\Lambda^{\top}$-taut if it takes:
    \begin{itemize}
        \item the smallest possible bend ($k=0$) whenever crossing from the positive side of a wall to the negative side, and 
        \item the largest possible bend ($k=K_t$) whenever crossing from the negative side of the wall to the positive side.
    \end{itemize}
\end{lem}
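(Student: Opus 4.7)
The plan is to follow the argument given just before Lemma \ref{lem:Lambda-taut} essentially verbatim, with one sign flip arising from the distinction between the defining relations $\Lambda P = Q^{\bullet}$ and $\Lambda^{\top} P = -Q^{\bullet}$. So we set up the same local model at a time $t$ where $\Gamma$ crosses walls $(W_i,f_i,n_i)$ with common primitive normal $\nu = c Q^{\bullet} v$ and scattering-function support in $1 + x^{Pv}y^v \Z\llb x^{Pv}y^v\rrb$, and we consider $m_s \coloneqq -\Gamma'(s)$ and now transport the vector $n_s \coloneqq \Lambda^{\top} m_s$ instead of $\Lambda m_s$.

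The first step is to verify that $T_t^{\vee}(\Lambda^{\top} m_{t+\epsilon}) = \Lambda^{\top} m_{t-\epsilon}$, so that the vector being transported backwards along $\Gamma$ for the purposes of $n$-tautness (with $n = \Lambda^{\top} m_{\Gamma}$) agrees at every time with $\Lambda^{\top} m_s$. This is a direct computation: using $Pv \cdot \Lambda^{\top} m_{t+\epsilon} = m_{t+\epsilon}\cdot \Lambda Pv = m_{t+\epsilon}\cdot Q^{\bullet}v$ (note the opposite sign compared to the $\Lambda$ case, where $\Lambda^{\top}Pv = -Q^{\bullet}v$ was invoked), one finds
\[
T_t^{\vee}(\Lambda^{\top} m_{t+\epsilon}) = \Lambda^{\top} m_{t+\epsilon} + \sigma c k (m_{t+\epsilon}\cdot Q^{\bullet}v) Q^{\bullet} v,
\]
while $\Lambda^{\top} m_{t-\epsilon} = \Lambda^{\top}(m_{t+\epsilon} - \sigma ck(m_{t+\epsilon}\cdot Q^{\bullet}v) Pv) = \Lambda^{\top} m_{t+\epsilon} + \sigma c k (m_{t+\epsilon}\cdot Q^{\bullet}v) Q^{\bullet}v$, and these agree.

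The second step is to translate the tautness inequality \eqref{eq:taut-ineq} into a constraint on the bend parameter $k$. Writing out $u_{t+\epsilon}\cdot n_{t+\epsilon} \leq u'_t \cdot n_{t+\epsilon}$ for arbitrary admissible bend parameter $k'\in [0,K_t]\cap \Q$, applying $Pv \cdot \Lambda^{\top} m_{t+\epsilon} = m_{t+\epsilon}\cdot Q^{\bullet}v$, and using skew-symmetry of $B^{\bullet}$ (so that $m_{t-\epsilon}\cdot Q^{\bullet}v = m_{t+\epsilon}\cdot Q^{\bullet}v$), the inequality collapses to
\[
\sigma c k (m_{t+\epsilon}\cdot Q^{\bullet}v)^2 \;\leq\; \sigma c k'(m_{t+\epsilon}\cdot Q^{\bullet}v)^2,
\]
equivalently $\sigma(k - k')\leq 0$. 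Thus $\sigma > 0$ (crossing from the positive to the negative side of the wall) forces $k$ to be minimal, i.e.\ $k = 0$, while $\sigma < 0$ forces $k$ to be maximal, i.e.\ $k = K_t$. This is exactly the characterization claimed.

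There is no real obstacle here; the only subtlety is making sure the sign in the computation $Pv \cdot \Lambda^{\top} m = + m\cdot Q^{\bullet}v$ (versus the $-$ sign that appeared in the $\Lambda$-case) is tracked correctly, since it is precisely this sign flip that reverses the conclusion from ``largest bend on positive-to-negative crossings'' to ``smallest bend on positive-to-negative crossings.'' Once that is in hand, the rest of the argument is identical to the proof of Lemma \ref{lem:Lambda-taut} and Lemma \ref{lem:LambdaT-taut} follows.
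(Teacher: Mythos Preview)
Your proof is correct and follows exactly the approach the paper indicates: the paper simply says ``Similarly, one shows that $\Lambda^{\top}$-tautness is characterized by the following,'' and your write-up is precisely that similar computation, tracking the sign change coming from $\Lambda^{\top}Pv = -Q^{\bullet}v$ versus $\Lambda Pv = Q^{\bullet}v$.
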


Note that a straight broken line ending in the positive chamber will only cross from the negative sides of walls to the positive sides, and vice-versa for straight broken lines ending in the negative chamber.  Thus:

\begin{cor}\label{cor:straight-bl-taut}
    A straight broken line ending in the positive chamber is $\Lambda$-taut.  A straight broken line ending in the negative chamber is $\Lambda^{\top}$-taut.
\end{cor}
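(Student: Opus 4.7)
The plan is to apply the characterizations in Lemmas \ref{lem:Lambda-taut} and \ref{lem:LambdaT-taut} directly. A straight broken line takes no bend, meaning $k=0$ at every wall crossing. By Lemma \ref{lem:Lambda-taut}, $k=0$ is the $\Lambda$-taut choice precisely at crossings from the negative side to the positive side of a wall; by Lemma \ref{lem:LambdaT-taut}, $k=0$ is the $\Lambda^{\top}$-taut choice precisely at crossings from the positive side to the negative side. So the entire task reduces to verifying the direction of each wall crossing: if $\Gamma$ is straight and ends in $C^{+}$, each crossing should go from negative to positive, and if $\Gamma$ ends in $C^{-}$, each crossing should go from positive to negative.

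For the $C^{+}$ case, I would parametrize the straight broken line as $\Gamma(s) = p - sm$ for $s \in (-\infty, 0]$, where $p \in C^{+}$ is the endpoint and $m \in M_{\R}$ is the (constant) attached exponent. Consider any wall $(W, f, \nu)$ crossed by $\Gamma$. By Lemma \ref{lem:D-skew}, $\nu$ is a positive rational multiple of $Q^{\bullet} v$ for some $v \in \N^r$, and $W \subset (Q^{\bullet} v)^{\perp}$. Since $C^{+}$ lies on the positive side of every wall, $p \cdot Q^{\bullet} v > 0$. If the crossing occurs at time $t < 0$, then $\Gamma(t) \cdot Q^{\bullet} v = (p - tm) \cdot Q^{\bullet} v = 0$, and combining with $p \cdot Q^{\bullet} v > 0$ and $t < 0$ forces $m \cdot Q^{\bullet} v < 0$. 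A direct computation then gives $\Gamma(t - \epsilon) \cdot Q^{\bullet} v = \epsilon (m \cdot Q^{\bullet} v) < 0$ for small $\epsilon > 0$, so $\Gamma(t-\epsilon)$ lies on the negative side; symmetrically $\Gamma(t+\epsilon)$ lies on the positive side. By Lemma \ref{lem:Lambda-taut}, the $k=0$ bend is precisely the $\Lambda$-taut choice at this crossing, and since every crossing is of this form, $\Gamma$ is $\Lambda$-taut.

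The $C^{-}$ case is completely symmetric, reversing all signs: $p \in C^{-}$ yields $p \cdot Q^{\bullet} v < 0$ at any wall $\Gamma$ crosses, and the same calculation shows each crossing runs from the positive side to the negative side, so by Lemma \ref{lem:LambdaT-taut}, the $k=0$ bend is the $\Lambda^{\top}$-taut choice. Note that we may freely assume $(P,Q)$ is saturated-injective (Lemma \ref{lem:sat}), so that $C^{+}$ and $C^{-}$ are genuine top-dimensional chambers and the positivity conditions on $p$ hold in the untranslated setup. The argument is ultimately a sign computation, and there is no substantive obstacle—just careful bookkeeping with the conventions for $\sigma$ and the positive/negative sides of a wall.
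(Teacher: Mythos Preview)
Your proof is correct and follows exactly the same approach as the paper: the paper's argument is the single sentence preceding the corollary, noting that a straight broken line ending in $C^{+}$ only crosses walls from the negative side to the positive side (and vice versa for $C^{-}$), then invokes Lemmas \ref{lem:Lambda-taut} and \ref{lem:LambdaT-taut}. You have simply supplied the explicit sign computation that the paper leaves to the reader.
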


\begin{lem}\label{lem:Lambda-T-taut}
    Let $\Gamma$ be a rational or ordinary broken line with initial exponent $m$ and endpoint $p$.  Then $\Gamma$ is $(\Lambda p)$-taut if and only if it's $\Lambda^{\top}$-taut.  Similarly, $\Gamma$ is $(\Lambda^{\top} p)$-taut if and only if it's $\Lambda$-taut.
\end{lem}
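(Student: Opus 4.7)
My plan is to establish a closed-form expression for the transported vectors $v_s$ in both cases, then observe that the tautness conditions reduce to matching sign conditions. The key identity I aim to prove, by backward induction on the segments of $\Gamma$, is
\[
v^{\Lambda p}_s \;=\; \Lambda\bigl(\Gamma(s) + s\, m_s\bigr) \qquad \text{and} \qquad v^{\Lambda^{\top} p}_s \;=\; \Lambda^{\top}\bigl(\Gamma(s) + s\, m_s\bigr),
\]
where $m_s$ is the $M$-exponent attached to $\Gamma$ at generic time $s$. The base case is the final segment $(t_k,0]$, on which $m_s = m_\Gamma$ and $\Gamma(s) = p - s\,m_\Gamma$, so $\Gamma(s) + s\,m_s = p$ and the formulas give $\Lambda p$ and $\Lambda^{\top} p$ as required.

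For the inductive step, consider a bend at time $t_i$ at a wall with normal $Q^{\bullet} w^{(i)}$ and bend coefficient $\alpha_i$ satisfying $\alpha_i\,Pw^{(i)} = m^{(i)} - m^{(i-1)}$. Writing $m = \Gamma(t_i) + t_i m^{(i)}$ and using that $\Gamma(t_i)\cdot Q^{\bullet} w^{(i)} = 0$ (since $\Gamma(t_i)$ lies on the wall), the transport reduces to $T_{t_i}^{\vee}(\Lambda m) = \Lambda m - t_i \alpha_i Q^{\bullet} w^{(i)}$. Applying $\Lambda Pw^{(i)} = Q^{\bullet} w^{(i)}$ turns the correction into $-t_i \Lambda(m^{(i)} - m^{(i-1)})$, so $T_{t_i}^{\vee}(\Lambda m) = \Lambda(\Gamma(t_i) + t_i m^{(i-1)})$, which matches the formula on the preceding segment $(t_{i-1}, t_i)$. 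The $\Lambda^{\top}$ case is identical, using $\Lambda^{\top} Pw^{(i)} = -Q^{\bullet} w^{(i)}$.

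With the identity in hand, at each bend $t_i$ I compute
\[
P w^{(i)} \cdot v^{\Lambda p}_{t_i + \epsilon} = -(\Gamma(t_i) + t_i m^{(i)})\cdot Q^{\bullet} w^{(i)} = -t_i\,(m^{(i)} \cdot Q^{\bullet} w^{(i)}),
\]
while $P w^{(i)} \cdot v^{\Lambda^{\top}}_{t_i + \epsilon} = Pw^{(i)}\cdot \Lambda^\top m^{(i)} = m^{(i)} \cdot Q^{\bullet} w^{(i)}$; since $-t_i > 0$, these have the same sign. Analogously, $Pw^{(i)} \cdot v^{\Lambda^{\top} p}_{t_i + \epsilon} = t_i\,(m^{(i)} \cdot Q^{\bullet} w^{(i)})$ and $Pw^{(i)} \cdot v^{\Lambda}_{t_i + \epsilon} = -m^{(i)} \cdot Q^{\bullet} w^{(i)}$ share the common sign $-\sigma_i$ (both factors being negative). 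By the analysis preceding Lemma~\ref{lem:Lambda-taut}, the tautness condition at each bend is determined solely by the sign of $Pw^{(i)}\cdot v_{t_i + \epsilon}$, so these sign coincidences immediately yield both equivalences. The main technical obstacle is the inductive step of the key identity, where one must combine both halves of the $\Lambda$-structure equations together with the wall-incidence $\Gamma(t_i) \cdot Q^{\bullet} w^{(i)} = 0$; once this is verified, the sign comparison is clean, with the crucial positive factor $-t_i$ arising simply because every bend occurs at a negative time.
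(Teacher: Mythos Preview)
Your proof is correct and follows essentially the same approach as the paper: both establish the key identity $v_s = \Lambda(\Gamma(s) + s\,m_s)$ (which the paper writes as $n_t = \Lambda[\Gamma(t) - t\Gamma'(t)]$) by backward induction along the segments of $\Gamma$, then compare the tautness conditions. The only cosmetic difference is that the paper finishes by interpreting $p_{t+\epsilon} = \Gamma(t+\epsilon) - (t+\epsilon)\Gamma'(t+\epsilon)$ geometrically (it lies on the same side of the wall as $\Gamma(t+\epsilon)$), whereas you extract the sign of $Pw^{(i)}\cdot v_{t_i+\epsilon}$ directly; both routes reduce to the observation that $-t_i > 0$.
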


    The idea behind the lemma is that one can modify the linear structure\footnote{See \cite[\S 2.1]{Man1} for background on (integral) linear structures.} on a conical neighborhood of $\Gamma\subset M_{\R}\setminus \Joints(\f{D})$, without changing the linear structure of the walls, so that $\Gamma$ looks like a  ray with no bends.\footnote{If $\Gamma$ self-intersects, one can work in a covering space of $M_{\R}\setminus \Joints(\f{D})$ with an induced scattering structure to avoid the self-intersections.}  With respect to this linear structure, the maps $T_{t_i}$ and $T_{t_i}^{\vee}$ are trivial.  So since $p$ and $m_{\Gamma}$ lie on opposite sides of the walls crossed by $\Gamma$ ($p$ being the base of the ray and $m_{\Gamma}$ being the direction), we find, for all $t$, that $(\Lambda p)_t=\Lambda p$ and $(\Lambda^{\top} m_{\Gamma})_t=\Lambda^{\top} m_{\Gamma}$ take the same signs on the exponents of every wall crossed by $\Gamma$.  Hence, $(\Lambda p)$-tautness is equivalent to $\Lambda^{\top}$-tautness.  The argument below is a more explicit version of this idea without using the language of linear structures. 

\begin{proof}
    Suppose $\Gamma$ is $n$-taut for $n=\Lambda p$.  We claim that 
    \begin{align}\label{eq:vt}
        n_t=\Lambda[\Gamma(t)-t\Gamma'(t)]
    \end{align} for all $t\in (-\infty,0]$ where $\Gamma'(t)$ is defined.  Let $t_1<t_2<\ldots<t_k$ be the times where $\Gamma$ bends.  For $t_k<t<0$, we have $n_t=\Lambda p$ and $p=\Gamma(t)-t\Gamma'(t)$, so the claim holds.  We now work by induction on decreasing values of $t$.  
    By the inductive assumption, $n_{t_i+\epsilon}=\Lambda p_{t_i}$ where $p_{t_i}\coloneqq\Gamma(t_i)-t_i\Gamma'(t_i+\epsilon)$.  The bend at $t_i$ has the form $T_{t_i}:m\mapsto m+c\sigma(m\cdot Q^{\bullet} v) Pv$ for some nonzero $c\in \Q$, $Q^{\bullet} v$ perpendicular to the wall (so $\Gamma(t_i)\cdot Q^{\bullet} v=0$), and $\sigma = \sign(m\cdot Q^{\bullet} v)$.  Then 
    \begin{align*}
        T_{t_i}^{\vee}:n\mapsto n+c\sigma(Pv\cdot n)Q^{\bullet}v.
    \end{align*}
    We now compute
    \begin{align*}
        n_{t_i-\epsilon} = T_{t_i}^{\vee}[\Lambda p_{t_i}] &= \Lambda(p_{t_i}) + c\sigma \left[Pv\cdot \left(\Lambda(\Gamma(t_i)-t_i\Gamma'(t_i+\epsilon))\right)\right]Q^{\bullet}v\\
        &=\Lambda(p_{t_i})+c\sigma t_i\left[\Gamma'(t_i+\epsilon)\cdot Q^{\bullet}v \right]\Lambda P v \\
        &= \Lambda\left[\Gamma(t_i)-t_i\Gamma'(t_i+\epsilon)+c\sigma t_i\left(\Gamma'(t_i+\epsilon)\cdot Q^{\bullet}v \right)Pv\right] \\
        &=\Lambda\left[\Gamma(t_i)-t_i T_{t_i}^{-1}(\Gamma'(t_i+\epsilon))\right] \\
        &=\Lambda\left[\Gamma(t_i)-t_i\Gamma'(t_i-\epsilon)
        \right].
    \end{align*}
    Since $\Gamma(t)-t\Gamma'(t)$ is constant on each straight segment, the claim \eqref{eq:vt} follows.

    Now, $\Gamma$ being $n$-taut means that as we increase $t$, $\Gamma$ always takes the bend which has minimal dual pairing with $n_{t+\epsilon}$.  Let $(W,f,n)$ be a wall containing $\Gamma(t)$.   By \eqref{eq:vt}, $n_{t+\epsilon}=\Lambda p_{t+\epsilon}$ for $p_{t+\epsilon}=\Gamma(t+\epsilon)-(t+\epsilon)\Gamma'(t+\epsilon)$.  Note that $p_{t+\epsilon}$ lies on the same side of $W$ as $\Gamma(t+\epsilon)$.  The bend $c\sigma (m\cdot Q^{\bullet} v)Pv$ is a non-negative multiple of $Pv$, and $Pv\cdot \Lambda p_{t+\epsilon}=- p_{t+\epsilon}\cdot Q^{\bullet}v$, so the $n$-tautness condition means taking the maximal bend if crossing to the side of $W$ where $Q^{\bullet} v$ is positive, and taking the minimal bend (i.e., no bend) otherwise.  By Lemma \ref{lem:LambdaT-taut}, this is precisely the condition  for $\Gamma$ to be $\Lambda^{\top}$-taut.  The first claim follows.

    The claim for $\Lambda^{\top} p$-tautness being equivalent to $\Lambda$-tautness follows by a completely analogous argument.
    \end{proof}

\subsection{Constancy along taut broken lines}\label{sub:LConvex}

Suppose we have a family $\varphi$ of piecewise-linear functions $\varphi_p:N_{\R}\rar \R^{\min}$ for each generic $p\in M_{\R}$.  We are particularly interested in the cases $\varphi_p=\Theta^{\trop}_{m,p}$.  Fix a rational broken line $\Gamma$, and let $m_t$ be the attached exponent at generic time $t$.  We say that $\varphi$ is $\Lambda$-\textbf{constant} along $\Gamma$ if $\varphi_{\Gamma(t)}(\Lambda m_t)$ is constant with respect to $t$.  Similarly,  we say $\varphi$ is $\Lambda^{\top}$-\textbf{constant} along $\Gamma$ if $\varphi_{\Gamma(t)}(\Lambda^{\top} m_t)$ is constant with respect to $t$.

\begin{lem}\label{lem:L-const}
    For all $m\in M_{\Q}$, $\Theta_m^{\trop}$ is $\Lambda$-constant along $\Lambda$-taut rational broken lines and $\Lambda^{\top}$-constant along $\Lambda^{\top}$-taut rational broken lines.
\end{lem}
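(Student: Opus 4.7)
The function $\varphi(t) := \Theta_{m,\Gamma(t)}^{\trop}(\Lambda m_t)$ is automatically constant on each straight segment of $\Gamma$: within a fixed chamber, $\Theta_{m,p}$ depends only on the chamber (Corollary \ref{cor:Q-CPS}), and $m_t$ is constant on a straight segment by the definition of rational broken lines. So it remains to show continuity of $\varphi$ across each bending time $t_i$.

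At $t_i$, $\Gamma$ crosses walls whose common primitive normal is $\nu = cQ^{\bullet}v$ (some $c\in \Q_{>0}$, $v\in \N^r$) with combined scattering function $f\in 1 + x^{Pv}y^v\Z\llb x^{Pv}y^v\rrb$, by Lemma \ref{lem:D-skew}. Set $\sigma := \sign(\Gamma(t_i-\epsilon)\cdot\nu)\in\{\pm 1\}$ and $\mu := m_{t_i-\epsilon}\cdot Q^{\bullet}v$; the direction of crossing forces $\sigma\mu>0$. Lemma \ref{lem:Lambda-taut} then imposes on the $\Lambda$-taut $\Gamma$ that its bend parameter is $k^* = K_t$ when $\sigma=+1$ and $k^* = 0$ when $\sigma=-1$, so
\[
m_{t_i+\epsilon} = m_{t_i-\epsilon} + c\sigma k^*\mu\,Pv,
\qquad
\Lambda m_{t_i+\epsilon} = \Lambda m_{t_i-\epsilon} + c\sigma k^*\mu\,Q^{\bullet}v
\]
using $\Lambda P = Q^{\bullet}$.

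Now apply Lemma \ref{lem:CPS} (to $\vartheta_{km,\cdot}$) and Lemma \ref{lem:Theta}: every element of $\Theta_{m,\Gamma(t_i+\epsilon)}$ arises, up to cancellations in the formal expansion of $E_{\nu,f}^\sigma$, as $u+rPv$ for some $u\in\Theta_{m,\Gamma(t_i-\epsilon)}$ and $r\in\Q_{\geq 0}$ in a range determined by the sign of $\sigma u\cdot\nu$. Using $\Lambda^\top P = -Q^{\bullet}$ (equivalently $Pv\cdot\Lambda m = -m\cdot Q^{\bullet}v$) together with $Pv\cdot Q^{\bullet}v = 0$ (skew-symmetry of $B^\bullet$), one computes
\[
(u+rPv)\cdot\Lambda m_{t_i+\epsilon} \;=\; u\cdot\Lambda m_{t_i-\epsilon} + \mu\bigl(c\sigma k^*(u\cdot Q^{\bullet}v)-r\bigr).
\]
When $\sigma=-1$ (so $k^*=0$, $\mu<0$) this equals $u\cdot\Lambda m_{t_i-\epsilon} + r|\mu|$, minimized at $r=0$; since the $r=0$ option corresponds to a genuine rational broken line passing straight through the wall whenever $u\cdot\nu<0$, the infima on the two sides match. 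When $\sigma=+1$ (so $k^*=K_t$, $\mu>0$) and $u$ satisfies $u\cdot Q^{\bullet}v\geq 0$, the formally allowed range is $r\in[0,cK_t u\cdot Q^{\bullet}v]$, and $cK_t(u\cdot Q^{\bullet}v)-r$ attains its minimum $0$ at the upper endpoint, again yielding contribution $u\cdot\Lambda m_{t_i-\epsilon}$. Observe that the tautness choice $k^* = K_t$ is precisely what pins this minimum to the endpoint of $r$'s range. Combined with a reverse inequality obtained by swapping the roles of $t_i\mp\epsilon$, this would give $\varphi(t_i-\epsilon)=\varphi(t_i+\epsilon)$.

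The main obstacle lies in the remaining subcase $\sigma=+1$ with $u\cdot Q^{\bullet}v<0$ (and symmetrically for $\sigma=-1$ with $u\cdot Q^{\bullet}v>0$): formally $r$ ranges over $[0,\infty)$ and $-r\mu$ would drive the naive value to $-\infty$. I expect this is controlled by positivity of $\f{D}$ (Theorem \ref{thm:pos-scattering}): the actual support of $\vartheta_{km,\Gamma(t_i+\epsilon)}$ (hence $\Theta_{m,\Gamma(t_i+\epsilon)}$) is strictly smaller than what the formal inverse-series expansion of $E_{\nu,f}^{\sigma}$ produces, because the sign-alternating contributions from different $u'$ must cancel against each other to yield the nonnegative coefficients demanded by positivity. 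Making this cancellation precise---probably by exhibiting a direct broken-line bijection across the wall in the spirit of the perturbation argument used in Lemma \ref{lem:Lambda-T-taut}---is where the real work lies. The $\Lambda^{\top}$-constancy claim then follows by an entirely parallel argument invoking Lemma \ref{lem:LambdaT-taut} in place of Lemma \ref{lem:Lambda-taut}: the roles of the $\sigma=+1$ and $\sigma=-1$ cases swap, but the computation and the structure of the obstacle are otherwise identical.
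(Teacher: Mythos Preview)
Your plan is on the right track and your computation
\[
(u+rPv)\cdot\Lambda m_{t_i+\epsilon} = u\cdot\Lambda m_{t_i-\epsilon} + \mu\bigl(c\sigma k^*(u\cdot Q^{\bullet}v) - r\bigr)
\]
is correct. You also correctly locate the obstacle: when $\sigma$ and $u\cdot Q^{\bullet}v$ have opposite signs, the formal action of $E_{\nu,f}^{\sigma}$ on $x^u$ involves a negative power of $f$, producing an infinite series in $r$, and the naive pairing runs off to $-\infty$. But you leave this case unresolved, and the speculation about cancellations is not the route the paper takes (nor, I think, one that would work cleanly).

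The paper resolves this not by cancellation but by the geometric observation you gesture at in your last paragraph. Work with individual broken lines $\beta$ contributing to $\vartheta_{km,\Gamma(t_i-\epsilon)}$ rather than with the formal action of $E_{\nu,f}^{\sigma}$ on the whole theta function. Given such a $\beta$ with final exponent $u$, there is a trichotomy: (1) $\beta$ can be extended to cross $W$, (2) $\beta$ has just crossed $W$, or (3) the last segment of $\beta$ is parallel to $W$. Your obstacle case is exactly case (2): if $\sigma(u\cdot Q^{\bullet}v)<0$, then $\beta'(s)=-u$ points toward the side of $W$ containing $\Gamma(t_i-\epsilon)$, so $\beta$ arrived from the other side and crossed $W$ just before reaching its endpoint. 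The correct move is not to push $\beta$ across $W$ via $E^{\sigma}$ but to \emph{truncate} $\beta$ at the moment before that last crossing, yielding a genuine broken line $\beta'$ ending near $\Gamma(t_i+\epsilon)$. Its final exponent $b_{t+\epsilon}$ is the exponent $\beta$ carried before the last bend, related to $u$ by $u = b_{t+\epsilon} - \sigma ck(b_{t+\epsilon}\cdot Q^{\bullet}v)Pv$ for some $k\in[0,K_t]$. Since you may freely adjust this last bend of $\beta$ (it remains a valid broken line for any such $k$), you choose $k$ optimally and verify the pairing inequality directly, just as in your non-obstacle cases. Case (3) is handled by Lemma~\ref{lem:CPS} and positivity.

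In short: the real work is the trichotomy, and in your obstacle case the correct operation on $\beta$ is truncation, not extension.
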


\begin{proof}
    We prove the $\Lambda$-constancy claim.  The $\Lambda^{\top}$ analog is proved similarly.  Thanks to Lemmas \ref{lem:val-ku} and \ref{lem:Theta}, it suffices to consider $\vartheta_m^{\trop}$ with $m\in M$.  This allows us to more easily utilize Lemma \ref{lem:CPS}.  We write $\Theta^{\Z}_{m,p}$ to be the set of exponents appearing in $\vartheta_{m,p}$.

 Suppose a $\Lambda$-taut rational broken line $\Gamma$ crosses a wall $(W,f,\nu)$ at time $t$, say with $f\in 1+x^{Pv}y^v\Z\llb x^{Pv}y^v\rrb$ and $\nu\in N$ the primitive element with direction  $Q^{\bullet}v$.  For convenience, we assume this is the only wall crossed at this time (this always holds up to equivalence of scattering diagrams).  Let $K\in \Z_{\geq 0}\cup \{\infty\}$ be the degree of $f$, and define $c\in \Q_{>0}$ by $\nu=cQ^{\bullet}v$.  Using Lemma \ref{lem:Lambda-taut}, when $\Gamma$ crosses $W$, the attached exponent $m_t=-\Gamma'(t)$ changes from $m_{t-\epsilon}$ to \begin{align*}
     m_{t+\epsilon}=\begin{cases}
     m_{t-\epsilon} \quad &\text{if $\Gamma(t-\epsilon)\cdot Q^{\bullet} v<0$}\\
     m_{t-\epsilon}+ cK(m_{t-\epsilon}\cdot Q^{\bullet}v)Pv \quad &\text{if $\Gamma(t-\epsilon)\cdot Q^{\bullet} v>0$.}
 \end{cases}
 \end{align*}

Let $\beta$ be a broken line with initial exponent $m\in M$ and endpoint $\Gamma(t-\epsilon)$ for $\epsilon >0$ small enough that $\Gamma$ does not bend between $t-\epsilon$ and $t$.  For $\epsilon$ sufficiently small, there are three possibilities:
    \begin{enumerate}
        \item $\beta$ can be extended slightly to cross $W$,
        \item $\beta$ has just crossed $W$,
        \item the last straight segment of $\beta$ is parallel to $W$, i.e., perpendicular to $Q^{\bullet}v=\Lambda P v$.
    \end{enumerate}
In each case, we shall write $b_{t-\epsilon}$ to denote the final attached exponent for $\beta$ with endpoint at $\Gamma(t-\epsilon)$ and $b_{t+\epsilon}$ for a certain modified broken line $\beta'$ with the same initial exponent $m$ and endpoint near $\Gamma(t+\epsilon)$.\footnote{We can make the new endpoint arbitrarily close to $\Gamma(t+\epsilon)$ by making $\epsilon$ sufficiently small, so Lemma \ref{lem:CPS} ensures that there is some broken line with the same initial and final exponents which genuinely does end at $\Gamma(t+\epsilon)$.}  We will ensure that 
\begin{align}\label{eq:bt-mt}
b_{t+\epsilon} \cdot \Lambda m_{t+\epsilon} \leq b_{t-\epsilon} \cdot \Lambda m_{t-\epsilon}
\end{align}
hence
\begin{align}\label{eq:t-pm-epsilon-ineq}
\vartheta_{m,\Gamma(t+\epsilon)}^{\trop}(\Lambda m_{t+\epsilon})  &=\inf_{b\in \Theta^{\Z}_{m,\Gamma(t+\epsilon)}} b\cdot \Lambda m_{t+\epsilon}  \nonumber \\
&\leq \inf_{b\in \Theta^{\Z}_{m,\Gamma(t-\epsilon)}} b \cdot \Lambda m_{t-\epsilon} = \vartheta_{m,\Gamma(t-\epsilon)}^{\trop}(\Lambda m_{t-\epsilon}).
\end{align}
A similar procedure (details left to the reader) applies in the reverse direction (crossing from $\Gamma(t+\epsilon)$ to $\Gamma(t-\epsilon)$) to yield the reverse inequality; i.e., \eqref{eq:t-pm-epsilon-ineq} is in fact an equality.  The desired $\Lambda$-constancy along $\Gamma$ then follows.

In Case (1), if we extend $\beta$ to cross and possibly bend at $W$ to obtain $\beta'$ with final exponent $b_{t+\epsilon}$, then we have
    \begin{align}\label{eq:bt}
        b_{t+\epsilon}=b_{t-\epsilon}+\sigma ck(b_{t-\epsilon}\cdot Q^{\bullet}v)Pv
    \end{align}
    for some $0\leq k\leq K$ and $\sigma = \sign(\Gamma(t-\epsilon)\cdot Q^{\bullet} v)$.  In the case $\sigma=-1$, we choose $k=0$ so that $b_{t+\epsilon}=b_{t-\epsilon}$.  Since $m_{t+\epsilon}=m_{t-\epsilon}$ in this case, \eqref{eq:bt-mt} follows trivially.  When $\sigma=+1$, the $\Lambda$-tautness of $\Gamma$ implies $K$ is finite, so we can choose $k=K$.  We then obtain 
    \begin{align}
        b_{t+\epsilon}\cdot \Lambda m_{t+\epsilon} &= \left(b_{t-\epsilon}+cK(b_{t-\epsilon}\cdot Q^{\bullet}v)Pv\right) \cdot \Lambda\left(m_{t-\epsilon}+ cK(m_{t-\epsilon}\cdot Q^{\bullet}v)Pv\right) \label{eq:bt-dot}\\
        &= b_{t-\epsilon}\cdot \Lambda m_{t-\epsilon} +  cK(m_{t-\epsilon}\cdot Q^{\bullet}v)(b_{t-\epsilon}\cdot \Lambda Pv) + cK(b_{t-\epsilon}\cdot Q^{\bullet}v)(Pv\cdot \Lambda m_{t-\epsilon}) \nonumber\\
        &=b_{t-\epsilon}\cdot \Lambda m_{t-\epsilon}\nonumber
    \end{align}
    as desired.  In the last line we used $\Lambda Pv=Q^{\bullet}v$ and $Pv\cdot \Lambda m_{t-\epsilon} = m_{t-\epsilon} \cdot \Lambda^{\top}Pv = -m_{t-\epsilon} \cdot Q^{\bullet} v$. 
    
    Now consider Case (2).  Note that here, $b_{t-\epsilon}$ describes the final exponent of $\beta$ (after it crosses $W$), and $b_{t+\epsilon}$ describes the last exponent just before it crosses $W$.  We have
    $$b_{t-\epsilon}=b_{t+\epsilon}-\sigma ck(b_{t-\epsilon}\cdot Q^{\bullet} v)Pv$$
    for some $0\leq k\leq K$.  The minus sign appears here because the appropriate sign for $\beta$ is opposite that for $\Gamma$ since the broken lines cross $W$ from opposite sides.  Let us modify the bend $\beta$ across $W$ while keeping the other bends the same, choosing $k$ in order to minimize the following: $$b_{t-\epsilon}\cdot \Lambda m_{t-\epsilon}=b_{t+\epsilon}\cdot \Lambda m_{t-\epsilon} -\sigma ck(b_{t-\epsilon}\cdot Q^{\bullet}v)(Pv\cdot \Lambda m_{t-\epsilon}).$$  Since $m_{t-\epsilon}$ and $b_{t-\epsilon}$ are on opposite sides of $W$, and since $Pv\cdot \Lambda m_{t-\epsilon} = -m_{t-\epsilon} \cdot Q^{\bullet} v$, the sign of $(b_{t-\epsilon}\cdot Q^{\bullet}v)(Pv\cdot \Lambda m_{t-\epsilon})$ is positive.  So when  $\sigma=-1$, minimizing this pairing means minimizing $k$, i.e., taking $k=0$, so $b_{t-\epsilon}= b_{t+\epsilon}$.  Since we also have $m_{t-\epsilon} = m_{t+\epsilon}$ in this case, the desired relation \eqref{eq:bt-mt} follows immediately.  On the other hand, if $\sigma = +1$, then we take $k=K$ (which must be finite by $\Lambda$-tautness of $\Gamma$), and \eqref{eq:bt-mt} is checked almost exactly as in \eqref{eq:bt-dot}.  Thus, $b_{t+\epsilon}\cdot \Lambda m_{t+\epsilon}$ is less than or equal to the original $b_{t-\epsilon}\cdot \Lambda m_{t-\epsilon}$ (before the final bend of $\beta$ was modified).

    For Case (3), first note that being perpendicular to $Q^{\bullet}v$ implies that the action of the wall-crossing automorphism on the final monomial of $\beta$ will be trivial.  Thus, Lemma \ref{lem:CPS} and positivity ensure that a broken line with the same initial and final exponents as $\beta$ still exists with endpoint on the other side of $W$.   This will give the same pairing with $\Lambda m_{t+\epsilon}$ as with $\Lambda m_{t-\epsilon}$ because $\Lambda(m_{t+\epsilon}-m_{t-\epsilon})$ is parallel to $\Lambda Pv$,  hence zero on the final exponent of $\beta$.  So broken lines as in Case (3) will not cause $\vartheta_{m,\Gamma(s)}^{\trop}(\Lambda m_s)$ to change when we pass $s=t$.

    The above 3 cases show that, given a broken line $\beta$ ending at $\Gamma(t-\epsilon)$, we can produce a new broken line $\beta'$ with the same initial exponent but ending at $\Gamma(t+\epsilon)$ and giving an equal or lesser pairing with $\Lambda m_s$.  The arguments for each case are easily reversed to go from $t+\epsilon$ to $t-\epsilon$.  The desired $\Lambda$-constancy of $\vartheta_m^{\trop}$ along $\Gamma$ at $s=t$ follows.  Since $t$ was an arbitrary point where $\Gamma$ crosses a wall, and since the constancy is trivial when not crossing a wall (thanks to Lemma \ref{lem:CPS}), we obtain the desired $\Lambda$-constancy along all of $\Gamma$.
\end{proof}

\begin{lem}\label{lem:BL-const-finite}
   Fix an $m\in M$ and a $\Lambda$-taut rational broken line $\Gamma$ such that $\vartheta_{m,\Gamma(t)}^{\trop}(-\Lambda \Gamma'(t))$ is finite for some (hence every) generic $t\in (-\infty,0]$.  Then the $\Lambda$-constancy of $\vartheta_m^{\trop}$ along $\Gamma$ holds even if we replace $\f{D}=\f{D}^{(P,Q)}$ with $\f{D}^k$ for sufficiently large but finite $k$.  Similarly for $\Lambda^{\top}$-constancy along a $\Lambda^{\top}$-taut broken line.

    On the other hand, given $\Lambda$-taut $\Gamma$ as above, suppose $\vartheta_{m,\Gamma(t)}^{\trop}(-\Lambda \Gamma'(t))=-\infty$ for some (hence every) generic $t\in (-\infty,0]$.  Then for each $L<0$, there exists some $k\in \Z_{>0}$ such that $(\vartheta_{m,\Gamma(t)}^{\f{D}^k})^{\trop}(-\Lambda \Gamma'(t))<L$ for all generic $t\in (-\infty,0]$.  Similarly for the $\Lambda^{\top}$-taut analog.
\end{lem}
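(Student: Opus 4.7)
The approach is to modify a single witness broken line $\beta_{t_0}$ into a family $\{\beta_t\}$ along $\Gamma$, with the entire construction confined to the finite scattering diagram $\f{D}^k$. Lemma \ref{lem:L-const} already gives that $L \coloneqq \vartheta_{m,\Gamma(t)}^{\trop}(-\Lambda m_t)$ is constant in $t$. In the finite case, I would fix a generic $t_0$ and apply Lemma \ref{lem:inf=min} (noting $-\Lambda m_{t_0} \in N_\Q$, since $\Gamma$ is rational with initial exponent $m \in M$) to obtain a broken line $\beta_{t_0}$ in $\f{D}$ with $u_{\beta_{t_0}} \cdot (-\Lambda m_{t_0}) = L$. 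I would then choose $k$ large enough that every wall at which either $\beta_{t_0}$ or $\Gamma$ bends lies in $\f{D}^{k}$.

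The main step is to construct, for each generic $t$, a broken line $\beta_t$ in $\f{D}^k$ with $u_{\beta_t} \cdot (-\Lambda m_t) = L$, by traversing $\Gamma$ forward and backward from $t_0$ and applying the Case 1/2/3 modifications from the proof of Lemma \ref{lem:L-const} at each wall $W$ that $\Gamma$ crosses. Two situations arise. If $W \in \f{D}^k$, the standard modification introduces at most one new bend, at the wall $W$ itself, so $\beta_t$ stays in $\f{D}^k$ and the pairing is preserved by the computation of Lemma \ref{lem:L-const}. If $W \in \f{D} \setminus \f{D}^k$, then by the choice of $k$, $\Gamma$ does not bend at $W$; $\Lambda$-tautness (Lemma \ref{lem:Lambda-taut}) then forces $\Gamma$ to cross $W$ from its negative to its positive side, so the relevant modification uses $k' = 0$ and $\beta_t$ simply extends across $W$ without a bend, remaining in $\f{D}^k$. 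Since $m_t$ is unchanged at such a crossing, the pairing is again preserved. Hence $u_{\beta_t} \cdot (-\Lambda m_t) = L$ for every generic $t$, and by positivity (Theorem \ref{thm:pos-scattering}) the exponent $u_{\beta_t}$ appears in $\vartheta_{m,\Gamma(t)}^{\f{D}^k}$ with nonzero coefficient, yielding $(\vartheta_{m,\Gamma(t)}^{\f{D}^k})^{\trop}(-\Lambda m_t) \leq L$. The reverse inequality follows because every broken line in $\f{D}^k$ is also a broken line in $\f{D}$ (with trivial bends at walls of $\f{D} \setminus \f{D}^k$), and positivity prevents cancellation.

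For the $-\infty$ case, the same traversal applies starting from any $\beta_{t_0}$ with $u_{\beta_{t_0}} \cdot (-\Lambda m_{t_0}) < L$ (which exists by definition of the infimum). The Case 1/2/3 modifications never increase the pairing, so each $\beta_t$ in $\f{D}^k$ satisfies $u_{\beta_t} \cdot (-\Lambda m_t) < L$, giving $(\vartheta_{m,\Gamma(t)}^{\f{D}^k})^{\trop}(-\Lambda m_t) < L$ for all generic $t$. The $\Lambda^\top$-taut analogs follow by the symmetric argument using Lemmas \ref{lem:LambdaT-taut} and \ref{lem:Lambda-T-taut}.

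The main technical subtlety I anticipate is that Case 3 of the modification procedure invokes Lemma \ref{lem:CPS} (consistency) to produce the replacement broken line, and $\f{D}^k$ is only consistent modulo $\s{I}^k$. One should therefore enlarge $k$ further so that the total $y$-degree of any $\beta_t$ produced remains below $k$. This should be possible because $\beta_t$ differs from $\beta_{t_0}$ by finitely many modifications---namely those at walls of $\f{D}^k$ crossed by $\Gamma$, with nontrivial $y$-degree added only at Case 1 modifications where $\sigma = +1$, which can only happen at the finitely many walls $W_1,\dots,W_N$ where $\Gamma$ itself bends---and each such modification adds at most the bounded degree of the scattering function at $W_i$ (finite by $\Lambda$-tautness of $\Gamma$). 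Hence the total $y$-degree growth is bounded by a $k$-independent constant, so a sufficiently large but finite $k$ does exist.
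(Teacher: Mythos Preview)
Your approach is correct and essentially the same as the paper's: both arguments transport a witness broken line along $\Gamma$ using the Case 1/2/3 modifications from Lemma~\ref{lem:L-const}, track the $y$-degree of its final monomial, and observe that this degree can only change nontrivially at the finitely many bends of $\Gamma$ (since at walls where $\Gamma$ does not bend, $\Lambda$-tautness forces a negative-to-positive crossing and hence a $k'=0$ modification). The paper starts at $t=0$ and goes backward only, phrasing the conclusion as ``$k_t$ is constant along straight segments of $\Gamma$,'' whereas you traverse both directions from an arbitrary $t_0$ and treat Case~3 via a separate $y$-degree bound---but these are cosmetic differences, and your handling of the Case~3 consistency issue (bounding total $y$-degree by a $k$-independent constant, then enlarging $k$) is equivalent to the paper's observation that the final monomial, hence its degree, is unchanged under the Case~3 replacement.
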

\begin{proof}
    We shall prove the $\Lambda$-taut case.  The proof of the $\Lambda^{\top}$ case is completely analogous.
    
    For the first claim, let us start at $\Gamma(0)$ with a $\Lambda m_{\Gamma}$-minimizing broken line $\s{P}_0$.  The proof of Lemma \ref{lem:L-const} shows how we can transform $\s{P}_0$ as we decrease $t$ to obtain $\Lambda$-minimizing broken lines $\s{P}_t$ for all $t\in (-\infty,0]$.  Define $k_t\in \Z_{\geq 0}$ so that the final monomial of $\s{P}_t$ is \textit{not} contained in $\s{I}_{k_t+1}$.  It suffices to show that every value of $k_t$ is bounded by some fixed $K \in \Z_{\geq 0}$, because then every $\s{P}_t$ will be a broken line defined for $\f{D}^K$.

    As we decrease $t$, when $\Gamma(t)$ crosses a wall $W$, we saw in the proof of Lemma \ref{lem:L-const} that there are 3 possibilities to consider.  If $\s{P}_{t+\epsilon}$ is parallel to $W$ (what we called Case (3)), then there is a $\Lambda m_{t-\epsilon}$-minimizing broken line $\s{P}_{t-\epsilon}$ on the other side of $W$ with the same final exponent as $\s{P}_t$.  Since the final exponent has not changed, neither does $k_t$.
    
    Now suppose $\s{P}_t$ is not parallel to $W$, so we are in a situation like what we called Case (1) or Case (2).  In these cases, if $\Gamma$ does not bend when crossing $W$, then $\s{P}_t$  also will not bend across $W$.  We thus see that $k_t$ is constant along straight segments of $\Gamma$.

    Since $\Gamma$ only bends for finitely many values of $t$, we see now that there are only finitely many distinct values for $k_t$.  We can thus take $K=\max_t k_t$ to complete the argument.

    The claim regarding $(\vartheta_{m,\Gamma(t)}^{\f{D}^k})^{\trop}(-\Lambda \Gamma'(t))<L$ is proved similarly.  This time though, since $\s{P}_t$ is not $\Lambda m_t$-minimizing, there could be Case (2) situations where $\s{P}_{t+\epsilon}$ has just crossed and bent non-trivially across a wall $W\ni \Gamma(t)$ along which $\Gamma$ does not bend, so the degree for $\s{P}_t$ could change non-trivially even along straight segments of $\Gamma$.  However, this degree-change corresponds to removing a bend as we decrease $t$, thus decreasing $k_t$, so it still suffices to use $k=\max_{t\in \{0,t_1,t_2,\ldots,t_s\}} \{k_{t-\epsilon}\}$ where $t_1,\ldots,t_s$ are the times at which $\Gamma$ bends.
\end{proof}

\subsection{Moving the basepoint}\label{sub:move}

The following utilizes the limiting theta functions as in \S \ref{sub:lim} along with the rational broken line analogs $\lim_{\epsilon \rar 0^+} \Theta_{m,p+\epsilon \mu}$ from \eqref{eq:limT}.

\begin{lem}\label{lem:Lambda-p-val}
Fix $m\in M_{\Q}$.  For any generic $p\in M_{\R}$, we have $\val_{\Lambda p}(\Theta_{m,+})=\val_{\Lambda p}(\Theta_{m,p})$.  That is,
    \begin{align}\label{eq:val-inf}
        \val_{\Lambda p}\left(\Theta_{m,+}\right)=\inf_{\Ends(\Gamma)=(m,p)} m_{\Gamma} \cdot \Lambda p.
    \end{align}
More generally, even if $p$ is not necessarily generic, we have
    \begin{align*}
        \val_{\Lambda p}\left(\Theta_{m,+}\right)=\val_{\Lambda p}\left(\lim_{\epsilon \rar 0^+} \Theta_{m,p+\epsilon \mu}\right)
    \end{align*}
    for any generic $\mu \in M_{\R}$.

    Furthermore, if the infimum in \eqref{eq:val-inf} is attained for some $\Gamma$, then this $\Gamma$ is $\Lambda^{\top}$-taut.  Similarly, even if $p$ is not generic, suppose $\val_{\Lambda p'}\left(\Theta_{m,+}\right)$ is finite for $p'$ in a neighborhood of $p$.  Then for generic $\mu\in M_{\R}$, $\val_{\Lambda p}\left(\lim_{\epsilon \rar 0^+} \Theta_{m,p+\epsilon \mu}\right)$ is attained for some $\Gamma$---that is, for sufficiently small generic $\epsilon>0$, there exists $\Gamma$ with ends $(m,p+\epsilon \mu)$ such that $m_{\Gamma}\cdot \Lambda p= \val_{\Lambda p}\left(\lim_{\epsilon \rar 0^+} \Theta_{m,p+\epsilon \mu}\right)$---and such $\Gamma$ are $\Lambda^{\top}$-taut.
\end{lem}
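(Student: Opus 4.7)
The plan is to deduce this lemma from the $\Lambda$-constancy of tropical theta functions along $\Lambda$-taut broken lines (Lemma \ref{lem:L-const}), by exhibiting a straight $\Lambda$-taut broken line that transports the valuation data from a basepoint in the positive chamber out to $p$. As a preliminary step, we would use Lemma \ref{lemma: invlambda} to embed $(P,Q)$ into a seed datum $(P',Q')$ with an invertible $\Lambda$-structure $\Lambda'$ congruent to $\Lambda$ (in the sense of Lemma/Definition \ref{lemdfn:comp}) and a genuine positive chamber $C^+$; combined with the adjunction formula of Proposition \ref{prop: adjunction}, this reduces the lemma to the case where $(P,Q)$ itself admits such a positive chamber. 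In this case, Lemma \ref{lem:D-skew} tells us that the walls, and hence all chambers of $\f{D}^{(P,Q)}$, are open cones from the origin.

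For the main equality \eqref{eq:val-inf} when $p$ is generic, we would fix a generic $p_+\in C^+$ and consider the straight rational broken line $\Gamma\colon\R_{\leq 0}\to M_\R$ defined by $\Gamma(t)=p_+-tp$, with attached exponent $m_\Gamma\equiv p$, using the real-exponent extension described in Remark \ref{rmk:irrational-exp}. The choice not to bend at any wall crossing is always permitted, since every scattering function has constant term $1$. By Corollary \ref{cor:straight-bl-taut}, $\Gamma$ is $\Lambda$-taut, so Lemma \ref{lem:L-const} gives that $\Theta_{m,\Gamma(t)}^{\trop}(\Lambda p)$ is independent of $t$. Writing $\Gamma(-s)=s(p_+/s+p)$, for $s$ sufficiently large the point $\Gamma(-s)$ lies in the same open cone as $p$, and so Corollary \ref{cor:Q-CPS} yields $\Theta_{m,\Gamma(-s)}=\Theta_{m,p}$. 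Combining yields $\val_{\Lambda p}(\Theta_{m,+})=\val_{\Lambda p}(\Theta_{m,p})$, proving \eqref{eq:val-inf}.

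For non-generic $p$, we would fix a generic $\mu\in M_\R$ and apply the generic case to $p+\epsilon\mu$ for small generic $\epsilon>0$. The left-hand side $\val_{\Lambda(p+\epsilon\mu)}(\Theta_{m,+})$ converges to $\val_{\Lambda p}(\Theta_{m,+})$ by continuity of $\vartheta_{m,+}^{\trop}$ on its finite locus (or by upper semicontinuity of tropicalizations more generally), while the right-hand side converges to $\val_{\Lambda p}(\lim_{\epsilon\to 0^+}\Theta_{m,p+\epsilon\mu})$ via the order-wise stabilization discussed in \S \ref{sub:lim}. For the $\Lambda^\top$-tautness assertions, if $\Gamma$ with $\Ends(\Gamma)=(m,p)$ attains the infimum in \eqref{eq:val-inf}, then $\Gamma$ is $(\Lambda p)$-taut by Theorem \ref{thm:min-taut}, and Lemma \ref{lem:Lambda-T-taut} (with $p$ the endpoint of $\Gamma$) converts $(\Lambda p)$-tautness into $\Lambda^\top$-tautness. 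In the non-generic case, Lemma \ref{lem:inf=min} applied to $\vartheta_{m,+}^{\trop}$ (which is finite near $\Lambda p$ by hypothesis) produces a minimizing exponent, which, after pulling back through the stabilization, gives a broken line with ends $(m,p+\epsilon\mu)$ attaining the desired valuation for all sufficiently small $\epsilon$; its $\Lambda^\top$-tautness follows from the generic case and persists in the limit.

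The main obstacle will be making the limiting arguments of the non-generic case fully rigorous---in particular commuting $\inf$ with the limit $\epsilon\to 0^+$, ensuring that the stabilization of \S \ref{sub:lim} is compatible with the $\Lambda^\top$-tautness condition, and verifying that the minimizer obtained from Lemma \ref{lem:inf=min} really lifts to a broken line with endpoint $p+\epsilon\mu$ for all sufficiently small $\epsilon$. A secondary concern is to arrange the reduction to the positive-chamber case so that the chosen $\Lambda$-structures are congruent; otherwise the adjunction would not produce equalities for the precise valuations appearing in the statement.
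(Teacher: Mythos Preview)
Your generic-$p$ argument is essentially the paper's: both use $\Lambda$-constancy (Lemma~\ref{lem:L-const}) along a straight $\Lambda$-taut broken line with attached exponent $p$, invoking Corollary~\ref{cor:straight-bl-taut} for tautness. The paper, however, does not embed into a larger seed datum; it instead translates the initial walls so that $0$ lies in a positive chamber of a new diagram $\f{D}'$, then uses that $\f{D}$ is the asymptotic diagram of $\f{D}'$. Your embedding route is workable, but note that Lemma~\ref{lemma: invlambda} only guarantees an invertible $\Lambda'$, not a positive chamber---you would additionally need $Q'$ injective (e.g.\ combine with the construction in Lemma~\ref{lemma: saturatedresolution}), and the congruence condition $\Lambda=A^\top\Lambda'A$ must be verified for adjunction to transfer the exact valuations in the statement. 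The translation trick avoids all of this.

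The genuine gap is in the non-generic case. Your plan is to apply the generic equality at $p'=p+\epsilon\mu$ and let $\epsilon\to 0$, but this requires
\[
\lim_{\epsilon\to 0^+}\val_{\Lambda(p+\epsilon\mu)}\!\bigl(\Theta_{m,\,p+\epsilon\mu}\bigr)
\;=\;
\val_{\Lambda p}\!\Bigl(\lim_{\epsilon\to 0^+}\Theta_{m,\,p+\epsilon\mu}\Bigr),
\]
and there is no general principle for commuting an infimum over an infinite (and $\epsilon$-dependent) set with this limit---particularly in the $-\infty$ case, where continuity of $\vartheta_{m,+}^{\trop}$ fails. The paper closes this gap with two ingredients you do not invoke. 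First, it observes directly that walls containing $p$ do not affect $\val_{\Lambda p}$: such a wall has scattering exponents in $\Z Pv$ with $p\cdot Q^\bullet v=0$, and $Pv\cdot\Lambda p=-Q^\bullet v\cdot p=0$, so the wall-crossing leaves $\val_{\Lambda p}$ invariant. This yields $\val_{\Lambda p}(\lim_{K\to\infty}\vartheta_{m,Kp}^{\f{D}'})=\val_{\Lambda p}(\lim_{\epsilon\to 0^+}\vartheta_{m,p+\epsilon\mu}^{\f{D}})$, reducing the problem to a single ray in $\f{D}'$. Second, it applies Lemma~\ref{lem:BL-const-finite} to show that the $\Lambda$-constancy along that ray already holds at some fixed finite order $k$, so one may pass to $K\to\infty$ without an uncontrolled interchange of limits; the $-\infty$ case is handled by the second clause of that lemma. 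Your limiting sketch, and the appeal to ``order-wise stabilization,'' does not substitute for these steps.

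The $\Lambda^\top$-tautness claims are handled exactly as in the paper (Theorem~\ref{thm:min-taut} plus Lemma~\ref{lem:Lambda-T-taut}); for non-generic $p$ the paper also picks a nearby generic $p'$ sharing a domain of linearity of $\Theta_{m,+}^{\trop}$, so the minimizer at $p'$ realizes the value at $p$.
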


\begin{proof}

By Lemmas \ref{lem:val-ku} and \ref{lem:Theta}, it suffices to restrict to  $m\in M$ and then work with ordinary theta functions.  We do this for convenience.

Consider $p\in M_{\R}$, not necessarily generic.  We generically translate the initial walls $\f{D}^{(P,Q)}_{\In}$ as suggested in \S \ref{sub:chambers} so that $0$ is contained in a chamber $C^+$ on the positive side of each wall.  Let $\f{D}'$ be the resulting consistent scattering diagram, the walls of which are affine cones.
 
 We see that $\f{D}$ is the ``asymptotic scattering diagram'' of $\f{D}'$, meaning that $\f{D}$ is equivalent to the scattering diagram obtained from $\f{D}'$ by translating each wall so that its apex lies at the origin.  It follows that for any $k>0$ and sufficiently small $\delta >0$ (small enough that the rays $\R_{\geq 0} p$ and $\R_{\geq 0} (p+\delta \mu)$ share a chamber of $\f{D}^k$), if $K\gg 0$ is sufficiently large, then $$\vartheta^{(\f{D}')^k}_{m,K(p+\delta \mu)} = \lim_{\epsilon \rar 0^+} \vartheta^{\f{D}^k}_{m,p+\epsilon \mu} \qquad (\text{mod } \s{I}^k).$$  Furthermore, $\vartheta^{(\f{D}')^k}_{m,K(p+\delta \mu)}$ and $\vartheta^{(\f{D}')^k}_{m,Kp}$ agree (mod $\s{I}^k$) up to the actions of walls parallel to $p$.
 
 This holds for arbitrary $k$, but the ``sufficiently large'' condition on $K$ might become more restrictive as we increase $k$.  We therefore define $\lim_{K\rar \infty} \vartheta_{m,Kp}^{\f{D}'}$ similarly to our definition of  $\lim_{\epsilon \rar 0^+} \vartheta^{(P,Q)}_{m,p+\epsilon \mu}$ in \S \ref{sub:lim}.  The above implies that $\lim_{K\rar \infty} \vartheta_{m,Kp}^{\f{D}'}$ and $\lim_{\epsilon \rar 0^+} \vartheta^{\f{D}}_{m,p+\epsilon \mu}$ only differ by the actions of walls parallel to $p$.   By Lemma \ref{lem:D-skew}, such walls have the form $(W,f,n)$ for $n$ a multiple of $Q^{\bullet}v$, $p\in n^{\perp}= Q^{\bullet}v^{\perp}$, and $f\in 1+x^{Pv}y^v \Z\llb x^{Pv}y^v\rrb$.  Since $Pv\cdot \Lambda p = -Q^{\bullet} v\cdot p = 0$, the actions of these wall-crossing automorphisms do not affect the value of $\val_{\Lambda p}$.  Thus, $$\val_{\Lambda p}\left(\lim_{K\rar \infty} \vartheta_{m,Kp}^{\f{D}'}\right) = \val_{\Lambda p}\left(\lim_{\epsilon \rar 0^+} \vartheta_{m,p+\epsilon \mu}^{\f{D}}\right).$$

For the first claims, it now suffices to prove that $$\val_{\Lambda p}\left(\lim_{K\rar \infty} \vartheta_{m,Kp}^{\f{D}'}\right) = \val_{\Lambda p}\left(\vartheta_{m,+}^{\f{D}}\right)$$ where the right-hand side is by definition the same as $\val_{\Lambda p}\left(\vartheta_{m,+}^{\f{D}'}\right)$.   For the scattering diagram $\f{D}'$, consider the straight rational broken line $\s{P}$ with ends $(p,0)$---i.e., $\s{P}$ has initial exponent $p$ and endpoint $0$, so the support is $\R_{\geq 0} p$.  Here we use Remark \ref{rmk:irrational-exp} since $p$ may be irrational.  By Corollary \ref{cor:straight-bl-taut}, $\s{P}$ is $\Lambda$-taut.  So Lemma \ref{lem:L-const} ($\Lambda$-constancy of tropical theta functions along $\Lambda$-taut broken lines) implies that the equality $\val_{\Lambda p}\left(\vartheta_{m,+}^{\f{D}}\right)=\val_{\Lambda p}\left(\vartheta_{m,p'}^{\f{D}'}\right)$ remains true as we slide the endpoint $p'$ along $\s{P}$ from $0$ to $Kp$ for any $K\in \R_{\geq 0}$. 

Now if $\val_{\Lambda p}\left(\vartheta_{m,+}^{\f{D}}\right)$ is finite, Lemma \ref{lem:BL-const-finite} implies that broken lines for $\vartheta_{m,p'}^{\f{D}'}$ which minimize $\val_{\Lambda p}$ are attained at all generic $p'\in \R_{\geq 0} p$ even for $(\f{D}')^k$ for some fixed finite $k>0$.  So for this fixed $k$ and all sufficiently large (but still finite) $K\gg 0$, we have $$\val_{\Lambda p}\left(\lim_{\kappa\rar \infty} \vartheta_{m,\kappa p}^{\f{D}'}\right)=\val_{\Lambda p}\left(\vartheta_{m,Kp}^{(\f{D}')^k}\right)=\val_{\Lambda p}\left(\vartheta_{m,Kp}^{\f{D}'}\right)=\val_{\Lambda p}\left(\vartheta_{m,+}^{\f{D}}\right).$$

Similarly, suppose $\val_{\Lambda p}\left(\vartheta_{m,+}^{\f{D}}\right)=-\infty$. Then by the second part of Lemma \ref{lem:BL-const-finite}, for any $L<0$, we can find finite $k>0$ such that, for all $K\gg 0$, we have
\begin{align*}
    \val_{\Lambda p}\left(\lim_{\kappa\rar \infty} \vartheta_{m,\kappa p}^{\f{D}'}\right) 
    \leq \val_{\Lambda p}\left(\vartheta_{m,Kp}^{(\f{D}')^k}\right)<L.
\end{align*}
Since $L$ is arbitrary, it follows that $ \val_{\Lambda p}\left(\lim_{\kappa\rar \infty} \vartheta_{m,\kappa p}^{\f{D}'}\right) =-\infty$.  Thus, in any case, we obtain
$\val_{\Lambda p}\left(\lim_{K\rar \infty} \vartheta_{m,Kp}^{\f{D}'}\right) =\val_{\Lambda p}\left(\vartheta_{m,+}^{\f{D}}\right)$
as desired.

For the final claims, recall from Theorem \ref{thm:min-taut} that a $\val_{\Lambda p,p}$-minimizing broken line $\Gamma$ must be $(\Lambda p)$-taut.  We know from Lemma \ref{lem:Lambda-T-taut} that this implies $\Gamma$ is $\Lambda^{\top}$-taut.  This proves the claim for generic $p$.  For the non-generic cases, we choose a generic basepoint $p'=p+\epsilon \mu$ sharing a domain of linearity of $\Theta_{m,+}^{\trop}$ with $p$ (utilizing our finiteness assumption).  Then the $\val_{\Lambda p',p'}$-minimizing broken line $\Gamma$ (which exists by Lemma \ref{lem:inf=min}) will also minimize $\val_{\Lambda p,p'}$, and Lemma \ref{lem:Lambda-T-taut} shows that this $\Gamma$ is $\Lambda^{\top}$-taut since it $(\Lambda p')$-taut.
\end{proof}

\begin{rem}\label{rmk:Lambda-top-p-val}
    A symmetric argument yields an analog of Lemma \ref{lem:Lambda-p-val} in which we switch the roles of $\Lambda$ and $\Lambda^{\top}$ and replace $C^+$ with $C^-$.
\end{rem}

\subsection{Proof of Theta Reciprocity}\label{sub:proof}

\begin{thm}[Theta Reciprocity]\label{thm:taut-pairing}
    Claims \ref{conj: theta1} and \ref{conj: theta2} hold.
\end{thm}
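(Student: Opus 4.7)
The plan is first to reduce to Claim \ref{conj: theta2} via Lemma \ref{lem:theta2-theta1}, and then to the saturated-injective case via Lemma \ref{lem:sat}. Under this latter assumption, $\f{D}^{(P,Q)}$ has a full-dimensional positive chamber $C^+$ and a full-dimensional negative chamber $C^-$, so the basepoint-dependent theta functions $\vartheta_{m,+}$ and $\vartheta_{m,-}$ are defined directly.

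Fix $m_1, m_2 \in M_\Q$ with both sides of Claim \ref{conj: theta2} finite. I would first apply Lemma \ref{lem:Lambda-p-val} with $m = m_2$ and basepoint $p = m_1$ (passing to the limit $\lim_{\epsilon \to 0^+} \Theta_{m_2, m_1 + \epsilon \mu}$ if $m_1$ is non-generic) to obtain
$$\val_{\Lambda m_1}(\Theta_{m_2,+}) = m_\Gamma \cdot \Lambda m_1$$
for some $\Lambda^\top$-taut rational broken line $\Gamma$ with ends $(m_2, m_1)$. Symmetrically, by Remark \ref{rmk:Lambda-top-p-val} applied with $m = m_1$ and $p = m_2$,
$$\val_{\Lambda^\top m_2}(\Theta_{m_1,-}) = m_{\Gamma^*} \cdot \Lambda^\top m_2$$
for some $\Lambda$-taut rational broken line $\Gamma^*$ with ends $(m_1, m_2)$.

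Each bend of a broken line adjusts the attached exponent by a multiple of some $Pv$ (by Lemma \ref{lem:D-skew}), so I may write $m_\Gamma = m_2 + \delta_\Gamma$ and $m_{\Gamma^*} = m_1 + \delta_{\Gamma^*}$ with $\delta_\Gamma, \delta_{\Gamma^*} \in \im(P)_\Q$. Using $\Lambda P = Q^\bullet = -\Lambda^\top P$ together with $m_1 \cdot \Lambda^\top m_2 = m_2 \cdot \Lambda m_1$, the two pairings decompose as a common constant $m_2 \cdot \Lambda m_1$ plus corrections involving only $\delta_\Gamma$, $\delta_{\Gamma^*}$, $Q^\bullet$, and $m_1, m_2$. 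So the theorem reduces to a combinatorial identity comparing the ``bend data'' of $\Gamma$ and $\Gamma^*$.

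To establish this identity, I plan to exhibit a natural bijective duality between $\Lambda^\top$-taut rational broken lines with ends $(m_2, m_1)$ and $\Lambda$-taut ones with ends $(m_1, m_2)$---essentially a time reversal that interchanges the ``largest bend'' and ``smallest bend'' rules from Lemmas \ref{lem:Lambda-taut} and \ref{lem:LambdaT-taut}. The $\Lambda^\top$-constancy of $\vartheta_{m_2}^\trop$ along $\Gamma$ (Lemma \ref{lem:L-const}) should allow me to transport the tropical evaluation back along the initial segment of $\Gamma$ into an asymptotic regime where broken lines from $m_2$ degenerate to straight rays, and Lemma \ref{lem:BL-const-finite} gives the finite-truncation control needed to justify limit interchanges. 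The hardest part will be making this time-reversal rigorous: a literal reversal of $\Gamma$ is not itself a broken line in the standard sense, since the initial and final data swap roles, and the matching of which side of each wall is ``positive'' must be tracked carefully against the chamber exchange $C^+ \leftrightarrow C^-$ coming with the switch $\Lambda \leftrightarrow \Lambda^\top$.
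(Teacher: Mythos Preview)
Your reductions via Lemma \ref{lem:theta2-theta1} and Lemma \ref{lem:sat} match the paper, and the use of Lemma \ref{lem:Lambda-p-val} to realise $\val_{\Lambda m_1}(\Theta_{m_2,+})$ as $m_\Gamma\cdot\Lambda m_1$ for a $\Lambda^\top$-taut $\Gamma$ with ends $(m_2,p)$, $p$ near $m_1$, is also what the paper does. But from that point on your plan diverges and contains a genuine gap.

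The proposed ``time-reversal bijection'' between $\Lambda^\top$-taut broken lines with ends $(m_2,m_1)$ and $\Lambda$-taut broken lines with ends $(m_1,m_2)$ is not how the paper proceeds, and you yourself flag that a reversed broken line is not a broken line. More importantly, your transport step has the indices swapped: you propose to use $\Lambda^\top$-constancy of $\vartheta_{m_2}^{\trop}$ along $\Gamma$, but $\Gamma$ has initial exponent $m_2$, so transporting $\vartheta_{m_2}^{\trop}$ back along $\Gamma$ just recovers the left-hand side again and never produces $\Theta_{m_1,-}^{\trop}(\Lambda^\top m_2)$. The paper's key move is instead to transport $\Theta_{m_1}^{\trop}$ (not $\Theta_{m_2}^{\trop}$) along the $\Lambda^\top$-taut $\Gamma$: at the endpoint $p$ near $m_1$ the attached exponent is $m_+$, and the straight broken line with ends $(m_1,p)$ gives $\Theta_{m_1,p}^{\trop}(\Lambda^\top m_+)\leq m_1\cdot\Lambda^\top m_+=\Theta_{m_2,+}^{\trop}(\Lambda m_1)$; then $\Lambda^\top$-constancy of $\Theta_{m_1}^{\trop}$ along $\Gamma$ (Lemma \ref{lem:L-const}) carries $\Theta_{m_1,p}^{\trop}(\Lambda^\top m_+)$ back to $\Theta_{m_1,\Gamma(t)}^{\trop}(\Lambda^\top m_2)$ for $t\ll 0$, and Lemma \ref{lem:BL-const-finite} together with Remark \ref{rmk:Lambda-top-p-val} identifies the limit as $\Theta_{m_1,-}^{\trop}(\Lambda^\top m_2)$. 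This yields one inequality; the other follows by symmetry. No bijection of broken lines or explicit bend-data identity is needed.
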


\begin{proof}
    By Lemma \ref{lem:theta2-theta1}, it suffices to prove Claim \ref{conj: theta2}.  That is, we assume $(P,Q)$ has a rational $\Lambda$-structure and then prove for all $m_1,m_2\in M_{\Q}$ that
\begin{align}\label{eq:theta-recip}
    \Theta_{m_2,+}^{\trop}(\Lambda m_1)
= \Theta_{m_1,-}^{\trop}(\Lambda^{\top} m_2)
\end{align}
assuming both sides are finite.  Furthermore, Lemma \ref{lem:sat} allows us to assume that $(P,Q)$ is a saturated-injective seed datum, so we may assume that $\f{D}(P,Q)$ contains a positive chamber $C^+$ and a negative chamber $C^-$.

Additionally, since tropical theta functions and valuations are both upper semicontinuous (cf. \S \ref{sub:trop-fun} and Lemma \ref{lem:v-upper-semi}), we may assume that $\Lambda m_1$ is in the interior of the locus where $\Theta_{m_2,+}^{\trop}$ is finite and $\Lambda^{\top} m_2$ is in the interior of the locus where $\Theta_{m_1,-}^{\trop}$ is finite (since upper semicontinuity then implies the extension to the boundary cases).

Now let $\Gamma_0$ be a straight broken line with initial exponent $m_1$ and endpoint in $C^+$.  Lemma \ref{lem:Lambda-taut} implies that $\Gamma_0$ is $\Lambda$-taut.  So by Lemma \ref{lem:L-const} ($\Lambda$-constancy along $\Lambda$-taut broken lines), we have 
\begin{align}\label{eq:1st-pairing}
    \Theta_{m_2,+}^{\trop}(\Lambda m_1) = \inf_{\Ends(\Gamma)=(m_2,p)} m_{\Gamma} \cdot \Lambda m_1
\end{align}
for any generic $p\in m_1+C^+$.  Choosing such $p$ to be sufficiently close to $m_1$ (so that $\Lambda p$  shares a domain of linearity of $\Theta_{m_2,+}^{\trop}$ with $\Lambda m_1$, using our finiteness assumptions), we see as in the proof of Lemma \ref{lem:Lambda-p-val} that the infimum in \eqref{eq:1st-pairing} is attained for a $\Lambda^{\top}$-taut broken line $\Gamma_+$.  Let $m_+\in M$ be the final exponent of $\Gamma_+$.  By construction, $$\Theta_{m_2,+}^{\trop}(\Lambda m_1)=m_+\cdot \Lambda m_1 = m_1 \cdot \Lambda^{\top} m_+.$$  

Now note that
\begin{align*}
    \Theta_{m_1,p}^{\trop}(\Lambda^{\top}m_+) = \inf_{\Ends(\Gamma)=(m_1,p)} m_{\Gamma} \cdot \Lambda^{\top} m_+ \leq m_1 \cdot \Lambda^{\top} m_+ 
\end{align*}
because the set of broken lines considered in this infimum includes the straight broken line with ends $(m_1,p)$.  We now have
\begin{align*}
   \Theta_{m_2,+}^{\trop}(\Lambda m_1) = m_1\cdot \Lambda^{\top} m_+ \geq \Theta_{m_1,p}^{\trop}(\Lambda^{\top}m_+).
\end{align*}

By Lemma \ref{lem:L-const} ($\Lambda^{\top}$-constancy of $\Theta_{m_1,p}^{\trop}$ along $\Lambda^{\top}$-taut broken lines) and the fact that $\Gamma_+$ is $\Lambda^{\top}$-taut with ends $(m_2,p)$, we deduce that $$\Theta_{m_1,p}^{\trop}(\Lambda^{\top} m_+) = \Theta_{m_1,\Gamma_+(t)}^{\trop}(\Lambda^{\top} m_2)$$ for all $t\ll 0$.  By Lemma \ref{lem:BL-const-finite}, a minimizing broken line $\beta$ for $\Theta_{m_1,\Gamma_+(t)}^{\trop}(\Lambda^{\top} m_2)$ (or, if $\Theta_{m_1,\Gamma_+(t)}^{\trop}(\Lambda^{\top} m_2)$ were $-\infty$, a broken line $\beta_L$ with ends $(m_{\beta_L},\Gamma_+(t))$ such that $m_{\beta_L}\cdot \Lambda^{\top}m_2<L$ for given $L\in \R$) is attained at a finite order $k$ which does not depend on $t$, so the equality also holds in the limit as $t\rar -\infty$.  Equivalently, letting $\mu=\Gamma(t)$ for some fixed $t\ll 0$, we have $\Theta_{m_1,\Gamma_+(t)}^{\trop}(\Lambda^{\top} m_2) = (\lim_{\epsilon \rar 0^+} \Theta_{m_1,m_2+\epsilon \mu})^{\trop}(\Lambda^{\top} m_2)$, which by Lemma \ref{lem:Lambda-p-val} (modified as in Remark \ref{rmk:Lambda-top-p-val}) equals $\Theta_{m_1,-}(\Lambda^{\top} m_2)$.  Thus,
\begin{align*}
    \Theta_{m_2,+}^{\trop}(\Lambda m_1) \geq \Theta_{m_1,-}^{\trop}(\Lambda^{\top} m_2).
\end{align*}
A completely symmetric argument proves the reverse inequality.  The claim follows.
\end{proof}

\subsection{Convexity along broken lines}\label{sub:BL-convex}

Here we apply our characterizations of tropical theta functions to recover some convexity results

\subsubsection{Convexity on the finite locus}

Following \cite[Def. 8.2]{GHKK}, we say that a piecewise-linear function $\varphi:M_{\R}\rar \R$ is \textbf{convex along broken lines} (called min-convex in loc.~cit.) if, for all broken lines $\Gamma$ (with generic endpoint), we have that $D_{\Gamma(t)}\varphi\cdot m_t$ is non-decreasing as we increase the generic time $t$.  Here, $D_{\Gamma(t)}\varphi$ is the differential of $\varphi$ at $\Gamma(t)$, viewed as an element of $N_{\R}$, and $m_t$ is the exponent attached to $\Gamma$ at time $t$. 
 Restricting to connected open subsets of broken lines, we similarly define convexity along broken line segments. 

Given $f\in \wh{A}^{\can}$, we would like to apply the above definition to $f^{\trop}\circ \Lambda: p\mapsto f_p^{\trop}(\Lambda p)$.  In general though, $f^{\trop}\circ \Lambda$ takes values in $\R^{\min}$, possibly including $-\infty$, so it is not clear how to define the differential here. 
 We therefore focus on the locus where $f_p^{\trop}$ is finite, saving our limited discussion of the infinite locus for \S \ref{subsub: infinite}.

\begin{prop}\label{prop: BL}
    Let $m\in M_{\Q}$.  The map $\Theta_m^{\trop}\circ \Lambda: p\mapsto \Theta_{m,p}^{\trop}(\Lambda p)$ is convex along broken line segments that are contained in the locus where $\Theta_m^{\trop}\circ \Lambda$ is finite.
\end{prop}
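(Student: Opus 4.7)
My plan is to identify $F := \Theta_m^{\trop} \circ \Lambda$ with the pullback of a tropical theta function and then verify the broken-line convexity condition case by case. By Lemma \ref{lem:Lambda-p-val}, on the finite locus we have $F(p) = \Theta_{m,p}^{\trop}(\Lambda p) = \Theta_{m,+}^{\trop}(\Lambda p)$, exhibiting $F$ as the composition of the tropical theta function $\Theta_{m,+}^{\trop}: N_{\R} \to \R^{\min}$ (itself an infimum of linear functionals $w \mapsto m_{\Gamma'} \cdot w$ indexed by broken lines $\Gamma'$ with ends $(m,+)$) with the linear map $\Lambda$. Hence $F$ is concave and piecewise-linear on its (convex) finite locus.

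The broken-line convexity condition---that $D_{\Gamma(t)} F \cdot m_t$ is non-decreasing in $t$ along a broken line segment $\Gamma$ contained in the finite locus---splits into two cases. On any straight segment of $\Gamma$ with constant attached exponent $m^*$, the path moves in direction $-m^*$, so the ordinary concavity of $F$ immediately forces $D F \cdot m^*$ to be non-decreasing; moreover, when $\Gamma$ crosses a non-smooth locus of $F$ within a straight segment, the slope of $F$ in direction $m^*$ can only jump upward, again by concavity. At a bend of $\Gamma$ at time $t_0$ crossing a wall of $\f{D}^{(P,Q)}$ with normal $cQ^\bullet v$, the attached exponent changes by $m_+ - m_- = \alpha Pv$ with $\alpha = \sigma c k(m_- \cdot Q^\bullet v) \geq 0$ (the standard sign analysis for broken-line wall crossings), so the required inequality reduces, when $\alpha > 0$, to $D_{\Gamma(t_0)} F \cdot Pv \geq 0$. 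Using a local infimum-achieving broken line $\Gamma'^*$ and the identity $\Lambda Pv = Q^\bullet v$, this is equivalent to $m_{\Gamma'^*} \cdot Q^\bullet v \geq 0$, which I would establish by combining the $\Lambda^\top$-taut characterization of $\Gamma'^*$ from Lemma \ref{lem:Lambda-p-val} (together with Remark \ref{rmk:Lambda-top-p-val}), the transfer of broken lines under $\Lambda : (P,Q) \to (Q^\bullet, -P^\bullet)$ provided by Theorem \ref{thm: linearmorphism}, and the $\Lambda$-constancy arguments of \S \ref{sub:LConvex}.

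The hardest step will be this sign check at bends. The quantity $m_{\Gamma'^*} \cdot Q^\bullet v$ decomposes as $m \cdot Q^\bullet v$ plus a non-negative combination of entries $B^\bullet(v^{(j)}, v)$ of the skew-symmetric matrix $B^\bullet$, which a priori carries no definite sign. What must be exploited is the geometric compatibility between $\Gamma'^*$ and the specific wall of $\f{D}^{(P,Q)}$ that $\Gamma$ crosses at $\Gamma(t_0)$, encoded in the $\Lambda^\top$-tautness of $\Gamma'^*$. A potentially cleaner backup route is to invoke theta reciprocity (Theorem \ref{thm:taut-pairing}) to rewrite $F(p) = \Theta_{p,-}^{\trop}(\Lambda^\top m)$ on the dense rational locus, thereby extracting broken-line convexity from the $\Lambda^\top$-constancy of tropical theta functions along $\Lambda^\top$-taut broken lines established in Lemma \ref{lem:L-const}; this requires carefully tracking how the basepoint data of the dual theta function varies with $p$ along $\Gamma$, but it has the advantage of recasting the question into a form where the already-proved tautness machinery applies directly.
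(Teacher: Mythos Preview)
Your straight-segment argument via ordinary concavity of $F(p)=\Theta_{m,+}^{\trop}(\Lambda p)$ is correct and is implicit in the paper as well.  The genuine gap is at the bends.  Your reduction there to the inequality $D_{\Gamma(t_0)}F\cdot Pv\geq 0$, i.e.\ $m_{\Gamma'^*}\cdot Q^{\bullet}v\geq 0$, is not what actually needs to be shown, and in fact that inequality is false in general.  Concretely, in the $A_2$ seed datum $P=\left(\begin{smallmatrix}0&-1\\1&0\end{smallmatrix}\right)$, $Q=\Id$, with $\Lambda=P^{-1}$ and $m=-e_1$, one computes $\vartheta_{-e_1,+}=x^{(-1,0)}+x^{(-1,1)}y_1$, hence $F(p)=-p_2+\min(0,-p_1)$ and $DF\cdot Pe_1=(0,1)\cdot DF=-1<0$ on both sides of the wall $e_1^{\perp}$.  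The point is that the bend of $\Gamma$ occurs exactly on a wall of $\f{D}^{(P,Q)}$, and $F$ typically has a kink on that very wall (since the $\val_{\Lambda p}$-minimizing broken line with endpoint $p$ changes as $p$ crosses it).  So both $DF$ and $m_t$ jump simultaneously, and the two jumps interact: in the example, the kink in $F$ contributes $+(m_{t-\epsilon})_1$ while the bend of $\Gamma$ contributes $-k(m_{t-\epsilon})_1$, giving a net $(1-k)(m_{t-\epsilon})_1\geq 0$ precisely because $k\leq K=1$.  Your decomposition treats these two contributions separately and asks for the second alone to be non-negative, which it is not.

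The paper's proof avoids this by staying in the basepoint-$p$ representation rather than the basepoint-$+$ representation.  It takes the $\Lambda^{\top}$-taut minimizing broken line $\beta$ ending at $p^-=\Gamma(t-\epsilon)$, then extends or truncates $\beta$ across the wall (with a bend parameter $k'$) to produce a comparison broken line at $p^+$.  A direct computation gives the combined jump as $c(k-k')\sigma\alpha$ with $\alpha=(m_{t-\epsilon}\cdot Q^{\bullet}v)(b_{t-\epsilon}\cdot Q^{\bullet}v)$, and the $\Lambda^{\top}$-tautness of $\beta$ pins down $k'$ (to $0$ or to $K$, depending on which side $\beta$ approached from) in exactly the way that forces $(k-k')\sigma\alpha\geq 0$.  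This is the missing ingredient: the interaction between the bend parameter $k$ of $\Gamma$ and the forced bend parameter $k'$ of the minimizer, not a standalone sign on $b\cdot Q^{\bullet}v$.  Your backup route through theta reciprocity does not obviously supply this either, since $\Lambda^{\top}$-constancy in Lemma~\ref{lem:L-const} is a statement along $\Lambda^{\top}$-\emph{taut} broken lines, whereas broken-line convexity must hold along \emph{arbitrary} broken lines.
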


We note that a version of this proposition (for tropicalizations of regular functions) was conjectured in \cite[Conj. 8.11]{GHKK}, then proved in \cite[Lem. 15.6]{KY} for affine log Calabi-Yau varieties containing a Zariski-dense algebraic torus (including affine cluster varieties)---in fact, \cite[\S 10.7]{KY2} shows that the torus assumption is unnecessary. Our characterization of tropical theta functions in Lemma \ref{lem:Lambda-p-val} allows us to give the following direct proof in the cluster setting.

\begin{proof}
   As usual, by Lemmas \ref{lem:val-ku} and \ref{lem:Theta}, it suffices to consider $\vartheta_m^{\trop}$ for $m\in M$.

    Our approach here is similar to that of Lemma \ref{lem:L-const}.  Consider a broken line $\Gamma$ in $M_{\R}$ with attached exponents denoted $m_t$.  Let $p^{\pm}=\Gamma(t\pm \epsilon)$ for small $\epsilon>0$.  By Lemma \ref{lem:Lambda-p-val}, $\vartheta_{m,p^-}^{\trop}(\Lambda p^-)=\inf_{\Ends(\beta)=(m,p^-)} m_{\beta} \cdot \Lambda p^-$ is attained for a $\Lambda^{\top}$-taut broken line $\beta$.    Let $(W,f,\nu)$ be a (possibly trivial) wall containing $\Gamma(t)$, say with scattering function $f\in 1+x^{Pv}y^v\Z\llb x^{Pv}y^v\rrb$ and $\nu\in N$.  Let $K\in \Z_{\geq 0}\cup \{\infty\}$ be the degree of $f$, and define $c\in \Q_{>0}$ by $\nu=cQ^{\bullet}v$.  We have the following three possibilities:
    \begin{enumerate}
        \item $\beta$ can be extended slightly to cross $W$,
        \item $\beta$ has just crossed $W$,
        \item the last straight segment of $\beta$ is parallel to $W$, i.e., perpendicular to $Q^{\bullet}v=\Lambda P v$.
    \end{enumerate}
    In any case, we have
    \begin{align*}
     m_{t+\epsilon}=
     m_{t-\epsilon}+ ck\sigma(m_{t-\epsilon}\cdot Q^{\bullet}v)Pv
    \end{align*}
    where $\sigma =  \sign(\Gamma(t-\epsilon)\cdot Q^{\bullet} v)$ and $0\leq k\leq K$.  We will modify $\beta$, with final attached exponent $b_{t-\epsilon}$, to obtain a new broken line ending at $\Gamma(t+\epsilon)$ with final attached exponent $b_{t+\epsilon}$ satisfying $\Lambda^{\top} b_{t+\epsilon} \cdot m_{t+\epsilon} \geq \Lambda^{\top}b_{t-\epsilon} \cdot m_{t-\epsilon}$.  Note that $D_{p^-} (\vartheta_{m,p^-}^{\trop}\circ \Lambda) = \Lambda^{\top} b_{t-\epsilon}$.
    
    For Case (3), we use the same argument as in the proof of Lemma \ref{lem:L-const}, using the consistency of the scattering diagram and applying Lemma \ref{lem:CPS} to produce a broken line on the other side of $W$ with the same final exponent $b_{t+\epsilon}=b_{t-\epsilon}.$  Since $b_{t\pm \epsilon}\cdot \Lambda Pv=0$ here, we have $b_{t+\epsilon}\cdot \Lambda m_{t+\epsilon} = b_{t-\epsilon} \cdot \Lambda m_{t-\epsilon}$, or equivalently, $\Lambda^{\top} b_{t+\epsilon}\cdot  m_{t+\epsilon} = \Lambda^{\top} b_{t-\epsilon} \cdot m_{t-\epsilon}$.
    
    For Cases (1) and (2), by extending $\beta$ across $W$ in Case (1) or truncating $\beta$ before $W$ in Case (2), we obtain $\beta'$ with final exponent $b_{t+\epsilon}$ satisfying $b_{t+\epsilon}=b_{t-\epsilon}+ck'\sigma (b_{t-\epsilon}\cdot Q^{\bullet} v)Pv$ for some $0\leq k'\leq K$.  Then
    \begin{align*}
        \Lambda^{\top} b_{t+\epsilon}\cdot m_{t+\epsilon} &= (b_{t-\epsilon}+ck'\sigma (b_{t-\epsilon}\cdot Q^{\bullet} v)Pv) \cdot \Lambda (m_{t-\epsilon}+ ck\sigma(m_{t-\epsilon}\cdot Q^{\bullet}v)Pv) \\
        &=b_{t-\epsilon}\cdot \Lambda m_{t-\epsilon} + c(k-k')\sigma (m_{t-\epsilon}\cdot Q^{\bullet} v)(b_{t-\epsilon}\cdot Q^{\bullet}v). 
    \end{align*}
    For convenience, let $\alpha \coloneqq (m_{t-\epsilon}\cdot Q^{\bullet} v)(b_{t-\epsilon}\cdot Q^{\bullet}v)$, so the above equation can be written as
    \begin{align*}
        b_{t+\epsilon}\cdot \Lambda m_{t+\epsilon} = b_{t-\epsilon}\cdot \Lambda m_{t-\epsilon} + c(k-k')\sigma\alpha.
    \end{align*}
Thus, our goal is to choose $k'$ so that $c(k-k')\sigma \alpha \geq 0$.

This is simple in Case (1) since we have control over $k'$.  In this case, $b_{t- \epsilon}$ and $m_{t- \epsilon}$ lie on the same side of $(Q^{\bullet}v)^{\perp}$, so $\alpha>0$.  So if $\sigma=+1$, we can choose $k'=0$, and for $\sigma=-1$, we choose any $k'\geq k$.

    Next consider Case (2).  Here we have $\alpha < 0$.  
    Suppose $\sigma=+1$.  Since $\beta$ is assumed to be $\Lambda^{\top}$-taut,  we must have $k'=K\geq k$, hence $c (k-k')\sigma \alpha \geq 0$.  Similarly, if $\sigma=-1$, then $\beta$ being $\Lambda^{\top}$-taut implies $k'=0\leq k$, and again $c (k-k')\sigma \alpha \geq 0$.
    
    Thus, given that $\beta$ is $\Lambda p^-$-minimizing, there always exists a broken line $\beta'$ ending at $\Gamma(t+\epsilon)$ such that the final attached exponent $b_{t+\epsilon}$ satisfies $\Lambda^{\top} b_{t+\epsilon} \cdot m_{t+\epsilon} \geq \Lambda^{\top} b_{t-\epsilon} \cdot m_{t-\epsilon}$.  If the infimum determining $\vartheta_{m,p^+}^{\trop}(\Lambda p^+)$ is attained for this $\beta'$, then we are done.  If not, there must be a different broken line $\beta''$ ending at $p^+$ which is $\Lambda p^+$-minimizing.  So the final attached exponent $b'_{t+\epsilon}$ of $\beta''$ yields a smaller value than $b_{t+\epsilon}$ when paired with $\Lambda p^+$.  Then $\Lambda^{\top}(b'_{t+\epsilon}-b_{t+\epsilon})$ will be a linear function which is $0$ along $W$ (since both $\Lambda^{\top}b'_{t+\epsilon}$ and $\Lambda^{\top} b_{t+\epsilon}$ agree with $\Lambda^{\top}b_{t-\epsilon}$ on $W$) and negative on the new side of the wall, hence positive at $p^-$.  Since $p^-$ and $m_{t+\epsilon}$ lie on the same side of $W$, this means that the actual value $\Lambda^{\top} b'_{t+\epsilon}\cdot m_{t+\epsilon}$ of $[D_{p^+}(\vartheta_{m}^{\trop}\circ \Lambda)]\cdot m_{t+\epsilon}$ is \textit{greater than} the value computed using $b_{t+\epsilon}$, so the desired non-decreasing property holds.
\end{proof}

The following can be viewed as compactifying the BL-convexity in Proposition \ref{prop: BL} to include a convexity condition at $t=-\infty$.

\begin{cor}\label{lem:BL-convex-infty}
    Let $\Gamma$ be a broken line with ends $(m,p)$. 
 Let $b\in M$, and assume $\Lambda m$ lies in the interior of the cone where $\vartheta^{\trop}_{b,+}$ is finite.  Further assume that $\Gamma$ does not pass through any points where $\vartheta_{b,+}^{\trop}\circ \Lambda$ equals $-\infty$.  Then $$\vartheta_{b,+}^{\trop}(\Lambda m)\leq [D_p(\vartheta_b^{\trop}\circ \Lambda)](m_{\Gamma}).$$
\end{cor}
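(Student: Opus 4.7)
The plan is to apply Proposition \ref{prop: BL} along $\Gamma$ out to its initial ray and then use the positive homogeneity of $\vartheta_{b,+}^{\trop}$ to identify the limiting directional derivative with $\vartheta_{b,+}^{\trop}(\Lambda m)$. Write $\varphi \coloneqq \vartheta_b^{\trop}\circ \Lambda$, so $\varphi(q) = \vartheta_{b,q}^{\trop}(\Lambda q)$ at generic $q$. By Lemma \ref{lem:Lambda-p-val}, $\varphi(q) = \vartheta_{b,+}^{\trop}(\Lambda q)$, which exhibits $\varphi$ as the pullback via the linear map $\Lambda$ of the classically concave (i.e.\ min-convex in the paper's terminology) and positively-homogeneous-of-degree-$1$ function $\vartheta_{b,+}^{\trop}$; hence $\varphi$ inherits both properties. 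The hypotheses translate to: $m$ lies in the interior of the finite locus of $\varphi$, and $\Gamma$ is contained in that finite locus.

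First, I would invoke Proposition \ref{prop: BL} along $\Gamma$: at each generic $t$ where $\Gamma(t)$ is a smooth point of $\varphi$, $D_{\Gamma(t)}\varphi \cdot m_t$ is non-decreasing in $t$. On the initial unbounded segment of $\Gamma$, we have $m_t = m$, so
\[
D_p\varphi \cdot m_\Gamma \;\geq\; D_{\Gamma(t)}\varphi \cdot m
\]
for any generic $t \ll 0$ in the initial segment.

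Next, I would apply the classical concavity of $\varphi$ at $\Gamma(t)$ to obtain $\varphi(\Gamma(t) + sm) \leq \varphi(\Gamma(t)) + s\,D_{\Gamma(t)}\varphi \cdot m$ for $s > 0$. Rearranging and using positive homogeneity of degree $1$ gives
\[
D_{\Gamma(t)}\varphi \cdot m \;\geq\; \frac{\varphi(\Gamma(t)+sm)-\varphi(\Gamma(t))}{s} \;=\; \varphi\!\lrp{\tfrac{\Gamma(t)}{s}+m} - \tfrac{\varphi(\Gamma(t))}{s}.
\]
Letting $s \to \infty$ and using continuity of $\varphi$ at $m$ (in the interior of the finite locus), the right side tends to $\varphi(m) = \vartheta_{b,+}^{\trop}(\Lambda m)$. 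Chaining the two inequalities yields the claim.

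The main subtlety to handle is that $D_{\Gamma(t)}\varphi$ is only defined at smooth points of $\varphi$, so one must restrict $t$ to a generic subset of the initial segment. Because $\varphi$ is piecewise-linear with only finitely many linearity regions meeting any bounded neighborhood of $\Gamma$, this restriction is harmless, and the inequality persists in the limit $s \to \infty$ as described.
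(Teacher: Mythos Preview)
Your proof is correct and takes a somewhat different route from the paper. Both arguments invoke Proposition~\ref{prop: BL} to reduce the claim to the inequality $D_{\Gamma(t)}\varphi\cdot m\geq \varphi(m)$ for some generic $t$ on the initial ray of $\Gamma$, where $\varphi=\vartheta_b^{\trop}\circ\Lambda$. The paper establishes this as an \emph{equality} by unpacking the broken-line description from Lemma~\ref{lem:Lambda-p-val}: it identifies $\varphi(m)=u_0\cdot\Lambda m$ for a minimizing exponent $u_0$ (via Lemma~\ref{lem:inf=min} and the limiting construction of \S\ref{sub:lim}), and then observes that for $t\ll 0$ the points $\Gamma(t)$ and $m$ share a linearity domain of $\varphi$, so $D_{\Gamma(t)}\varphi=\Lambda^\top u_0$. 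You instead derive the inequality abstractly from the fact that $\varphi=\vartheta_{b,+}^{\trop}\circ\Lambda$ is classically concave and positively homogeneous of degree~$1$, via a difference-quotient limit (this is essentially the supporting-hyperplane inequality combined with Euler's identity $\varphi(x)=D_x\varphi\cdot x$). Your route is more elementary---it sidesteps the limiting theta functions and Lemma~\ref{lem:inf=min}---while the paper's argument yields the sharper intermediate statement (equality at the initial end) and makes the minimizing exponent explicit.
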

\begin{proof}
 By Lemma \ref{lem:Lambda-p-val}, $\vartheta_{b,+}^{\trop}(\Lambda m)=(\lim_{\epsilon \rar 0^+} \vartheta_{b,m+\epsilon \mu})^{\trop}(\Lambda m)$ for any generic $\mu\in M_{\R}$. We choose $\mu=\Gamma(t)-m$ for some $t\ll 0$.  For convenience, let $\Theta\subset M$ denote the set of exponents appearing in $\lim_{\epsilon \rar 0^+} \vartheta_{b,m+\epsilon \mu}$, so we have
    \begin{align}\label{eq:theta-inf}
        \vartheta_{b,+}^{\trop}(\Lambda m) = \inf_{u\in \Theta} u\cdot \Lambda m.
    \end{align}
    By Lemma \ref{lem:inf=min}, $\inf_{u\in \Theta} u\cdot \Lambda m$ is attained for some $u_0\in \Theta$.  For generic $t$, let $\Theta_t\subset M$ be the set of exponents appearing in $\vartheta_{b,\Gamma(t)}$.  It follows from the definition of $\lim_{\epsilon \rar 0^+} \vartheta_{b,m+\epsilon \mu}$ (cf. \S \ref{sub:lim}) that
    \begin{align}\label{eq:bigcup-bigcap}
        \Theta = \bigcup_{t_0\leq 0} \bigcap_{t\leq t_0} \Theta_t.
    \end{align}

    By assumption, $\vartheta_{b,+}^{\trop}(\Lambda \Gamma(t))$ is finite for all $t\ll 0$, and $m$ will lie in the same domain of linearity of  $\vartheta_{b,+}^{\trop}\circ \Lambda$ as $\Gamma(t)$ for $t\ll 0$.  Now Lemma \ref{lem:Lambda-p-val} implies that we have $[D_{\Gamma(t)}(\vartheta_b^{\trop}\circ \Lambda)]=\Lambda^{\top} u_0$ for $t\ll 0$, hence \begin{align*}
    \vartheta_{b,+}^{\trop}(\Lambda m) = u_0\cdot \Lambda m &= [D_{\Gamma(t)}(\vartheta_b^{\trop}\circ \Lambda)](m_{t}) \qquad \text{(for $t\ll 0$ so $m_t=m$)} \\ & \leq [D_p(\vartheta_b^{\trop}\circ \Lambda)](m_{\Gamma})    
    \end{align*}
    where the last inequality follows from the convexity along broken lines of $\vartheta_b^{\trop}\circ \Lambda$ (Proposition \ref{prop: BL}).
\end{proof}

\subsubsection{Convexity on the infinite locus}\label{subsub: infinite}

In the proof of Theorem \ref{thm:taut-pairing}, one might hope to apply Corollary \ref{lem:BL-convex-infty} with $b=m_2$, $m=m_1$, $p\in m_1+C^+$ very near $m_1$, and $\Gamma_+$ the $\val_{\Lambda m_1}$-minimizing broken line with ends $(m_2,p)$ and final exponent $m_+$ to deduce $$\Theta_{m_2,+}^{\trop}(\Lambda m_1)\leq [D_p(\Theta_{m_2}^{\trop}\circ \Lambda)](m_{\Gamma})=m_{\Gamma}\cdot \Lambda^{\top} m_+ $$
hence
$$\Theta_{m_2,+}^{\trop}(\Lambda m_1)\leq \inf_{\Ends(\Gamma)=(m_1,p)} m_{\Gamma} \cdot \Lambda^{\top}m_+ = \Theta_{m_1,p}^{\trop}(\Lambda^{\top}m_+).$$
This would yield the inequality $\Theta_{m_2,+}^{\trop}(\Lambda m_1) \leq \Theta_{m_1,-}^{\trop}(\Lambda^{\top} m_2)$ without the need for assuming that $\Theta_{m_1,-}^{\trop}(\Lambda^{\top} m_2)$ is finite.

The flaw with this argument is that, a priori, the broken lines $\Gamma$ with ends $(m_1,p)$ could pass through the locus where $\Theta_{m_2,+}^{\trop}=-\infty$, so we lose the notion of convexity along broken lines.  We have attempted to extend the definition of convexity along broken lines to these cases but have been unable to prove that the corresponding convexity claims are satisfied. 
E.g., it would suffice to prove that the locus $S\coloneqq \{\Theta_{m_2,+}^{\trop}>-\infty\}$ is broken line convex (i.e., any broken line segment with endpoints in $S$ is entirely contained in $S$), but we could not find a proof despite expecting this to be true.

We note that an extension of Claim \ref{conj: theta1} to drop the assumption that both sides are finite would be a useful result.  In particular, one is often interested in knowing, for a seed datum $(P,Q)$, whether or not $A^{\midd}_{(P,Q)}$ is equal to $A^{\can}_{(P,Q)}$.  This equality is equivalent to $\val_n(\vartheta_{m,+})$ being finite for all $m$ and $n$, so theta reciprocity (without the finiteness hypothesis) would imply that it suffices to check the chiral (Langlands) dual seed datum.  

In particular, recall from Example \ref{eg:clusterA} that $(P,Q)$ determines corresponding ``$\s{A}$-type'' and ``$\s{X}$-type'' seed data $(B,\Id)$ and $(\Id,B^\top)$, respectively.  We shall write $A^{\midd}_{+,(B,\Id)}$ as $\s{A}^{\midd}_+$ and $A^{\midd}_{+,(\Id,B^{\top})}$ as $\s{X}_+^{\midd}$.  One analogously defines $\s{A}_+^{\can}$ and $\s{X}_+^{\can}$.  For the original seed $(P,Q)$, we write $A_{+,(P,Q)}^{\midd}$ and $A_{+,(P,Q)}^{\can}$ as $\s{U}_+^{\midd}$ and $\s{U}_+^{\can}$, respectively.

By Theorem \ref{thm: linearmorphism}, the linear morphisms of \eqref{eq:ensemble} induce homomorphisms
\begin{align}\label{eq:rhoPQ}
    \s{X}_+^{\can}\stackrel{\rho_P}{\longrightarrow} \s{U}_+^{\can} \stackrel{\rho_{Q^{\top}}}{\longrightarrow} \s{A}_+^{\can},
\end{align}
and these homomorphisms take theta functions to theta functions.  As a consequence, if $\s{U}_+^{\midd}=\s{U}_+^{\can}$, it follows that $\s{X}_+^{\midd}=\s{X}_+^{\can}$.

Now suppose Claim \ref{conj: theta1}, hence Proposition \ref{lem:D-move}, hold without the finiteness assumptions. Then under the integrality assumptions on $D$ and $B^{\bullet}$, $A^{\midd}_{+,(Q,P)}=A^{\can}_{+,(Q,P)}$.  The corresponding $\s{X}$-type seed datum for $(Q,P)$ is $(\Id,B)$, and the chiral-Langlands dual is the $\s{A}$-type seed datum $(B,\Id)$, so it would follow that $\s{A}_{(P,Q)}^{\midd}=\s{A}_{(P,Q)}^{\can}$ as well.

Similar manipulations of theta reciprocity without the finiteness hypothesis would imply that if $\s{A}_+^{\midd}=\s{A}_+^{\can}$ for $(P,Q)$, then $\s{A}_-^{\midd}=\s{A}_-^{\can}$ for the Langlands dual seed datum $(Q,P)$.

\subsection{Example: Log Calabi-Yau surfaces}\label{sub:Looijenga-Pair}

Here we consider the special case of a seed datum $(P,Q)$ where $\rank(M)=\rank(N)=2$.  We further assume that $(P,Q)$ is a skew-symmetric seed datum, that $Pe_i$ and $Qe_i$ are primitive for each $i$, and that there exists a $\Lambda$-structure $\Lambda$ which is given with respect to some fixed basis for $M$ and dual basis for $N$ by the matrix $\left(\begin{matrix}
    0 & 1 \\ -1 & 0
\end{matrix}\right)$.\footnote{These further assumptions do not result in a significant loss of generality.  Given $k\in \Z_{\geq 1}$, a wall $(W,f,kn)$ is equivalent (up to identifying coefficient variables) to $k$ walls $(W,f,n)$.  We may apply this to always assume the $Qe_i$'s are primitive.  Additionally, $(W,f,n)$ is equivalent to $(W,f,-n)$ because the sign $s$ of a wall-crossing as in \eqref{eq:Egamma} also changes when we change the sign of $n$ (only the side of the wall considered to be positive changes).  The exponents $Pv$ can also be rescaled using some equivalence of scattering diagrams and specialization of coefficients like in the proof of Proposition \ref{lem:D-move}.  With these modifications, we can assume that each $Pe_i$ is primitive and that $Q=\Lambda P$ for $\Lambda$ as specified.}  The resulting scattering diagrams $\f{D}=\f{D}(P,Q)$ in $M_{\R}$ are essentially those associated to log Calabi-Yau surfaces in \cite{GHK1} (up to ``moving worms'' as in \cite[\S 3.2-3.3]{GHK1} and an identification of the coefficients as in \cite[Thm. 5.5]{GHK3}).  Let $m\in M$.  In the next several paragraphs, we will explicitly describe  the following two functions:
\begin{center}
\hfill\begin{minipage}{.4\textwidth}
\eqn{\vartheta_{m,+}^{\trop}\circ \Lambda:M_{\R}&\to \R^{\min}\\
 u&\mapsto \vartheta_{m,+}^{\trop}(\Lambda u)}
\end{minipage}\hfill and \hfill
\begin{minipage}{.4\textwidth}
\eqn{\val_{\Lambda m}:M_{\R}&\to \R^{\min} \\
   u&\mapsto \val_{\Lambda m}(\vartheta_{u,+}) .}
   \end{minipage}\hfill\null
\end{center}

\null

Let us first consider $\vartheta_{m,+}^{\trop}(\Lambda u) = \val_{\Lambda u}(\vartheta_{m,+})$.  By Lemma \ref{lem:Lambda-p-val}, this is equal to $\inf_{\Gamma} m_{\Gamma} \cdot \Lambda u$, where the infimum is over all broken lines $\Gamma$ with ends $(m,p)$ for $p$ very near the ray through $u$.  Note in general that $m_t\cdot \Lambda \Gamma(t)$ is constant for $t\in (-\infty,0]$; cf. \cite[Lem. 5.3]{CGMMRSW}.  We call this the $\Lambda$-\textbf{momentum} of $\Gamma$, denoted $\mu(\Gamma)$.

For $m_t=(a,b)$ and $\Gamma(t)=(c,d)$, the $\Lambda$-momentum of $\Gamma$ will be $\mu(\Gamma)=m_t\cdot \Lambda \Gamma(t)=(a,b)\cdot (d,-c) = ad-bc$.  It follows $\mu(\Gamma)$ is positive when $\Gamma$ is wrapping counterclockwise around the origin and negative when wrapping clockwise.

Furthermore, if $\Gamma(0)=p\in \R_{\geq 0} u$, then for $\kappa \in \R_{>0}$ such that $u=\kappa p$, we have $m_{\Gamma}\cdot \Lambda u=\kappa \mu(\Gamma)$.  Thus, for $\mu(\Gamma)>0$ (so $\Gamma$ wrapping counterclockwise), minimizing $m_{\Gamma}\cdot \Lambda u$ will mean always bending away from $0$ as much as possible in order to minimize $\kappa $ (so bending maximally on outgoing walls, but not bending on the incoming parts of incoming walls), while for $\mu(\Gamma)<0$ (so $\Gamma$ wrapping clockwise), it will mean always bending towards $0$ as much as possible in order to maximize $\kappa $ (so bending maximally on the incoming parts of incoming walls but otherwise not bending).

Indeed, consider a wall $(W,f\in 1+x^{Pv}y^v\Z\llb x^{Pv}y^v\rrb,cQv)$, $c\in \Q_{>0}$.  Since $Q=\Lambda P$ (and $\Lambda$ can be viewed as $90^{\circ}$-clockwise rotation), we see that, for an outgoing wall, $\Gamma$ passes from the positive side of the wall to the negative side exactly if it is wrapping clockwise around the origin, i.e., when $\mu(\Gamma)$ is negative, so the condition of taking no bend on outgoing walls here agrees with the $\Lambda^{\top}$-tautness condition of Lemma \ref{lem:LambdaT-taut}.  Analogous analyses apply to incoming rays and to broken lines wrapping counterclockwise (i.e., with $\mu(\Gamma)>0$) to see that the above conditions again agree with $\Lambda^{\top}$-tautness.

We note that, unlike higher-dimensional cases, it is clear in the present setting that if a broken line intersects a given ray $\rho$ at least $k$ times, then it will continue to intersect $\rho$ at least $k$ times even when modified to bend more towards the origin.  It follows that, when considering $\val_{\Lambda u}(m)=\inf_{\Gamma} m_{\Gamma} \cdot \Lambda u$, it suffices to restrict to $\Lambda^{\top}$-taut broken lines even when the valuation ends up being $-\infty$.

Now let us consider $\val_{\Lambda m}(\vartheta_{u,+})$.  By Claim \ref{conj: theta2}, this is equal to $\val_{\Lambda^{\top} u} (\vartheta_{m,-})$.  Then by Lemma \ref{lemma: swap}, $\vartheta^{(P,Q)}_{m,-}=\vartheta^{(P,-Q)}_{m,+}$.  Note that $\Lambda$ being a $\Lambda$-structure for $(P,Q)$ is equivalent to $\Lambda^{\top}$ being a $\Lambda$-structure for $(P,-Q)$, and recall that $\f{D}(P,Q)=\f{D}(P,-Q)$ except for reversing our identification of which side of a wall is considered positive.  So the above analysis of $\val_{\Lambda u}(\vartheta_{m,+}^{(P,Q)})$ may be similarly applied to describe $\val_{\Lambda^{\top} u}(\vartheta_{m,+}^{(P,-Q)})=\val_{\Lambda m}(\vartheta_{u,+})$ by simply replacing $\Lambda$ with $\Lambda^{\top}$.  We define $\mu^{\top}(\Gamma)=m_t\cdot \Lambda^{\top} \Gamma(t)$ the be the $\Lambda^{\top}$-\textbf{momentum} of $\Gamma$.

This is all summarized in the following proposition:

\begin{prop}
    Fix $(P,Q)$ and $\Lambda$ as above.  Let us modify $\f{D}(P,Q)$ by breaking each initial wall apart into an incoming ray and an outgoing ray.  Fix $m,u\in M$.

    Then $\vartheta_{m,+}^{\trop}(\Lambda u)=\inf_{\Gamma} \mu(\Gamma)$, where the infimum is over all $\Lambda^{\top}$-taut broken lines $\Gamma$ with ends $(m,u)$, along with broken lines with initial exponent $m$ whose final direction $\Gamma'(0)$ is parallel to $u$.\footnote{Broken lines with initial exponent $m$ and $\Gamma'(0)$ parallel to $u$, if they exist, contribute $0$ to the infimum.}  Here, a $\Lambda^{\top}$-taut broken line $\Gamma$ with $\mu(\Gamma)>0$ will wrap counterclockwise around $0$ and will always bend away from $0$ as much as possible, i.e., bending maximally on outgoing rays while not bending on incoming rays.  A $\Lambda^{\top}$-taut broken line $\Gamma$ with $\mu(\Gamma)<0$ will wrap clockwise around $0$ and will always bend towards $0$ as much as possible, i.e., bending maximally on incoming rays while not bending on outgoing rays.

    Similarly, $\val_{\Lambda m}(u)=\inf_{\Gamma} \mu^{\top}(\Gamma)$, where the infimum is over all $\Lambda$-taut broken lines $\Gamma$ with ends $(m,u)$, along with broken lines with initial exponent $m$ whose final direction $\Gamma'(0)$ is parallel to $u$.  A $\Lambda$-taut broken line $\Gamma$ with $\mu^{\top}(\Gamma)>0$ will wrap clockwise around $0$ and will always bend away from $0$ as much as possible, i.e., bending maximally on outgoing rays while not bending on incoming rays.  A $\Lambda$-taut broken line $\Gamma$ with $\mu^{\top}(\Gamma)<0$ will wrap counterclockwise around $0$ and will always bend towards $0$ as much as possible, i.e., bending maximally on incoming rays while not bending on outgoing rays.
\end{prop}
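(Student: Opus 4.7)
The plan is to distill the preceding discussion into a structured argument for both claims, leveraging Lemma \ref{lem:Lambda-p-val}, the constancy of $\Lambda$-momentum, and the characterization of $\Lambda^\top$-tautness from Lemma \ref{lem:LambdaT-taut}, with the second claim obtained from the first via theta reciprocity.

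First I would apply Lemma \ref{lem:Lambda-p-val} to identify $\vartheta_{m,+}^{\trop}(\Lambda u)$ with the infimum of $m_\Gamma \cdot \Lambda u$ over broken lines with ends $(m,u)$ (or, when $u$ is non-generic, over the limiting set $\lim_{\epsilon \to 0^+}\Theta_{m,u+\epsilon\mu}$ for generic $\mu$), together with the assertion that minimizers may be taken $\Lambda^\top$-taut. A direct check confirms that the $\Lambda$-momentum $\mu(\Gamma) = m_t \cdot \Lambda \Gamma(t)$ is independent of $t$: antisymmetry of $\Lambda$ handles straight segments, while at a wall-crossing the change in $m_t$ is a multiple of $Pv$, and $Pv \cdot \Lambda \Gamma(t) = -\Gamma(t) \cdot Q^\bullet v = 0$ because $\Gamma(t) \in W \subset (Q^\bullet v)^\perp$. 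Hence $m_\Gamma \cdot \Lambda u = \mu(\Gamma)$ along each such broken line. The auxiliary class of broken lines with $\Gamma'(0)$ parallel to $u$ arises in the limiting construction, and their contribution is automatically zero since $m_\Gamma \in \R \cdot u$ gives $m_\Gamma \cdot \Lambda u = u \cdot \Lambda u = 0$ by antisymmetry.

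For the geometric bending characterization, I would work in the fixed basis where $\Lambda$ is the $90^\circ$-clockwise rotation matrix. The positive side of a wall with scattering function supported on multiples of $Pv$ and normal direction $Q^\bullet v = \Lambda Pv$ is then the open half-plane at angles lying clockwise from $\arg(Pv)$. A counterclockwise-wrapping broken line ($\mu > 0$) therefore crosses the incoming ray at direction $Pv$ from positive to negative side, and the outgoing ray at direction $-Pv$ from negative to positive side; the reverse holds for clockwise wrapping ($\mu < 0$). Lemma \ref{lem:LambdaT-taut} states that $\Lambda^\top$-taut broken lines take the smallest bend when crossing from positive to negative and the largest when crossing from negative to positive; combining these with the angular analysis gives the rules in the statement: $\mu > 0$ implies maximal bending on outgoing rays and no bending on incoming rays, and symmetrically for $\mu < 0$.

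For the second claim, I would invoke Claim \ref{conj: theta2} and Lemma \ref{lemma: swap} to obtain
\[
\val_{\Lambda m}(\vartheta^{(P,Q)}_{u,+}) = \val_{\Lambda^\top u}(\vartheta^{(P,Q)}_{m,-}) = \val_{\Lambda^\top u}(\vartheta^{(P,-Q)}_{m,+}).
\]
Since $\Lambda^\top P = -Q$ and $(\Lambda^\top)^\top P = \Lambda P = Q$, the map $\Lambda^\top$ is a $\Lambda$-structure for $(P,-Q)$, so the first claim applies with $(P,Q,\Lambda)$ replaced by $(P,-Q,\Lambda^\top)$. The scattering diagram $\f{D}^{(P,-Q)}$ has the same walls and the same incoming/outgoing classification as $\f{D}^{(P,Q)}$ but with positive and negative sides of each wall swapped; moreover, since $\Lambda$ is antisymmetric in this skew-symmetric rank-$2$ setting, $\Lambda^\top = -\Lambda$, so $\mu^\top(\Gamma) = -\mu(\Gamma)$ and the sign of the momentum inverts relative to the wrapping direction. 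These two sign reversals combine to yield precisely the stated description in terms of $\Lambda$-tautness and $\mu^\top$. The main obstacle here is bookkeeping rather than conceptual: one must carefully track which side of each wall is positive, which sense of wrapping corresponds to positive $\mu$, and how the two sign flips under $Q \mapsto -Q$ interact; once these conventions are pinned down, the proof reduces to an application of Lemmas \ref{lem:Lambda-p-val} and \ref{lem:LambdaT-taut} together with theta reciprocity.
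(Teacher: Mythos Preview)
Your proposal is correct and follows essentially the same route as the paper: the proposition is stated as a summary of the preceding discussion in \S\ref{sub:Looijenga-Pair}, which invokes Lemma~\ref{lem:Lambda-p-val}, the constancy of $\Lambda$-momentum, the geometric interpretation of $\Lambda^\top$-tautness via Lemma~\ref{lem:LambdaT-taut}, and then obtains the second claim from the first via Claim~\ref{conj: theta2} and Lemma~\ref{lemma: swap}. One small point the paper addresses that you omit: Lemma~\ref{lem:Lambda-p-val} only guarantees that minimizers are $\Lambda^\top$-taut when the infimum is attained, so to justify restricting to $\Lambda^\top$-taut broken lines even when the valuation equals $-\infty$, the paper inserts the two-dimensional observation that bending a broken line more towards the origin cannot decrease the number of times it crosses a given ray, hence the $\Lambda^\top$-taut broken lines already witness divergence to $-\infty$.
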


In particular, the negative fibers of $\val_{\Lambda m}$ and $\vartheta_{m,+}^{\trop}$ are broken lines which always bend as closely to $0$ as possible.  As explained in \cite[\S 3.5]{Man1}, these broken exactly coincide with the straight lines for $U^{\trop}$, where by $U^{\trop}$ we mean $M_{\R}$ equipped with the canonical integral linear structure defined in \cite{GHK1}.  In \cite[\S 3.7]{Man1} and \cite[\S 4]{LaiZhou}, these straight lines are used to construct partial compactifications of the mirror (the former via local charts, the latter via a $\Proj$ construction).  If $L_m$ is a counterclockwise-wrapping $\Lambda$-taut broken line with initial exponent $m$, then the boundary of the associated partial compactification in \cite{Man1,LaiZhou} will consist of a single divisor $D_{\Lambda m}$ whose corresponding valuation agrees with our $\val_{\Lambda m}$.

\bibliographystyle{alpha}  
\bibliography{biblio.bib}        

\newcommand{\etalchar}[1]{$^{#1}$}
\begin{thebibliography}{{Man}21b}

\bibitem[AB23]{AB}
H\"{u}lya {Arg\"{u}z} and Pierrick {Bousseau}.
\newblock Fock-{G}oncharov dual cluster varieties and {G}ross-{S}iebert
  mirrors.
\newblock {\em J. Reine Angew. Math.}, 802:125--171, 2023.

\bibitem[AG22]{AG}
H\"{u}lya Arg\"{u}z and Mark Gross.
\newblock The higher-dimensional tropical vertex.
\newblock {\em Geom. Topol.}, 26(5):2135--2235, 2022.

\bibitem[BFZ05]{BFZ}
Arkady {Berenstein}, Sergey {Fomin}, and Andrei {Zelevinsky}.
\newblock Cluster algebras. {III}. {U}pper bounds and double {B}ruhat cells.
\newblock {\em Duke Math. J.}, 126(1):1--52, 2005.

\bibitem[BH03]{BH}
Florian {Berchtold} and J\"{u}rgen {Hausen}.
\newblock Homogeneous coordinates for algebraic varieties.
\newblock {\em J. Algebra}, 266(2):636--670, 2003.

\bibitem[BL]{Blum-Liu}
Harold {Blum} and Yuchen {Liu}.
\newblock {Valuative independence for Calabi--Yau varieties}.
\newblock arXiv:2604.27890.

\bibitem[BMRS15]{BMRS}
Ang\'{e}lica {Benito}, Greg {Muller}, Jenna {Rajchgot}, and Karen~E. {Smith}.
\newblock Singularities of locally acyclic cluster algebras.
\newblock {\em Algebra Number Theory}, 9(4):913--936, 2015.

\bibitem[BZ05]{BZ}
Arkady {Berenstein} and Andrei {Zelevinsky}.
\newblock Quantum cluster algebras.
\newblock {\em Adv. Math.}, 195(2):405--455, 2005.

\bibitem[CGM{\etalchar{+}}17]{CGMMRSW}
Man~Wai {Cheung}, Mark {Gross}, Greg {Muller}, Gregg {Musiker}, Dylan {Rupel},
  Salvatore {Stella}, and Harold {Williams}.
\newblock The greedy basis equals the theta basis: a rank two haiku.
\newblock {\em J. Combin. Theory Ser. A}, 145:150--171, 2017.

\bibitem[CMN22]{CMN}
Man-Wai {Cheung}, Timothy {Magee}, and Alfredo {N\'{a}jera Ch\'{a}vez}.
\newblock Compactifications of cluster varieties and convexity.
\newblock {\em Int. Math. Res. Not. IMRN}, (14):1--54 [10858--10911 on table of
  contents], 2022.

\bibitem[CPS24]{CPS}
Michael {Carl}, Max {Pumperla}, and Bernd {Siebert}.
\newblock A tropical view on {L}andau-{G}inzburg models.
\newblock {\em Acta Math. Sin. (Engl. Ser.)}, 40(1):329--382, 2024.

\bibitem[DM21]{DM}
Ben {Davison} and Travis {Mandel}.
\newblock Strong positivity for quantum theta bases of quantum cluster
  algebras.
\newblock {\em Inventiones mathematicae}, pages 1--119, 2021.

\bibitem[{Fei}21]{Fei-mult}
Jiarui {Fei}.
\newblock Tensor product multiplicities via upper cluster algebras.
\newblock {\em Ann. Sci. \'Ec. Norm. Sup\'er.}, 54(6):1415--1464, 2021.

\bibitem[{Fei}23a]{Fei-CombF}
Jiarui {Fei}.
\newblock Combinatorics of {$F$}-polynomials.
\newblock {\em Int. Math. Res. Not. IMRN}, (9):7578--7615, 2023.

\bibitem[{Fei}23b]{Fei-tropF}
Jiarui {Fei}.
\newblock Tropical {$F$}-polynomials and general presentations.
\newblock {\em J. Lond. Math. Soc. (2)}, 107(6):2079--2120, 2023.

\bibitem[FG06a]{FG-amalg}
V.~V. {Fock} and A.~B. {Goncharov}.
\newblock Cluster {$\s{X}$}-varieties, amalgamation, and {P}oisson-{L}ie
  groups.
\newblock In {\em Algebraic geometry and number theory}, volume 253 of {\em
  Progr. Math.}, pages 27--68. Birkh\"{a}user Boston, Boston, MA, 2006.

\bibitem[FG06b]{FG0}
Vladimir {Fock} and Alexander {Goncharov}.
\newblock Moduli spaces of local systems and higher {T}eichm\"{u}ller theory.
\newblock {\em Publ. Math. Inst. Hautes \'{E}tudes Sci.}, (103):1--211, 2006.

\bibitem[FG09]{FG1}
Vladimir~V. {Fock} and Alexander~B. {Goncharov}.
\newblock {Cluster ensembles, quantization and the dilogarithm}.
\newblock {\em Ann. Sci.\'Ec. Norm. Sup. (4)}, 42(6):865--930, 2009.

\bibitem[FM]{F-MM}
Juan~Bosco {Fr\'ias-Medina} and Timothy {Magee}.
\newblock {Fundamentals of Broken Line Convex Geometry}.
\newblock arXiv:2407.02427.

\bibitem[GHK15a]{GHK3}
Mark {Gross}, Paul {Hacking}, and Sean {Keel}.
\newblock Birational geometry of cluster algebras.
\newblock {\em Algebr. Geom.}, 2(2):137--175, 2015.

\bibitem[GHK15b]{GHK1}
Mark {Gross}, Paul {Hacking}, and Sean {Keel}.
\newblock Mirror symmetry for log {C}alabi-{Y}au surfaces {I}.
\newblock {\em Publ. Math. Inst. Hautes \'Etudes Sci.}, 122:65--168, 2015.

\bibitem[GHKK18]{GHKK}
Mark {Gross}, Paul {Hacking}, Sean {Keel}, and Maxim {Kontsevich}.
\newblock Canonical bases for cluster algebras.
\newblock {\em J. Amer. Math. Soc.}, 31(2):497--608, 2018.

\bibitem[GHS22]{GHS}
Mark {Gross}, Paul {Hacking}, and Bernd {Siebert}.
\newblock Theta functions on varieties with effective anti-canonical class.
\newblock {\em Mem. Amer. Math. Soc.}, 278(1367):xii+103, 2022.

\bibitem[GPS10]{GPS}
Mark {Gross}, Rahul {Pandharipande}, and Bernd {Siebert}.
\newblock The tropical vertex.
\newblock {\em Duke Math. J.}, 153(2):297--362, 2010.

\bibitem[GS]{GS:QGM}
Alexander {Goncharov} and Linhui {Shen}.
\newblock {Quantum geometry of moduli spaces of local systems and
  representation theory}.
\newblock arXiv:1904.10491.

\bibitem[GS11]{GS11}
Mark {Gross} and Bernd {Siebert}.
\newblock From real affine geometry to complex geometry.
\newblock {\em Ann. of Math. (2)}, 174(3):1301--1428, 2011.

\bibitem[GS15]{GonSh}
Alexander {Goncharov} and Linhui {Shen}.
\newblock Geometry of canonical bases and mirror symmetry.
\newblock {\em Invent. Math.}, 202(2):487--633, 2015.

\bibitem[GS16]{GSTheta}
Mark {Gross} and Bernd {Siebert}.
\newblock Theta functions and mirror symmetry.
\newblock In {\em Surveys in differential geometry 2016. {A}dvances in geometry
  and mathematical physics}, volume~21 of {\em Surv. Differ. Geom.}, pages
  95--138. Int. Press, Somerville, MA, 2016.

\bibitem[GS26]{GSInt2}
Mark {Gross} and Bernd {Siebert}.
\newblock Intrinsic mirror symmetry.
\newblock {\em J. Amer. Math. Soc.}, 39(2):313--451, 2026.

\bibitem[HK00]{HK}
Yi~{Hu} and Sean {Keel}.
\newblock Mori dream spaces and {GIT}.
\newblock {\em Michigan Math. J.}, 48:331--348, 2000.
\newblock Dedicated to William Fulton on the occasion of his 60th birthday.

\bibitem[{Joh}24]{Johnston-comparison}
Samuel {Johnston}.
\newblock Comparison of nonarchimedean and logarithmic mirror constructions via
  the {F}robenius structure theorem.
\newblock {\em J. Lond. Math. Soc. (2)}, 110(5):Paper No. e12998, 34, 2024.

\bibitem[JP14]{JP}
David {Jensen} and Sam {Payne}.
\newblock Tropical independence {I}: {S}hapes of divisors and a proof of the
  {G}ieseker-{P}etri theorem.
\newblock {\em Algebra Number Theory}, 8(9):2043--2066, 2014.

\bibitem[KS14]{WCS}
Maxim {Kontsevich} and Yan {Soibelman}.
\newblock Wall-crossing structures in {D}onaldson-{T}homas invariants,
  integrable systems and mirror symmetry.
\newblock In {\em Homological mirror symmetry and tropical geometry}, volume~15
  of {\em Lect. Notes Unione Mat. Ital.}, pages 197--308. Springer, Cham, 2014.

\bibitem[KW]{KW}
Sean {Keel} and Logan {White}.
\newblock {Theta Function Basis of the Cox ring of Postive 2d Looijenga pairs}.
\newblock arXiv:2412.01774.

\bibitem[KY]{KY2}
Sean {Keel} and Tony~Yue {Yu}.
\newblock {Log Calabi-Yau mirror symmetry and non-archimedean disks}.
\newblock arXiv:2411.04067.

\bibitem[KY23]{KY}
Sean {Keel} and Tony~Yue {Yu}.
\newblock The {F}robenius structure theorem for affine log {C}alabi-{Y}au
  varieties containing a torus.
\newblock {\em Ann. of Math. (2)}, 198(2):419--536, 2023.

\bibitem[{Le}19]{Le}
Ian {Le}.
\newblock Cluster structures on higher {T}eichmuller spaces for classical
  groups.
\newblock {\em Forum Math. Sigma}, 7:Paper No. e13, 165, 2019.

\bibitem[{Li}a]{Li-Degen}
Yang {Li}.
\newblock {Degeneration of Calabi-Yau metrics and canonical basis}.
\newblock arXiv:2505.11087.

\bibitem[{Li}b]{Li-SYZ}
Yang {Li}.
\newblock {Valuative independence and metric SYZ conjecture}.
\newblock arXiv:2605.00516.

\bibitem[LP]{LP}
Fang {Li} and Jie {Pan}.
\newblock {Recurrence formula, positivity and polytope basis in cluster
  algebras via Newton polytopes}.
\newblock arXiv:2201.01440.

\bibitem[LZ25]{LaiZhou}
Jonathan {Lai} and Yan {Zhou}.
\newblock Mirror symmetry for log {C}alabi-{Y}au surfaces {II}.
\newblock {\em Compos. Math.}, 161(8):2025--2090, 2025.

\bibitem[Mag20]{MageeGHK}
Timothy Magee.
\newblock Littlewood-{R}ichardson coefficients via mirror symmetry for cluster
  varieties.
\newblock {\em Proc. Lond. Math. Soc. (3)}, 121(3):463--512, 2020.

\bibitem[{Man}16]{Man1}
Travis {Mandel}.
\newblock Tropical theta functions and log {C}alabi-{Y}au surfaces.
\newblock {\em Selecta Math. (N.S.)}, 22(3):1289--1335, 2016.

\bibitem[{Man}17]{ManAtomic}
Travis {Mandel}.
\newblock Theta bases are atomic.
\newblock {\em Compos. Math.}, 153(6):1217--1219, 2017.

\bibitem[{Man}19]{ManCox}
Travis {Mandel}.
\newblock Cluster algebras are {C}ox rings.
\newblock {\em Manuscripta Math.}, 160(1-2):153--171, 2019.

\bibitem[{Man}21a]{Man3}
Travis {Mandel}.
\newblock Scattering diagrams, theta functions, and refined tropical curve
  counts.
\newblock {\em J. Lond. Math. Soc. (2)}, 104(5):2299--2334, 2021.

\bibitem[{Man}21b]{ManFrob}
Travis {Mandel}.
\newblock Theta bases and log {G}romov-{W}itten invariants of cluster
  varieties.
\newblock {\em Trans. Amer. Math. Soc.}, 374(8):5433--5471, 2021.

\bibitem[MQ]{ManQin}
Travis {Mandel} and Fan {Qin}.
\newblock {Bracelets bases are theta bases}.
\newblock arXiv:2301.11101.

\bibitem[MSW13]{MSW2}
Gregg {Musiker}, Ralf {Schiffler}, and Lauren {Williams}.
\newblock Bases for cluster algebras from surfaces.
\newblock {\em Compos. Math.}, 149(2):217--263, 2013.

\bibitem[QY]{QY}
Fan {Qin} and Milen {Yakimov}.
\newblock {Partially compactified quantum cluster structures on simple
  algebraic groups and the full Berenstein--Zelevinsky conjecture}.
\newblock arXiv:2504.05134.

\bibitem[Rei80]{Reid}
{Miles} Reid.
\newblock Canonical {$3$}-folds.
\newblock In {\em Journ\'{e}es de {G}\'{e}ometrie {A}lg\'{e}brique d'{A}ngers,
  {J}uillet 1979/{A}lgebraic {G}eometry, {A}ngers, 1979}, pages 273--310.
  Sijthoff \& Noordhoff, Alphen aan den Rijn---Germantown, Md., 1980.

\end{thebibliography}
\index{Bibliography@\emph{Bibliography}}%

\end{document}